\newcounter{savfert}
    \pgfarrowshullpoint{\pgfarrowlength}{0pt}
\else\pgfsetlinewidth{+\pgfarrowlinewidth}\fi
    \pgfarrowshullpoint{\pgfarrowlength}{0pt}
    \pgfarrowshullpoint{\pgfarrowinset}{0pt}
\else\pgfsetlinewidth{+\pgfarrowlinewidth}\fi
\newdimen\ipeminipagewidth
\tikzstyle{ipe import} = [
\tikzset{
  rgb color/.code args={#1=#2}{%
    \definecolor{tempcolor-#1}{rgb}{#2}%
    \tikzset{#1=tempcolor-#1}%
  },
}
\newcommand{\cupall}{\pmb{\pmb{\bigcup}}}
\newcommand{\yes}{{\textsf{yes}}}
\newcommand{\remove}[1]{}
\newcommand{\makefast}[2]{\ifthenelse{\value{savfert}=1}{#2}{#1}}
\newcommand{\hh}{\end{document}}
\newcommand{\classP}{\#\textsf{P}}
\newcommand{\pmprob}{\#\textsc{TotalWeight}}
\newcommand{\svmg}{\textrm{SVMG}}
\setlist[itemize]{topsep=0pt,partopsep=0pt,itemsep=0pt,parsep=0pt}
\setlist[itemize,1]{label={\small\textbullet}}
\setlist[itemize,2]{label={\tiny\textbullet}}
\setlist[itemize,3]{label=$\cdot$}
\setlist[enumerate]{topsep=0pt,partopsep=0pt,itemsep=0pt,parsep=0pt}
\setlist[enumerate,1]{label=\roman*)}
\setlist[enumerate,2]{label=\alph*)}
\setlist[enumerate,3]{label=\arabic*)}
\title{Excluding Single-Crossing Matching Minors in Bipartite Graphs}
\author{\bigskip\large 
Archontia C.\@ Giannopoulou\thanks{Department of Informatics and Telecommunications, National and Kapodistrian University of Athens.}
\and
Dimitrios M. Thilikos\thanks{LIRMM, Univ de Montpellier, CNRS, Montpellier, France.}~$^{,}$\thanks{Supported by the ANR projects DEMOGRAPH (ANR-16-CE40-0028), ESIGMA (ANR-17-CE23-0010), and the French-German Collaboration ANR/DFG Project UTMA (ANR-20-CE92-0027).}\and\large 
\and\large 
Sebastian Wiederrecht$^{2,}$\thanks{Discrete Mathematics Group, Institute for Basic Science, Daejeon, South Korea.}~$^{,}$\thanks{The research of Sebastian Wiederrecht was supported by the ANR project ESIGMA (ANR-17-CE23-0010) and the Institute for Basic Science (IBS-R029-C1).}}
\date{}
\begin{document}

\maketitle

\begin{abstract}
\noindent By a seminal result of Valiant, computing the permanent of $(0,1)$-matrices is, in general,  $\#\mathsf{P}$-hard.
In 1913 P\'olya asked for which $(0,1)$-matrices $A$ it is possible to change some signs such that the permanent of $A$ equals the determinant of the resulting matrix.
In 1975, Little showed these matrices to be exactly the biadjacency matrices of bipartite graphs excluding $K_{3,3}$ as a \textsl{matching minor}.
This was turned into a polynomial time algorithm by McCuaig, Robertson, Seymour, and Thomas in 1999.
However, the relation between the exclusion of some matching minor in a bipartite graph and the tractability of the permanent extends beyond $K_{3,3}.$
Recently it was shown that the exclusion of any planar bipartite graph as a matching minor yields a class of bipartite graphs on which the \textsl{permanent} of the corresponding $(0,1)$-matrices can be computed efficiently.
In this paper we unify the two results above into a single, more general result in the style of the celebrated structure theorem for single-crossing-minor-free graphs.
We identify a class of bipartite graphs strictly generalising planar bipartite graphs and $K_{3,3}$ which includes infinitely many non-Pfaffian graphs.
The exclusion of any member of this class as a matching minor yields a structure that allows for the efficient evaluation of the permanent.
Moreover, we show that the evaluation of the permanent remains $\#\mathsf{P}$-hard on bipartite graphs which exclude $K_{5,5}$ as a matching minor.
This establishes a first computational lower bound for the problem of counting perfect matchings on matching minor closed classes.
\end{abstract}

\newpage
\tableofcontents
\newpage

%

\section{Introduction}\label{sec_introduction}

The \emph{permanent} of an $(n\times n)$-matrix $A=(a_{i,j})$ is defined as
\begin{eqnarray}
	\mathsf{perm}(A)=\sum_{\sigma\in S_{n}}\prod_{i=1}^{n}a_{i,\sigma(i)},\label{def_perm}
\end{eqnarray}
where $S_{n}$ is the set of all possible permutations of the set $[n]=\{1,\ldots,n\}.$
Permanents of $(0,1)$-matrices arise in many combinatorial questions related to counting, especially  when it comes to the counting of permutations with certain restrictions \cite{percus2012combinatorial}.
A particularly well-studied application of the permanent of $(0,1)$-matrices is the counting of \emph{perfect matchings} in graphs.
This problem has further applications, under the name ``\emph{dimer problem}'', in theoretical physics \cite{kasteleyn1967graph,Kasteleyn61thest,TemperleyF61dimer,Kasteleyn1963dimer}.
The deep connection of the permanent to permutations and the number of perfect matchings in graphs has lead Valiant to the definition of the complexity class $\#\mathsf{P}$ for counting problems \cite{Valiant79theco}. 
He proved that the computation of the permanent of a $(0,1)$-matrix is $\#\mathsf{P}$-hard.

\paragraph{Excluding a matching minor.}
Among the problems surrounding the permanent one may identify \emph{P\'olya's Permanent Problem} as one of the most prominent.
Due to the close similarity between the definition of the permanent and the definition of the determinant of a matrix, P\'olya \cite{polya1913aufgabe} asked for which $(0,1)$-matrices $A$ it would be possible to reduce the computation of the permanent to the computation of the determinant of a similar matrix obtained from $A$ by changing some of its signs.
This property was later found to have a graph theoretical  formulation in terms of \emph{Pfaffian orientations} of bipartite graphs \cite{kasteleyn1967graph,little1975characterization,mccuaig2004polya}.
Indeed, Little \cite{little1975characterization} observed that the applicability of P\'olya's approach is preserved under certain reduction rules for $(0,1)$-matrices which have natural graph theoretic counterparts that eventually gave rise to the notion of \emph{matching minors}.
Roughly speaking, the matching minor relation is a ``matching respecting'' restriction of the minor relation.
That is, the deletion of edges and vertices and the contraction of edges are restricted to preserve certain matching theoretic properties of the graph.

In graph theoretic terms, Little proved that a bipartite graph has a Pfaffian orientation if and only if it excludes $K_{3,3}$ as a matching minor.
Little's result gave a structural solution to P\'olya's question.
It took another $24$ years until McCuaig and, independently, Robertson, Seymour, and Thomas found a precise description of bipartite graphs without a $K_{3,3}$ matching minor that implied a polynomial time algorithm for the computation of the permanent for the corresponding $(0,1)$-matrices \cite{robertson1999permanents,mccuaig2004polya}.
The methods used to obtain this result strongly resemble those used in the more classical theory of (ordinary) graph minors.
Recently it was shown by Giannopoulou, Kreutzer, and Wiederrecht that also the exclusion of a planar bipartite graph as a matching minor in bipartite graphs gives rise to a class of $(0,1)$-matrices on which the permanent can be computed efficiently \cite{giannopoulou2021excluding}. 
We give a more in-depth discussion on the relation of permanents with matching minors in Subsection~\ref{subsec_history}.

\paragraph{Counting perfect matchings in $H$-minor-free graphs.}
Notice that all the results above deal with the permanent of general $(n\times n)$-matrices with $(0,1)$-entries.
Such matrices can be seen as the \emph{biadjacency matrices} of bipartite graphs, hence the tight relation with bipartite graphs and their perfect matchings.

The complexity of counting perfect matchings on special graph classes  has been studied extensively.
We particularly emphasise here the study of the problem of counting perfect matchings in $H$-minor-free graphs.
In his work on the dimer problem, Kasteleyn originally proved that every planar graph has a Pfaffian orientation and thus the number of perfect matchings can be computed efficiently.
In \cite{Kasteleyn61thest,kasteleyn1967graph} it was claimed that this method could be extended to graphs of bounded genus by combining several orientations.
This was proven by Galluccio and Loebl \cite{GalluccioL99onthe} for orientable surfaces and by Tesler \cite{Tesler00match} for non-orientable surfaces.

We now enter the realm of Graph Minors  Series  developed by Robertson and Seymour (R\&S).
Vazirani \cite{Vazirani89ncalg} lifted Kasteleyn's result to graphs that exclude $K_{3,3}$ as a minor by using Wagner's structural description \cite{wagner1937eigenschaft} of $K_{3,3}$-minor-free graphs\footnote{As we describe in detail in Subsection~\ref{subsec_history}, the structure of $K_{3,3}$-minor-free graphs differs vastly from the structure of bipartite $K_{3,3}$-matching-minor-free graphs. Indeed, any graph is a minor of a bipartite graph which excludes $K_{3,3}$ as a matching minor.}.
Similarly, it was shown by Straub, Thierauf, and Wanger \cite{StraubTW14count} that $K_5$-minor-free graphs allow for a polynomial time algorithm that computes the number of perfect matchings.
By combining these ideas with known dynamic programming techniques on bounded treewidth graphs, Curticapean and, independently, Eppstein and Vazirani \cite{Curticapean14count,EppsteinV19ncalg} proposed an algorithm to compute the number of perfect matchings of any graph excluding a fixed \textsl{single-crossing minor} efficiently.
The key to this algorithm was the classic result of R\&S \cite{robertson1993excluding} that any single-crossing-minor-free graph has a tree decomposition where the torso of each large enough bag\footnote{Here ``large enough'' only depends on the excluded single-crossing minor.} is a planar graph.
Recently these positive results were matched by Curticapean and Xia \cite{CurticapeanXia22} who showed that counting the perfect matchings of $K_{8}$-minor-free graphs remains $\#\mathsf{P}$-hard.
In fact, they showed an even stronger result by tying this hardness result to the occurrence of \emph{vortices} in the celebrated Graph Minors Structure Theorem (GMST) by R\&S \cite{robertson2003graph}.
This computational lower bound was matched in \cite{ThilikosW22killi1} where a precise description of those $H$  whose minor exclusion allows to avoid the occurrence of vortices in the GMST by R\&S was given.
With this result, a full dichotomy for counting perfect matchings in $H$-minor-free graphs was established.

\paragraph{Our contribution.}

In this paper we leave the restricted setting of symmetric $(0,1)$-matrices and focus on biadjacency matrices of bipartite graphs with perfect matchings.
That is, our results apply to all square $(0,1)$-matrices.

We extend the theory of matching minors in bipartite graphs and derive a structure theorem in the spirit of the structure theorem for single-crossing-minor-free graphs by R\&S \cite{robertson1993excluding}.
This is the first complete structural result on matching minors since the solution of P\'olya's Permanent Problem by McCuaig et al.\@ and the first ever extension of the tractability of the permanent on matching minor-closed classes of graphs beyond the exclusion of $K_{3,3}$ and planar graphs (see \cref{fig_classes} for an illustration of the containment relation between the relevant classes). 
Our theorem unifies the setting of $H$-matching-minor-free bipartite graphs where $H$ is  planar and the realm of $K_{3,3}$-matching-minor-free bipartite graphs (also known as bipartite Pfaffian graphs) into a single more powerful framework.

\begin{figure}[h]
	\centering
	\makefast{\scalebox{.9}{
	\begin{tikzpicture}[scale=0.8]
			\pgfdeclarelayer{background}
			\pgfdeclarelayer{foreground}
			\pgfsetlayers{background,main,foreground}
			\node[v:ghost] (C) {};
			
			\node[v:ghost,position=0:0mm from C] (PlanarMidBottom) {};
			\node[v:ghost,position=170:25mm from PlanarMidBottom] (PlanarLabel) {\textcolor{AO}{planar}};
			
			\node[v:ghost,position=180:28mm from C] (PMWMidBottom) {};
			\node[v:ghost,position=147:30mm from PMWMidBottom] (PMWLabel) {\textcolor{BostonUniversityRed}{bounded $\mathbf{pmw}$}};
			
			\node[v:ghost,position=0:9.5mm from C] (PfaffianMidBottom) {};
			\node[v:ghost,position=144:36mm from PfaffianMidBottom] (PfaffianLabel) {\textcolor{CornflowerBlue}{Pfaffian}};
			
			\node[v:ghost,position=0:9.5mm from C] (PfaffianMidBottom) {};
			\node[v:ghost,position=144:36mm from PfaffianMidBottom] (PfaffianLabel) {\textcolor{CornflowerBlue}{Pfaffian}};
			
			\node[v:ghost,position=0:13mm from C] (CrossingMidBottom) {};
			\node[v:ghost,position=139:63mm from CrossingMidBottom] (CrossingLabel1) {single crossing};
			\node[v:ghost,position=270:4.5mm from CrossingLabel1] (CrossingLabel2) {matching minor free};
			
			\node[v:ghost,position=0:52mm from PlanarMidBottom] (GenusMidBottom) {}; 
			\node[v:ghost,position=160:26mm from GenusMidBottom] (GenusLabel1) {\textcolor{DarkMagenta}{bounded}};
			\node[v:ghost,position=270:4.5mm from GenusLabel1] (GenusLabel2) {\textcolor{DarkMagenta}{Euler genus}};
			
			\node[v:ghost,position=0:73mm from PlanarMidBottom] (VortexMidBottom) {}; 
			\node[v:ghost,position=145:48mm from VortexMidBottom] (VortexLabel1) {\textcolor{PrincetonOrange}{shallow vortex}};
			\node[v:ghost,position=270:4.5mm from VortexLabel1] (VortexLabel2) {\textcolor{PrincetonOrange}{matching minor free}};
			
			\node[v:ghost,position=162:180mm from VortexMidBottom] (OutsideLabel1) {\textcolor{DeepCarrotOrange}{counting}};
			\node[v:ghost,position=270:4.5mm from OutsideLabel1] (OutsideLabel2) {\textcolor{DeepCarrotOrange}{perfect matchings}};
			\node[v:ghost,position=270:4.5mm from OutsideLabel2] (OutsideLabel3) {\textcolor{DeepCarrotOrange}{is $\classP$-hard}};
			
			\begin{pgfonlayer}{background}
			
			\node[rectangle, draw = white,fill = DeepCarrotOrange,opacity=0.08,minimum width = 17cm, minimum height = 5.3cm] (r) at (-1.8,3.31) {};
				
			\draw[thick,white,fill=white,opacity=1] (VortexMidBottom) arc (0:180:90mm and 60mm);
			\draw[line width=1.8pt,PrincetonOrange,opacity=0.5] (VortexMidBottom) arc (0:180:90mm and 60mm);	
				
			\draw[thick,DarkMagenta,fill=DarkMagenta,opacity=0.04] (GenusMidBottom) arc (0:180:57mm and 16.5mm);
			\draw[thick,DarkMagenta,opacity=0.1] (GenusMidBottom) arc (0:180:57mm and 16.5mm);
					
			\draw[thick,AO,fill=AO,opacity=0.04] (PlanarMidBottom) arc (0:180:25mm and 12mm);
			\draw[thick,AO,opacity=0.1] (PlanarMidBottom) arc (0:180:25mm and 12mm);
			
			\draw[thick,BostonUniversityRed,fill=BostonUniversityRed,opacity=0.04] (PMWMidBottom) arc (0:180:25mm and 33mm);
			\draw[thick,BostonUniversityRed,opacity=0.1] (PMWMidBottom) arc (0:180:25mm and 33mm);
			
			\draw[thick,CornflowerBlue,fill=CornflowerBlue,opacity=0.07] (PfaffianMidBottom) arc (0:180:31mm and 29mm);
			\draw[thick,CornflowerBlue,opacity=0.1] (PfaffianMidBottom) arc (0:180:31mm and 29mm);
			
			\draw[thick,fill,opacity=0.018] (CrossingMidBottom) arc (0:180:48mm and 49mm);
			\draw[line width=1.8pt,opacity=1] (CrossingMidBottom) arc (0:180:48mm and 49mm);
			
			\end{pgfonlayer}
		\end{tikzpicture}}}{\scalebox{.476}{\includegraphics{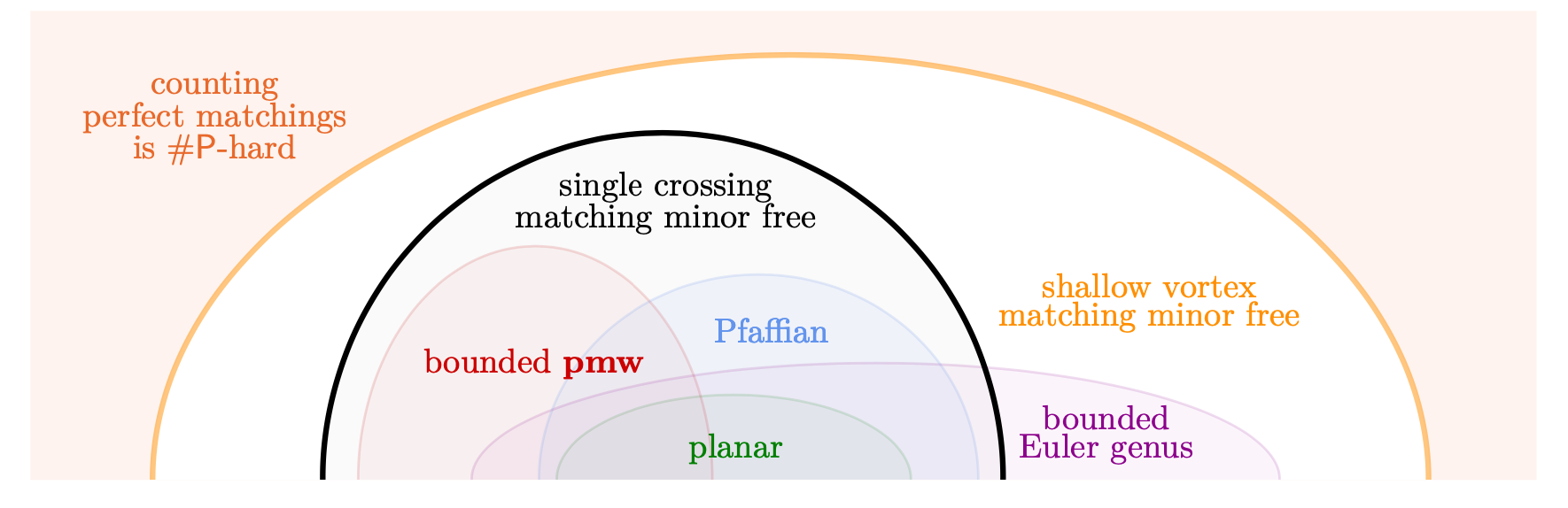}}}
	
	\caption{Known matching minor-closed classes of bipartite graphs.
		The case of planar graphs was resolved by Kasteleyn \cite{kasteleyn1967graph} while the general result for bounded Euler genus is due to Gallucio, Loebl, and Tesler \cite{GalluccioL99onthe,Tesler00match}.
		The algorithm for Pfaffian graphs is a consequence of the solution of P\'olya's Permanent Problem \cite{robertson1999permanents,mccuaig2004polya}, and the case of bounded perfect matching width ($\mathsf{pmw}$) was treated in \cite{giannopoulou2021excluding}.
		We unify the latter two classes into the families of single-crossing matching-minor-free graphs.
		The area above the shallow vortex matching-minor-free class is where our complexity lower bounds apply. }
	\label{fig_classes}
\end{figure}

A graph is said to be \emph{matching covered} if it is connected and each of its edges is contained in some perfect matching.
A \emph{tight cut} is an edge cut in a matching covered graph $G$ such that any perfect matching contains exactly one edge in this cut.
Notice that the graph obtained by identifying one of the two shores of a tight cut into a single vertex results again in a matching covered graph.
Lovász noticed \cite{lovasz1987matching} that any matching covered graph can be decomposed by repeated such \emph{tight cut contractions} into a unique set of matching covered graphs where all tight cuts have a trivial shore.
These ``prime'' elements are called \emph{bricks} if they are non-bipartite and \emph{braces} if they are bipartite.
Bricks and braces are of particular importance in Matching Theory since many properties, including the computation of the permanent, can be reduced to these prime elements of the tight cut decomposition (see for example \cref{thm_splicegeneratingfunctions}).

We call the tight cut contraction of the closed neighbourhood of a vertex of degree two a \emph{bicontraction}.
A \emph{matching minor} of a matching covered graph $G$ is a graph $H$ which can be obtained from some subgraph $H'$ of $G$ such that $G-H'$ has a perfect matching by repeated applications of bicontractions.

In the style of R\&S's celebrated result on \textsl{single-crossing minors} \cite{robertson1993excluding} we define the class of \textsf{single-crossing matching minors} to be the class of all bipartite matching covered graphs $H$ for which some $t$ exists such that $H$ is a matching minor of the \emph{single-crossing matching grid of order $t$}.
For an illustration of this grid-like graph see \cref{fig_singlecrossingmatchinggrid}.
We denote the class of all single-crossing matching minors by $\mathfrak{S}.$
Our main structural result is the following.

\begin{figure}
	\centering
	\makefast{ 
	\begin{tikzpicture}[scale=0.8]
		\pgfdeclarelayer{background}
		\pgfdeclarelayer{foreground}
		\pgfsetlayers{background,main,foreground}
		\node[v:ghost] (C) {};
		
		\foreach \x in {1,3,5,7}
		\foreach \y in {2,4,6,8}
		{	
			\node[v:main] (v_\x_\y) at (\x*0.87,\y*0.87){};
		}
		
		\foreach \x in {2,4,6,8}
		\foreach \y in {1,3,5,7}
		{	
			\node[v:main] (v_\x_\y) at (\x*0.87,\y*0.87){};
		}
		
		\foreach \x in {1,3,5,7}
		\foreach \y in {1,3,5,7}
		{	
			\node[v:mainempty] (v_\x_\y) at (\x*0.87,\y*0.87){};
		}
		
		\foreach \x in {2,4,6,8}
		\foreach \y in {2,4,6,8}
		{	
			\node[v:mainempty] (v_\x_\y) at (\x*0.87,\y*0.87){};
		}
		
		\node[v:main,position=245:5mm from v_5_5] (a) {};
		\node[v:mainempty,position=295:5mm from v_4_5] (b) {};
		
		\begin{pgfonlayer}{background}
			
			\foreach \x in {1,...,8}
			{
				\draw[e:main] (v_\x_1) to (v_\x_8);
				\draw[e:main] (v_1_\x) to (v_8_\x);
			}
			
			\foreach \z in {1,...,8}
			{
				\draw[e:coloredborder] (v_1_\z) to (v_2_\z);
				\draw[e:coloredborder] (v_3_\z) to (v_4_\z);
				\draw[e:coloredborder] (v_5_\z) to (v_6_\z);
				\draw[e:coloredborder] (v_7_\z) to (v_8_\z);
			}
			
			\foreach \z in {1,...,8}
			{
				\draw[e:colored] (v_1_\z) to (v_2_\z);
				\draw[e:colored] (v_3_\z) to (v_4_\z);
				\draw[e:colored] (v_5_\z) to (v_6_\z);
				\draw[e:colored] (v_7_\z) to (v_8_\z);
			}
			
			\draw[e:coloredborder] (a) to (b);
			\draw[e:main] (b) to (v_4_5);
			\draw[e:main] (b) to (v_5_4);
			\draw[e:main] (a) to (v_5_5);
			\draw[e:main] (a) to (v_4_4);
			
			\draw[e:colored] (a) to (b);
			
		\end{pgfonlayer}
	\end{tikzpicture}}{\scalebox{.15}{\includegraphics{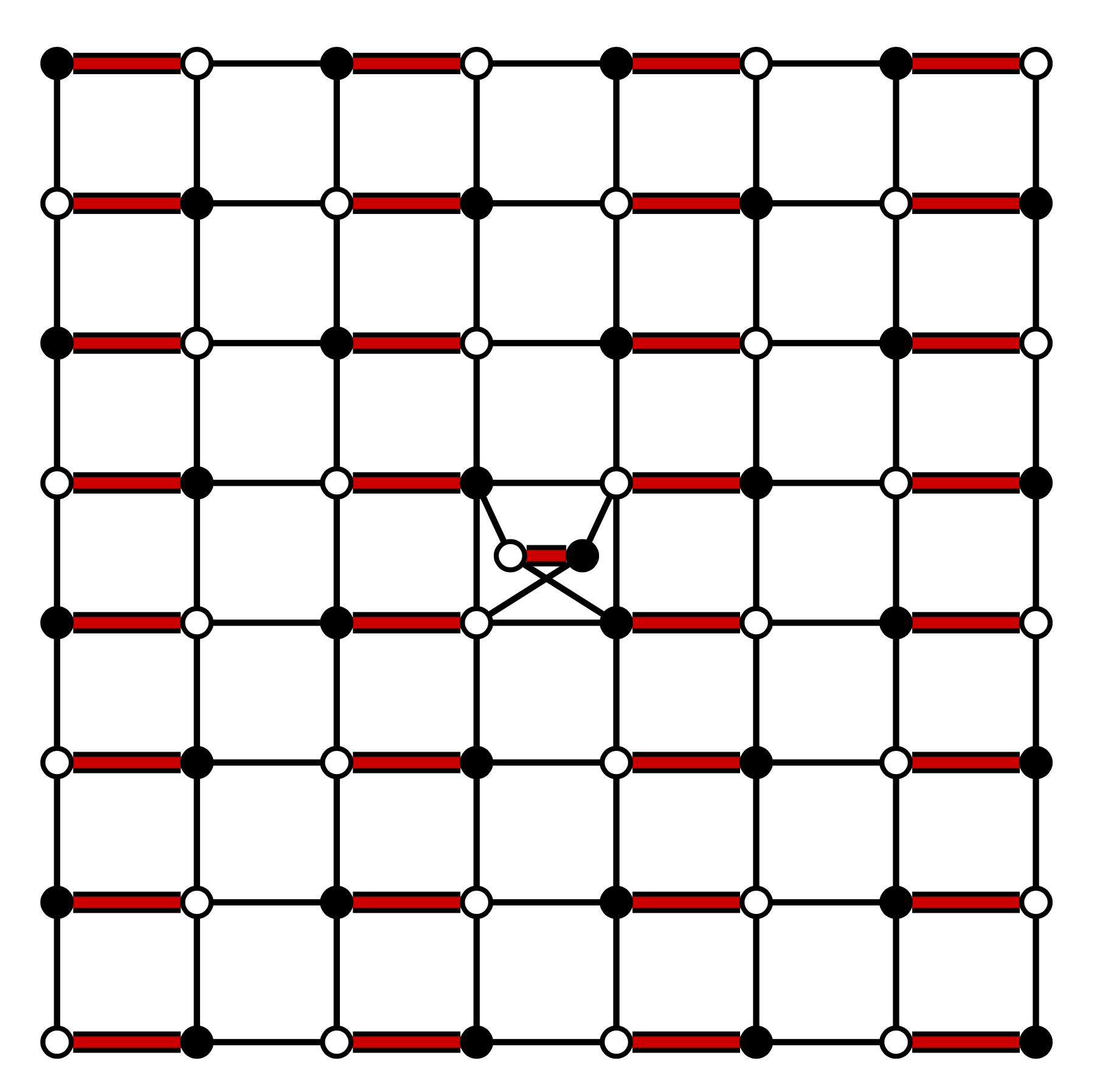}}}
	\caption{The single-crossing matching grid of order $4.$}
	\label{fig_singlecrossingmatchinggrid}
\end{figure}

\begin{theorem}\label{thm_matchingmainthm}
	There exists a function $h_1\colon \mathbb{N}\rightarrow\mathbb{N}$ such that for every $H\in\mathfrak{S},$ every brace $B$ that excludes $H$ as a matching minor is either Pfaffian or satisfies $\pmw{B}\leq h_1(|H|).$
\end{theorem}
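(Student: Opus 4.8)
I would prove the contrapositive: for a fixed $H\in\mathfrak{S}$, produce a bound $h_1(|H|)$ such that every brace $B$ which is \emph{not} Pfaffian and has $\pmw{B}>h_1(|H|)$ contains $H$ as a matching minor. There are only finitely many graphs on at most $|H|$ vertices, and by definition each member of $\mathfrak{S}$ is a matching minor of a single-crossing matching grid of some finite order; hence there is a function $h_0$ such that every member of $\mathfrak{S}$ on at most $k$ vertices is already a matching minor of the single-crossing matching grid of order $h_0(k)$, which I denote $\svmg_{h_0(k)}$. By transitivity of the matching-minor relation it then suffices to find $\svmg_{t}$ as a matching minor of $B$, where $t:=h_0(|H|)$; fix such a $B$ and such a $t$. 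Write $\Gamma_t$ for the matching grid of order $t$, that is, $\svmg_t$ without its single crossing.

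\medskip
\noindent\textbf{Step 1 (a huge matching grid and a crossing obstruction).}
The matching grid $\Gamma_r$ of order $r$ is planar, bipartite and matching covered, so the excluded-planar-matching-minor theorem of Giannopoulou, Kreutzer and Wiederrecht \cite{giannopoulou2021excluding} bounds the perfect matching width of every brace excluding $\Gamma_r$ by a function of $r$. Choosing $h_1(|H|)$ large enough, I may therefore assume $B$ contains $\Gamma_r$ as a matching minor for an $r$ as large as desired relative to $t$. At the same time, since $B$ is not Pfaffian, Little's theorem \cite{little1975characterization} yields a $K_{3,3}$ matching minor of $B$, which will serve as a ``crossing obstruction''.

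\medskip
\noindent\textbf{Step 2 (merging the obstruction with the grid).}
The core of the argument is a flat-wall / two-disjoint-paths dichotomy for matching minors in braces. I would obtain it by passing to the digraph $D(B,M)$ associated with $B$ and one of its perfect matchings $M$ — the edges of $M$ oriented one way, all other edges the opposite way — under which matching minors of $B$ correspond to butterfly/topological minors of $D(B,M)$, matching grids correspond to directed grids, and the directed excluded-grid and flat-wall machinery exploited in \cite{giannopoulou2021excluding} becomes available. The dichotomy I aim for: a brace carrying $\Gamma_r$ as a matching minor either contains $\Gamma_t$ together with a \emph{cross} over it — a vertex-disjoint pair of paths, alternating with respect to some perfect matching, whose four endpoints interleave along the boundary of the grid — or it has a \emph{flat} matching subgrid of order $\Omega(r)$, attached to the rest of $B$ through a separation of bounded order and a bounded ``apical'' part. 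In the first case, routing the cross along $\Gamma_t$ and applying bicontractions contracts the whole configuration onto $\svmg_{t}$, exhibiting $\svmg_{t}$, and hence $H$, as a matching minor of $B$ and contradicting the choice of $B$. In the second case, the high connectivity of braces forces everything outside the grid into the bounded apical part; if that part is nonempty, an apex vertex together with the large grid already yields a cross over a subgrid of order at least $t$, hence $\svmg_t$ and a contradiction, whereas if it is empty, the flat subgrid together with brace-ness forces $B$ to be Pfaffian by the structural description of Pfaffian braces behind the solution of P\'olya's Permanent Problem \cite{robertson1999permanents,mccuaig2004polya}, again contradicting our assumption. As every case contradicts ``$B$ excludes $H$'', we conclude that $H$ is a matching minor of $B$.

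\medskip
\noindent\textbf{Main obstacle.}
All the work is in Step 2: establishing the flat-wall dichotomy in the matching-minor world and, within the first alternative, carrying out the routing of the crossing obstruction onto the matching grid. The routing must respect three constraints at once — the transporting paths must be pairwise vertex-disjoint, they must alternate between the two colour classes with respect to a fixed perfect matching, and, crucially, the remainder of $B$ after deleting everything used must still have a perfect matching, so that the resulting minor is genuinely a \emph{matching} minor and not merely a topological one. This last (conformality) requirement is what makes the step strictly harder than a classical linkage argument, and it calls for a perfect-matching linkage theorem in braces together with careful tangle- and conformality-bookkeeping. Granting all this, one sets $h_1(|H|):=h\bigl(h_0(|H|)\bigr)$, where $h$ is the width bound from Step 1 for the grid order the dichotomy requires, and the proof is complete.
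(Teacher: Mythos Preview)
Your high-level strategy is right: reduce to finding the single-crossing matching grid of a bounded order inside a brace with large perfect matching width, use the matching grid theorem to produce a huge grid, and then argue a dichotomy ``cross over the grid vs.\ Pfaffian''. The reduction via $h_0$ is fine, and the use of \cite{hatzel2019cyclewidth} (not \cite{giannopoulou2021excluding}, though the two are closely related) to obtain the grid is exactly what the paper does.

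The genuine gap is in Step~2, specifically the sentence ``the high connectivity of braces forces everything outside the grid into the bounded apical part''. This is false, and the rest of your flat-wall branch rests on it. A brace is $2$-extendable; that gives you alternating-path connectivity between colour classes, but it does not force one side of a bounded-order separation to be small. Any large planar brace (for instance the cylindrical matching grid itself) already contains a flat subgrid with a large outside. So you cannot hope to absorb the outside into a bounded apex set; you have to analyse it. Relatedly, there are no apices in this story at all: the single-crossing setting is precisely the regime in which the classical flat-wall theorem needs no apex deletion, and the same is true here.

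What the paper does instead is treat the inside and the outside of the grid \emph{symmetrically}. After showing that any ``long jump'' across the tripartition of the cylindrical grid already yields the single-crossing matching grid (so such jumps may be assumed absent), two consecutive concentric cycles on each side of the middle third genuinely separate $B$. This creates two ``reinforced societies'', one looking inward and one looking outward. The paper's key new tool is a matching-theoretic societal Two-Paths theorem (\cref{thm_societytwopaths}): each reinforced society either carries a conformal cross over its boundary cycle (and then the cross together with the remaining grid produces the single-crossing matching grid), or it has a vortex-free matching rendition in the disk. If both societies are matching flat, the two disk renditions glue along the shared middle cycle to give a vortex-free extended sphere decomposition of $B$; since $B$ is a brace there are no big vertices, so $B$ is obtained from planar braces by trisums and is Pfaffian by \cite{robertson1999permanents,mccuaig2004polya}. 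The digraph detour via $D(B,M)$ is not needed and is not taken; the Two-Paths theorem, and the notions of extended $\Sigma$-decomposition, matching rendition and reinforced society that support it, are developed directly in the bipartite matching setting, bootstrapping from the $4$-cycle cross/$K_{3,3}$ equivalence of \cite{giannopoulou2021two}.
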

Here the term $\mathsf{pmw}$ refers to \emph{perfect matching width}, a treewidth-like parameter introduced by Norin \cite{norine2005matching} as a tool for the study of matching minors.
We postpone the definition of perfect matching width to \cref{sec_perfectmatchingwidth}.
This result is parametrically tight since every single-crossing matching grid has a non-Pfaffian brace with large perfect matching width.

Our proofs are constructive and imply an algorithm that can find a decomposition of every bipartite matching covered graph $B$ that excludes a single-crossing matching minor $H$ into, first its braces, then, second, each brace of small perfect matching width into a small width perfect matching decomposition, in polynomial time.
By combining dynamic programming on the parts of small perfect matching width with the concept of \textsl{Pfaffian orientations} on the Pfaffian braces, we then obtain our algorithmic main result:
\begin{theorem}\label{thm_algomainthm}
There exist a function $h_2\colon \mathbb{N}\rightarrow{\mathbb{N}}$ and an algorithm that, for every $H\in\mathfrak{S}$ and every bipartite graph $B$ with a perfect matching that does not contain $H$ as a matching minor, outputs the number of perfect matchings in $B,$ and thus the permanent of its biadjacency matrix, in time $O(\Abs{B}^{h_2(|H|)}).$
\end{theorem}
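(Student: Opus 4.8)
The plan is to handle $B$ in three nested layers --- first its matching covered components, then the braces of each component, then each brace individually --- using the dichotomy of \cref{thm_matchingmainthm} to dispatch every brace to one of two subroutines. To begin, delete from $B$ every edge that lies in no perfect matching; this is done in polynomial time by standard matching theory, it leaves the number of perfect matchings unchanged, and every connected component of the result is a bipartite matching covered graph. Since any matching minor of such a component is also a matching minor of $B$, each component again excludes $H$, and the number of perfect matchings of $B$ is the product over the components. Hence it suffices to count the perfect matchings of a single bipartite matching covered $H$-matching-minor-free graph $D$. Compute a tight cut decomposition of $D$: by Lov\'asz's theorem this produces braces $B_1,\dots,B_k$ obtained from $D$ by repeated tight cut contractions, with $k$ and $\sum_i\Abs{B_i}$ polynomial in $\Abs{D}$; this is precisely the brace-finding stage of the constructive decomposition algorithm guaranteed by our structural results. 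Since every matching minor of a brace arising in a tight cut decomposition is a matching minor of the decomposed graph, each $B_i$ excludes $H$ as a matching minor, and by \cref{thm_splicegeneratingfunctions} the (weighted) perfect matching generating function of $D$ --- in particular the plain number of perfect matchings --- is assembled in time polynomial in $\Abs{D}$ from the generating functions of $B_1,\dots,B_k$. It remains to compute, for each brace, the data fed to \cref{thm_splicegeneratingfunctions}.

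Fix a brace $B_i$ and invoke \cref{thm_matchingmainthm}. First test whether $B_i$ is Pfaffian; by the algorithmic solution of P\'olya's Permanent Problem \cite{robertson1999permanents,mccuaig2004polya} this test runs in polynomial time and, when successful, also returns a Pfaffian orientation. In that case the number of perfect matchings of $B_i$ equals the absolute value of the determinant of the correspondingly signed biadjacency matrix, and the refined counts required by \cref{thm_splicegeneratingfunctions} are computed from the same signed matrix and its submatrices; all of these are obtained by fraction-free Gaussian elimination in polynomial time.

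If $B_i$ is not Pfaffian, then \cref{thm_matchingmainthm} forces $\pmw{B_i}\le h_1(|H|)$, and the second stage of our constructive decomposition algorithm produces in polynomial time a perfect matching decomposition of $B_i$ of width $O(h_1(|H|))$. Along it we run a dynamic program that sweeps the decomposition tree from its leaves inward and maintains, at each node with associated cut, a table indexed by the boundary patterns of a matching on the already-processed side, storing the number (or weight) of extensions of each pattern; the number of patterns is bounded by a function of the width, each merge is a convolution of polynomial size, and the program returns the generating function of $B_i$ in time $\Abs{B_i}^{O(h_1(|H|))}$. Feeding all the $B_i$-generating functions into \cref{thm_splicegeneratingfunctions} and multiplying over the components from the first paragraph yields the number of perfect matchings of $B$, hence the permanent of its biadjacency matrix. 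Every step is polynomial except the per-brace dynamic programs, whose total cost $\sum_i\Abs{B_i}^{O(h_1(|H|))}$ is $O(\Abs{B}^{h_2(|H|)})$ for a suitable $h_2$ once the hidden polynomial overhead and constants are absorbed.

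The main obstacle is the dynamic program for non-Pfaffian braces --- equivalently, establishing that counting perfect matchings lies in $\mathsf{XP}$ when parameterised by perfect matching width, while outputting exactly the data \cref{thm_splicegeneratingfunctions} consumes. One must isolate a notion of state along a perfect matching decomposition that is bounded in size by a function of the width, that determines the number of ways each side of every cut extends to a perfect matching, and that composes correctly under the decomposition's merge operation; since perfect matching decompositions are organised around the matching structure rather than around vertex separators, pinning down these states and their merge --- and proving that they count, not merely detect, perfect matchings --- is the technical core. A lesser point requiring care is verifying that the reductions to components and to braces preserve $H$-matching-minor-freeness and that \cref{thm_splicegeneratingfunctions} recombines the per-brace generating functions within the stated time bound.
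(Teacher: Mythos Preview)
Your proposal is correct and follows essentially the same route as the paper: reduce to matching covered pieces, then to braces via \cref{thm_splicegeneratingfunctions}, apply the dichotomy of \cref{thm_matchingmainthm} to each brace, and dispatch to the Pfaffian algorithm or a dynamic program on a bounded-width perfect matching decomposition. The paper packages the interface between braces slightly differently---it works throughout with the labelled generating function $\GenerateMatchings{\cdot,\mathbf{p}}$ and the function $\mathsf{VertexGen}$, so that peeling off a brace amounts to relabelling edges rather than recombining separately computed counts---and it cites the approximation algorithm for perfect matching width and the DP lemma as black boxes rather than sketching the state space; but the architecture and the identification of the DP as the technical core are the same.
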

In light of \cref{thm_algomainthm} one might feel compelled to believe that counting perfect matchings is tractable on \textsl{all} proper matching minor-closed classes of bipartite graphs.
To complement the above algorithmic result we show that this is not the case.
\begin{theorem}\label{thm_K5hardness}
	Counting perfect matchings is $\#\mathsf{P}$-hard on the class of bipartite $K_{5,5}$-matching-minor-free graphs with perfect matchings.
\end{theorem}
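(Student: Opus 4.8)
The plan is to establish $\#\mathsf{P}$-hardness by a parsimonious (or weight-preserving) reduction from the permanent of an arbitrary square $(0,1)$-matrix, equivalently from counting perfect matchings in an arbitrary bipartite graph $G$ with a perfect matching. The key observation to exploit is that, while $K_{3,3}$ as a matching minor is extremely restrictive, $K_{5,5}$ is not: as hinted in the footnote after Vazirani's result, \emph{every} graph is a minor of some bipartite graph excluding $K_{3,3}$ as a matching minor, so in the matching-minor world even forbidding fairly large bipartite patterns leaves room for gadget constructions. Concretely, I would first understand which bipartite graphs \emph{do} contain $K_{5,5}$ as a matching minor, and in particular argue that the standard gadgets used in Valiant-style hardness proofs for the permanent — vertex gadgets, edge gadgets, and "XOR"/crossover gadgets that locally simulate arbitrary bipartite adjacency — can be made small and "matching-minor-thin" enough that the resulting graph does not contain $K_{5,5}$ as a matching minor.

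The main steps, in order, would be: (i) Fix a target bipartite graph $G$ on $n+n$ vertices whose perfect matchings we wish to count. (ii) Replace each edge of $G$ by a fixed constant-size bipartite gadget with a prescribed number of internal perfect matching patterns, chosen so that counting perfect matchings of the new graph $G'$ equals (a simple function of) the number of perfect matchings of $G$ — this is where I would reuse the existing combinatorial toolbox for reducing counting perfect matchings to the $(0,1)$-permanent. (iii) Prove the structural heart: $G'$ excludes $K_{5,5}$ as a matching minor. For this I would use the characterisation of matching minors via subgraphs $H'$ with $G'-H'$ having a perfect matching, followed by bicontractions; the goal is to show that any such $H'$, after bicontractions, cannot realise the ten branch sets with the required conforming matching structure of $K_{5,5}$. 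The natural way is a "local sparsity" or separator argument: the gadgets are glued along small conforming cuts (ideally tight cuts or near-tight cuts), and a $K_{5,5}$ matching model is "$4$-connected" in a matching sense (each branch vertex must reach five others by internally disjoint $M$-conforming paths), so it cannot be squeezed through the constant-size interfaces between gadgets; within a single gadget, $K_{5,5}$ simply does not fit by a finite case check. (iv) Conclude that counting perfect matchings is $\#\mathsf{P}$-hard on bipartite $K_{5,5}$-matching-minor-free graphs, since Valiant's theorem gives $\#\mathsf{P}$-hardness of the source problem and the reduction is polynomial-time and parsimonious (up to the explicit gadget factor).

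The hard part will be step (iii): controlling matching minors is subtler than controlling ordinary minors, because bicontractions can merge far-apart parts of the graph and a "conforming" subgraph need not respect the gadget decomposition. The delicate point is that the crossover/planarity-breaking gadget — the one that lets us simulate the non-planar bipartite graph $G$ — must itself be verified to be $K_{5,5}$-matching-minor-free and to compose well, i.e. gluing many copies of it along their interfaces must not create a $K_{5,5}$ matching minor spanning several copies. I would handle this by choosing the interfaces to be tight cuts so that, by the Tight Cut Lemma / the fact that matching minors interact cleanly with the brace decomposition (cf. \cref{thm_matchingmainthm} and the surrounding discussion of tight cuts and braces), any $K_{5,5}$ matching model must live inside a single brace of $G'$; then it suffices to bound the braces of $G'$, which are determined by the (constant-size) gadgets and are individually easily seen to exclude $K_{5,5}$. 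A secondary obstacle is bookkeeping the exact multiplicative constant contributed by the gadgets so that the reduction is genuinely parsimonious-up-to-scaling (hence preserves $\#\mathsf{P}$-hardness), but this is routine once the gadgets are pinned down.
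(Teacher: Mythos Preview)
Your step (iii) is self-defeating. You propose to make every gadget interface a tight cut so that each brace of $G'$ is one of the constant-size gadgets, and then exclude $K_{5,5}$ by a finite check per brace. But if all braces of $G'$ have bounded size, then counting perfect matchings in $G'$ is \emph{polynomial-time}: the generating function of perfect matchings decomposes along the tight cut decomposition (this is precisely \cref{thm_splicegeneratingfunctions}), so one computes it brace by brace in constant time each. Thus any reduction whose image has bounded-size braces cannot be a hardness reduction; the very feature you use to certify $K_{5,5}$-freeness trivialises the problem. The observation that a $K_{5,5}$ matching model must sit inside a single brace is correct, but it forces you to produce \emph{large} braces that are nevertheless $K_{5,5}$-matching-minor-free, and your proposal gives no mechanism for that.

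The paper proceeds quite differently and never controls the brace structure of the target graph. It introduces the \emph{shallow vortex matching grids} (a cylindrical matching grid with one ring of pairwise-crossing edges on the outermost cycle) and, adapting the Curticapean--Xia reduction, shows that for any bipartite $B$ one can push all crossings of a straight-line drawing to the outer boundary, replace the auxiliary crossings by bipartite planar sign-crossing gadgets, and obtain a bipartite $B_{\mathrm{svmm}}$ with the same (signed, then unsigned via a Turing reduction) perfect-matching count that is a matching minor of some shallow vortex matching grid. This yields $\#\mathsf{P}$-hardness on the class of shallow vortex matching minors. Separately, a direct structural argument (analysing how many vertices of each colour class of a putative $K_{5,5}$ model can lie in the planar interior versus the two outer cycles, and deriving a four-disjoint-paths-through-a-size-three-separator contradiction) shows that $K_{5,5}$ is \emph{not} a shallow vortex matching minor. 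Hence every shallow vortex matching grid is $K_{5,5}$-matching-minor-free, and hardness transfers. The target graphs here are single large braces; their $K_{5,5}$-freeness comes from a global topological constraint (planar plus a thin vortex), not from any local gadget decomposition.
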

In fact, we show in \cref{@interconnected} that the tractability of counting perfect matchings stops at a point which is similar to the setting of ordinary minors \cite{CurticapeanXia22}.
That is, we introduce a notion of \textsl{vortices} appropriate for the setting of matching minors in bipartite graphs and identify these as obstacles for the tractability of the permanent.
We stress here that, despite the similarity to the setting of $H$-minor-free graphs, our result is incomparable to those of Curticapean and Xia    \cite{CurticapeanXia22} and the recent algorithmic results of Thilikos and Wiederrecht  in \cite{ThilikosW22killi1}.

In the remainder of this introduction we give an in-depth discussion on the connection between matching minors in bipartite graphs and the tractability of the permanent itself as well as a clear distinction between matching minors and ordinary minors.
That is, for every $t\in \N,$ there exists a brace with perfect matching width two and a bipartite Pfaffian  graph such that both   contain $K_t$ as a minor.
In \cref{subsec_techniques} we provide a high-level explanation of the different techniques used to obtain our results.

\subsection{Permanent and matching minors}\label{subsec_history}

Let us start by recalling the formal definition of the \emph{determinant} of an  $(n\times n)$-matrix $A=(a_{i,j}),$
\begin{eqnarray*}
	\mathsf{det}(A)=\sum_{\sigma\in S_{n}}\Big(\mathsf{sgn}(\sigma)\cdot \prod_{i=1}^{n}a_{i,\sigma(i)}\Big),
\end{eqnarray*}
where $\mathsf{sgn}(\sigma)$ is the sign of the permutation $\sigma\in S_{n}.$
It is apparent that the difference between the determinant and the permanent (define in \cref{def_perm}) is the additional factor of the sign of the permutation $\sigma.$
This observation is exactly the root of P\'olya's idea of changing the signs of some entries of a matrix.
The strategy is to change the signs in such a way that they cancel out the signs of the permutations in the definition of the determinant and thereby produce the permanent of the original matrix.

Let us focus on the matrix operations used by Little in the characterization of the $(0,1)$-matrices on which P\'olya's strategy can be applied \cite{little1975characterization}.

Consider the following two operations on an $A=(a_{i,j})\in\Set{0,1}^{n\times n}$ with $\mathsf{perm}(A)\neq 0.$
A \emph{cross deletion} is the simultaneous removal of some row \textsl{and} some column of $A,$ whose shared entry is equal to one, while an \emph{entry deletion} is the operation that replaces some non-zero entry of $A$ with a zero.
A $(0,1)$-matrix $A'$ is said to be a \emph{conformal submatrix} of $A$ if there exists a sequence $A_1,\ldots,A_h$ of $(0,1)$ matrices with $A_0=A,$ $A_h=A',$ $\mathsf{perm}(A_i)\neq 0$ for all $i\in[n],$ and for every $i\in[n-1],$ the matrix $A_{i+1}$ is obtained from $A_i$ by a cross deletion or an entry deletion.
A class $\mathcal{A}$ of $(0,1)$-matrices is said to be \emph{hereditary} if for every $A\in\mathcal{A},$ $\mathcal{A}$ contains all conformal submatrices of $A.$

A {\em cross deletion} of the row $i$ and the column $j$ can be seen as the restriction of the possible choices for $\sigma\in S_n$ where $\sigma(i)=j,$ while the entry deletion of the entry $a_{i,j}$ is the exclusion of all $\sigma\in S_n$ with $\sigma(i)=j$ in the computation of the permanent.
By the requirement that performing either operation does not drop the permanent to zero, both of these operations restrict the formula for $\mathsf{perm}(A)$ to a proper, but non-trivial, subsum of the same expression. 

Now suppose our matrix $A$ has a row, say row $i,$ (or column) with exactly two non-zero entries, say $a_{i,i_1}$ and $a_{i,i_2},$ $i_1<i_2.$
Since the permanent is invariant under the permutation of rows and columns, we may assume $i=i_2=n$ and $i_1=n-1.$
Then we can partition the set $Y\subseteq S_n$ of permutations $\sigma$ with $A_{\sigma}\neq 0$ into the sets $Y_1,$ $Y_2,$ and $Y_3$ such that:
For $j\in[2],$ $Y_j$ contains exactly those $\sigma$ with $\sigma(n)=i_j$ and the entry at $i_j$ of the row $\sigma^{-1}(i_{3-j})$ is $0$ and $Y_3$ contains all $σ$  for which the entry at $i_{j}=\sigma(n)$ of the row $\sigma^{-1}(i_{3-j})$ is equal to $1.$
Notice that $|Y_{3}|$ is always even since for every $\sigma\in Y_3$ with $i_j=\sigma(n)$ there exists a $\sigma'\in Y_3$ where $\sigma'(n)=i_{3-j},$ $\sigma'(\sigma^{-1}(i_{3-j}))=i_j,$ and $\sigma(h)=\sigma(h')$ for all $h\in[n-1]\setminus\{\sigma^{-1}(i_{3-j})\}.$
Moreover, let $Y'\subseteq S_{n-1}$ be the collection of all $\sigma'\in S_{n-1}$ such that there exist $\sigma\in Y$ and $j\in[n-1]$ with
$$ \sigma'(k)=\sigma(k) \text{ for all }k\in[n-1]\setminus\Set{j},~\sigma(j)\in\Set{n-1,n}\text{ and }\sigma'(j)=n-1.$$
It follows that $\Abs{Y'}=\Abs{Y_1}+\Abs{Y_2}+\frac{1}{2}\Abs{Y_3}.$
Let us define a new column by setting $a'_{j,i_1}\coloneqq\max\Set{a_{j,i_1},a_{j,i_2}}$ for every $j\in[n].$
Now let $A'$ be the matrix obtained from $A$ by replacing the column $i_1$ with $(a'_{j,i_1})_{j\in[n]}$ and then cross deleting row $i$ and column $i_2.$
We call the above operation a \emph{bicontraction} of the matrix $A$ at the row $i.$
In case one of the two columns $i_1$ and $i_2$ also has exactly two non-zero entries, we call the corresponding bicontraction \emph{elementary}.
Notice that in this case we have $Y_3=\emptyset$ and thus an elementary bicontraction does not change the permanent.
A class $\mathcal{A}$ of $(0,1)$-matrices is said to be \emph{closed} if it is hereditary, and for every $A\in\mathcal{A},$ $\mathcal{A}$ contains all matrices obtainable from $A$ by bicontractions.

\paragraph{From matrices to matching minors.}

Building on the previous discussion, we now introduce the concept of matchings minors as the graph theoretic analogue of the above matrix operations.

A set $X\subseteq\V{G}$ is \emph{conformal} if $G-X$ has a perfect matching and a subgraph $H$ of $G$ is \emph{conformal} if $V(H)$ is conformal.

Any $(0,1)$-matrix $A=(a_{i,j})$ can be seen as the biadjacency matrix of a bipartite graph $B(A).$
Let $A=(a_{i,j})\in\Set{0,1}^{n\times n}$ be a matrix for some positive $n$ and let $\sigma\in S_n.$
Consider the term $A_{\sigma}\coloneqq \prod_{i=1}^na_{i,\sigma(i)}$ and observe that $A_{\sigma}=1$ if and only if for all $i\in[n],$ we have $a_{i,\sigma(i)}=1.$
Since each $a_{i,j}$ corresponds to an edge of $B(A),$ we may associate with $\sigma$ the set $F_{\sigma}\coloneqq \CondSet{\Set{u_i,v_{\sigma(i)}}}{i\in[n]\text{ and }a_{i,\sigma(i)}=1}.$
It follows that $A_{\sigma}=1$ if and only if $F_{\sigma}$ is a perfect matching of $B(A).$
Hence, $\mathsf{perm}(A)$ equals the total number of perfect matchings in $B(A).$
Since determining whether a bipartite graph has a perfect matching is polynomial time solvable, deciding whether $\mathsf{perm}(A)=0$ is as well.
Moreover, it follows that a bipartite graph $H$ with perfect matching and biadjacency matrix $A_H$ is a conformal subgraph of a bipartite graph $B$ with a perfect matching and biadjacency matrix $A_B$ if and only if $A_H$ is a conformal submatrix of $A_B.$

Notice, that a row (or column) of $A_H$ with exactly two non-zero entries corresponds to a vertex $v$ of $H$ with exactly two neighbours.
Hence, $v$ is incident with exactly two edges $e_1$ and $e_2$ in $H.$
Bicontracting this row (column) in $A_H$ corresponds to the operation of contracting the edges $e_1$ and $e_2$ simultaneously and then removing all resulting loops and parallel edges.
We call this operation a \emph{bicontraction}.
\begin{definition}[Matching minor]
Let $G$ be a graph with a perfect matching.
A graph $H$ is a \emph{matching minor} of $G$ if it can be obtained from a conformal subgraph of $G$ by bicontractions.	
\end{definition}
Given bipartite graphs $B$ and $H$ with perfect matchings and biadjacency matrices $A_B,$ $A_H$ respectively, then $H$ is a matching minor of $B$ \textsl{if and only if} $A_H$ can be obtained from a conformal submatrix of $A_B$ by bicontractions.
Hence a class $\mathcal{B}$ of bipartite graphs with perfect matchings is closed under matching minors if and only if $\{ A_B \mid B\in\mathcal{B} \}$ is a complete class of $(0,1)$-matrices.
Hence, the theory of matching minors for bipartite graphs gives rise to a structural theory of square $(0,1)$-matrices and their permanents.

\paragraph{Matching minors and (ordinary) minors.}
A natural question which might arise is, \textsl{``To what extent the theory of matching minors is actually different from the theory of Graph Minors  created by R\&S?''}.
To be more precise, is perfect matching width different from treewidth and does the exclusion of $K_{3,3}$ as a matching minor imply also the exclusion of some minor?
In \cite{giannopoulou2019braces}, the authors give an infinite family of braces of perfect matching width two, which contains arbitrary large clique minors.
The second question has a similar outcome.
It is a well known fact in graph minor theory that a grid with many dispersed faces which contain crossings also contains a large clique minor (see for example \cite{kawarabayashi2018new}).
The structure theorem for Pfaffian braces \cite{robertson1999permanents,mccuaig2004polya} says that the identification of planar braces at a four cycle is Pfaffian.
By doing so, it is possible to construct a Pfaffian brace that contains a grid with many dispersed crosses as a minor.
Hence, for every $t$ there exists a Pfaffian brace that contains $K_t$ as a minor.
This means that the results for matching minor-closed classes of bipartite graphs with perfect matchings as presented here are \textsl{incomparable}  with those for $H$-minor-free graphs.

\subsection{Our techniques}\label{subsec_techniques}

The cornerstone of our proofs is a ``societal'' version of the matching theoretic \textsl{Two Paths Theorem} as introduced in \cite{giannopoulou2021two}.
The \textsl{society} version of the Two Paths Theorem can be seen as the ``secret weapon'' behind the new proofs for the Flat Wall Theorem \cite{kawarabayashi2018new} and the new proof of the R\&S GMST for $H$-minor-free graphs \cite{kawarabayashi2020quickly} by Kawarabayashi, Thomas, and Wollan.
Roughly speaking, this theorem gives a characterisation of the  circumstances under which a given set of vertices of some graph $G$ can be forced to be embedded, in a given cyclic order,  on the same face of a planar graph that may be obtained from $G$ by resolving clique sums of order at most three.
The cases where this is not possible are certified by the \textsl{Two Path Theorem} as those in which one can link four vertices of the aforementioned ``society'' by two vertex disjoint paths which cross with respect to the given order \cite{jung1970verallgemeinerung,seymour1980disjoint,shiloach1980polynomial,thomassen19802,robertson1990graph}.
This tight topological link between a crossing and an (almost) planar embedding can be seen as the key enabling tool for the topological part of the Graph Minors series.

As such, for the development of a (topological) theory of matching minors, a matching theoretic counterpart of this result is more than desirable.

We introduce matching theoretic counterparts of the tools introduced in \cite{kawarabayashi2020quickly} for the study of ``almost embeddings'' and ``societies''.
In particular, we provide a ``society'' based characterisation that produces either a cross consisting of two alternating paths, which interacts nicely with the underlying matching structure or defines an area for which a fixed order of some prescribed vertices can be realised while the area itself appears ``flat''.
The notion of flatness is a well-studied concept from the Graph Minors series.
As part of our contribution in this paper, we give a precise and versatile formalisation of a matching theoretic analogue of flatness.

The original matching theoretic \textsl{Two Paths Theorem} from \cite{giannopoulou2021two} has two main features which make it difficult to be applied in a less restricted setting:
The theorem is only applicable to braces, and  it requires a cycle of length four.
We overcome both problems by identifying a key feature of non-trivial tight cuts.
That is, they cannot pierce through a collection of pairwise-disjoint conformal cycles.
This observation gives rise to the idea to equip a matching theoretic society with a small grid-like structure that provides the necessary infrastructure to fully control every appearance of $K_{3,3}$ as a matching minor (we stress that $K_{3,3}$ turns out to be the actual obstruction to the absence of crossing pairs of alternating paths).

The original proof of the structure theorem for single-crossing-minor-free graphs by R\&S \cite{robertson1993excluding} is relatively short and not exactly self-contained.
R\&S  employ several deep and technical results from the Graph Minors series which have since been revised in more modern works and replaced by much more digestible techniques.
However, a proof of the classic result for single-crossing-minor-free graphs using these modern techniques does not exist in the literature so far.
To illustrate our proof for matching minors in a friendlier setting, and as part of the development of our proof strategy, we provide a complete and almost self-contained\footnote{With the exceptions being the Two Path Theorem and the Grid Theorem.} proof of the original result in terms of societies.
This approach allows us to highlight the following key features of the proof.
\begin{enumerate}
	\item A reduction to cases with more robust and sufficient connectivity to trivialise the structure arising from small clique sums (or non-trivial tight cuts in the matching theoretic setting).
	\item An identification of several, pairwise equivalent, universal obstructions that arise from different cases in the analysis of the case of large treewidth (or perfect matching width).
	\item And finally, the application of the grid theorem (a matching theoretic version was proven in \cite{hatzel2019cyclewidth}) to obtain a well-structured basis for the application of the Two Paths Theorem.
\end{enumerate}
Essentially, step iii) finds a large grid and then, utilising step ii), determines that this grid must contain a large, extremely well-behaved area.
This can be seen as a stronger version of the Flat Wall Theorem for single-crossing-minor-free graphs.
The Two Paths Theorem (or its matching theoretic analogue) allows to analyse the structure ``outside'' the resulting planar wall.
If this structure contains a cross, we have found our single-crossing minor, otherwise the ``outside'' must also be planar (or at least be ``well-behaved'' in the matching setting).

Finally, to obtain our algorithmic results, we combine well known dynamic programming techniques with specialised methods to deal with the respective problems on graphs that  may have large width but behave well in a topological sense.

When designing a dynamic programming algorithm for tree decompositions where some bags have topological properties rather than being of bounded size one often runs into the problem that the number of children of corresponding nodes cannot be bounded.
In particular this can mean that the size of the union of all interfaces (or adhesions) towards the children cannot be bounded as well.
This is particularly true for the more sensitive nature of structural decompositions such as the one arising from the structure theorem of single-crossing-minor-free graphs.
In our case we encounter particular difficulties in the case of bipartite matching covered graphs.
To deal with such cases one usually employs specialised gadgets.
The purpose of these gadgets is to represent the information from the subtree below in a way that behaves well with respect to the methods used to handle the case of large bags.
To overcome this problem in the case of counting perfect matchings, instead of \cref{thm_algomainthm} we prove a much stronger and more general result.
That is, we allow the edges of our graph to be labelled with polynomials.
These polynomials will then be used to store partial generating functions for the perfect matchings within subgraphs in order to  encode the necessary information directly on these labels (a similar trick has been used in \cite{ThilikosW22killi1}).

\subsection{Organisation}\label{subsec_oragnisation}

The remainder of this paper is organised as follows.
In \cref{sec_excludesinglecrossing} we introduce the key notions developed by Kawarabayashi, Thomas, and Wollan for their revision of the proof for the GMST.
We highlight the undirected version of the Two Paths Theorem in its society form and present a new and short proof of the structure theorem for single-crossing-minor-free graphs.
\Cref{sec_background} is dedicated to the introduction of the matching theoretic concepts deployed in this paper together with a short discussion of the structural aspects of $K_{3,3}$-matching-minor-free graphs.
We then move on to \cref{sec_rendition} where we introduce the matching theoretic analogues of ``partial'' and ``almost'' embeddings.
Moreover, this section contains a proof of a matching theoretic society version of the Two Paths Theorem.
The concepts and results of this section are held very general to allow for easy access in future papers.
These concepts are then utilised for the proof of \cref{thm_matchingmainthm} in \cref{sec_proof_struct}.
We present the proofs and techniques for our algorithmic main results in Subsection \ref{@desfavorecer} in the case for counting matchings, i.e., the computation of the permanent.
Finally, the \textsf{\#P}-hardness result of \cref{thm_K5hardness} is presented in Subsections \ref{@interconnected} and  \ref{@unanswerable}.
We conclude in \cref{@strengthening} with some conjectures and directions for further research.


\section{Excluding an ordinary single-crossing minor}\label{sec_excludesinglecrossing}

A graph $G$ is said to be \emph{singly crossing} if it can be drawn in the plane with a single crossing.
A graph $H$ is called a \emph{single-crossing minor} if it is a minor of some singly crossing graph.
Please note that a single-crossing minor is not necessarily singly crossing itself.
A first instance of the exclusion of a single-crossing minor was found by Wagner \cite{wagner1937eigenschaft} in form of his characterisation of $K_5$-minor-free graphs.
A very similar description can be found for $K_{3,3}$-minor-free graphs:
Both classes can be, roughly, described as the graphs obtainable from planar graphs -- and a single non-planar exception in form of the M\"obius-ladder with four rungs in the case of $K_5,$ or the $K_5$ itself in the case of $K_{3,3}$ -- by joining these graphs along small clique sums and, possibly, removing some edges of these cliques.
We say that a graph, or a graph class, is \emph{SCM-free} if it minor-excludes some single-crossing minor.
This phenomenon was unified in the \textsl{parametric} characterisation of SCM-free graphs by R\&S \cite{robertson1993excluding}.
Roughly speaking, a graph excludes some single-crossing minor if and only if it can be pieced together from graphs of small treewidth and planar graphs by using small clique sums.
The proof of this theorem, as presented in \cite{robertson1993excluding}, is relatively short and relies heavily on specialised versions of the GMST \cite{robertson2003graph}.

In this section we introduce the more streamlined tool set for dealing with graph minors introduced by Kawarabayashi et al.\@ \cite{kawarabayashi2018new,kawarabayashi2020quickly} in their new proof of the Graph Minors Structure Theorem.
We then provide a new and short proof of the structure theorem for SCM-free graphs utilising this tool set.
This proof acts as a high-level sketch of the proof of our \hyperref[thm_matchingmainthm]{structural main theorem};
In the following sections we generalise the tool set of \cite{kawarabayashi2018new,kawarabayashi2020quickly} to the setting of bipartite graphs with perfect matchings.
Our proof of \cref{thm_matchingmainthm} then follows along the lines of the \textsl{much simpler} proof presented in this section while replacing the concepts used here with their newly introduced  matching theoretic analogues.

\subsection{Reduction to quasi-${4}$-connected graphs}

The first step is to reduce our graph to its quasi-$4$-connected components, a concept introduced by Grohe \cite{grohe2016quasi} as a generalisation of previous decompositions, the most fundamental of which is the decomposition of a graph into its blocks\footnote{Maximal connected subgraphs without a cut-vertex.}.
An interesting fact about these decompositions is that they can be seen to be uniquely determined by the underlying separator structure of the graph \cite{grohe2016tangles,grohe2016quasi,robertson1991graph}.
For an in-depth discussion on this uniqueness see also \cite{carmesin2017canonical}.

Let $G$ be a graph.
A \emph{separation} in $G$ is a tuple $(A,B)$ such that $A\cup B=V(G)$ and there is no edge in $G$ with one endpoint in $A\setminus B$ and the other in $B\setminus A.$
We call $A\cap B$ the \emph{separator} of $(A,B)$ and the \emph{order} of $(A,B)$ is $\Abs{A\cap B}.$
A separation $(A,B)$ is \emph{trivial} if $A\setminus B=\emptyset$ or $B\setminus A=\emptyset.$

For any $k\geq 1$ we say that $G$ is \emph{$k$-connected} if it has at least $k+1$ vertices and every separation of order at most $k-1$ in $G$ is trivial.

A graph $G$ is said to be \emph{quasi-$4$-connected} if it is $3$-connected and for all separations $(A,B)$ of order three either $\Abs{A\setminus B}\leq 1$ or $\Abs{B\setminus A}\leq 1$ holds.

A \emph{tree decomposition} for a graph $G$ is a tuple $(T,\beta)$ where $T$ is a tree and $\beta\colon V(T)\rightarrow 2^{V(G)}$ maps the vertices of $T$ to subsets of $V(G),$ we call the $\beta(t)$ the \emph{bags} of $(T,\beta),$ such that
\begin{enumerate}
	\item $\bigcup_{t\in V(T)}\beta(t)=V(G),$
	\item for every $e\in E(G)$ there exists $t\in V(T)$ with $e\subseteq \beta(t),$ and
	\item for every $v\in V(G)$ the set $\CondSet{t\in V(T)}{v\in\beta(t)}$ induces a subtree of $T.$
\end{enumerate}
The \emph{width} of a tree decomposition is defined to be $\max_{t\in V(T)}\Abs{\beta(t)}-1$ and the \emph{treewidth} of $G,$ denoted by $\tw{G},$ is defined to be the smallest width among all tree decompositions of $G.$
Let $t\in V(T)$ be any vertex and $e=td\in E(T)$ be an edge of $T$ incident with $t.$
We call the set $\beta(t)\cap\beta(d)$ the \emph{adhesion set} of $t$ corresponding to $d.$
We say that $(T,\beta)$ is of \emph{adhesion at most $k$} if every adhesion set of any vertex of $T$ has size at most $k.$
The \emph{torso} of $G$ at the vertex $t$ is the graph obtained from $\InducedSubgraph{G}{\beta(t)}$ by turning every adhesion set of $t$ into a clique.

\begin{proposition}[\cite{grohe2016quasi}]\label{thm_4components}
	Every graph $G$ has a tree decomposition $(T,\beta)$ of adhesion at most $3$ such that for all $t\in V(T)$ the torso of $G$ at $t$ is a minor of $G$ that is either quasi-$4$-connected or isomorphic to a complete graph on at most four vertices.
	Moreover, this decomposition can be found in time $\mathcal{O}(\Abs{V(G)}^3).$
\end{proposition}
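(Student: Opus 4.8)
The plan is to build the tree decomposition by a standard ``greedy'' argument on the collection of low-order separations, following the recursive structure of Grohe's construction \cite{grohe2016quasi} (and, further back, the Robertson--Seymour/Tutte-style decompositions along $3$-separations \cite{robertson1991graph}). First I would dispose of the easy reductions: if $G$ is not connected, or has a cut vertex, or has an order-$2$ separation $(A,B)$ with both sides large, one splits along that separation and recurses, gluing the resulting tree decompositions along a bag equal to the separator made into a clique; each such split strictly decreases $|V(G)|$ and produces adhesion sets of size at most $2<3$, so after finitely many steps every piece is $3$-connected. Thus it suffices to treat a $3$-connected graph $G$ and find a tree decomposition of adhesion at most $3$ whose torsos are minors of $G$ that are each either quasi-$4$-connected or a clique on at most four vertices.

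For the $3$-connected case, the key object is the set $\mathcal{S}$ of order-$3$ separations $(A,B)$ with $|A\setminus B|\ge 2$ and $|B\setminus A|\ge 2$ (the ``non-trivial'' ones, in the quasi-$4$-connectivity sense). If $\mathcal{S}=\emptyset$, then $G$ is already quasi-$4$-connected and the trivial one-bag decomposition works. Otherwise I would pick a separation in $\mathcal{S}$ and split $G$ along it: replace $G$ by the two ``augmented sides'' $G_A := \InducedSubgraph{G}{A}$ together with a (virtual) edge or triangle on the separator $A\cap B$, and symmetrically $G_B$. One must check two things here. First, each augmented side is again $3$-connected (this uses that $G$ was $3$-connected together with $|A\cap B|=3$ and both sides being large — a routine Menger-type argument), so the recursion stays inside the $3$-connected regime. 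Second, and this is where I would be careful, the added triangle on $A\cap B$ is realised as a minor of the original $G$: since $G$ is $3$-connected and $|A\cap B|=3$, one finds three internally disjoint paths in the opposite side linking the three separator vertices pairwise (again Menger), and contracting these yields the triangle, so every torso remains a minor of $G$. One then recurses on $G_A$ and $G_B$, each with strictly fewer vertices, and glues the two tree decompositions along a shared bag equal to $A\cap B$ (a clique of size $3$, hence adhesion $\le 3$).

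The main obstacle — and the reason this is merely a sketch — is \emph{termination together with the guarantee that no torso is itself one of the large members of $\mathcal{S}$}: a naive recursion might re-split the same separation forever, or produce a decomposition that is not laminar/canonical. The standard fix, which I would invoke, is to only ever split along separations that are ``totally nested'' with all previously chosen ones, exploiting the fact that the order-$3$ separations of a $3$-connected graph form a tree-like (laminar up to crossing exchanges) structure — this is exactly the canonical-decomposition machinery of \cite{grohe2016tangles,grohe2016quasi,carmesin2017canonical}, and it is what makes the final decomposition unique. Once laminarity is in hand, termination is immediate (each split reduces a finite potential such as the number of vertices), and at a leaf of the recursion the corresponding side admits no further non-trivial order-$3$ split, i.e.\ is quasi-$4$-connected (or has at most four vertices, giving the $K_{\le 4}$ alternative). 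For the running time $\mathcal{O}(|V(G)|^3)$ I would note that connectivity up to order $3$ and a suitable separation can be found in linear or near-linear time by flow techniques, and that the recursion tree has linearly many nodes, each handled with at most a cubic-time search; I would cite \cite{grohe2016quasi} for the precise bookkeeping rather than reproduce it. This completes the plan.
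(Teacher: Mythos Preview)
The paper does not prove this proposition at all: it is stated with a citation to Grohe \cite{grohe2016quasi} and used as a black box, so there is no ``paper's own proof'' to compare your attempt against.

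As for the content of your sketch, it is a faithful high-level description of the strategy behind Grohe's result (reduce to the $3$-connected case, then decompose along non-trivial $3$-separations, using the laminar/canonical structure of these separations to guarantee termination and well-definedness). You correctly identify the genuine difficulty, namely that naive recursion along $3$-separations need not terminate or stay nested, and you correctly defer to \cite{grohe2016quasi,grohe2016tangles,carmesin2017canonical} for the machinery that resolves it. One small point worth tightening: your claim that each augmented side is again $3$-connected after adding the triangle on the separator is not quite automatic from Menger alone and uses that both sides of the split have at least two vertices outside the separator; Grohe handles this via his specific notion of quasi-$4$-connected components rather than a raw recursion, which is also what makes the $\mathcal{O}(|V(G)|^3)$ bound go through cleanly. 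But since the paper treats the proposition as a citation, your write-up is already more than what the paper provides.
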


Let $G$ be a graph and let $(T,\beta)$ be the tree decomposition of $G$ provided by \cref{thm_4components}, we call the torsos of $G$ at the vertices of $T$ the \emph{quasi-$4$-components} of $G.$

\subsection{Societies and the Two Paths Theorem}\label{subsec_society}

Let $G$ be a graph and let $s_1,s_2,t_1,t_2\in\V{G}.$
The \textsc{Two Disjoint Paths Problem} (\textsc{TDPP}) with \emph{terminals} $s_1,s_2,t_1,t_2$ is the question for the existence of two paths $P_1$ and $P_2$ such that for both $i\in[1,2]$ $P_i$ joins $s_i$ and $t_i$ and $P_1$ and $P_2$ are vertex disjoint.
The characterisation for the \yes-instances of the \textsc{TDPP} known as the \emph{Two Paths Theorem} plays an integral role in structural graph theory.
The statement of the Two Paths Theorem we present here makes use of the concept of so-called ``societies'' which play a focal role in \cite{kawarabayashi2020quickly}.

\begin{definition}[Society]\label{def_society}
	Let $\Omega$ be a cyclic permutation of the elements of some set which we denote by $V(\Omega).$
	A \emph{society} is a pair $(G,\Omega),$ where $G$ is a graph and $\Omega$ is a cyclic permutation with $V(\Omega)\subseteq V(G).$
	A \emph{cross} in a society $(G,\Omega)$ is a pair $(P_1,P_2)$ of disjoint paths\footnote{When we say two paths are disjoint we mean that their vertex sets are disjoint.} in $G$ such that $P_i$ has endpoints $s_i,t_i\in V(\Omega)$ and is otherwise disjoint from $V(\Omega),$ and the vertices $s_1,s_2,t_1,t_2$ occur in $\Omega$ in the order listed.
\end{definition}
Hence, $(G,s_1,s_2,t_1,t_2)$ is a \yes-instance of the \textsc{TDPP} if and only if the society $(G,\Omega),$ where $V(\Omega)=\{s_1,s_2,t_1,t_2\}$ and the vertices occur in $\Omega$ in the order listed, has a cross.

To fully present the Two Paths Theorem we need to introduce some topological concepts as well.

By a \emph{surface} we mean a closed compact $2$-dimensional manifold with or without boundary.
By the classification theorem of surfaces, every surface is homeomorphic to the sphere with $h$ handles and $c$ cross-caps added, and the interior of $d$ disjoint closed disks $\Delta_1,\dots,\Delta_d$ removed, in which case the \emph{Euler genus} of the surface is defined to be $2h+c.$
We call the union of the boundaries of the disks $\Delta_i$ the \emph{boundary} of the surface and each such boundary is a \emph{boundary component} of the surface.

Note that in this paper we are purely interested in the cases where $\Sigma$ is homeomorphic to the sphere or a closed disk. 
However, we give the definitions in the more general setting of \cite{kawarabayashi2020quickly} since we model or matching theoretic analogues after them and want to ensure compatibility with future works. 

\begin{definition}(Drawing on a surface)\label{def_drawing}
	A \emph{drawing} (with crossings) \emph{in a surface $\Sigma$} is a triple $\Gamma=(U,V,E)$ such that
	\begin{itemize}
		\item $V$ and $E$ are finite,
		\item $V\subseteq U\subseteq\Sigma,$
		\item $V\cup\bigcup_{e\in E}e=U$ and $V\cap (\bigcup_{e\in E}e)=\emptyset,$
		\item for every $e\in E,$ either $e=h(0,1),$ where $h\colon[0,1]_{\R}\rightarrow U$ is a homeomorphism onto its image with $h(0),h(1)\in V,$ or $e=h(\mathbb{S}^2-(1,0)),$ where $h\colon\mathbb{S}^2\rightarrow U$ is a homeomorphism onto its image with $h(0,1)\in V,$ and
		\item if $e,e'\in E$ are distinct, then $\Abs{e\cap e'}$ is finite.
	\end{itemize}
	We call the set $V,$ sometime referred to by $V(\Gamma),$ the \emph{vertices of $\Gamma$} and the set $E,$ referred to by $E(\Gamma),$ the \emph{edges of $\Gamma$}.
	If $G$ is graph and $\Gamma=(U,V,E)$ is a drawing with crossings in a surface $\Sigma$ such that $V$ and $E$ naturally correspond to $V(G)$ and $E(G)$ respectively, we say that $\Gamma$ is a \emph{drawing of $G$ in $\Sigma$ (with crossings)}.
\end{definition}

\begin{definition}[$\Sigma$-Decomposition]\label{def_sigmadecomposition}
	Let $\Sigma$ be a surface.
	A \emph{$\Sigma$-decomposition} of a graph $G$ is a pair $\delta=(\Gamma,\mathcal{D}),$ where $\Gamma$ is a drawing of $G$ is $\Sigma$ with crossings, and $\mathcal{D}$ is a collection of closed disks, each a subset of $\Sigma$ such that
	\begin{enumerate}
		\item the disks in $\mathcal{D}$ have pairwise-disjoint interiors,
		\item the boundary of each disk in $\mathcal{D}$ intersects $\Gamma$ in vertices only,
		\item if $\Delta_1,\Delta_2\in\mathcal{D}$ are distinct, then $\Delta_1\cap\Delta_2\subseteq V(\Gamma),$ and
		\item every edge of $\Gamma$ belongs to the interior of one of the disks in $\mathcal{D}.$
	\end{enumerate}
	Let $N$ be the set of all vertices of $\Gamma$ that do not belong to the interior of the disks in $\mathcal{D}.$
	We refer to the elements of $N$ as the \emph{nodes} of $\delta.$
	If $\Delta\in\mathcal{D},$ then we refer to the set $\Delta-N$ as a \emph{cell} of $\delta.$
	We denote the set of nodes of $\delta$ by $N(\delta)$ and the set of cells by $C(\delta).$
	For a cell $c\in C(\delta)$ the set of nodes that belong to the closure of $c$ is denoted by $\widetilde{c}.$
	Please note that this means that the cells $c$ of $\delta$ with $\widetilde{c}\neq\emptyset$ form the edges of a hypergraph with vertex set $N(\delta)$ where $\widetilde{c}$ is the set of vertices incident with $c.$
	For a cell $c\in C(\delta)$ we define $\sigma_{\delta}(c),$ or simply $\sigma(c)$ if $\delta$ is clear from the context, to be the subgraph of $G$ consisting of all vertices and edges drawn in the closure of $c.$
	We define $\pi_{\delta}\colon N(\delta)\rightarrow V(G)$ to be the mapping that assigns to every node in $N(\delta)$ the corresponding vertex of $G.$
	
	Isomorphisms between two $\Sigma$-decompositions are defined in the natural way.
\end{definition}

Notice that, given some $\Sigma$-decomposition $\delta$ of a graph $G,$ for any cell $c\in C(\delta),$ the nodes of $\widetilde{c}$ ordered by their appearance on the boundary of the closure $c,$ together with the vertices of $\sigma(c)$, form a society.
In case $\Abs{\widetilde{c}}\leq 3,$ whatever is drawn into $c$ may be removed and replaced by a complete graph on $\widetilde{c}.$
By doing so one can incorporate the resulting complete graph into the properly embedded part without introducing any crossings.
However, if $\Abs{\widetilde{c}}\geq 4$ the same operation would result in two crossing edges within the closure of $c.$
Hence, we distinguish between cells with at most $3$ nodes in their boundaries and cells with at least four nodes.

\begin{definition}[Vortex]\label{def_vortex}
	Let $G$ be a graph, $\Sigma$ be a surface and $\delta=(\Gamma,\mathcal{D})$ be a $\Sigma$-decomposition of $G.$
	A cell $c\in C(\delta)$ is called a \emph{vortex} if $\Abs{\widetilde{c}}\geq 4.$
	Moreover, we call $\delta$ \emph{vortex-free} if no cell in $C(\delta)$ is a vortex.
\end{definition}

\begin{definition}[Rendition]\label{def_rendition}
	Let $(G,\Omega)$ be a society, and let $\Sigma$ be a surface with one boundary component $B.$
	A \emph{rendition} of $G$ in $\Sigma$ is a $\Sigma$-decomposition $\rho$ of $G$ such that the image under $\pi_{\rho}$ of $N(\rho)\cap B$ is $V(\Omega),$ mapping one of the two cyclic orders of $B$ to the order of $\Omega.$
	
	If $(G,\Omega)$ has a vortex-free rendition in the disk, we say that $(G,\Omega)$ is \emph{flat}.
\end{definition}

These technical definitions allow us the state the Two Paths Theorem in the general context of the GMST as follows.

\begin{proposition}[(Societal) Two Paths Theorem, \cite{jung1970verallgemeinerung,seymour1980disjoint,shiloach1980polynomial,thomassen19802,robertson1990graph}]\label{thm_twopaths}
	A society $(G,\Omega)$ has no cross if and only if it is flat.
\end{proposition}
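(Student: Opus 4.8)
The plan is to establish the two implications by rather different means: the direction ``flat $\Rightarrow$ no cross'' is a short planarity argument, whereas ``no cross $\Rightarrow$ flat'' is the classical Two Paths Theorem, for which I would only sketch the standard inductive proof and refer to \cite{seymour1980disjoint,shiloach1980polynomial,thomassen19802,robertson1990graph} for the details.

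For the easy direction, suppose $\rho$ is a vortex-free rendition of $G$ in the disk $\Sigma$ with boundary $B$. Since every cell of $\rho$ has at most three nodes, I would first \emph{resolve} each cell $c$: discard the part of $G$ drawn in the interior of $c$ and insert instead a clique on $\widetilde{c}$, keeping only those edges whose ends are joined by a path internal to $\sigma(c)$; a triangle embeds in a disk, so this introduces no crossing inside the closed cell. The result is a genuinely crossing-free drawing in $\Sigma$ of a graph $G'$ whose only vertices on $B$ are those of $V(\Omega)$, appearing in the cyclic order of $\Omega$. A cross $(P_1,P_2)$ of $(G,\Omega)$ would survive this surgery, since within any single cell at most one of $P_1,P_2$ can own an internal vertex --- two internally disjoint subpaths would need four distinct ends among the at most three nodes of $\widetilde{c}$ --- so each path's trace in a resolved cell can be rerouted along the inserted clique while disjointness is kept. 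But a cross in the plane-embedded $G'$ contradicts the Jordan curve theorem: $P_1$ meets $B$ only in its ends $s_1,t_1$, hence separates $\Sigma$ into two regions, and the cyclic order $s_1,s_2,t_1,t_2$ places $s_2$ and $t_2$ in different regions, so the disjoint $s_2$--$t_2$ path $P_2$ would be forced to meet $P_1$. Hence a flat society has no cross.

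For the converse I would assume $(G,\Omega)$ has no cross and prove it flat by induction on $\Abs{V(G)}+\Abs{E(G)}$. First delete any component or edge meeting no $V(\Omega)$-to-$V(\Omega)$ path, which affects neither crossiness nor flatness. Then I would search for a separation $(A,B)$ of order at most three with $V(\Omega)\subseteq A$ and $B\setminus A\neq\emptyset$: in the spirit of \cref{thm_4components}, replace the side $B$ by a clique on $A\cap B$, retaining only edges realised by a path through $B$. This strictly shrinks the graph, creates no cross and destroys none, and a vortex-free rendition of the reduced society lifts to one of $(G,\Omega)$ by redrawing $B$ inside the small cell (at most three nodes) that the new clique occupies. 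Iterating removes all such separations, leaving a society that is, in effect, $4$-connected towards $\Omega$.

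The remaining step --- which I expect to be the main obstacle --- is this highly connected base case: a society with no cross and no non-trivial separation of order at most three hiding $V(\Omega)$ on one side must be planar with $V(\Omega)$ on a single face in the order of $\Omega$, which is precisely flatness. I would argue contrapositively. If $G$ is non-planar it has a $K_5$- or $K_{3,3}$-minor, and the connectivity lets me attach four representative vertices of $\Omega$ to this minor and pull out two disjoint paths that cross with respect to $\Omega$. If $G$ is planar but no face carries $V(\Omega)$ in the order of $\Omega$, then in any plane embedding four vertices $s_1,s_2,t_1,t_2$ of $\Omega$ are ``interleaved'' across distinct faces; tracing two paths along the appropriate face boundaries and patching them through the interior, using connectivity to dodge collisions, again yields a cross. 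Splicing the base case back through the separation reductions produces the sought vortex-free rendition of $(G,\Omega)$ in the disk. The delicate points throughout are bookkeeping: making sure no vertex of $V(\Omega)$ is ever absorbed into a summed-off side, and that the small cells created by order-three separations glue into a legitimate $\Sigma$-decomposition whose boundary still respects $\Omega$.
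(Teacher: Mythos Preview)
The paper does not prove this proposition at all; it is stated as a black-box result imported from the literature (Jung, Seymour, Shiloach, Thomassen, Robertson--Seymour), with no proof or even proof sketch given. There is therefore nothing in the paper to compare your proposal against. Your outline is a reasonable sketch of the classical argument --- the easy direction via cell resolution and the Jordan curve theorem is correct, and the hard direction follows the standard induction through small separations down to a highly connected base case --- but the paper simply cites the result and moves on.
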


Let $(G,\Omega)$ be a society and $G$ be quasi-$4$-connected.
Keep in mind that, in case $(G,\Omega)$ is flat, the graph $G$ must in fact be planar since any cell $c$ of the rendition with $\Abs{\widetilde{c}}\geq 2$ and $\Abs{V(c)}\geq 2$ would induce a separation contradicting the quasi-$4$-connectedness of $G.$

\subsection{The structure of single-crossing-minor-free graphs}

In what follows we present a new proof of the structure theorem for SCM-free graphs.
This proof is roughly broken into the following steps, which we aim to replicate in the setting of bipartite graphs with perfect matchings.
\begin{enumerate}
	\item Identify a parametric graph $\{ U_t\}_{t\in\mathbb{N}}$ that captures the existence of a single-crossing minor in a way which interacts well with the following steps,
	\item decompose a given $U_t$-minor-free graph $G$ into its quasi-$4$-connected components (this step is provided by \cref{thm_4components}),
	\item within a large quasi-$4$-connected component $H$ of $G$ of huge treewidth find a big planar area $H_1$ together with a grid-like infrastructure (this is a special case of the Flat Wall Theorem), and
	\item show that $H_2\coloneqq H-H_1$ must also be planar and ``compatible'' with $H_1$ as otherwise we would find our single-crossing minor.
\end{enumerate}
Combining steps iii) and iv) then yields that any huge quasi-$4$-component of our SCM-free graph must be planar, where ``huge'' means that its treewidth is bounded from below by some function depending \textsl{only} on the excluded single-crossing minor.

\paragraph{Single-crossing grids.}

Let us start with a description of the universal obstruction.

An \emph{$(n\times m)$-grid} is the graph $G_{n,m}$ with vertex set $[1,n]\times[1,m]$ and edge set $\CondSet{\Set{(i,j),(i,j+1)}}{i\in[1,n],j\in[1,m-1]}\cup\CondSet{\Set{(i,j),(i+1,j)}}{i\in[1,n-1],j\in[1,m]}.$
We call the paths of the form $(i,1)(i,2)\dots(i,m)$ the \emph{rows} and the paths of the form $(1,j)(2,j)\dots(n,j)$ the \emph{columns} of the grid.
If $L$ is a row (column) of $G_{n,m}$ of the form $(i,1)(i,2)\dots(i,m)$ ($(1,j)(2,j)\dots(n,j)$), we call it the \emph{$i$th row} (\emph{$j$th column}) of $G_{n,m}$ and the vertex $(i,j)$ is the \emph{$j$th vertex of the $i$th row} and the \emph{$i$th vertex of the $j$th column}.
The edges of $G_{n,m}$ are numbered similarly.
An \emph{elementary $n$-wall}, $n\geq 3,$ is obtained from a $(2n\times n)$-grid by deleting every odd edge in every odd column, every even edge in every even column, and finally delete all occurring vertices of degree 1.
An \emph{$n$-wall} is a subdivision of an elementary $n$-wall.

Given a $(2n\times 2n)$-grid $H,$ we call the cycle $(n,n),(n,n+1),(n+1,n+1),(n+1,n)$ the \emph{central cycle} of $H.$
We generally denote the central cycle of $H$ by $C^1_H.$
Now consider $H-C_H^1,$ then this graph has a unique face\footnote{We implicitly assume $H$ to come with a cross-free embedding in the plane.} which used to contain $C_H^1,$ let $C_H^2$ be the cycle that bounds this face.
Suppose the cycles $C_H^1,\dots,C_H^{i-1}$ have been defined for $i\geq 3.$
Then we define $C_H^i$ to be the cycle in $H-(\bigcup_{j\in[1,i-1]}C_H^j)$ which bounds the face that used to contain the $C_H^j.$
Observe that we find precisely $n$ such cycles.
We call $(C_H^1,\dots,C_H^n)$ the \emph{centred layering} of $H.$

\begin{figure}
	\centering
\makefast{
\begin{subfigure}{0.48\textwidth}
		\centering
	\begin{tikzpicture}[scale=1]
		\pgfdeclarelayer{background}
		\pgfdeclarelayer{foreground}
		\pgfsetlayers{background,main,foreground}
		\node[v:ghost] (C) {};
		
		\foreach \x in {1,...,8}
		\foreach \y in {1,...,8}
		{	
			\node[v:main] (v_\x_\y) at (\x*0.6,\y*0.6){};
		}
	
		\node[v:ghost,position=90:8.5mm from v_8_8] (x2) {};
		\node[v:ghost,position=90:8.5mm from v_1_8] (y2) {}; 

		\begin{pgfonlayer}{background}
			
			\foreach \x in {1,...,8}
			{
				\draw[e:main] (v_\x_1) to (v_\x_8);
				\draw[e:main] (v_1_\x) to (v_8_\x);
			}
			
			\draw[e:thick,color=DarkMagenta] (v_4_4) to (v_5_5);
			\draw[e:thick,color=CornflowerBlue] (v_4_5) to (v_5_4);
			
		\end{pgfonlayer}
	\end{tikzpicture}
	\end{subfigure}
	\begin{subfigure}{0.48\textwidth}
		\centering
		\begin{tikzpicture}[scale=1]
			\pgfdeclarelayer{background}
			\pgfdeclarelayer{foreground}
			\pgfsetlayers{background,main,foreground}
			\node[v:ghost] (C) {};
			
			\foreach \x in {1,...,8}
			\foreach \y in {1,...,8}
			{	
				\node[v:main] (v_\x_\y) at (\x*0.6,\y*0.6){};
			}
			
			\node[v:ghost,position=45:7mm from v_8_8] (x2) {}; 
			\node[v:ghost,position=90:5mm from v_4_8] (x1) {}; 
			\node[v:ghost,position=0:5mm from v_8_4] (x3) {};
			
			\node[v:ghost,position=135:7mm from v_1_8] (y2) {}; 
			\node[v:ghost,position=90:5mm from v_5_8] (y3) {}; 
			\node[v:ghost,position=180:5mm from v_1_4] (y1) {};
			
			\begin{pgfonlayer}{background}
				
				\foreach \x in {1,...,8}
				{
					\draw[e:main] (v_\x_1) to (v_\x_8);
					\draw[e:main] (v_1_\x) to (v_8_\x);
				}
				
				\draw[e:thick,color=DarkMagenta] plot [smooth, tension=2] coordinates {(v_1_1) (y1) (y2) (y3) (v_8_8)};
				\draw[e:thick,color=CornflowerBlue] plot [smooth, tension=2] coordinates {(v_1_8) (x1) (x2) (x3) (v_8_1)};
				
			\end{pgfonlayer}
		\end{tikzpicture}
	\end{subfigure}}{\scalebox{.187}{\includegraphics{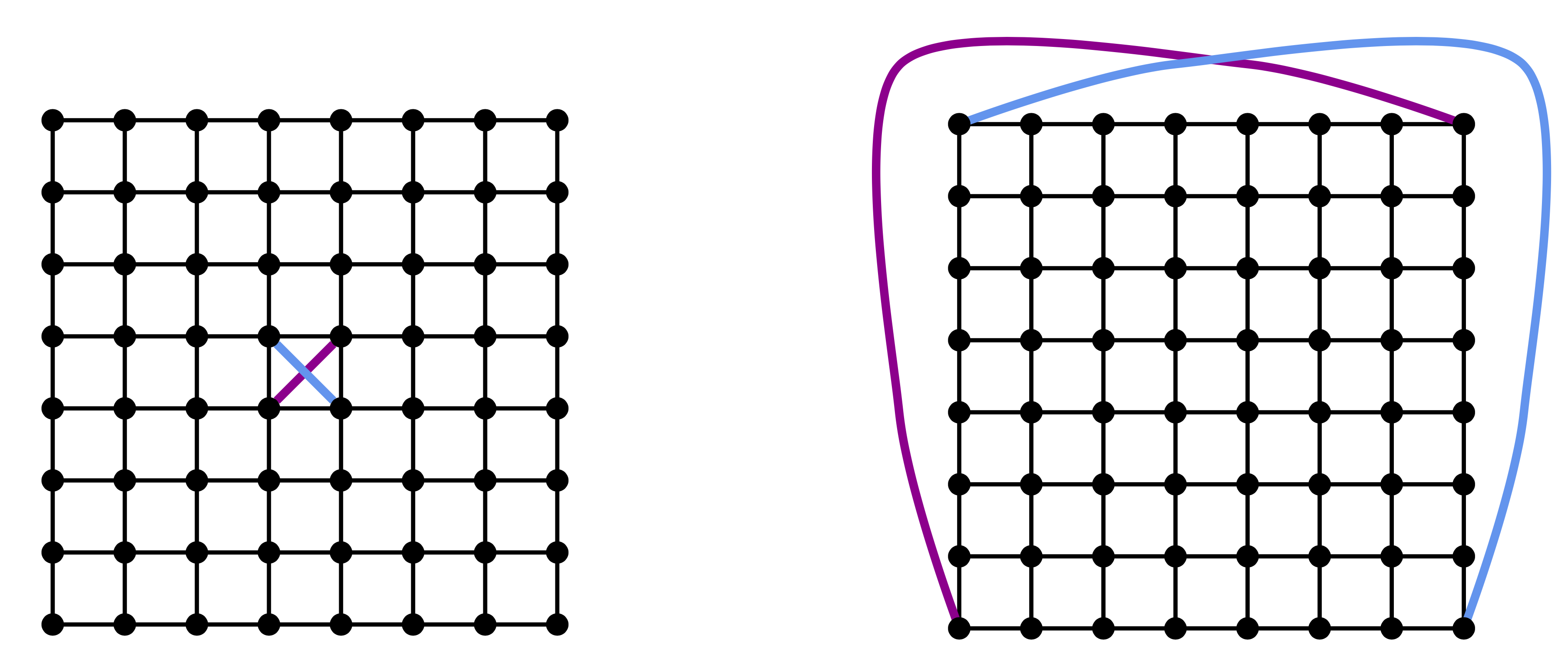}}}
	\caption{The single-crossing grid of order $4$ (left) and the inside-out crossing grid of order $4$ (right).}
	\label{fig_singlecrossinggrid}
\end{figure}

Let $n\in\N$ be some positive integer.
The \emph{single-crossing grid of order $n$} is the graph obtained from the $(2n \times 2n)$-grid by adding the edges $\Set{(n,n),(n+1,n+1)}$ and $\Set{(n,n+1),(n+1,n)}.$
And the \emph{inside-out crossing grid of order $n$} is the graph obtained from the $(2n\times 2n)$-grid by adding the edges $\Set{(1,1),(2n,2n)}$ and $\Set{(1,2n),(2n,1)}.$
For an illustration see \cref{fig_singlecrossinggrid}.

Observe that the single-crossing grid of order $n$ is a minor of the inside-out crossing grid of order $2n$ and vice versa.
Thus, up to a factor of $2,$ these two graphs can be seen as the same.
This is significant for our purpose since the single-crossing grid is usually seen as the universal pattern for single-crossing minors, but in most cases in our proof we will find an inside-out crossing grid as a minor.

A typical example of bipartite graphs that are matching minors of the single-crossing grids the Möbius ladder on $2(2k+1)$ vertices, for $k\geq 1.$   

\paragraph{Components of small treewidth.}

Observe that for every graph $G,$ every clique $H$ in $G$ and every tree-decomposition $(T,\beta)$ of $G,$ there exists some $t\in V(T)$ such that $V(H)\subseteq \beta(t).$
Hence, we immediately obtain the following observation.

\begin{observation}\label{obs_decomposequasi4components}
Let $G$ be a graph and let $(T,\beta)$ be the tree decomposition of $G$ from \cref{thm_4components}.
Let $t\in V(T)$ and $G_t$ be the torso of $G$ at $t.$
Moreover, let $(T',\beta')$ be a tree decomposition for $G_t.$
Then for every $dt\in E(T)$ there exists a vertex $v\in V(T')$ such that $\beta(t)\cap\beta(d)\subseteq\beta'(v).$
\end{observation}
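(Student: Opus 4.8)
The plan is to derive this directly from the definition of the torso together with the standard fact, recalled immediately before the statement, that every clique of a graph is contained in a single bag of any of its tree decompositions.

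First I would fix an edge $dt\in E(T)$ and put $S\coloneqq\beta(t)\cap\beta(d)$, the adhesion set of $t$ towards $d$. Since $V(G_t)=\beta(t)\supseteq S$, the set $S$ consists of vertices of $G_t$, and by the definition of the torso of $G$ at $t$ --- the graph obtained from $\InducedSubgraph{G}{\beta(t)}$ by turning every adhesion set of $t$ into a clique --- the induced subgraph $\InducedSubgraph{G_t}{S}$ is complete. If $S=\emptyset$ the statement is vacuous, so assume $S\neq\emptyset$. Applying the recalled fact to the graph $G_t$, its clique $\InducedSubgraph{G_t}{S}$, and the given tree decomposition $(T',\beta')$ then yields a node $v\in V(T')$ with $S\subseteq\beta'(v)$, which is exactly the claim.

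For self-containedness one may also include the short proof of the recalled fact, which is the only substantive point. For each $x\in S$ the set $T'_x\coloneqq\CondSet{w\in V(T')}{x\in\beta'(w)}$ induces a nonempty subtree of $T'$ by the connectivity axiom of tree decompositions, and for any two distinct $x,y\in S$ the edge $xy$ of $G_t$ lies in some bag $\beta'(w)$, so $T'_x\cap T'_y\neq\emptyset$; since pairwise-intersecting subtrees of a tree share a common node (the Helly property for subtrees of a tree), $\bigcap_{x\in S}T'_x\neq\emptyset$, and any node $v$ in this intersection satisfies $S\subseteq\beta'(v)$. I do not anticipate any real obstacle: the only points needing a line of care are observing that $S$ is a clique of $G_t$ (rather than merely of $G$) and dispatching the degenerate case $S=\emptyset$.
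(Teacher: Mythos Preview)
Your proposal is correct and follows exactly the paper's own reasoning: the paper gives no formal proof but simply states the clique-in-a-bag fact and declares the observation immediate, which is precisely what you unpack by noting that the adhesion set $S=\beta(t)\cap\beta(d)$ is a clique of the torso $G_t$ and then invoking the Helly property for subtrees. There is nothing to add or correct.
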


This observation allows us to refine the decomposition $(T,\beta)$ from \cref{thm_4components} by replacing any vertex $t$ with a tree decomposition for the torso of $G$ at $t$ and joining the neighbours of $t$ in $T$ to the respective vertices of this new tree decomposition which contain the corresponding adhesion set.

\paragraph{A planar wall.}

Step iii) of our strategy requires us to find a large planar area within a quasi-$4$-connected SCM-free graph of huge treewidth.
Moreover, this area should come equipped with a grid-like infrastructure.

Let $H$ be a $(6k\times 6k)$-grid for some positive integer $k$ and let $(C_1,\dots,C_{3k})$ be its centred layering.
Now let, for every $i\in[1,3],$ $H_i$ be the subgraph of $H$ induced by $\bigcup_{j\in[(i-1)k+1,ik]}V(C_j).$
We call $(H_1,H_2,H_3)$ the \emph{tripartition} of $H.$

\begin{lemma}\label{lemma_jumpandcrossfreegrid}
Let $k\in\N$ be some positive integer.
Let $G=H+P$ be a graph where $H$ is a $(12k\times12k)$-grid and $P$ is a path with endpoints $a$ and $b$ such that $V(P)\cap V(H)=\Set{a,b}.$
Let $(H_1,H_2,H_3)$ be the tripartition and $(C_1,\dots,C_{6k})$ be the centred layering of $H.$
Suppose $a\in V(H_1).$
If there exists some $j\in[1,6k]$ for which $C_j$ separates $a$ and $b$ within $H,$ then $G$ contains the single-crossing grid of order $k$ as a minor.
\end{lemma}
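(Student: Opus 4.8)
The plan is to contract $P$ to a single edge and to work in $H^{+}:=H+ab$; since $H^{+}$ is a minor of $G$, it suffices to find the single-crossing grid of order $k$ as a minor of $H^{+}$, and by the observation relating the single-crossing and the inside-out crossing grids it is in fact enough to find the inside-out crossing grid of order $2k$. So the target object is a $(4k\times4k)$-grid-minor of $H^{+}$ together with its two crossing corner-diagonals, realised so that the edge $ab$ plays the role of the single ``non-planar'' chord. Before constructing it I would normalise the configuration. Since the layers $C_{1},\dots,C_{6k}$ are nested, a layer $C_{j}$ separates $a$ from $b$ exactly when the layers carrying $a$ and $b$ are at least two apart, and such a $C_{j}$ then has one of $a,b$ in the inner component of $H-C_{j}$ and the other in the outer component. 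As the conclusion is symmetric in $a$ and $b$, I may assume that $a$ lies in the inner component of some separating layer $C_{j^{*}}$ and $b$ in the outer one; then $a$ still lies on a layer of index at most $2k$, so $a\in V(H_{1})$ remains true. I then distinguish whether $b$ lies inside $H_{1}$ as well.

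Consider first the principal case $b\notin V(H_{1})$. Here $a$ lies in the central $(4k\times4k)$-subgrid $W:=H_{1}$, whose boundary cycle is $C_{2k}$, while $b$ lies in the annular region $H-V(H_{1})=H_{2}\cup H_{3}$, which is $4k$ layers thick. I would select an off-centre $(4k\times4k)$-subgrid $Z$ of $H$ living inside the annulus $H_{2}\cup H_{3}$ and avoiding $b$, positioned so that one corner $q_{1}$ of $Z$ lies on $C_{2k+1}$ and the other three corners $q_{2},q_{3},q_{4}$ (in cyclic order around the boundary of $Z$) are untouched; the $(12k\times12k)$ size of $H$ and the tripartition $(H_{1},H_{2},H_{3})$ are precisely what provide the room for this. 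It then remains to realise the two corner-diagonals of $Z$ by two internally disjoint connections lying outside the interior of $Z$: the diagonal $q_{1}q_{3}$ is assembled from a path from $q_{1}$ through $W$ to $a$, the edge $ab$, and a path from $b$ through the annulus to $q_{3}$; the diagonal $q_{2}q_{4}$ is routed entirely inside $H-V(Z)$, taking a detour around the outer boundary cycle $C_{6k}$ of $H$. Since $a$ sits just inside the corner $q_{1}$ of $Z$ and $b$ sits in the annulus on the opposite side, the $ab$-route genuinely performs the crossing of the outer face which planarity would otherwise forbid, so one obtains the inside-out crossing grid of order $2k$, hence the single-crossing grid of order $k$.

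Now consider the remaining case $b\in V(H_{1})$. Here $a$, $b$ and the separating layer $C_{j^{*}}$ all live inside the $(4k\times4k)$-grid $W=H_{1}$, while the whole $4k$-layer-thick annulus $H-V(H_{1})$ around it is free. I would reduce this to the principal case: first route $b$ outward through $W$ to a vertex of the boundary cycle $C_{2k}$ and contract the outer layers of $W$, moving $b$ onto $C_{2k}$ while a $(4k\times4k)$-grid-minor of $H$ still survives; equivalently, run the routing of the previous paragraph with $Z$ chosen to lie partly in $W$ and partly in the free annulus, $ab$ still crossing the sub-layer $C_{j^{*}}$ inside $W$. In either case we again obtain the inside-out crossing grid of order $2k$ as a minor of $H^{+}$, and therefore $G$ contains the single-crossing grid of order $k$ as a minor.

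The step I expect to be the main obstacle is the disjointness bookkeeping in the principal case: arranging that the grid $Z$, the $q_{1}q_{3}$-route through $W$ and across $ab$, and the $q_{2}q_{4}$-route around $C_{6k}$ are witnessed by pairwise disjoint subgraphs of $H^{+}$. This is where the generous factor of $12$ in the size of $H$, together with the tripartition, is consumed. A secondary difficulty is the preliminary normalisation that places $a$ next to a corner of $Z$ while keeping enough of the grid intact; it is to accommodate this flexibility that the hypothesis only asks for $a\in V(H_{1})$ rather than for $a$ to lie on a prescribed layer.
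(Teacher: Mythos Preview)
Your overall strategy—contract $P$ to an edge and realise an inside-out crossing grid using a $(4k\times4k)$-subgrid $Z$ together with two corner-diagonals, one of which passes through the chord $ab$—is sound and reaches the same destination as the paper, but the paper organises the case analysis differently and thereby sidesteps exactly the two difficulties you flag at the end.

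The paper splits not on $b\in V(H_1)$ versus $b\notin V(H_1)$, but on $b\in V(H_1)\cup V(H_2)$ versus $b\in V(H_3)$. In the first case it does not go through the inside-out grid at all: it observes that the society on $C_{4k}$ formed by $H_1\cup H_2+P$ must carry a cross with endpoints at the four corners of $C_{4k}$, and then uses the untouched outer annulus $H_3$ to supply the concentric layers of a single-crossing grid, with the cross at the centre. Only when $b\in V(H_3)$ does the paper pick a fixed subgrid $H'$ inside $H_2$ and exhibit a cross over its boundary in the complement $H-(H'-C)$, obtaining the inside-out grid.

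What this buys is precisely the bookkeeping you worried about. In your principal case both diagonals must be threaded through the same annulus while dodging $Z$, each other, and an unknown $b$; this can be made to work but it is genuinely fiddly (for instance, if $Z$ sits in a corner of $H$ then the far corner $q_3$ has no neighbour outside $Z$, so your description as written does not quite go through). More seriously, your reduction in the case $b\in V(H_1)$ is not correct as stated: contracting outer layers of $W=H_1$ to push $b$ onto $C_{2k}$ also moves $a$ and can collapse the separating layer $C_{j^*}$ between them. The paper's split dissolves both issues at once: when $b\in V(H_1)\cup V(H_2)$ the cross lives entirely inside that $(8k\times8k)$-subgrid and the grid part comes from the free $H_3$, so no dodging is needed; when $b\in V(H_3)$ one may fix a subgrid of $H_2$ once and for all, since $b$ is nowhere near it.
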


\begin{proof}
We distinguish two cases: either $b\in V(H_1)\cup V(H_2)$ or $b\in V(H_3).$
In the first case let $G'$ be the graph induced by $V(H_1)\cup V(H_2)\cup V(P)$ and let $\Omega$ be an ordering of the vertices of $C_{4k}$ obtained by traversing $C_{4k}$ in clockwise direction in a plane embedding of $H$ with $C_{6k}$ as the cycle bounding the outer face.
Observe that the society $(G',\Omega)$ must have a cross, in particular, the two paths $L$ and $R$ that form the cross can be chosen such that $L$ has endpoints $(2k+1,2k+1)$ and $(10k,10k),$ while $R$ has endpoints $(2k+1,10k)$ and $(10k,2k+1).$
This cross can then be extended to be a cross on the cycle $C_{2k}$ while being internally disjoint from $H_3.$
Observe that we are now able to find the single-crossing matching grid of order $k$ as a minor by using $H_3$ together with the paths $L$ and $R.$

In case   $b\in V(H_3),$ let $H'$ be the subgraph of $H_2$ induced by the vertices of $\CondSet{(i,j)}{i,j\in[2k+1,4k]}$ and let $C$ be the cycle in $H'$ on the vertices of $V(H')\setminus\CondSet{(i,j)}{i,j\in[2k+2,4k-1]}.$
Moreover, let $\Omega$ be an ordering of $V(C)$ obtained by traversing along $C$ in clockwise direction in a plane embedding of $H'$ where $C$ bounds the outer face.
Finally, $H''\coloneqq H-(H'-C).$
Then the society $(H'',\Omega)$ has a cross consisting of the paths $L$ and $R$ such that $L$ has endpoints $(2k+1,2k+1)$ and $(4k,4k),$ while $R$ has endpoints $(2k+1,4k)$ and $(4k,2k+1).$
Observe that $H'+L+R$ contains the inside-out crossing grid of order $2k$ as a minor and thus it also contains the single-crossing grid of order $k$ as a minor.
\end{proof}

With \cref{lemma_jumpandcrossfreegrid} we are able to exclude any kind of ``long jump'' that intersects with the innermost part of the tripartition of a grid.

An elementary $k$-wall $W$ has a unique face whose boundary contains more than six vertices.
The \emph{perimeter} of an elementary $k$-wall is defined to be the subgraph of $W$ induced by all vertices that lie on the unique face with more than six vertices.
Now let $W'$ be a $k$-wall obtained by subdividing each edge of $W$ an arbitrary (possibly zero) number of times.
The \emph{perimeter} of $W',$ denoted by $\Perimeter{W'},$ is the subgraph of $W'$ induced by the vertices of the perimeter of $W$ together with the subdivision vertices of the edges of the perimeter of $W.$
Observe that for elementary $2k$-walls we may extended our definition of centred layerings and for elementary $6k$-walls we may also extend the notion of the tripartition in the natural way (see \cref{fig_tripartitionofawall} for an illustration).
These definitions then carry over to general $6k$-walls.

\begin{figure}
		\centering
\makefast{
	\begin{tikzpicture}[scale=0.85]
			\pgfdeclarelayer{background}
			\pgfdeclarelayer{foreground}
			\pgfsetlayers{background,main,foreground}
			\node[v:ghost] (C) {};
			
			\foreach \x in {1,...,23}
			\foreach \y in {1,12}
			{	
				\node[v:main,color=CornflowerBlue] (v_\x_\y) at (\x*0.6,\y*0.6){};
			}
		
			\foreach \x in {1,...,24}
			\foreach \y in {2,11}
			{	
				\node[v:main,color=CornflowerBlue] (v_\x_\y) at (\x*0.6,\y*0.6){};
			}
		
			\foreach \x in {1,...,4}
			\foreach \y in {3,...,10}
			{	
				\node[v:main,color=CornflowerBlue] (v_\x_\y) at (\x*0.6,\y*0.6){};
			}
		
			\foreach \x in {21,...,24}
			\foreach \y in {3,...,10}
			{	
				\node[v:main,color=CornflowerBlue] (v_\x_\y) at (\x*0.6,\y*0.6){};
			}
		
			\foreach \x in {5,...,20}
			\foreach \y in {3,4,9,10}
			{	
				\node[v:main,color=DarkMagenta] (v_\x_\y) at (\x*0.6,\y*0.6){};
			}
			
			\foreach \x in {5,...,8}
			\foreach \y in {5,...,8}
			{	
				\node[v:main,color=DarkMagenta] (v_\x_\y) at (\x*0.6,\y*0.6){};
			}
		
			\foreach \x in {17,...,20}
			\foreach \y in {5,...,8}
			{	
				\node[v:main,color=DarkMagenta] (v_\x_\y) at (\x*0.6,\y*0.6){};
			}
			
			\foreach \x in {9,...,16}
			\foreach \y in {5,...,8}
			{	
				\node[v:main,color=DarkTangerine] (v_\x_\y) at (\x*0.6,\y*0.6){};
			}
			
			\begin{pgfonlayer}{background}
				
				\foreach \x in {1,12}
				{
					\draw[e:thick,color=CornflowerBlue] (v_1_\x) to (v_23_\x);
				}
			
				\foreach \x in {2,11}
				{
					\draw[e:thick,color=CornflowerBlue] (v_1_\x) to (v_24_\x);
				}
			
				\foreach \y in {3,...,10}
				{
					\draw[e:thick,color=CornflowerBlue] (v_1_\y) to (v_4_\y);
					\draw[e:main] (v_4_\y) to (v_5_\y);
					\draw[e:thick,color=CornflowerBlue] (v_21_\y) to (v_24_\y);
					\draw[e:main] (v_20_\y) to (v_21_\y);
				}
				
				\foreach \y in {3,4,9,10}
				{
					\draw[e:thick,color=DarkMagenta] (v_5_\y) to (v_20_\y);
				}
			
				\foreach \y in {5,...,8}
				{
					\draw[e:thick,color=DarkMagenta] (v_5_\y) to (v_8_\y);
					\draw[e:main] (v_8_\y) to (v_9_\y);
					\draw[e:thick,color=DarkMagenta] (v_17_\y) to (v_20_\y);
					\draw[e:main] (v_16_\y) to (v_17_\y);
				}
			
				\foreach \y in {5,...,8}
				{
					\draw[e:thick,color=DarkTangerine] (v_9_\y) to (v_16_\y);
				}
			
				\foreach \x in {1,3,21,23}
				\foreach \y in {1,3,5,7,9,11}
				{	
					\pgfmathsetmacro\next{\y+1}
					\draw[e:thick,color=CornflowerBlue] (v_\x_\y) to (\x*0.6,\next*0.6);
				}
			
				\foreach \x in {2,4,22,24}
				\foreach \y in {2,4,6,8,10}
				{	
					\pgfmathsetmacro\next{\y+1}
					\draw[e:thick,color=CornflowerBlue] (v_\x_\y) to (\x*0.6,\next*0.6);
				}
			
				\foreach \x in {5,7,9,11,13,15,17,19}
				{
					\draw[e:thick,color=CornflowerBlue] (v_\x_1) to (v_\x_2);
					\draw[e:thick,color=CornflowerBlue] (v_\x_11) to (v_\x_12);
				}
			
				\foreach \x in {9,11,13,15}
				{
					\draw[e:thick,color=DarkMagenta] (v_\x_3) to (v_\x_4);
					\draw[e:thick,color=DarkMagenta] (v_\x_9) to (v_\x_10);
				}
			
				\foreach \x in {5,7,17,19}
				\foreach \y in {3,5,7,9}
				{	
					\pgfmathsetmacro\next{\y+1}
					\draw[e:thick,color=DarkMagenta] (v_\x_\y) to (\x*0.6,\next*0.6);
				}
				
				\foreach \x in {6,8,18,20}
				\foreach \y in {4,6,8}
				{	
					\pgfmathsetmacro\next{\y+1}
					\draw[e:thick,color=DarkMagenta] (v_\x_\y) to (\x*0.6,\next*0.6);
				}
				
				\foreach \x in {9,11,13,15}
				{
					\draw[e:thick,color=DarkTangerine] (v_\x_5) to (v_\x_6);
					\draw[e:thick,color=DarkTangerine] (v_\x_7) to (v_\x_8);
				}
			
				\foreach \x in {10,12,14,16}
				{
					\draw[e:thick,color=DarkTangerine] (v_\x_6) to (v_\x_7);
				}
			
				\foreach \x in {6,8,10,12,14,16,18,20}
				{
					\draw[e:main] (v_\x_2) to (v_\x_3);
					\draw[e:main] (v_\x_10) to (v_\x_11);
				}
			
				\foreach \x in {10,12,14,16}
				{
					\draw[e:main] (v_\x_4) to (v_\x_5);
					\draw[e:main] (v_\x_8) to (v_\x_9);
				}

			\end{pgfonlayer}
		\end{tikzpicture}}{\scalebox{.17}{\includegraphics{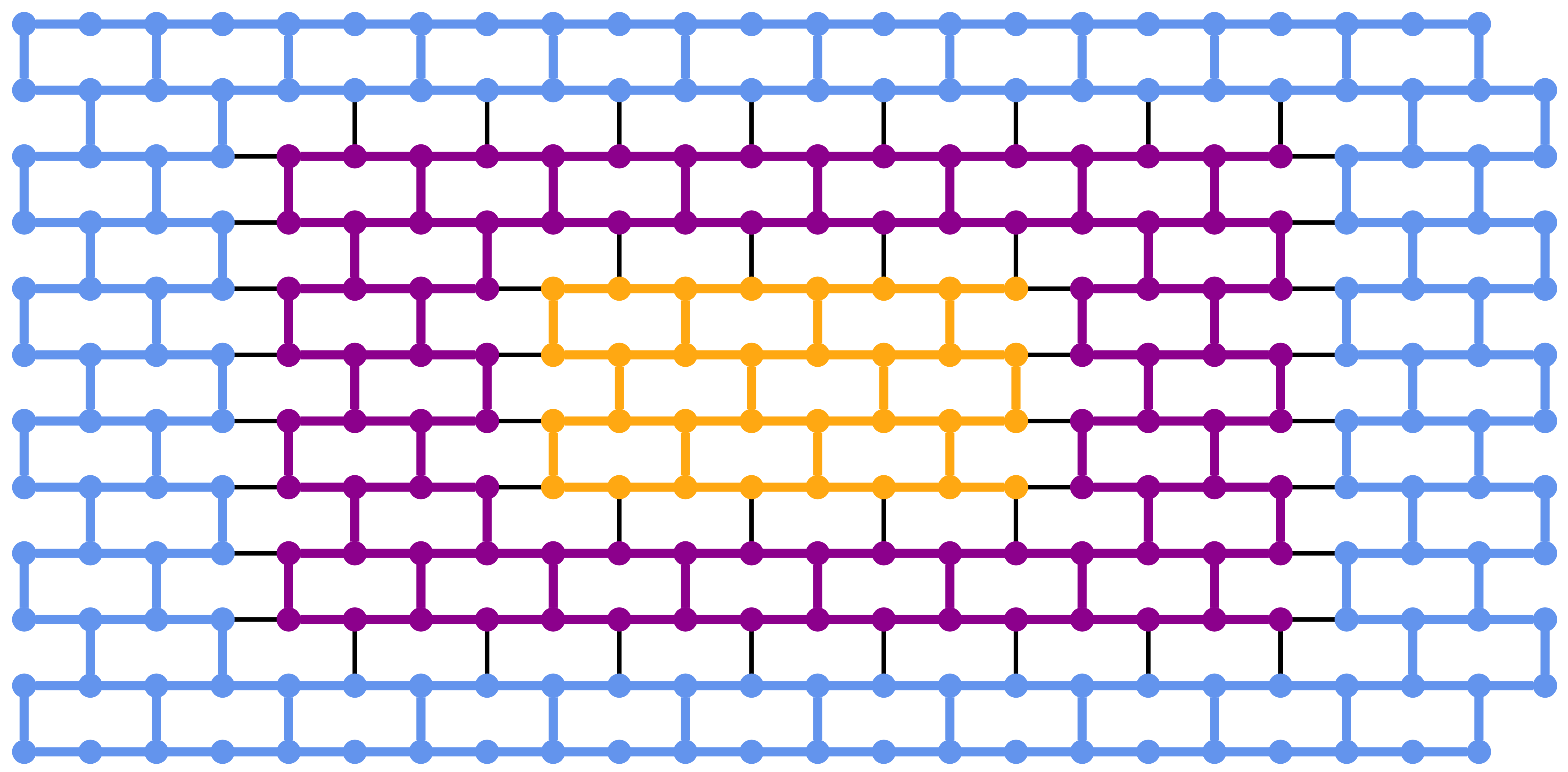}}}
	\caption{The tripartition of an elementary $12$-wall. The graphs $H_{1},$ $H_{2},$ and $H_{3}$ are drawn in orange, purple, and blue respectively.}
	\label{fig_tripartitionofawall}
\end{figure}

Let $G$ be a graph and $W$ be a wall in $G.$
The \emph{compass} of $W$ in $G,$ denoted by $\CompassG{G}{W},$ is the subgraph of $G$ induced by the vertices of $\Perimeter{W}$ together with the vertices of the unique component of $G-\Perimeter{W}$ that contains $W-\Perimeter{W}.$

\begin{lemma}\label{lemma_separatingcycle}
	Let $k\in \N$ be some integer and let $G$ be a quasi-$4$-connected graph.
	Next, let $W$ be a $(12k+6)$-wall in $G$ together with its tripartition $(W_1,W_2,W_3).$
	Then either $G$ contains the single-crossing grid of order $k$ as a minor or there is a separation $(A,B)$ in $G$ such that $V(W_1)\subseteq A,$ $\Perimeter{W}\subseteq B,$ and $\Perimeter{W_1}=A\cap B.$
\end{lemma}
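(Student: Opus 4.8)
The plan is to prove the statement in contrapositive form: assuming $G$ contains no single-crossing grid of order $k$ as a minor, I will produce the required separation. Write $C:=\Perimeter{W_1}$. Since $W$ has order $12k+6=6(2k+1)$, its centred layering $(C^1_W,\dots,C^{6k+3}_W)$ and its tripartition $(W_1,W_2,W_3)$ are available, $C$ is the layering cycle $C^{2k+1}_W$ (the outermost layer of $W_1$), and inside $W$ this cycle separates the set $X:=V(W_1)\setminus V(C)$ -- non-empty since $k\geq 1$ -- from $V(W)\setminus V(W_1)$, which in turn contains $V(\Perimeter{W})$. Note also that $V(C)\subseteq V(W_1)$, so $V(W)$ is the disjoint union of $X$, $V(C)$ and $V(W)\setminus V(W_1)$. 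The guiding principle is that a cycle cannot be ``pierced'': the whole question is whether some single component of $G-V(C)$ reaches both $X$ and the part of $W$ lying outside $W_1$.

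Suppose first that no component of $G-V(C)$ meets both $X$ and $V(W)\setminus V(W_1)$. Let $A$ be the union of $V(C)$ with all components of $G-V(C)$ meeting $X$, and let $B$ be the union of $V(C)$ with all remaining components. Then $(A,B)$ is a separation of $G$, and since the two families of components are disjoint, $A\cap B=V(C)=\Perimeter{W_1}$. Moreover $V(W_1)=X\cup V(C)\subseteq A$, and any component meeting $V(\Perimeter{W})$ meets $V(W)\setminus V(W_1)$ and hence, by assumption, misses $X$, so it lies in $B$; thus $\Perimeter{W}\subseteq B$. This is exactly the separation asserted by the lemma.

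It therefore remains to rule out the case that some component $K$ of $G-V(C)$ meets both $X$ and $V(W)\setminus V(W_1)$. I pick a path in $K$ joining these two sets and shorten it to a path $P$ from a vertex $a\in X$ to a vertex $b\in V(W)\setminus V(W_1)$ whose internal vertices lie in neither of these two sets; as $P$ is moreover disjoint from $V(C)$, its internal vertices avoid $V(W)$ entirely, so $V(P)\cap V(W)=\{a,b\}$. Now $a\in V(W_1)$ and $C=C^{2k+1}_W$ is a cycle of the centred layering of $W$ that separates $a$ from $b$ in $W$, so $W+P\subseteq G$ is precisely the configuration handled by (the wall analogue of) \cref{lemma_jumpandcrossfreegrid}, whence $G$ contains the single-crossing grid of order $k$ as a minor -- contradicting our assumption. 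Hence this case cannot occur, and the proof is complete.

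The only genuine work is hidden in the appeal, in the case just treated, to the analogue of \cref{lemma_jumpandcrossfreegrid} for walls rather than grids: one reruns the proof of that lemma with the wall $W$, equipped with its centred layering and tripartition, in place of the grid $H$ (the notions having been transferred to walls in the text). This is also where the bound $12k+6$ on the order of $W$ is spent -- it must be large enough for $W_1$ to have a non-empty interior and for the cross forced by the jump $P$ across $C^{2k+1}_W$ to still leave enough layers of $W$ to assemble a single-crossing grid of order $k$; so the main obstacle is really just checking that the constants in \cref{lemma_jumpandcrossfreegrid} survive the passage from grids to walls. The quasi-$4$-connectivity of $G$ plays no role in the argument above beyond being the standing hypothesis; alternatively, one may carry out everything inside the compass $\CompassG{G}{W}$.
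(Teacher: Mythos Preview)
Your proof is correct and follows essentially the same approach as the paper: assume the separation fails, extract a path that jumps over $\Perimeter{W_1}$ from the interior of $W_1$ to the outside, and invoke \cref{lemma_jumpandcrossfreegrid} to obtain the single-crossing grid minor. You are in fact more careful than the paper on two points: you spell out explicitly how the separation $(A,B)$ is built from the components of $G-V(C)$, and you flag that \cref{lemma_jumpandcrossfreegrid} is stated for grids while here it is applied to a wall, so its proof must be rerun in the wall setting with the transferred notions of centred layering and tripartition---the paper simply cites the lemma without comment. Your observation that quasi-$4$-connectivity is not actually used in this particular argument is also accurate; it only becomes relevant in the subsequent lemmas.
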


\begin{proof}
Let $W_1'$ be the maximal subwall of $W_1$ contained in $W_1-\Perimeter{W_1}.$
Let us assume that there is no separation $(A,B)$ in $G$ such that $V(W_1)\subseteq A,$ $\Perimeter{W}\subseteq B,$ and $\Perimeter{W_1}=A\cap B.$
In this case there must exist a path $Q'$ with one endpoint in $W_1-\Perimeter{W_1},$ the other endpoint in $\Perimeter{W},$ and which is internally disjoint from $W_1.$
Moreover, this means we may find a path $Q$ with one endpoint, say $a$ in $W_1'$ and the other one, say $b,$ in $W-W_1$ which is internally disjoint from $W.$
In particular, $Q$ must be disjoint from $\Perimeter{W_1}.$
By \cref{lemma_jumpandcrossfreegrid} this means that $G$ contains the single-crossing grid of order $k$ as a minor and our claim follows.
\end{proof}

\begin{lemma}\label{lemma_planarwall}
Let $k\in \N$ be some integer and let $G$ be a quasi-$4$-connected graph.
Next, let $W$ be a $(12k+6)$-wall in $G$ together with its tripartition $(W_1,W_2,W_3)$ and let $W_1'$ be the maximal subwall of $W_1$ contained in $W_1-\Perimeter{W_1}.$
Then either $G$ contains the single-crossing grid of order $k$ as a minor, or $\CompassG{G}{W_1'}$ is a planar graph where $\Perimeter{W_1'}$ bounds a face.
\end{lemma}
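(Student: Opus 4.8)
The plan is to assume $G$ has no single-crossing grid of order $k$ as a minor and prove that $\CompassG{G}{W_1'}$ is then planar with $\Perimeter{W_1'}$ bounding a face. Set $D\coloneqq\Perimeter{W_1'}$, let $K$ be the component of $G-D$ containing $W_1'-D$ (so $\CompassG{G}{W_1'}=G[V(K)\cup V(D)]$), and let $\Omega$ be the cyclic order of $V(D)$ along the cycle $D$. The first step is to confine the compass: I claim $V(\CompassG{G}{W_1'})\cap(V(W)\setminus V(W_1'))=\emptyset$. If not, there is a path in $G-D$ from $W_1'-D$ to $V(W)\setminus V(W_1')$, hence a subpath $R_0$ with one endpoint $a\in W_1'-D$, the other endpoint $b\in V(W)\setminus V(W_1')$, and interior disjoint from $V(W)$. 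Since $W_1'$ is a wall, $D$ separates $W_1'-D$ from $V(W)\setminus V(W_1')$ inside $W$, so $D$ is a cycle of $W$ separating $a$ from $b$ in $W$; aligning a $(12k\times12k)$-grid minor of $W$ with the centred layering of $W$ so that $D$ becomes one of its layers and $a$ lands in the innermost third, the argument of \cref{lemma_separatingcycle} (through \cref{lemma_jumpandcrossfreegrid}, applied to this grid together with $R_0$) yields a single-crossing grid of order $k$ as a minor of $G$ -- a contradiction. So the compass meets $V(W)$ only inside $W_1'$; in particular it is disjoint from the subwall $W^{\circ}$ of $W$ formed by the layers strictly outside $W_1'$, which has at least $2k$ concentric layers and whose innermost layer is attached to $D$ by spokes of $W$.

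Next I show that the society $(\CompassG{G}{W_1'},\Omega)$ has no cross. Suppose it has a cross $(P_1,P_2)$ with endpoints $s_1,s_2,t_1,t_2$ on $D$ in this cyclic order. Contract $D$ to a $4$-cycle on $\{s_1,s_2,t_1,t_2\}$, use $P_1$ and $P_2$ as the two diagonals across this central square, and build the surrounding $(2k\times2k)$-grid from $W^{\circ}$ together with four spokes of $W$ incident to $D$ near $s_1,s_2,t_1,t_2$; by the previous paragraph $P_1\cup P_2$ lies in $V(W_1')\cup(V(G)\setminus V(W))$ and is therefore disjoint from all of these, so this is a valid minor model of the single-crossing grid of order $k$ -- a contradiction. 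Hence, by the societal Two Paths Theorem (\cref{thm_twopaths}), $(\CompassG{G}{W_1'},\Omega)$ is flat and admits a vortex-free rendition $\rho$ in the disk with $D$ on its boundary. To pass from flat to planar I use quasi-$4$-connectedness of $G$: every vertex of the compass not on $D$ lies in $K$ and so has all its $G$-neighbours inside the compass, whence for every cell $c$ of $\rho$ the set $\widetilde{c}$ (of size at most $3$) separates $V(\sigma(c))\setminus\widetilde{c}$ from $V(G)\setminus V(\sigma(c))$ in $G$. Since all vertices of $D$ are nodes of $\rho$ on its boundary, $V(\sigma(c))$ contains at most $3$ of the many vertices of $D$, so $V(G)\setminus V(\sigma(c))$ is large and quasi-$4$-connectedness forces $|V(\sigma(c))\setminus\widetilde{c}|\le 1$. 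Thus each cell carries at most one interior vertex and at most three nodes, so $\sigma(c)$ has at most four vertices and can be redrawn inside $c$ without crossings; doing this cell by cell turns $\rho$ into a crossing-free embedding of $\CompassG{G}{W_1'}$ in the disk with $D$ on its boundary, i.e.\ a plane embedding in which $\Perimeter{W_1'}=D$ bounds a face.

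I expect the main obstacle to be the first step (and the parallel routing inside the second): turning an abstract ``escaping'' path into the concrete grid-with-crossing configuration consumed by \cref{lemma_jumpandcrossfreegrid} requires choosing a $(12k\times12k)$-grid minor inside $W$ whose centred layering is compatible with that of $W$, so that $D$ appears as one of the separating cycles and the inner endpoint falls into the innermost third -- the slack in the wall order assumed here is precisely what makes this alignment available. The flat-to-planar step is essentially bookkeeping once one notices that the compass traps neighbourhoods, and the cross-to-minor construction in the second step is routine.
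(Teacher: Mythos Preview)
Your proof is correct and follows the same overall strategy as the paper: confine the compass using the jump lemma, rule out a cross on the resulting society to obtain a vortex-free rendition via \cref{thm_twopaths}, and then use quasi-$4$-connectedness to upgrade flatness to planarity.

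The one place where you deviate in a meaningful way is the choice of society and the ensuing planarity argument. The paper forms the society on $\CompassG{G}{W_1}$ with $\Omega$ ranging only over the attachment vertices $X\subseteq V(\Perimeter{W_1})$; it then shows that every non-degenerate separation of order at most three in $\CompassG{G}{W_1}$ must place at least two vertices of $\Perimeter{W_1}$ in the separator, so that after restricting the rendition to $\CompassG{G}{W_1'}$ all cells satisfy $|\widetilde{c}|\le 2$ with empty interior. You instead take the society directly on $\CompassG{G}{W_1'}$ with $\Omega=V(D)$; because every vertex of $D$ is then a boundary node, $V(\sigma(c))\setminus\widetilde{c}$ lies entirely in $K$, and quasi-$4$-connectedness applied in $G$ forces $|V(\sigma(c))\setminus\widetilde{c}|\le 1$ immediately. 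Your route is a little more direct and does not need the buffer layer between $\Perimeter{W_1'}$ and $\Perimeter{W_1}$ for the planarity step (it is still used to feed the jump/cross lemmas), at the cost of slightly larger cells (up to four vertices rather than two) which is harmless here.
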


\begin{proof}
Let $(C_1,\dots,C_{6k+3})$ be the centred layering of $W.$
By \cref{lemma_separatingcycle} we may assume there is a separation $(A,B)$ in $G$ such that $V(W_1)\subseteq A,$ $\Perimeter{W}\subseteq B,$ and $\Perimeter{W_1}=A\cap B.$
Hence, $W-W_1$ is disjoint from $\CompassG{G}{W_1}.$

Let $X\subseteq V(\Perimeter{W_1})$ be the set of vertices with neighbours in $V(W-W_1)$ within $W$ and let us fix an ordering $\Omega$ of $X$ obtained by traversing $\Perimeter{W_1}$ in clockwise order.
Note that this means that all vertices in $X$ have degree three in the graph $W.$
Then $(\CompassG{G}{W_1},\Omega)$ is a society.
Observe that, in case $(\CompassG{G}{W_1},\Omega)$ has a cross, we find the single-crossing grid of order $k$ as a minor.
Hence, $(\CompassG{G}{W_1},\Omega)$ has a vortex-free rendition $\rho$ in the disk by \cref{thm_twopaths}.
Moreover, we may assume for all $c\in C(\rho)$ with $\Abs{\widetilde{c}}=3$ that $\Abs{\sigma(c)}< 5.$

Next let $(U,T)$ be a separation of order at most three in $\CompassG{G}{W_1}$ such that in case $\Abs{U\cap T}=3$ we have $\min\Set{\Abs{U\setminus T},\Abs{T\setminus U}}\geq 2.$
Observe that $(U,T)$ can always be chosen such that $T\setminus U$ contains $C_i$ for some $i\in[1,2k+1],$ while $U$ does not contain any cycle of $\Set{C_1,\dots,C_{2k+1}}$ completely.
Suppose $U\cap T$ contains at most one vertex of $\Perimeter{W_1}$ and let $i$ be chosen as above.
Let $u\in U\setminus T$ be any vertex.
It follows that, in $G,$ every $u$-$C_i$-path must contain a vertex of $U\cap T.$
Moreover, there exist at least two distinct choices of $u$ and thus the existence of $(U,T)$ contradicts the fact that $G$ is quasi-$4$-connected.
It follows that $\rho$ contains a rendition $\rho'$ of $\CompassG{G}{W_1'}$ such that $\Abs{\widetilde{c}}\leq 2$ and $\sigma(c)\setminus \pi(\widetilde{c})=\emptyset$ for all cells $c\in C(\rho').$
Hence, $\CompassG{G}{W_1'}$ is a planar graph and one of its faces must be bounded by $\Perimeter{W_1'}.$
\end{proof}

\paragraph{Outside a planar wall.}

In \cref{lemma_planarwall} we have seen that within any large enough wall $W$ in a quasi-$4$-connected graph $G$ we may either find a large single-crossing grid as a minor, or the compass of a substantial portion of $W$ is planar.
The next step is to show that the existence of $W$ implies either the existence of a large single-crossing grid as a minor, or the entirety of $G$ is a planar graph.

\begin{lemma}\label{lemma_planaroutside}
	Let $k\in \N$ be some integer and let $G$ be a quasi-$4$-connected graph.
	Next, let $W$ be a $12(k+1)$-wall in $G$ together with its tripartition $(W_1,W_2,W_3)$ and let $H$ be the component of $G-\Perimeter{W_1}$ that contains $W_1-\Perimeter{W_1}.$
	Then either $G$ contains the single-crossing grid of order $k$ as a minor, or $G-H$ is a planar graph where $\Perimeter{W_1}$ bounds a face.
\end{lemma}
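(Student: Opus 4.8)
The plan is to mirror, almost verbatim, the proof of \cref{lemma_planarwall}, but applied to the ``outside part'' $G-H$ rather than to the compass of an inner subwall. First the set-up: since $W_1-\Perimeter{W_1}$ is connected, disjoint from $\Perimeter{W_1}$, and contained in $H$, the whole wall $W_1$ lies in $H\cup\Perimeter{W_1}$ with outer cycle exactly $\Perimeter{W_1}$; and since $H$ is a component of $G-\Perimeter{W_1}$, the cycle $\Perimeter{W_1}$ separates $H$ from $(G-H)-\Perimeter{W_1}$ in $G$ (so, unlike in \cref{lemma_planarwall}, no appeal to \cref{lemma_separatingcycle} is needed). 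Let $X\subseteq V(\Perimeter{W_1})$ be the vertices of $\Perimeter{W_1}$ with a neighbour in $H$; by the wall structure of $W_1$ this set is ``dense'', i.e.\ it is spread all the way around $\Perimeter{W_1}$ and no arc between two consecutive elements of $X$ is long. Fix the cyclic order $\Omega$ of $X$ induced by $\Perimeter{W_1}$, so that $(G-H,\Omega)$ is a society.

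If $(G-H,\Omega)$ has a cross $(P_1,P_2)$, then $P_1,P_2$ are disjoint paths in $G-H$ with endpoints on $\Perimeter{W_1}$ in crossing order, and since $W_1-\Perimeter{W_1}\subseteq H$ they meet $W_1$ only in those four endpoints, all lying on its outer cycle $\Perimeter{W_1}$. Routing the cross a constant number of layers inward through the centred layering of $W_1$ and using the remaining inner part of $W_1$ as a grid, one sees that $W_1\cup P_1\cup P_2$ contains the inside-out crossing grid of order $2k$, hence the single-crossing grid of order $k$, as a minor; the parameter $12(k+1)$ is chosen precisely so that $2k$ layers of $W_1$ remain for the grid part. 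This is exactly the type of argument used at the end of the proof of \cref{lemma_jumpandcrossfreegrid}, and in this case we are done.

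If $(G-H,\Omega)$ has no cross, then by \cref{thm_twopaths} the society is flat, so $G-H$ admits a vortex-free rendition $\rho$ in the disk, with the nodes on the boundary circle mapping onto $X$ in the order $\Omega$. The clean-up rests on one observation: every vertex of $G-H$ lying on $\Perimeter{W_1}$ — in particular every vertex of $X$ — is a node of $\rho$, and nodes never lie in the interior of a cell; hence for each cell $c$ we have $X\cap(V(\sigma(c))\setminus\pi(\widetilde{c}))=\emptyset$, so $H$ attaches to $V(\sigma(c))$ only through $\pi(\widetilde{c})$. Therefore any cell $c$ with $V(\sigma(c))\setminus\pi(\widetilde{c})\neq\emptyset$ induces a separation of $G$ of order $\Abs{\widetilde{c}}\leq 3$ with one side $V(\sigma(c))$ and the other side containing all of $H$; as $G$ is quasi-$4$-connected this is impossible unless $\Abs{\widetilde{c}}=3$ and $V(\sigma(c))$ consists of those three nodes together with one further vertex $v$ of degree three. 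Such an exceptional $v$ has all its neighbours on $\Perimeter{W_1}$, and using the density of $X$ (so that these three neighbours lie within a single arc between consecutive elements of $X$) one re-embeds $v$ on the side of $\Perimeter{W_1}$ facing $W_2$ without introducing a crossing. After these modifications every cell of $\rho$ is trivial, so $G-H$ is planar, drawn in the disk with $\Perimeter{W_1}$ running along its boundary; in particular $\Perimeter{W_1}$ bounds the face into which $H$ attaches, which is the required conclusion.

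I expect the main obstacle to be exactly this last clean-up, and specifically the claim that $\Perimeter{W_1}$ \emph{bounds a face}. In \cref{lemma_planarwall} one could afford to discard the ``boundary cells'' hugging $\Perimeter{W_1}$ by passing to a smaller wall $W_1'$, but here the statement is about all of $G-H$, so nothing of $G-H$ may be trapped between $\Perimeter{W_1}$ and the boundary circle of the rendition. Quasi-$4$-connectivity by itself does not forbid a low-degree vertex (or, more generally, a small bridge of $\Perimeter{W_1}$) from sitting on the $H$-side of $\Perimeter{W_1}$; ruling this out genuinely uses that $X$ is spread around all of $\Perimeter{W_1}$ — equivalently, that $\rho$ can be chosen compatibly with the wall $W$ near $\Perimeter{W_1}$ — because otherwise such a configuration would, together with a path through $H$, yield a cross in $G$, contradicting the no-cross hypothesis. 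Getting this interaction between the wall's rigidity, the density of $X$, and the flat rendition exactly right is the technical core of the proof.
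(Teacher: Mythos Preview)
Your overall shape—society on the perimeter, a cross yields the inside-out grid, no cross yields a flat rendition, quasi-$4$-connectivity shrinks the cells—is the paper's strategy. The divergence is exactly where you flag it, and the paper resolves it differently from your re-embedding.

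The paper does \emph{not} put the society on $\Perimeter{W_1}$. It first invokes \cref{lemma_separatingcycle} (so the first alternative is available and $W-W_1$ is guaranteed to lie outside $\CompassG{G}{W_1}$), then steps one layer inward to the maximal subwall $W_1'\subseteq W_1-\Perimeter{W_1}$ and forms the society on $X\subseteq V(\Perimeter{W_1'})$ for the larger outside graph $G-H'$, where $H'=\CompassG{G}{W_1'}-\Perimeter{W_1'}$. A cross here, together with $W_1'$, still produces the inside-out crossing grid of order $2k$. In the no-cross case one gets a vortex-free rendition $\rho$ of $G-H'$; the same separation analysis as in \cref{lemma_planarwall} shows that every $\leq 3$-separation of $G-H'$ must have at least two cut vertices on $\Perimeter{W_1'}$, so the non-trivial cells of $\rho$ are pinned to the boundary. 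The rendition then restricts to a rendition $\rho'$ of $G-H\subsetneq G-H'$ in which every cell is a single edge, and $\Perimeter{W_1}$ bounds a face automatically. The extra layer is the whole trick: it absorbs the problematic boundary cells so that nothing has to be re-embedded.

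Your direct route on $\Perimeter{W_1}$ breaks at the two assertions underpinning the re-embedding. First, it is not true that every vertex of $\Perimeter{W_1}$ is a node of $\rho$; only the vertices of $X$ are forced onto the boundary circle, and the remaining perimeter vertices may well sit in the interior of a cell. Second, for an exceptional cell-interior vertex $v$ the three nodes $\pi(\widetilde{c})$ need not lie on $\Perimeter{W_1}$ at all, let alone within a single short arc between consecutive elements of $X$—they are arbitrary nodes of $\rho$. Without these facts the flip of $v$ ``to the $W_2$-side'' cannot be justified, and the claim that $\Perimeter{W_1}$ bounds a face is left unproved. You located the obstacle correctly; the remedy is the paper's layer-shift rather than a local re-embedding.
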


\begin{proof}
As before, by \cref{lemma_separatingcycle} we may assume there is a separation $(A,B)$ in $G$ such that $V(W_1)\subseteq A,$ $\Perimeter{W}\subseteq B,$ and $\Perimeter{W_1}=A\cap B.$
Hence, $W-W_1$ is disjoint from $\CompassG{G}{W_1}.$

Let $W_1'$ be the subwall of $W_1$ contained in $W_1-\Perimeter{W_1},$ let $H'\coloneqq \CompassG{G}{W_1'}-\Perimeter{W_1'},$ and let $X$ be the set of all vertices of $\Perimeter{W_1'}$ with neighbours in $W-W_1'$ within $W.$
Finally, let $\Omega$ be the cyclic ordering of $X$ obtained by traversing $\Perimeter{W_1'}$ in clockwise direction and let $\Complement{H}\coloneqq H-H'.$
If $(\Complement{H},\Omega)$ has a cross we find the inside-out-crossing grid of order $2k$ as a minor, and thus we have found our single-crossing grid of order $k.$
Thus, we may assume that $(\Complement{H},\Omega)$ has a vortex-free rendition $\rho$ in the disk by \cref{thm_twopaths}.
Moreover, we may assume for all $c\in C(\rho)$ with $\Abs{\widetilde{c}}=3$ that $\Abs{\sigma(c)}< 5.$

Similar to the proof of \cref{lemma_planarwall} we observe that there does not exist a separation $(U,T)$ of order at most three in $\Complement{H}$ such that $\Abs{U\cap T\cap V(\Perimeter{W_1'})}\leq 1$ and if $\Abs{U\cap T}=3$ we have $\min\Set{\Abs{U\setminus T},\Abs{T\setminus U}}\geq 2.$
It follows that $\rho$ contains a rendition $\rho'$ of $H\subsetneq \Complement{H}$ such that $\Abs{\widetilde{c}}\leq 2$ and $\sigma(c)\setminus \pi(\widetilde{c})=\emptyset$ for all cells $c\in C(\rho').$
Hence, $H$ is a planar graph and one of its faces must be bounded by $\Perimeter{W_1}.$
\end{proof}

Combining both \cref{lemma_planarwall} and \cref{lemma_planaroutside} yields the following ``local'' structural result.
To see this take the $12(k+1)$-wall from \cref{lemma_planaroutside} and choose a slightly smaller wall to apply \cref{lemma_planarwall} for the same cycle.
The results are two planar graphs, each with a face bounded by a cycle in which both graphs coincide.
Hence, both graphs can be seen as one half of a sphere each and their combination yields one planar graph -- or we find the single-crossing grid as a result of the two lemmas.

\begin{corollary}\label{cor_planarbags}
Let $k\in \N$ be some integer and let $G$ be a quasi-$4$-connected graph.
If $G$ contains a $12(k+1)$-wall, then either $G$ contains the single-crossing grid of order $k$ as a minor, or $G$ is planar.
\end{corollary}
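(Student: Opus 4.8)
The plan is to exhibit $G$ as the union of two planar graphs glued along a common cycle, one drawn in each hemisphere of the sphere $\mathbb{S}^2$. So assume $G$ does not contain the single-crossing grid of order $k$ as a minor, and let $W$ be the given $12(k+1)$-wall in $G$ with tripartition $(W_1,W_2,W_3)$. Applying \cref{lemma_planaroutside} to $W$ and using the hypothesis on $G$, we obtain a cycle $C\coloneqq\Perimeter{W_1}$ such that $G-H$ is planar with $C$ bounding a face, where $H$ is the component of $G-\Perimeter{W_1}$ containing $W_1-\Perimeter{W_1}$. Observe that $G-H$ and $\CompassG{G}{W_1}=\Perimeter{W_1}\cup H$ together cover $G$ and intersect in exactly $C$, so it remains to certify that $\CompassG{G}{W_1}$ is planar with $C$ bounding a face.

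For this I would recover the ``inside'' by re-running \cref{lemma_planarwall} on a suitable $(12k+6)$-subwall $W'$ of $W$. Since $12(k+1)>12k+6$, there is room to choose $W'$ inside $W$ so that the cycle $\Perimeter{W_1'}$ that \cref{lemma_planarwall} attaches to $W'$ is exactly $C$; here $W_1'$ is, as in \cref{lemma_planarwall}, the maximal subwall of the first part of the tripartition of $W'$ that avoids its perimeter, and the six extra rows and columns of $W$ over $W'$ provide precisely the slack needed to align the two perimeter cycles. For such a choice one verifies that $\CompassG{G}{W_1'}$ coincides with $\CompassG{G}{W_1}$, since both equal $C$ together with the unique component of $G-C$ meeting the interior of the wall. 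Now \cref{lemma_planarwall}, again invoking the hypothesis that $G$ has no single-crossing grid of order $k$ as a minor, tells us that $\CompassG{G}{W_1'}=\CompassG{G}{W_1}$ is planar with $C$ bounding a face.

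It remains to glue. Take a drawing of $G-H$ on $\mathbb{S}^2$ in which $C$ bounds an open face $f$, delete $f$, and insert into the resulting closed disk a drawing of $\CompassG{G}{W_1}$ with $C$ placed on the boundary circle so that the two cyclic orders of $C$ agree. The two drawings coincide on $C$, are otherwise disjoint, and between them use every vertex and every edge of $G$; hence their union is a drawing of $G$ on $\mathbb{S}^2$, and $G$ is planar.

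\textbf{Main obstacle.} The subtle part is forcing the two lemmas to speak about the \emph{same} cycle and verifying that the two planar pieces leave nothing of $G$ stranded between them, i.e.\ that $(G-H)\cup\CompassG{G}{W_1}=G$ and $(G-H)\cap\CompassG{G}{W_1}=C$ hold as subgraphs. The alignment $\Perimeter{W_1'}=\Perimeter{W_1}$ settles the former, and the latter is precisely the kind of statement already established inside \cref{lemma_separatingcycle,lemma_planarwall,lemma_planaroutside} from quasi-$4$-connectedness: any separation of order at most three with vertices strictly on both sides of $C$ would either contradict quasi-$4$-connectedness or, rerouted through $W$, produce a cross on $C$, and hence the single-crossing grid of order $k$, contrary to assumption. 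With this in hand the remaining gluing argument is purely topological.
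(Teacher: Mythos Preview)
Your proof is correct and is exactly the approach the paper sketches: apply \cref{lemma_planaroutside} to the $12(k+1)$-wall for the outside piece, apply \cref{lemma_planarwall} to a slightly smaller wall chosen so that the bounding cycle coincides with $\Perimeter{W_1}$, and glue the two planar halves along that common cycle on the sphere. You have also correctly flagged the alignment of the two cycles as the one delicate step, which is precisely the point the paper singles out (and treats at the same level of informality).
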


\paragraph{Proof of the structure theorem.}

We state the Grid Theorem in the form of Chuzhoy and Tan who currently provide the best   bound on the function.
Moreover, since we are going to work with walls, which have the advantage that they can be found as subgraphs and not just as minors, we state the theorem in its wall version.
Note that any grid contains a wall of half the order and any wall contains a grid minor of the same order, thus restricting ourselves to walls does not change the order of the functions involved.

\begin{proposition}[\cite{chuzhoy2021towards}]\label{thm_undirectedgridtheorem}
	There exists a function $\mathsf{g}\colon\N\rightarrow\N$ with $\mathsf{g}(n)\in\mathcal{O}(n^9\operatorname{poly log}(n))$ such that for every $k\in \N$ and every graph $G,$ if $\tw{G}>\mathsf{g}(k)$ then $G$ contains a $k$-wall as a subgraph.
\end{proposition}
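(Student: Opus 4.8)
The statement we must establish is the \emph{polynomial Excluded Grid Theorem} in its wall formulation, with the quoted bound of Chuzhoy and Tan. The plan is to deduce it from the grid-\emph{minor} version and to recall why the grid-minor version holds, since in the body of the paper we only ever invoke it as a black box. \textbf{Reducing walls to grid minors.} First I would observe that it suffices to produce a $(2k\times 2k)$-grid minor. By its very definition the elementary $k$-wall is a subgraph of the $(2k\times k)$-grid, hence of the $(2k\times 2k)$-grid, so a $(2k\times 2k)$-grid minor of $G$ yields an elementary $k$-wall minor of $G$. The elementary $k$-wall has maximum degree at most three, and a graph of maximum degree at most three is a minor of $G$ if and only if it is a topological minor of $G$; hence $G$ then contains a subdivision of the elementary $k$-wall as a subgraph, which is exactly a $k$-wall. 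So it is enough to find a function $\mathsf{g}_0$ with $\mathsf{g}_0(n)\in\mathcal{O}(n^9\operatorname{poly log}(n))$ such that $\tw{G}>\mathsf{g}_0(2k)$ forces a $(2k\times 2k)$-grid minor, and put $\mathsf{g}(k)\coloneqq\mathsf{g}_0(2k)$.

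\textbf{From treewidth to well-linkedness.} For the grid-minor statement I would begin with the classical, constant-factor tight fact that large treewidth forces a large \emph{well-linked} set: if $\tw{G}\ge t$ then $G$ contains a vertex set $Z$ with $|Z|=\Omega(t)$ such that for every bipartition $Z=Z_1\cup Z_2$ there are $\min\{|Z_1|,|Z_2|\}$ pairwise vertex-disjoint $Z_1$--$Z_2$ paths in $G$ (equivalently, pass through a bramble of large order). This costs only a constant factor.

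\textbf{From a well-linked set to a grid minor.} The technical core is to convert a polynomially large well-linked set into a \emph{path-of-sets system} of length $\Theta(k^2)$ and width $\Theta(k^2)$: a sequence $C_1,\dots,C_\ell$ of disjoint connected, internally well-linked ``clusters'' together with, for each consecutive pair, $w$ vertex-disjoint linking paths internally disjoint from all clusters. Such a system is built iteratively, at each step routing against the current well-linkedness (via Menger/flow arguments) either to carve off one more cluster while approximately preserving the invariants, or to read off enough disjoint paths directly; a sufficiently long and wide path-of-sets system then yields an honest $(k\times k)$-grid minor by routing, inside each cluster, the incoming to the outgoing linkage in the required crossing pattern and concatenating across all clusters. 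Composing the constant and polynomial losses of the three steps gives the $k^9\operatorname{poly log}(k)$ bound of \cite{chuzhoy2021towards}.

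\textbf{Main obstacle.} Every step except the third is routine. The difficulty — and the reason the bound is a ninth-degree polynomial rather than linear — is controlling how much well-linkedness each routing step consumes while building the path-of-sets system: a careless recursion produces super-polynomial, even tower-type, bounds, and the amortised analysis that keeps the loss polynomial is precisely the content of the Chekuri--Chuzhoy line of work culminating in \cite{chuzhoy2021towards}. Since we need only the theorem exactly as stated, we import it directly and do not reproduce this analysis.
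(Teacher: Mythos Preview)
The paper does not prove this proposition at all; it is stated as a cited black box from \cite{chuzhoy2021towards} and used immediately afterward as ``the remaining ingredient'' for \cref{thm_singlecrossingwithbounds}. Your proposal correctly identifies this and ultimately imports the result as well, while additionally supplying a high-level sketch (the wall-to-grid reduction via subcubicity, well-linked sets, and the path-of-sets system) that the paper does not include; so your treatment is consistent with, and in fact more detailed than, the paper's own.
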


This theorem is the remaining ingredient for the structure theorem for SCM-free graphs.
The following is a refined version of the original result by R\&S on graphs excluding a single-crossing minor \cite{robertson1993excluding}.
In particular, we present exact bounds, depending only on the graph $H$ which is excluded and the function of the grid theorem.

\begin{theorem}\label{thm_singlecrossingwithbounds}
	Let $k\in\N$ be some integer and $H$ be a single-crossing minor which is a minor of the single-crossing grid of order $k.$
	Moreover, let $\mathsf{g}$ be the function from \cref{thm_undirectedgridtheorem}.
	Then every $H$-minor-free graph $G$ has a tree decomposition $(T,\beta)$ such that for every $t\in V(T),$ if $\Abs{\beta(t)}> \mathsf{g}(12(k+1))+1$ then for all $dt\in E(T)$ we have $\Abs{\beta(t)\cap\beta(d)}\leq 3$ and the torso of $G$ at $t$ is a planar quasi-$4$-component of $G.$
\end{theorem}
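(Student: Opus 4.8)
The plan is to obtain the decomposition by \emph{refining} the canonical quasi-$4$-component decomposition of \cref{thm_4components}: we shall decompose further only those quasi-$4$-components of small treewidth, and keep the rest as single bags. Set $w\coloneqq\mathsf{g}(12(k+1))$; note $w\geq 3$ (a graph of treewidth at most $3$ contains no $12(k+1)$-wall). First I would invoke \cref{thm_4components} to get a tree decomposition $(T_0,\beta_0)$ of $G$ of adhesion at most $3$ whose torsos $G_t$ are minors of $G,$ each either quasi-$4$-connected or complete on at most four vertices. Because $H$ is a minor of the single-crossing grid of order $k,$ any graph that contains this grid as a minor also contains $H$; as $G$ is $H$-minor-free and each $G_t$ is a minor of $G,$ no $G_t$ contains the single-crossing grid of order $k$ as a minor.

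Next I would classify the nodes. If $\tw{G_t}\leq w$ (in particular whenever $G_t$ is complete on at most four vertices), call $t$ \emph{small} and fix a tree decomposition of $G_t$ of width at most $w$; all of its bags then have size at most $w+1.$ Otherwise $\tw{G_t}>w,$ and then $G_t$ is quasi-$4$-connected and, by \cref{thm_undirectedgridtheorem}, contains a $12(k+1)$-wall as a subgraph; since $G_t$ has no single-crossing grid of order $k$ as a minor, \cref{cor_planarbags} forces $G_t$ to be planar. Call such $t$ \emph{big}, and observe that then $|V(G_t)|\geq\tw{G_t}+1>w+1.$

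I would then build the refined decomposition $(T,\beta)$: replace each small node $t$ by the chosen tree decomposition of $G_t,$ keep each big node $t$ as a single bag $\beta(t)\coloneqq V(G_t),$ and for each edge $td\in E(T_0)$ reconnect the two resulting parts by an edge between a node on the $t$-side whose bag contains $\beta_0(t)\cap\beta_0(d)$ and a node on the $d$-side with the same property; such nodes exist by \cref{obs_decomposequasi4components}, since every adhesion set of $(T_0,\beta_0)$ is a clique of size at most $3$ in the relevant torso and hence lies in some bag. One checks in the standard manner that $(T,\beta)$ is a tree decomposition of $G$: the only edges of a torso $G_t$ not present in $G$ are those inside adhesion cliques, and each such clique is contained in a single bag of the chosen decomposition of $G_t,$ so both the edge-coverage condition and the subtree condition for every vertex are preserved.

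Finally I would read off the statement. A bag of $(T,\beta)$ of size exceeding $w+1$ cannot come from a small node, hence it equals $V(G_t)$ for a big node $t,$ with $G_t$ planar and quasi-$4$-connected. For each neighbour in $T$ of such a $t,$ the reconnecting node on the other side has a bag sandwiched between $\beta_0(t)\cap\beta_0(d)$ and $\beta_0(d)$ for some $T_0$-neighbour $d$ of $t,$ so its intersection with $V(G_t)=\beta_0(t)$ is exactly $\beta_0(t)\cap\beta_0(d),$ which has size at most $3.$ Thus the adhesion sets at $t$ in $(T,\beta)$ are the same as in $(T_0,\beta_0),$ and therefore the torso of $G$ at $t$ in $(T,\beta)$ is again $G_t,$ a planar quasi-$4$-component of $G,$ as required. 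I expect the only real obstacle to be the bookkeeping of this last paragraph — verifying that the reconnection preserves the adhesion sets exactly (so that torsos at big bags are unchanged) and that the refined object is a bona fide tree decomposition; the substantive input, namely that a quasi-$4$-connected $H$-minor-free graph of treewidth exceeding $w$ is planar, is already delivered by \cref{cor_planarbags} together with the Grid Theorem.
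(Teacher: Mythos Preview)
Your proposal is correct and follows essentially the same approach as the paper's proof: start from the quasi-$4$-component decomposition of \cref{thm_4components}, refine the bags of small treewidth via \cref{obs_decomposequasi4components}, and conclude planarity of the remaining large torsos from the Grid Theorem together with \cref{cor_planarbags}. Your version refines \emph{all} small-treewidth torsos rather than only the oversized ones and supplies more of the bookkeeping (checking that adhesions and torsos at big bags are unchanged), but the argument is otherwise identical.
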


\begin{proof}
	Let $G$ be an $H$-minor-free graph.
	We start by applying \cref{thm_4components} to obtain a tree decomposition $(T',\beta')$ of $G$ into its quasi-$4$-components such that the adhesion between any two neighbouring bags is at most three.
	For each $t\in V(T')$ where $\Abs{\beta'(t)}>\mathsf{g}(12(k+1))+1$ and the torso $G_t$ of $G$ at $t$ has treewidth at most $\mathsf{g}(12(k+1)),$ we may apply \cref{obs_decomposequasi4components} to refine $(T',\beta')$ and finally obtain a tree decomposition $(T,\beta)$ where for all $t\in V(T)$ we have that, if $\Abs{\beta(t)}>\mathsf{g}(12(k+1))+1$ then the torso $G_t$ of $G$ at $t$ is a quasi-$4$-component of $G$ with treewidth more that $\mathsf{g}(12(k+1))$ and $\Abs{\beta(t)\cap\beta(d)}\leq 3$ for all $dt\in E(T).$
	Let $t\in V(T)$ be a vertex with $\Abs{\beta(t)}>\mathsf{g}(12(k+1))+1$ and let $G_t$ be the torso of $G$ at $t.$
	Then by \cref{thm_undirectedgridtheorem}, $G_t$ contains a $12(k+1)$-wall.
	Since $G$ is $H$-minor-free and $G_t$ is a minor of $G,$ $G_t$ cannot contain the single-crossing grid of order $k$ as a minor and thus \cref{cor_planarbags} implies the planarity of $G_t.$
\end{proof}


\section{Matching theoretic background}\label{sec_background}

Before we start, let us take this section to formally introduce the matching theoretic concepts and tools we will use in the remainder of this paper.

\paragraph{Conventions for bipartite graphs.}

Since we are working almost exclusively on bipartite graphs, it makes sense to fix a bit of notation.
In most cases we will use $B$ to denote a bipartite graph, while $G$ generally denotes graphs which can be non-bipartite as well.

At times, we might break this convention if we run out of letters.

We assume that every bipartite graph $B$ is given together with a vertex partition into two sets $V_1$ and $V_2$ such that both of these sets induce independent sets in $B.$
The vertices of $V_1$ are always depicted as the black (or filled) vertices in our figures, while the vertices of $V_2$ are depicted as the white (or empty) ones.
For each $i\in[2]$ and any bipartite graph $B$ or set of vertices $X\subseteq V(B),$ we write $V_i(B),$ or $V_i(X),$ to denote $V_i\cap V(B),$ or $V_i\cap X$ respectively.
If the graph $B$ is understood from the context, we usually just write $V_1$ and $V_2$ instead of $V_1(B)$ and $V_2(B).$

\subsection{Basic definitions}
%

We now introduce a series of concepts from matching theory.

\paragraph{Matching covered graphs.}

Let $G$ be a graph and $F\subseteq E(G).$
By $V(F)$ we denote the set $\bigcup_{e\in F}e$ of all endpoints of the edges in $F.$
We say that $F$ is a \emph{matching} if the edges in $F$ are pairwise-disjoint.
A matching $M\subseteq E(G)$ \emph{covers} a vertex $v\in V(G)$ if $v\in V(M)$ and a vertex $u\in V(G)$ is said to be \emph{exposed} by $M$ if $u\in V(G)\setminus V(M).$
A matching $M\subseteq E(G)$ is \emph{perfect} if it covers all vertices of $G.$
We denote by $\Perf{G}$ the set of all perfect matchings of $G.$

Let $G$ be a graph.
A set $X\subseteq V(G)$ is \emph{conformal} if $G-X$ has a perfect matching.
If $M$ is a perfect matching of $G$ and $\E{G-X}\cap M$ is a perfect matching of $G-X,$ we call $X$ \emph{$M$-conformal}.
A subgraph $H$ of $G$ for which $\V{H}$ is ($M$-)conformal is called \emph{($M$-)conformal}.
An edge $e\in\E{G}$ is \emph{admissible} if there exists a perfect matching $M'$ of $G$ with $e\in M'.$
A graph $G$ is \emph{matching covered} if it is connected and all of its edges are admissible.

We begin by defining an operation, together with a decomposition, that can be seen as an analogue of the block-decomposition\footnote{That is the tree-like decomposition of a graph into its maximal subgraphs without cut-vertices.} of graphs.
However, due to the more global structure of perfect matchings one needs to use a slightly more complicated operation than just taking subgraphs.
Hence, the decomposition, in spirit, is closer to \cref{thm_4components}.

\begin{definition}[Tight cuts and braces]\label{@inconvinientes}
	Let $G$ be a graph and $X\subseteq\V{G}.$
	We denote by $\Cut{X}$ the set of edges in $G$ with exactly one endpoint in $X$ and call $\Cut{X}$ the \emph{edge cut} around $X$ in $G.$
	Let us denote by $\Perf{G}$ the set of all perfect matchings in $G.$
	An edge cut $\Cut{X}$ is \emph{tight} if $\Abs{\Cut{X}\cap M}=1$ for all perfect matchings $M\in\Perf{G}.$
	If $\Cut{X}$ is a tight cut and $\Abs{X},\Abs{\V{G}\setminus X}\geq 2,$ it is \emph{non-trivial}.
	Identifying the \emph{shore} $X$ of a non-trivial tight cut $\Cut{X}$ into a single vertex is called a \emph{tight cut contraction} and the resulting graph $G'$ can be seen to be matching covered again.
	A bipartite matching covered graph without non-trivial tight cuts is called a \emph{brace} while a non-bipartite graph without non-trivial tight cuts is called a \emph{brick}.	
\end{definition}

The following is an observation first made by Lov\'asz (see the proof of Lemma 1.4 in \cite{lovasz1987matching}).

\begin{observation}\label{obs_tightcutminoritymajority}
	Let $B$ be a bipartite and matching covered graph and let $X\subseteq\V{B}$ be a set of vertices with $\Abs{X},\Abs{\V{B}\setminus X}\geq 3.$
	Then $\CutG{}{X}$ is a tight cut if and only if there exists $i\in[2]$ such that $\Abs{X\cap V_i}=\Abs{X\setminus V_i}-1,$ and $\Fkt{N_B}{X\cap V_i}\subseteq X.$
	We say that $V_i$ is the \emph{minority} of $X$ and $V_{3-i}$ is the \emph{majority} of $X.$
\end{observation}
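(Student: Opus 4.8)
My plan is to reduce everything to a small piece of bookkeeping about how a perfect matching meets $X$, and then to extract the structural statement from the arithmetic. Fix an arbitrary $M\in\Perf{B}$ and classify the $M$-edges incident with $X$. Write $a\coloneqq\Abs{X\cap V_1}$ and $b\coloneqq\Abs{X\cap V_2}$. Since $B$ is bipartite, every edge of $M$ joins $V_1$ to $V_2$, so each $M$-edge meeting $X$ is of exactly one of three kinds: internal to $X$ (hence joining $X\cap V_1$ to $X\cap V_2$), joining $X\cap V_1$ to $V_2\setminus X$, or joining $X\cap V_2$ to $V_1\setminus X$; let $p$, $q_1$, $q_2$ count these three kinds respectively. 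Because $M$ covers each vertex of $X$ exactly once, counting from $X\cap V_1$ and from $X\cap V_2$ gives $a=p+q_1$ and $b=p+q_2$, while $\Abs{\Cut{X}\cap M}=q_1+q_2$.

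For the direction ``$\Rightarrow$'', assume $\Cut{X}$ is tight, so $q_1+q_2=1$ for every $M$. Subtracting the two identities gives $a-b=q_1-q_2$; since the left side does not depend on $M$, while $q_1,q_2\in\Set{0,1}$ sum to $1$, we conclude $a-b\in\Set{1,-1}$ and that the pair $(q_1,q_2)$ is the \emph{same} for every $M$, namely $(1,0)$ if $a-b=1$ and $(0,1)$ if $a-b=-1$. Up to interchanging $V_1$ and $V_2$ assume $a-b=1$, i.e. $\Abs{X\cap V_2}=\Abs{X\cap V_1}-1$, which is the asserted equality with $i=2$. The remaining task is to verify $N_B(X\cap V_2)\subseteq X$, and this is the one place where I would use that $B$ is matching covered: if some edge $e$ had one end $v\in X\cap V_2$ and the other in $V_1\setminus X$, then $e$ is admissible, hence lies in some $M\in\Perf{B}$, and for that $M$ we would get $q_2\ge 1$, contradicting $q_2=0$. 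As every neighbour of a vertex of $X\cap V_2$ lies in $V_1$, this yields $N_B(X\cap V_2)\subseteq V_1\cap X\subseteq X$. I expect this upgrade — from ``$q_2=0$ on the matchings we inspect'' to ``no such edge exists at all'' — to be the only genuinely non-routine step.

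For ``$\Leftarrow$'', up to interchanging $V_1$ and $V_2$ assume $\Abs{X\cap V_1}=\Abs{X\cap V_2}-1$ and $N_B(X\cap V_1)\subseteq X$, and let $M\in\Perf{B}$ be arbitrary. Each vertex of $X\cap V_1$ is $M$-matched to a neighbour, which lies in $X$ by hypothesis and in $V_2$ since $B$ is bipartite, hence in $X\cap V_2$; this accounts for $\Abs{X\cap V_1}=\Abs{X\cap V_2}-1$ distinct vertices of $X\cap V_2$, leaving exactly one vertex $w\in X\cap V_2$. Every vertex of $X\cap V_2$ other than $w$ is then $M$-matched inside $X$, while the $M$-partner of $w$ lies in $V_1$ but not in $X\cap V_1$ (all of which is already used), hence in $V_1\setminus X$; so $\Cut{X}\cap M$ is the single edge at $w$, and as $M$ was arbitrary, $\Cut{X}$ is tight. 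Finally I would add the remark that the hypotheses $\Abs{X},\Abs{\V{B}\setminus X}\ge 3$ are not needed for the equivalence itself — tightness already forces $\Abs{X\cap V_1}$ and $\Abs{X\cap V_2}$ to differ by exactly one, hence $\Abs{X}$ odd — they only rule out the trivial cut with $\Abs{X}=1$ (and $\Abs{X}=2$ supports no tight cut in any case), which is exactly the regime in which the observation is applied.
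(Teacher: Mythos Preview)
Your proof is correct. The paper does not actually prove this statement; it merely records it as an observation attributed to Lov\'asz (citing the proof of Lemma~1.4 in \cite{lovasz1987matching}), so there is no in-paper argument to compare against. Your bookkeeping argument---counting internal versus crossing matching edges by colour class and then invoking matching-coveredness to rule out the ``wrong-colour'' crossing edge---is exactly the standard elementary verification, and your closing remark that the size hypotheses are inessential to the equivalence (they only exclude the trivial shore) is also accurate.
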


One can observe that, if $X\subseteq Y\subseteq V(B)$ are two sets such that $\CutG{B}{X}$ and $\CutG{B}{Y}$ are both non-trivial tight cuts, then $\CutG{B_1}{X}$ is a tight-cut in the graph $B_1$ obtained by contracting $\Complement{Y}$ in $B.$
Hence, a maximal family of pairwise laminar non-trivial tight cuts in a matching covered graph $G$ defines a structure that decomposes $G$ in a tree-like way such that every vertex of this decomposition corresponds to a brick or brace of $G.$
We refer to this structure as the \emph{tight cut decomposition} of $G.$

It follows from a famous result of Lov\'asz \cite{lovasz1987matching} that the braces of a bipartite matching covered graph $B$ are uniquely determined.

\begin{definition}[Extendability]\label{@commemorative}
	Let $k$ be a positive integer.
	A graph $G$ is called \emph{$k$-extendable} if it has at least $2k+2$ vertices and for every matching $F\subseteq\E{G}$ of size $k$ there exists a perfect matching $M$ of $G$ with $F\subseteq M.$
\end{definition}

\begin{proposition}[\cite{lovasz2009matching}]\label{thm_braces}
	A bipartite graph $B$ is a brace if and only if it is either isomorphic to $C_4,$ or it is $2$-extendable.
\end{proposition}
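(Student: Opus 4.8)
The plan is to prove both implications through the deficiency form of Hall's theorem, using \cref{obs_tightcutminoritymajority} as a dictionary between tight cuts and Hall violations. The one non-routine ingredient, which I would establish first, is the following elementary fact about a bipartite matching covered graph $B$: for each $i\in[2]$ and each set $\emptyset\neq S\subsetneq V_i(B)$ one has $|N_B(S)|\geq|S|+1$. Since $B$ has a perfect matching, $|N_B(S)|\geq|S|$ by Hall; if equality held, then every perfect matching would match $S$ onto $N_B(S)$ and hence $N_B(S)$ into $S$, so no perfect matching would use an edge between $N_B(S)$ and $V_i(B)\setminus S$. But since $B$ is connected and $S\neq V_i(B)$ such an edge exists, and since $B$ is matching covered it lies in some perfect matching --- a contradiction.

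For the direction ``$C_4$ or $2$-extendable $\Rightarrow$ brace'', note first that $C_4$ is bipartite, connected and matching covered, and that a shore of a tight cut of a matching covered graph has odd size (one of its vertices is covered by the crossing matching edge, the rest are matched internally); hence a non-trivial tight cut would need both shores of size at least $3$, impossible on $4$ vertices, so $C_4$ is a brace. Now let $B$ be $2$-extendable, hence connected, with a perfect matching, and with $|V(B)|\geq 6$. Then $B$ is matching covered: given an edge $e$, take a perfect matching $M$; it has at least three edges, so some $e'\in M$ is disjoint from $e$, and $\{e,e'\}$ extends to a perfect matching through $e$. Next, suppose $B$ had a non-trivial tight cut $\partial(X)$; by \cref{obs_tightcutminoritymajority} I would take $V_1$ to be its minority, so $N_B(X\cap V_1)\subseteq X$, every edge of $\partial(X)$ joins $X\cap V_2$ to $V_1\setminus X$, and $|X|,|V(B)\setminus X|\geq 3$. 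If $\partial(X)$ contained two disjoint edges, these would form a matching of size $2$ meeting $\partial(X)$ twice, hence extending to no perfect matching (which meets a tight cut exactly once) --- contradicting $2$-extendability. Otherwise all edges of $\partial(X)$ pass through a single vertex $z$, which lies in $X\cap V_2$ or in $V_1\setminus X$; transferring $z$ to the opposite shore yields a set with equal parts one of whose parts has all its neighbours inside the set, and a short count against the fact above gives a contradiction (equivalently, $z$ would be a cut vertex, contradicting the $2$-connectedness of matching covered graphs on at least $4$ vertices). Hence $B$ has no non-trivial tight cut and is a brace.

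For the converse, let $B$ be a brace. If $|V(B)|\leq 4$ then $B$, being connected, bipartite, matching covered and balanced, is $C_4$ (the case $|V(B)|=2$, where $B=K_2$, being the trivial brace customarily set aside). So assume $|V(B)|\geq 6$ and fix disjoint edges $e_1=u_1v_1$, $e_2=u_2v_2$ with $u_1,u_2\in V_1$; it suffices to show $B'\coloneqq B-\{u_1,v_1,u_2,v_2\}$ has a perfect matching, since then that matching together with $e_1,e_2$ is a perfect matching of $B$ through both, and $|V(B)|\geq 6=2\cdot 2+2$. If $B'$ had no perfect matching, Hall would give a set $\emptyset\neq S\subseteq V_1\setminus\{u_1,u_2\}$ with $|N_{B'}(S)|\leq|S|-1$; as $N_B(S)\subseteq N_{B'}(S)\cup\{v_1,v_2\}$ this yields $|N_B(S)|\leq|S|+1$, and combined with the fact above (valid since $\emptyset\neq S\subsetneq V_1$) we get $|N_B(S)|=|S|+1$. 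Setting $X\coloneqq S\cup N_B(S)$ we then have $|X\cap V_1|=|S|=|X\cap V_2|-1$ and $N_B(X\cap V_1)\subseteq X$, while $|X|=2|S|+1\geq 3$ and $|V(B)\setminus X|\geq 3$ because $|S|\leq|V_1|-2$; so \cref{obs_tightcutminoritymajority} makes $\partial(X)$ a non-trivial tight cut of $B$, contradicting that $B$ is a brace. Hence $B'$ has a perfect matching and $B$ is $2$-extendable.

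I expect the only real obstacle to be the ``star'' subcase of the first implication, where $\partial(X)$ has no two disjoint edges: dispatching it cleanly requires either the $2$-connectedness of matching covered graphs on at least four vertices or the careful cardinality argument indicated above. Everything else reduces to bookkeeping with \cref{obs_tightcutminoritymajority} and the elementary fact established at the outset.
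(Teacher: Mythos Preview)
The paper does not prove this proposition; it is stated with a citation to Lov\'asz and Plummer and used as a black box, so there is no ``paper's own proof'' to compare against.

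Your argument is correct and is the standard one. A couple of small points worth tightening. First, you assert that a $2$-extendable bipartite graph is connected and has a perfect matching; with the bare definition given in the paper this is not entirely free (a graph with no matching of size two satisfies the condition vacuously), but it follows at once from \cref{thm_bipartiteextendibility}(ii), and in any event the standard convention in the cited reference includes connectedness and a perfect matching. Second, in the ``star'' subcase the phrase ``transferring $z$ to the opposite shore \dots\ a short count'' hides a small case split: when $z\in X\cap V_2$ one should apply the elementary fact to $S'=(X\cap V_2)\setminus\{z\}$ (on the $V_2$ side), and when $z\in V_1\setminus X$ to $S'=(V_1\setminus X)\setminus\{z\}$; in both cases $|N_B(S')|\leq|S'|$, contradicting the fact. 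Your parenthetical alternative via $2$-connectedness of matching covered graphs on at least four vertices is the cleaner route and avoids this bookkeeping. Finally, the $K_2$ caveat you flag is real and is indeed a convention: $K_2$ is a brace by the paper's definition but is neither $C_4$ nor $2$-extendable; the proposition tacitly excludes it.
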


The following theorem is a collection of several different characterisations of $k$-extendability in bipartite graphs.

\begin{definition}[Alternating path]\label{@sufficiently}
	Let $G$ be a graph with a perfect matching $M.$
	A path $P$ in $G$ is \emph{$M$-alternating} if there exists a set $S\subseteq\V{P}$ of endpoints of $P$ such that $P-S$ is a conformal subgraph of $G$, and we say that $P$ is \emph{alternating} if there exists a perfect matching $M$ of $G$ such that $P$ is $M$-alternating.
	$P$ is $M$-conformal if $S=\emptyset$ and $P$ is \emph{internally $M$-conformal} if $S$ contains both endpoints of $P.$
\end{definition}

\begin{proposition}[\cite{plummer1986matching,aldred2003m}]\label{thm_bipartiteextendibility}
	Let $B$ be a bipartite graph and $k\in\N$ a positive integer.
	The following statements are equivalent.
	\begin{enumerate}
		\item $B$ is $k$-extendable.
		\item $\Abs{V_1}=\Abs{V_2},$ and for all non-empty $S\subseteq V_1,$ $\Abs{\NeighboursG{B}{S}}\geq \Abs{S}+k.$
		\item For all sets $S_1\subseteq V_1$ and $S_2\subseteq V_2$ with $\Abs{S_1}=\Abs{S_2}\leq k$ the graph $B-S_1-S_2$ has a perfect matching.
		\item There is a perfect matching $M\in\Perf{B}$ such that for every $v_1\in V_1,$ every $v_2\in V_2$ there are $k$ pairwise internally disjoint internally $M$-conformal paths with endpoints $v_1$ and $v_2.$
		\item\label{more_stuf_con} For every perfect matching $M\in\Perf{B},$ every $v_1\in V_1,$ and  every $v_2\in V_2$ there are $k$ pairwise internally disjoint internally $M$-conformal paths with endpoints $v_1$ and $v_2.$
	\end{enumerate}
\end{proposition}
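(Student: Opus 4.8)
The plan is to show the five conditions equivalent along the cycle $(\mathrm{i})\Rightarrow(\mathrm{iii})\Rightarrow(\mathrm{ii})\Rightarrow(\mathrm{i})$ (the ``Hall part'') together with $(\mathrm{i})\Rightarrow(\mathrm{v})\Rightarrow(\mathrm{iv})\Rightarrow(\mathrm{i})$ (the ``Menger part''). Throughout we use the implicit standing hypothesis that $B$ has a perfect matching, so $\Abs{V_1}=\Abs{V_2}=:n$ (this is the setting in which the statement is applied, cf.\ \cref{thm_braces}); the degenerate small cases --- the clause $\Abs{V(B)}\geq 2k+2$ in the definition of $k$-extendability, and the fact that the inequality in (ii) is to be read with the restriction $\Abs{S}\leq n-k$ (equivalently, with the alternative $N_B(S)=V_2$ permitted for larger $S$) --- are handled separately. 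The Hall part rests on the marriage theorem and its defect form; the Menger part rests on Menger's theorem applied to a digraph built from a perfect matching.

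\emph{The Hall part.} For $(\mathrm{iii})\Leftrightarrow(\mathrm{ii})$: fix $S_1\subseteq V_1$, $S_2\subseteq V_2$ with $\Abs{S_1}=\Abs{S_2}\leq k$; since $B-S_1-S_2$ has balanced colour classes, Hall's theorem gives it a perfect matching iff $\Abs{N_B(T)\setminus S_2}\geq\Abs{T}$ for every $T\subseteq V_1\setminus S_1$. Ranging over all admissible $S_1,S_2$ and, for a fixed non-empty $T$, taking $S_2$ so as to delete as much of $N_B(T)$ as possible, this family of inequalities collapses to exactly ``$\Abs{N_B(T)}\geq\Abs{T}+k$ for every non-empty $T\subseteq V_1$ with $\Abs{T}\leq n-k$'', i.e.\ (ii). For $(\mathrm{iii})\Rightarrow(\mathrm{i})$: given a matching $F$ of size $k$, put $S_i:=V_i\cap V(F)$; then a perfect matching of $B-S_1-S_2$ together with $F$ is a perfect matching of $B$ containing $F$. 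For $(\mathrm{i})\Rightarrow(\mathrm{iii})$ we argue by contraposition: if $B-S_1-S_2$ has no perfect matching, the defect form of Hall yields a deficient $T\subseteq V_1\setminus S_1$, and using an arbitrary perfect matching of $B$ to locate edges one builds a matching $F$ of size $k$ whose deletion leaves $T$ with a neighbourhood smaller than $\Abs{T}$, so $F$ does not extend.

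\emph{The Menger part.} Fix a perfect matching $M$ and let $D_M$ be the digraph on $V_1\cup V_2$ obtained by directing each edge of $M$ from its $V_2$-endpoint to its $V_1$-endpoint and each non-matching edge from $V_1$ to $V_2$. Following the forced alternation one checks that an internally $M$-conformal path (in the sense of \cref{@sufficiently}) between $v_1\in V_1$ and $v_2\in V_2$ is either the single edge $v_1v_2$ --- which occurs as such a path exactly when $v_1v_2\in M$ --- or a directed $v_1$-$v_2$ path in $D_M$, and conversely every directed $v_1$-$v_2$ path in $D_M$ is internally $M$-conformal. Hence, by Menger's theorem (with the evident adjustment by one when $v_1v_2\in M$, and the usual handling of the arc $v_1v_2$ when $v_1v_2\in E(B)\setminus M$), the existence of $k$ pairwise internally disjoint internally $M$-conformal $v_1$-$v_2$ paths is equivalent to there being no set of fewer than $k$ vertices of $D_M-\{v_1,v_2\}$ separating $v_1$ from $v_2$. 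Now $(\mathrm{v})\Rightarrow(\mathrm{iv})$ is immediate, and $(\mathrm{i})\Rightarrow(\mathrm{v})$ and $(\mathrm{iv})\Rightarrow(\mathrm{i})$ follow by translating between such a separator and a deficient set: from a $v_1$-$v_2$ separator $Z$ of $D_M$ with $\Abs{Z}<k$ one takes the set $R$ of vertices reachable from $v_1$ in $D_M-Z$ and, reading off $R\cap V_1$, composing with the $M$-partner map and subtracting $Z$, obtains $T\subseteq V_1$ with $\Abs{N_B(T)}<\Abs{T}+k$, contradicting (ii); conversely, a failure of (iii), say $B-S_1-S_2$ without a perfect matching, yields for a suitable terminal pair $(v_1,v_2)$ a $v_1$-$v_2$ separator of $D_M$ of size $<k$, hence a violation of (iv).

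\emph{Main obstacle.} The routine work is the Hall part plus the bookkeeping for degenerate cases (the $\Abs{V(B)}\geq 2k+2$ clause, the admissible range of $S$ in (ii), the edge $v_1v_2\in M$, and the padding needed to pass from $\Abs{S_1}=\Abs{S_2}=k$ to $\Abs{S_1}=\Abs{S_2}\leq k$ in (iii)). The genuine difficulty is the two-way dictionary in the Menger part between a vertex separator of the alternating digraph $D_M$ and a Hall-deficient subset of $V_1$: it must keep precise track of how the separator distributes across $V_1$ and $V_2$ and compose correctly with $M$, since the ``local'' connectivity condition (iv)/(v) speaks about individual terminal pairs while the ``global'' condition (i)/(ii) is a neighbourhood-expansion statement. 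This is precisely what is carried out in \cite{plummer1986matching,aldred2003m}.
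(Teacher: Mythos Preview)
The paper does not give its own proof of this proposition; it is quoted as a known result from \cite{plummer1986matching,aldred2003m}. So there is nothing to compare against, and your task reduces to whether your argument stands on its own. It does: the two-block strategy (Hall part for (i)$\Leftrightarrow$(ii)$\Leftrightarrow$(iii), Menger part via the alternating digraph $D_M$ for (i)$\Leftrightarrow$(iv)$\Leftrightarrow$(v)) is exactly the standard route, and your identification of the degenerate cases (the implicit restriction $|S|\leq n-k$ in (ii), the $v_1v_2\in M$ adjustment) is accurate. One place where your sketch is thinner than it should be is the contrapositive for (i)$\Rightarrow$(iii): ``using an arbitrary perfect matching of $B$ to locate edges one builds a matching $F$ of size $k$'' needs an actual construction, and the obvious attempt (take the $M$-edges from $N_B(T)\setminus M(T)$ and pad) can stall when $N_B(M(T))\subseteq T$. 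A cleaner route here is to prove (i)$\Rightarrow$(iii) by induction on $|S_1|$, using that $B+uv$ is still $k$-extendable to handle the case $uv\notin E(B)$; alternatively, simply go (i)$\Rightarrow$(ii) via the $D_M$/Menger bridge you already built and close the Hall cycle through (ii)$\Rightarrow$(iii)$\Rightarrow$(i). Either way the gap is local and easily filled.
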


While ii) can be seen as a generalisation of Hall's Theorem, statements iv) and v) are more similar to Menger's Theorem.

\paragraph{Matching minors.}

An important restriction of tight cut contractions gives rise to the definition of matching minors.
Let $G$ be some matching covered graph and let $v\in V(G)$ be some vertex of degree two together with its two neighbours $N_{G}(v)=\{u_0,u_1\}.$
Notice that $\CutG{G}{\{ v,u_1,u_1\}}$ is a tight cut since every perfect matching of $G$ must use one of the two edges $vu_0$ or $vu_1.$
We call the operation of contracting $\{v,u_1,u_2 \}$ into a single vertex a \emph{bicontraction}.
Please note that all graphs in this paper are considered to be simple, and thus we remove parallel edges and loops wherever those might arise from contractions.

\begin{definition}[Matching minor]\label{def_matchingminor}
Let $G$ be a graph with a perfect matching.
A graph $H$ is a \emph{matching minor} of $G$ if it can be obtained from a conformal subgraph of $G$ by bicontractions.

Let $M$ be a perfect matching of $G.$
We say that $H$ is an \emph{$M$-minor} of $G$ if $H$ can be obtained from an $M$-conformal subgraph $H'$ of $G$ by repeated bicontractions where $M$ contains a perfect matching of $H'.$
\end{definition}

Similar to how every $2$-connected minor of a graph $G$ must be a minor of one of its blocks, every brace that is a matching minor of some bipartite matching covered graph $B$ is a matching minor of some brace of $B$ \cite{lucchesi2015thin}.

There exists a matching theoretic analogue of topological minors, or subdivisions, for the matching theoretic setting.
A graph $H$ is a \emph{bisubdivision} of some graph $H'$ if it can be obtained from $H'$ by subdividing every edge an even number of times.
Please note that we explicitly allow zero subdivisions in this definition.
A graph $G$ with a perfect matching contains a \emph{conformal bisubdivision} of a graph $H$ if $G$ has a conformal subgraph isomorphic to some bisubdivision of $H.$
For graphs of maximum degree three, there is no difference between the containment as a conformal bisubdivision or as a matching minor. 
A graph $G$ is said to be \emph{matching covered} if it is connected and each of its edges belongs to at least one perfect matching of $G.$

\begin{lemma}[\cite{lucchesi2018two}]\label{lemma_confmathingminors}
	Let $G$ and $H$ be matching covered graphs such that $\MaximumDegree{H}=3.$
	Then $G$ contains a conformal bisubdivision of $H$ if and only if it contains $H$ as a matching minor.
\end{lemma}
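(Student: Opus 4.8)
The plan is to prove the two implications separately; the substance lies in ``matching minor $\Rightarrow$ conformal bisubdivision'', and the hypothesis $\Delta(H)=3$ is used only there.

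\emph{The easy direction.} Suppose $G$ has a conformal subgraph $\hat H$ isomorphic to a bisubdivision of $H$. Each edge of $H$ appears in $\hat H$ as an internally disjoint path with an \emph{even} number of degree-two internal vertices. I would simply bicontract these internal vertices one at a time, in any order: a single bicontraction of such a vertex shortens the path by exactly two internal vertices, so after finitely many steps each path collapses to a single edge joining the images of its two endpoints, which are distinct since $H$ is simple, and the resulting graph is $H$. Every bicontraction performed is legal, because the closed-neighbourhood cut of a degree-two vertex is always tight, and $\hat H$ is conformal, so \cref{def_matchingminor} gives that $H$ is a matching minor of $G$. The degree bound is not needed here.

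\emph{The hard direction: setting up the model.} Suppose $H$ is a matching minor of $G$, witnessed by a conformal subgraph $H'\subseteq G$ and a sequence of bicontractions. First I would fix a perfect matching: $H'$ has a perfect matching (one lifts a perfect matching of the matching-covered $H$ backwards through each bicontraction), so uniting a perfect matching $M'$ of $H'$ with a perfect matching of $G-V(H')$ yields a perfect matching $M$ of $G$ with $M\cap E(H')=M'$; let $M_H$ be the perfect matching of $H$ onto which $M'$ projects. Next, reading the bicontractions backwards and inducting on their number, I would extract branch sets: pairwise disjoint connected sets $\{X_v\}_{v\in V(H)}$ partitioning $V(H')$ so that for each $v$ the graph $H'[X_v]$ carries the near-perfect matching $M'_v\coloneqq M'\cap E(H'[X_v])$ missing a single vertex $r_v$ (the \emph{root}), the unique edge of $M'$ leaving $X_v$ is incident with $r_v$, and each edge $uv$ of $H$ is witnessed by some $H'$-edge between $X_u$ and $X_v$ -- with $uv\in M_H$ witnessed by $r_ur_v\in M'$ itself. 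The inductive step just checks that a reverse bicontraction, splitting one branch set into three along a length-three $M'$-fragment, preserves connectedness, the near-perfect matching, and the ``one leaving $M'$-edge'' invariant, all of which are immediate.

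\emph{The hard direction: assembling the bisubdivision, and the obstacle.} Finally I would carve a conformal bisubdivision $\hat H$ out of $H'$: keep the roots $\{r_v\}$; for $uv\in M_H$ keep the single edge $r_ur_v$ (a zero-fold subdivision); and for $uv\in E(H)\setminus M_H$ take a path $P_{uv}$ from $r_u$ to $r_v$ built by concatenating an $M'_u$-alternating path inside $X_u$ from $r_u$ to an endpoint of a witnessing edge $e_{uv}$, the edge $e_{uv}$, and an $M'_v$-alternating path inside $X_v$ to $r_v$, chosen so that $e_{uv}$ is the only non-matching edge at each junction; tracking the alternation then forces $P_{uv}$ to have an even number of internal vertices. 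A direct check shows that $M'\cap E(\hat H)$ consists exactly of the edges $r_ur_v$ ($uv\in M_H$) together with the matching edges strung along the $P_{uv}$'s, hence is a perfect matching of $\hat H$; therefore $M'$ matches $V(H')\setminus V(\hat H)$ inside $H'-V(\hat H)$, so $(M\setminus M')\cup(M'\setminus E(\hat H))$ is a perfect matching of $G-V(\hat H)$, $\hat H$ is conformal, and by construction it is a bisubdivision of $H$. The hard part -- and the only place $\Delta(H)=3$ is used -- is making these paths genuinely fit together: at a branch set $X_v$ the matched edge touches only $r_v$, so at most $\deg_H(v)-1\le 2$ of the paths $P_{uv}$ run through $X_v$, and I need those $\le 2$ alternating paths out of the unique exposed vertex $r_v$ of the connected near-perfectly matched graph $H'[X_v]$ to be internally disjoint and to reach distinct ports. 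Securing this is an exchange/augmenting-path argument, and I expect it to require first normalising the model -- taking $H'$ to be a \emph{minimal} conformal subgraph still bicontracting onto $H$, and choosing the witnessing edges $e_{uv}$ with distinct endpoints inside each $X_v$ -- so that each $X_v$ is essentially a bisubdivided path or claw. For a degree-four vertex one would instead need three internally disjoint alternating paths out of a single exposed vertex, which need not exist; this is precisely why the lemma requires $\Delta(H)=3$.
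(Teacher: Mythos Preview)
The paper does not prove this lemma; it is quoted with a citation to \cite{lucchesi2018two} and used as a black box. So there is no ``paper's own proof'' to compare against.

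That said, your sketch is sound and matches the standard argument. The easy direction is exactly as you say. For the hard direction, your branch-set / root / alternating-path picture is correct, and you have correctly isolated the one nontrivial step: forcing the (at most two) non-matching alternating paths inside a branch set $X_v$ to be internally disjoint. The cleanest way to discharge this---and the way it is usually done in the literature---is via the Norine--Thomas \emph{matching minor model} (which this paper in fact introduces later as \cref{def_matchingminormodel} together with \cref{lemma_matmodel}): there each $\mu(v)$ is a \emph{barycentric tree} whose old vertices are exactly the attachment points of the incident edge-models. When $\deg_H(v)\le 3$, a minimal such tree has at most three old vertices and is therefore a path; concatenating these paths with the odd edge-paths $\mu(e)$ yields a bisubdivision directly, and conformality is inherited. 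Your ``take $H'$ minimal so each $X_v$ becomes a bisubdivided path'' is precisely this statement phrased without the barycentric-tree language, so you are not missing anything substantive---just the packaging that makes the disjointness automatic rather than an ad hoc exchange argument.
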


\subsection{Pfaffian graphs and the Two Paths Theorem}

\paragraph{The \textsc{Pfaffian Recognition} problem.}

To obtain a matching theoretic version of the \hyperref[thm_singlecrossingwithbounds]{structure theorem for SCM-free graphs} we will need a matching theoretic version of the \hyperref[thm_twopaths]{societal Two Paths Theorem}.
To understand where we derive such a theorem from, and particularly what kind of reductions we allow, we need to know a bit about the structure of matching covered bipartite graphs that exclude $K_{3,3}$ as a matching minor.

The graph $K_{3,3}$ plays a key role in the theory of matching minors.
It was found to be the singular obstruction, in the sense of matching minors, for a bipartite graph to have a Pfaffian Orientation relatively early \cite{kasteleyn1967graph}.
At the time however this did not yield a solution for the \textsc{Pfaffian Recognition} problem as no algorithm was known to check for the presence of a specific matching minor.
In the following $30$ years many equivalent problems would be discovered by various authors (see \cite{mccuaig2004polya} for a good overview) but it took a complete structural characterisation of bipartite graphs without a $K_{3,3}$-matching minor that resembles similar results from (regular) minor theory such as, for example, Wagner's description of $K_5$-minor-free graphs \cite{wagner1937eigenschaft}.

Given a bipartite graph $B$ with a perfect matching we say that $B$ is \emph{non-Pfaffian} if it contains $K_{3,3}$ as a matching minor.
If $B$ does not contain $K_{3,3}$ we say that $B$ is \emph{$K_{3,3}$-free} or \emph{Pfaffian}\footnote{We give a formal introduction to Pfaffian Orientations in \cref{sec_countingperfectmatchings}. For this part of the paper the structural definition suffices.}.

\begin{definition}[$4$-Cycle Sum]\label{@denomination}
	For every $i\in\Set{1,2,3}$ let $B_i$ be a bipartite graph with a perfect matching and $C_i$ be a conformal cycle of length four in $B_i.$
	A \emph{$4$-cycle-sum} of $B_1$ and $B_2$ at $C_1$ and $C_2$ is a graph $B'$ obtained by identifying $C_1$ and $C_2$ into the cycle $C'$ and possibly forgetting some of its edges.
	If a bipartite graph $B''$ is a $4$-cycle-sum of $B'$ and some bipartite and matching covered graph $B_3$ at $C'$ and $C_3,$ then $B''$ is called a \emph{trisum} of $B_1,$ $B_2$ and $B_3.$
\end{definition}

The \emph{Heawood graph} is the bipartite graph associated with the incidence matrix of the Fano plane, see \cref{fig_heawood} for an illustration.
It is the singular exceptional graph in the structure theorem for $K_{3,3}$-free braces, similar to how Wagner's graph is the only non-planar graph necessary to describe the structure of all $K_5$-minor-free graphs.

\begin{figure}[!ht]
	\centering
\makefast{

\begin{tikzpicture}[scale=0.9]
		\pgfdeclarelayer{background}
		\pgfdeclarelayer{foreground}
		\pgfsetlayers{background,main,foreground}
		
		\foreach \x in {2,4,6,8,10,12,14}
		{
			\node[v:main] () at (\x*25.71:15mm){};
		}
		
		\foreach \x in {1,3,5,7,9,11,13}
		{
			\node[v:mainempty] () at (\x*25.71:15mm){};
		}
		
		\begin{pgfonlayer}{background}
			\foreach \x in {2,4,6,8,10,12,14}
			{
				\draw[e:mainthin] (\x*25.71:15mm) to (128.55+\x*25.71:15mm);
			}
			
			\foreach \x in {1,...,14}
			{
				\draw[e:main] (\x*25.71:15mm) to (25.71+\x*25.71:15mm);
			}
		\end{pgfonlayer}
	\end{tikzpicture}}{
			\scalebox{.46}{\includegraphics{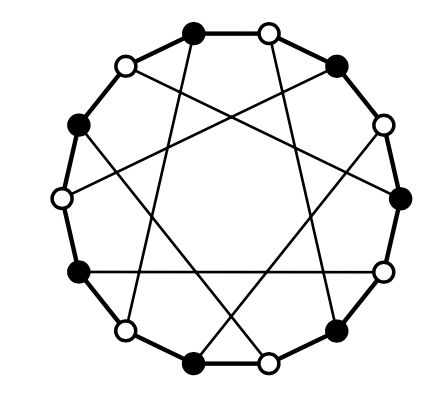}}}

	\caption{The Heawood graph $H_{14}.$}
	\label{fig_heawood}
\end{figure}

\begin{proposition}\cite{mccuaig2004polya,robertson1999permanents}\label{thm_trisums}
	A brace is $K_{3,3}$-free if and only if it either is isomorphic to the Heawood graph, or it can be obtained from planar braces by repeated application of the \hyperref[def:cyclesum]{trisum} operation.
\end{proposition}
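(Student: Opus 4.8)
The plan is to prove \cref{thm_trisums} by induction on $|V(B)|$, reconstructing the arguments of McCuaig and of Robertson, Seymour, and Thomas \cite{mccuaig2004polya,robertson1999permanents}. The first move would be to rephrase the hypothesis: by \cref{lemma_confmathingminors}, since $\Delta(K_{3,3})=3$, a brace $B$ is $K_{3,3}$-free if and only if it contains no \emph{conformal bisubdivision} of $K_{3,3}$, and this topological reformulation is much easier to argue with. Throughout I would lean on \cref{thm_braces} and on the equivalent formulations of $2$-extendability in \cref{thm_bipartiteextendibility} — especially the Hall-type inequality $|N(S)|\geq|S|+2$ and the Menger-type condition on internally $M$-conformal paths — to control the small separations of $B$. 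The induction then splits into a \emph{reducible} case, where $B$ can be torn apart along a conformal $4$-cycle and one recurses, and an \emph{irreducible} case, where one must show outright that $B$ is planar or is the Heawood graph.

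For the reducible case, suppose $B$ has a conformal $4$-cycle $C$ and a separation $(A_1,A_2)$ of $B$ with $A_1\cap A_2=V(C)$ and both sides proper. I would show, using the $2$-extendability criteria of \cref{thm_bipartiteextendibility}, that $B$ is then a trisum of three strictly smaller bipartite graphs, each taken to be a brace: one supported on $A_1$, one on $A_2$, and one living near $C$. The third summand on $C$ is exactly what makes the operation a \emph{trisum} rather than a plain two-piece $4$-sum (see \cref{@denomination}); it absorbs the matching structure that neither $B[A_1]$ nor $B[A_2]$ carries on its own, and getting this bookkeeping right — ensuring all three summands really are braces and are strictly smaller — is the delicate part here. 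Each summand is $K_{3,3}$-free, since a conformal bisubdivision of $K_{3,3}$ inside one summand would, because $C$ is conformal and the summands overlap in exactly the cycle $C$, reassemble into a conformal bisubdivision of $K_{3,3}$ in $B$. The inductive hypothesis then applies to each summand.

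The irreducible case is the crux, and the step I expect to be by far the hardest. Assume $B$ admits no splitting along a conformal $4$-cycle; if $B$ is planar we are done, so suppose $B$ is non-planar. As $K_5$ is not bipartite, Kuratowski's theorem gives a subdivision $H$ of $K_{3,3}$ in $B$, which I would choose to minimise first the number of vertices outside $H$ and then the total length of its branch paths. Since $B$ is $K_{3,3}$-free, $H$ cannot be both conformal and a bisubdivision, so either some branch path has even length or $B-V(H)$ has no perfect matching. The heart of the proof is to show that such a defect is always \emph{local}: inspecting the components of $B-V(H)$ and their attachments to $H$, one tries to reroute $H$ through them to repair the parity or the conformality, and one argues that the only way every reroute is blocked is that the attachments are governed by a separator of order at most four lying on a conformal $4$-cycle — contradicting irreducibility — \emph{unless} $B$ is the Heawood graph, whose girth $6$ and symmetry leave no room for any reroute while still forcing every topological $K_{3,3}$ in it to be non-conformal. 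This is not a single clean argument but a long, careful enumeration of how a non-conformal topological $K_{3,3}$ can sit inside a $2$-extendable bipartite graph without ever yielding a conformal bisubdivision; it is essentially the bulk of \cite{mccuaig2004polya,robertson1999permanents}, and it is where all the real work lies.

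Finally I would assemble the theorem. Combining the two cases, every $K_{3,3}$-free brace is planar, is the Heawood graph, or is a trisum of strictly smaller $K_{3,3}$-free braces; and since the Heawood graph has girth $6$ it possesses no conformal $4$-cycle, so it can never be a proper summand of a trisum — it only appears when $B$ itself is the Heawood graph. Unwinding the recursion therefore gives exactly the claimed dichotomy. For the converse, one checks that a trisum of $K_{3,3}$-free braces at conformal $4$-cycles is again a $K_{3,3}$-free brace: brace-ness is preserved by the $4$-cycle-sum via the $2$-extendability criteria of \cref{thm_bipartiteextendibility}, and a conformal bisubdivision of $K_{3,3}$ in the trisum, forced to traverse the identified $4$-cycle, could be confined to a single summand and would contradict that summand's $K_{3,3}$-freeness; planar braces are $K_{3,3}$-free because $K_{3,3}$ is non-planar and every matching minor is in particular a minor.
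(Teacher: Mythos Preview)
The paper does not prove \cref{thm_trisums}. It is stated as a \emph{Proposition} with citations to \cite{mccuaig2004polya,robertson1999permanents} and used as a black box throughout (notably in the proofs of \cref{thm_societytwopaths} and \cref{thm_pfaffianbrace}). There is therefore no ``paper's own proof'' to compare your proposal against; the authors are importing the result wholesale from the literature.

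As for the proposal itself, the overall shape --- split into a reducible case (decompose along a separating conformal $4$-cycle and recurse) and an irreducible case (show the brace is planar or Heawood) --- matches the architecture of the original proofs. But two points deserve flagging. First, your account of the trisum in the reducible case is confused: a separation $(A_1,A_2)$ with separator $V(C)$ has two sides, not three, and your ``third summand living near $C$ that absorbs the matching structure'' is not what \cref{@denomination} describes. In the actual theory the relevant observation is that for a brace $B$ and a conformal $4$-cycle $C$, the graph $B-V(C)$ can have up to three components (the bound of three comes from $2$-extendability), and the summands are the graphs induced on each component together with $C$. When there are only two components one uses a $4$-cycle-sum, not a trisum; the paper's phrasing ``repeated application of the trisum operation'' is slightly loose on this point. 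Second, you openly concede that the irreducible case ``is essentially the bulk of \cite{mccuaig2004polya,robertson1999permanents}'' and offer only a heuristic for why a minimal non-conformal topological $K_{3,3}$ should force either a separating $4$-cycle or the Heawood graph. That is fair as an outline of what has to happen, but it is not a proof sketch in any actionable sense --- the actual argument in those papers runs to dozens of pages of structural case analysis, and there is no shortcut known.

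For the converse, your claim that ``brace-ness is preserved by the $4$-cycle-sum'' is not automatic and also requires care in the original papers; it is not an immediate consequence of \cref{thm_bipartiteextendibility}.
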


\begin{corollary}\cite{mccuaig2004polya,robertson1999permanents}\label{cor_pfaffianalg}
	There exists an algorithm that decides, given a brace $B$ as input, whether $B$ contains $K_{3,3}$ as a matching minor in time $\Fkt{\mathcal{O}}{\Abs{\V{B}}^3}.$
\end{corollary}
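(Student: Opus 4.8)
The plan is to convert the structure theorem for $K_{3,3}$-free braces (\cref{thm_trisums}) into a recursive decomposition algorithm, which is exactly the algorithmic content of \cite{mccuaig2004polya,robertson1999permanents}. Note first that a direct search is not obviously polynomial: by \cref{lemma_confmathingminors}, detecting $K_{3,3}$ as a matching minor amounts to finding a conformal bisubdivision of $K_{3,3}$, and no direct polynomial-time routine for conformal bisubdivisions is available, so one must exploit the fact that \cref{thm_trisums} completely characterises the ``no''-instances. Given a brace $B$, the algorithm first tests the two ``atomic'' cases: whether $B$ is planar (a linear-time planarity test) and whether $B$ is isomorphic to the Heawood graph $H_{14}$ (a constant-time check, since $H_{14}$ has $14$ vertices); in either case $B$ is $K_{3,3}$-free. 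Otherwise it searches for a conformal $4$-cycle $C$ of $B$ along which $B$ decomposes as a $4$-cycle-sum, that is, for the ``hub'' $4$-cycle of a trisum representation of $B$; such a $C$ is witnessed by a separation of $B$ of order four whose separator is $\V{C}$ and for which, after adding the missing chords of $C$, the sides are strictly smaller braces each containing $C$ as a conformal cycle. If such a $C$ is found, recurse on the smaller pieces; if a recursive piece fails to be a brace, first split it into its braces via its (polynomial-time computable, and unique by Lov\'asz) tight cut decomposition, which is sound because $K_{3,3}$ is itself a brace and, by \cite{lucchesi2015thin}, a brace occurs as a matching minor of a bipartite matching covered graph if and only if it occurs as a matching minor of one of its braces. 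Output ``$K_{3,3}$-free'' precisely when every leaf of the recursion is planar or the Heawood graph; if at some node the current brace is neither planar nor Heawood and admits no decomposing $4$-cycle, output ``contains $K_{3,3}$''.

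Correctness is immediate from \cref{thm_trisums}: unwinding the recursion, $B$ is declared $K_{3,3}$-free exactly when it can be assembled from planar braces and the Heawood graph by repeated trisums, which is precisely the class of $K_{3,3}$-free braces; and whenever a node is neither planar nor Heawood and has no decomposing $4$-cycle, \cref{thm_trisums} forces a $K_{3,3}$ matching minor. For the running time one argues that a successful decomposition splits a brace on $n$ vertices into pieces whose orders sum to $n+O(1)$ (the hub $4$-cycle is shared), so the recursion tree has $O(n)$ nodes whose sizes sum to $O(n^2)$; charging to each node a linear-time planarity test and a polynomial-time search for a decomposing $4$-cycle, and amortising, yields the claimed $\mathcal{O}(\Abs{\V{B}}^3)$ bound.

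The main obstacle, and the step that genuinely requires \cite{mccuaig2004polya,robertson1999permanents} rather than \cref{thm_trisums} alone, is the efficient detection of a decomposing $4$-cycle: a brace may have $\Omega(n^2)$ edges and hence many $4$-cycles, so naively testing every $4$-cycle for the separation property is too slow. One must show that a $K_{3,3}$-free brace that is neither planar nor the Heawood graph always possesses a decomposing $4$-cycle drawn from a small, structurally restricted family of candidates -- exploiting the Hall-type expansion of braces from \cref{thm_bipartiteextendibility} -- and that membership of a candidate $4$-cycle in this family can be certified quickly through bipartite matching and connectivity computations. Once this combinatorial-algorithmic core is in place, the remaining ingredients (the base-case tests, the recursion, and the correctness argument) are routine.
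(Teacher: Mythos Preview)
The paper does not supply a proof of this corollary; it is stated as a known result from \cite{mccuaig2004polya,robertson1999permanents} with no argument given. So there is no ``paper's own proof'' to compare against. Your outline is a plausible reconstruction of how the structure theorem \cref{thm_trisums} is converted into an algorithm in those references, and you are right that the algorithmic substance lies in the efficient detection of a decomposing $4$-cycle. But you do not actually carry out that step: your final paragraph only names what must be shown (``a small, structurally restricted family of candidates'') without exhibiting such a family or proving it suffices. As written, then, this is a plan rather than a proof, and it defers the genuine work back to the citations.

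Two smaller points. First, \cref{thm_trisums} says a $K_{3,3}$-free brace is either the Heawood graph \emph{or} obtainable from planar braces by trisums; the Heawood graph is not one of the building blocks, so once the top-level Heawood test fails your recursion should expect only planar leaves, not ``planar or Heawood''. Second, your runtime accounting is loose: you budget a ``polynomial-time search for a decomposing $4$-cycle'' at each recursion node and then assert an overall $\mathcal{O}(\Abs{\V{B}}^3)$ bound, but unless that search is near-linear in the current piece, the amortisation over $O(n)$ nodes of total size $O(n^2)$ does not yield a cubic bound.
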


\paragraph{A matching theoretic two paths theorem.}

For our purposes we cannot simply employ the Two Paths Theorem itself, as not every cross in the graph is compatible with its matching structure.
Instead, we need to discuss crosses which consist of alternating paths and which are accessible from the larger infrastructure provided by large grids\footnote{How these large grids are obtained is explained in \cref{sec_perfectmatchingwidth}.}.
As a starting point towards proving a matching theoretic version of the \hyperref[thm_twopaths]{\textsl{societal} Two Paths Theorem}, we introduce a theorem that links the existence of such a ``good'' cross to the presence of $K_{3,3}$ as a matching minor.
In $K_{3,3}$-free braces that are distinct from the Heawood graph we may replace any occurrences of non-planarity by $4$-cycles.
Moreover, if we are given a highly connected \hyperref[def:matchingminor]{matching minor} within a $K_{3,3}$-free brace, say for example a large grid, we know that this grid cannot be contained in many different parts which are separated by $4$-cycles, and thus we must be able to find a planar brace within the tree-structure provided by \cref{thm_trisums} that contains our large grid.
This gives a solid idea of what the absence of ``good'' crosses  might be like in bipartite matching covered graphs.
So next we need a tool that allows us to argue that, in the absence of a large complete bipartite graph as a matching minor, we will find an area within every grid which is, essentially, $K_{3,3}$-free.
 \cite{giannopoulou2021two} proposed an analogue of the \textsl{Two Paths Theorem} for crosses formed by alternating paths over conformal cycles in bipartite graphs with perfect matchings based on \hyperref[def:cyclesum]{$4$-cycle sums}.

\begin{definition}[Conformal cross]\label{@breathtaking}
	Let $G$ be a graph with a perfect matching and let $C$ be a conformal cycle in $G.$
	Two paths $P_1$ and $P_2$ where $P_i$ has endpoints $s_i$ and $t_i$ for each $i\in[2]$ are said to form a \emph{conformal cross} over $C$ if they are disjoint, internally disjoint from $C,$ their endpoints $s_1,$ $s_2,$ $t_1,$ and $t_2$ occur on $C$ in the order listed, and the graph $C+P_1+P_2$ is a conformal subgraph of $G.$
\end{definition}

The following two results are the key to the main theorem of \cite{giannopoulou2021two} and will serve as the main ingredient of a  societal Two Paths Theorem appropriate for the matching setting.

\begin{lemma}[\cite{giannopoulou2021two}]\label{lemma_goodcrossesmeanK33}
	Let $B$ be a brace and $C$ a $4$-cycle in $B,$ then there is a conformal cross over $C$ in $B$ if and only if $C$ is contained in a conformal bisubdivision of $K_{3,3}.$	
\end{lemma}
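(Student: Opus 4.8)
I would prove the two implications separately. The backward direction is a short explicit construction; the forward one, where the brace hypothesis is really used (through $2$-extendability, \cref{thm_braces} and \cref{thm_bipartiteextendibility}), is the harder part.

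\emph{From a $K_{3,3}$-bisubdivision to a cross.} Suppose $C$ lies inside a conformal bisubdivision $K$ of $K_{3,3}$. Since $K_{3,3}$ has girth four and bisubdividing only lengthens the cycles through an edge, a four-cycle of $K$ must be a four-cycle of $K_{3,3}$ all of whose edges are left unsubdivided; after relabelling I may thus assume $C = a_1 b_1 a_2 b_2$ with $a_1, b_1, a_2, b_2$ branch vertices of $K$. Let $a_3, b_3$ be the two branch vertices off $C$, so that the inflated edges $a_1 b_3, b_3 a_2$ form a path $P_1$ from $a_1$ to $a_2$ and the inflated edges $b_1 a_3, a_3 b_2$ form a path $P_2$ from $b_1$ to $b_2$, both internally disjoint from $C$ and from each other. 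With $s_1 = a_1, s_2 = b_1, t_1 = a_2, t_2 = b_2$ the endpoints occur on $C$ in the listed cyclic order, so $(P_1, P_2)$ is a cross over $C$. Moreover, $C + P_1 + P_2$ is exactly $K$ with the interior of the single inflated edge $a_3 b_3$ removed, and as $K$ is a bisubdivision that inflated edge is an odd-length path, so its interior is a path on an even number of vertices (possibly empty). Hence $V(B) \setminus V(C + P_1 + P_2)$ is the disjoint union of $V(B) \setminus V(K)$ and that even-order interior path; taking a perfect matching of $B - V(K)$ (which exists since $K$ is conformal) together with a perfect matching of the interior path yields a perfect matching of $B - V(C + P_1 + P_2)$, so $(P_1, P_2)$ is a conformal cross over $C$. (This direction uses nothing about $B$ beyond the conformality of $K$.)

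\emph{From a cross to a $K_{3,3}$-bisubdivision.} Conversely, let $(P_1, P_2)$ be a conformal cross over $C = s_1 s_2 t_1 t_2$; the case $B = C_4$ is trivial, so I assume $B \neq C_4$, whence $B$ is $2$-extendable by \cref{thm_braces}. Choosing colour classes so that $s_1, t_1 \in V_1$ and $s_2, t_2 \in V_2$, the paths $P_1$ and $P_2$ have even length at least two, their interiors are nonempty, and $|\mathrm{int}(P_1) \cap V_2| = |\mathrm{int}(P_1) \cap V_1| + 1$ while $|\mathrm{int}(P_2) \cap V_1| = |\mathrm{int}(P_2) \cap V_2| + 1$. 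Put $D := C + P_1 + P_2$; suppressing its degree-two vertices yields $K_4$ on $\{s_1, s_2, t_1, t_2\}$, so $D$ is a subdivision of $K_4$ that is a conformal subgraph of $B$ and contains $C$. The plan is to enlarge $D$ to a bisubdivision of $K_{3,3}$ by adjoining one further path (a ``handle'') $Q$ from an interior vertex $q_1$ of $P_1$ to an interior vertex $q_2$ of $P_2$: after suppressing degree-two vertices, $D + Q$ becomes $K_{3,3}$ with colour classes $\{s_1, t_1, q_2\}$ and $\{s_2, t_2, q_1\}$, and --- the key point --- if $q_1 \in V_2$ and $q_2 \in V_1$, then every edge-path of this subdivision (the four edges of $C$, the two halves of $P_1$, the two halves of $P_2$, and $Q$) has odd length, so $D + Q$ is a genuine bisubdivision of $K_{3,3}$, not merely a subdivision.

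It remains to produce such a handle $Q$ that is internally disjoint from $D$ and leaves $D + Q$ conformal; this is the main obstacle. For disjointness I would first note that $B - V(C)$ has a perfect matching, by \cref{thm_bipartiteextendibility}\,(iii) applied with $S_1 = \{s_1, t_1\}$ and $S_2 = \{s_2, t_2\}$, so every component of $B - V(C)$ is balanced. If $\mathrm{int}(P_1)$ and $\mathrm{int}(P_2)$ lay in distinct components $A_1 \neq A_2$ of $B - V(C)$, then $A_1 - \mathrm{int}(P_1)$ would be a union of components of $B - V(D)$ and hence perfectly matched (the cross being conformal), which would force $|V_1(A_1)| - |V_2(A_1)| = |V_1(\mathrm{int}(P_1))| - |V_2(\mathrm{int}(P_1))| = -1$, contradicting the balance of $A_1$. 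Therefore $\mathrm{int}(P_1)$ and $\mathrm{int}(P_2)$ lie in a common component $A$, and one may pick $q_1 \in \mathrm{int}(P_1) \cap V_2$ and $q_2 \in \mathrm{int}(P_2) \cap V_1$ joined by a path in $A$ that is internally disjoint from $\mathrm{int}(P_1) \cup \mathrm{int}(P_2)$, and so from $D$. For conformality I would fix a perfect matching $M_0$ of $B - V(D)$ and additionally require $Q$ to alternate with respect to $M_0$ with non-matching first and last edges, so that $\mathrm{int}(Q)$ is $M_0$-conformal in $B - V(D)$ and hence $M_0$ restricted to $(B - V(D)) - \mathrm{int}(Q) = B - V(D + Q)$ is again a perfect matching. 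The delicate step is exactly securing a handle that fulfils all three requirements at once --- internally disjoint from $D$, endpoints of the prescribed colours, and correctly alternating with $M_0$ --- inside the matched graph on $A - (\mathrm{int}(P_1) \cup \mathrm{int}(P_2))$; I expect this to come from an augmenting-path argument invoking $2$-extendability once more (in the form of \cref{thm_bipartiteextendibility}\,(v)), perhaps after swapping the roles of $s_i$ and $t_i$ or passing to a length-minimal cross so that the parities align. The resulting $D + Q$ is then a conformal bisubdivision of $K_{3,3}$ containing $C$.
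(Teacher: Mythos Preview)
The paper does not prove this lemma at all: it is imported wholesale from \cite{giannopoulou2021two} and used as a black box, so there is no ``paper's proof'' to compare against. I therefore assess your argument on its own merits.

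Your backward direction is clean and correct. For the forward direction your overall plan is the natural one---attach a handle $Q$ between the interiors of $P_1$ and $P_2$ with the right endpoint colours and the right alternation so that $D+Q$ is a conformal bisubdivision of $K_{3,3}$---and your parity argument showing that $\mathrm{int}(P_1)$ and $\mathrm{int}(P_2)$ lie in the same component of $B-V(C)$ is a nice observation. However, the argument stops precisely at the crux: you do not actually produce the handle, you only ``expect'' it to come from $2$-extendability. This is a genuine gap, and the tools you name do not close it as stated. Statement (v) of \cref{thm_bipartiteextendibility} requires a perfect matching of the whole of $B$, not of $B-V(D)$; after extending $M_0$ by a perfect matching of $D$ (which exists, e.g.\ via the Hamiltonian cycle $s_1 P_1 t_1 t_2 P_2^{-1} s_2 s_1$ of $D$) you obtain two internally $M$-conformal $q_2$--$q_1$ paths, but nothing forces either of them to avoid $D$ internally, and nothing lets you choose $q_1,q_2$ after the fact. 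The hints you offer---swapping $s_i,t_i$ or passing to a minimal cross---are plausible heuristics but are not developed into an argument, and it is exactly this step where the brace hypothesis has to do real work. As written, the forward implication is a promising outline rather than a proof.
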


\begin{proposition}[\cite{giannopoulou2021two}]\label{thm_4cycleK33}
	Let $B$ be a brace containing $K_{3,3}$ and $C$ a $4$-cycle in $B,$ then there exists a conformal bisubdivision of $K_{3,3}$ with $C$ as a subgraph.
\end{proposition}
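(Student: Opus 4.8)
The plan is to reduce, via Lemma~\ref{lemma_goodcrossesmeanK33}, to the existence of a conformal cross over $C$, and then to produce such a cross by induction on $\Abs{\V{B}}$, exploiting the $4$-cycle-sum structure that underlies \cref{thm_trisums}. First I would record that $C$ is automatically conformal in $B$: since $B$ is a brace, by \cref{thm_braces} either $B\cong C_4$ -- in which case $B$ is too small to contain $K_{3,3}$ and the claim is vacuous -- or $B$ is $2$-extendable, and then each of the two perfect matchings of $C$ has size $2$, hence extends to a perfect matching of $B$ by \cref{thm_bipartiteextendibility}, so $B-\V{C}$ has a perfect matching. By the forward direction of Lemma~\ref{lemma_goodcrossesmeanK33} it then suffices to prove, by induction on $\Abs{\V{B}}$, that every brace $B$ containing $K_{3,3}$ as a matching minor and every $4$-cycle $C$ of $B$ admit a conformal cross over $C$; at the very end Lemma~\ref{lemma_goodcrossesmeanK33} converts this cross back into the desired conformal bisubdivision of $K_{3,3}$ through $C$, and throughout one may pass freely between ``conformal bisubdivision of $K_{3,3}$'' and ``$K_{3,3}$ as a matching minor'' by Lemma~\ref{lemma_confmathingminors}.

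For the inductive step, fix $B$ and $C$ and assume the statement for all smaller braces. If $C$ already has a conformal cross in $B$ we are done, so assume it does not. The crucial move is to convert this into a genuine decomposition: one shows that $B$ is a $4$-cycle-sum $B=B_1\oplus_C B_2$ of two bipartite matching covered graphs, each on strictly fewer vertices than $B$, in which a copy of $C$ survives in both summands. Here one uses the observation (already flagged in \cref{subsec_techniques}, and which follows the spirit of \cref{obs_tightcutminoritymajority}) that non-trivial tight cuts cannot pierce a conformal cycle: consequently, passing to a brace of each summand by contracting its tight cuts keeps $C$ a (conformal) $4$-cycle, merely relabelling at most one of its vertices, so after un-contracting the final bisubdivision still runs through the original $C$. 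Since $B$ contains $K_{3,3}$ as a matching minor, so does at least one of $B_1,B_2$ -- this is the statement that the $4$-cycle-sum operation \emph{reflects} non-Pfaffianness, which is part of the theory behind \cref{thm_trisums}: a conformal $K_{3,3}$-bisubdivision is internally $3$-connected, so it cannot be split by the four vertices of $C$ into two non-empty pieces without already exhibiting a conformal cross over $C$. Say $B_1$ is non-Pfaffian; as $B_1$ is a smaller brace carrying the $4$-cycle $C$, the induction hypothesis yields a conformal cross $(P_1,P_2)$ over $C$ inside $B_1$.

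It remains to lift the cross to $B$. The paths $P_1,P_2$ lie in $B_1$ and are internally disjoint from $C$, hence they are also paths of $B$; a perfect matching of $B_1$ witnessing conformality of $C+P_1+P_2$ in $B_1$, together with a perfect matching of $B_2-\V{C}$ (which exists since $C$ is conformal in $B_2$), assembles into a perfect matching of $B-\V{C+P_1+P_2}$. Thus $(P_1,P_2)$ is a conformal cross over $C$ in $B$, closing the induction, and Lemma~\ref{lemma_goodcrossesmeanK33} then delivers the conformal bisubdivision of $K_{3,3}$ containing $C$.

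\textbf{Main obstacle.} The delicate point is extracting the $4$-cycle split from the mere absence of a conformal cross over $C$. The classical (topological) Two Paths Theorem only produces a near-planar drawing, whereas here the separating structure at $C$ must additionally be \emph{matching-respecting}: crossings by arbitrary disjoint paths are not enough, only crossings by disjoint \emph{alternating} paths certify $K_{3,3}$, so one has to verify that whenever $B$ does not decompose at $C$ such an alternating cross necessarily appears. It is precisely the fact that $C$ has length exactly four, combined with the non-piercing behaviour of non-trivial tight cuts, that pins the relevant separator down to (a conformal completion of) $\V{C}$ itself rather than something larger; making this rigorous is where the structural analysis of $K_{3,3}$-matching-minor-free braces is genuinely used, and it is the part I expect to require the most care. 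The remaining ingredients -- reflection of non-Pfaffianness under $4$-cycle-sums and the gluing of perfect matchings in the lifting step -- are essentially bookkeeping once the split is in hand.
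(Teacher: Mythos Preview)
The paper does not contain a proof of this proposition; it is quoted from \cite{giannopoulou2021two} and used as a black box (in the proof of \cref{thm_societytwopaths}). So there is nothing in the present paper to compare your proposal against.

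On the proposal itself: the step you flag as the ``main obstacle'' is not just delicate, it is the entire content of the result. You want to show that if a brace $B$ has no conformal cross over the $4$-cycle $C$, then $B$ splits as a $4$-cycle sum at $C$. But this statement is precisely the matching-theoretic Two Paths Theorem of \cite{giannopoulou2021two}, from which both \cref{lemma_goodcrossesmeanK33} and the present proposition are extracted. Nothing in the current paper gives you this split: \cref{thm_trisums} describes $K_{3,3}$-free braces, whereas here $B$ contains $K_{3,3}$; and the societal version \cref{thm_societytwopaths} is proved \emph{using} \cref{thm_4cycleK33}, so invoking its ideas would be circular. Your inductive scheme therefore assumes what has to be shown.

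There is also a technical slip in the reduction to smaller braces. A non-trivial tight cut \emph{can} pierce a single conformal $4$-cycle (it meets $C$ in two adjacent edges, one in $M$ and one not; \cref{lemma_tightcutsandconformalcycles} only forbids piercing two disjoint conformal cycles). Hence contracting tight cuts in a summand $B_1$ may collapse two adjacent vertices of $C$ into one, so $C$ need not survive as a $4$-cycle in the brace of $B_1$, and ``merely relabelling at most one of its vertices'' is not guaranteed. One can work around this, but it requires a further argument that the relevant tight-cut shore avoids $C$, which again leans on structural facts proved in \cite{giannopoulou2021two}.
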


\subsection{Perfect Matching Width}\label{sec_perfectmatchingwidth}

What is necessary for our plan is an analogue of treewidth for the study of matching minors in graphs with perfect matchings.
To fill this gap, Norin \cite{norine2005matching} introduced the notion of ``perfect matching width''.


The \emph{matching porosity} of $X$ is defined as $$\mathbf{mp}_{G}(X)=\max\{|M\cap \partial_{G}(X)| \mid M\in \Perf{G}\}.$$
Notice that $\partial_{G}(X)=\partial_{G} (V(G)\setminus X),$ therefore $\mathbf{mp}_{G}(X)=\mathbf{mp}_{G} (V(G)\setminus X).$

\begin{definition}[Perfect matching width]\label{def_pmw}
	Let $G$ be a graph with at least one matching.
	A \emph{perfect matching decomposition} of $G$ is a pair $(T,\delta)$ 
	where $T$ is a cubic tree and $\delta$ is a bijection from $L(T)$ to $V(G).$
	Let $e=t_1t_2\in E(T)$ and let $T_{1},T_{2}$ be the connected components of $T-e,$ assuming that $t_1\in V(T_{1}).$
	We define $X_i=\delta(V(T_{i})\cap L(T)), i\in[2]$ and observe that $\{X_{1},X_{2}\}$ is a partition of $V(G),$ 
	where $\partial_{G}(X_{1})=\partial_{G}(X_{2})$ defines an edge cut of $G,$ which we will denote as $\partial(e).$
	The \emph{width} of $(T,σ)$ is defined as $\max\{\mathbf{mp}_{G}(\partial_{G}(X))\mid e\in E(T)\}.$ Moreover, the \emph{perfect matching width} of $G,$ denoted as $\pmw{G},$ is defined to be the minimum width over all perfect matching decompositions of $G.$ 
\end{definition}


For our purposes we will also need an analogue of \cref{thm_undirectedgridtheorem} that provides us with a large planar infrastructure whenever we encounter a brace of large perfect matching width.

\begin{definition}[Cylindrical matching grid]\label{@wholeheartedly}
	The \emph{cylindrical matching grid} $CG_k$ of order $k$ is defined as follows.
	Let $C_1,\dots,C_k$ be $k$ vertex disjoint cycles of length $4k.$
	For every $i\in[k]$ let $C_i=\Brace{v_1^i,v_2^i,\dots,v_{4k}^i},$ $V_1^i\coloneqq\CondSet{v_j^i}{j\in\Set{1,3,5,\dots,4k-1}},$ $V_2^i\coloneqq\Fkt{V}{C_i}\setminus V_1^i,$ and $M_i\coloneqq\CondSet{v_j^iv_{j+1}^i}{v_j^i\in V_1^i}.$
	Then $CG_k$ is the graph obtained from the union of the $C_i$ by adding
	\begin{align*}
		\CondSet{v_j^iv_{j+1}^{i+1}}{i\in[k-1]~\text{and}~j\in\Set{1,5,9,\dots,4k-3}}&\text{, and}\\
		\CondSet{v_j^iv_{j+1}^{i-1}}{i\in[2,k]~\text{and}~j\in\Set{3,7,11,\dots,4k-1}}&
	\end{align*}
	to the edge set.
	We call $M\coloneqq\bigcup_{i=1}^kM_i$ the \emph{canonical matching} of $CG_k.$
	See \cref{fig_cylindricalgrid} for an illustration.
\end{definition}

Please note that the cylindrical matching grid is indeed a subcubic graph and thus, by \cref{lemma_confmathingminors}, one can always find a conformal bisubdivision of $CG_k$ within a bipartite graph $B$ if it contains $CG_k$ as a matching minor.

\begin{figure}
	\centering
\makefast{
		\begin{tikzpicture}

		\pgfdeclarelayer{background}
		\pgfdeclarelayer{foreground}
		\pgfsetlayers{background,main,foreground}

		\draw[e:main] (0,0) circle (11mm);
		\draw[e:main] (0,0) circle (16mm);
		\draw[e:main] (0,0) circle (21mm);
		\draw[e:main] (0,0) circle (26mm);
		
		\foreach \x in {1,...,4}
		{
			\draw[e:main] (\x*90:16mm) -- (\x*90+22.5:11mm);
			\draw[e:main] (\x*90:21mm) -- (\x*90+22.5:16mm);
			\draw[e:main] (\x*90:26mm) -- (\x*90+22.5:21mm);
		}
		
		\foreach \x in {1,...,4}
		{
			\draw[e:main] (\x*90-22.5:16mm) -- (\x*90-45:11mm);
			\draw[e:main] (\x*90-22.5:21mm) -- (\x*90-45:16mm);
			\draw[e:main] (\x*90-22.5:26mm) -- (\x*90-45:21mm);
			
		}
		
		\foreach \x in {1,...,8}
		{
			\draw[e:coloredthin,color=BostonUniversityRed,bend right=13] (\x*45:11mm) to (\x*45+22.5:11mm);
			\draw[e:coloredthin,color=BostonUniversityRed,bend right=13] (\x*45:16mm) to (\x*45+22.5:16mm);
			\draw[e:coloredthin,color=BostonUniversityRed,bend right=13] (\x*45:21mm) to (\x*45+22.5:21mm);
			\draw[e:coloredthin,color=BostonUniversityRed,bend right=13] (\x*45:26mm) to (\x*45+22.5:26mm);
		}
		
		\foreach \x in {1,...,8}
		{
			\node[v:main] () at (\x*45:11mm){};
			\node[v:main] () at (\x*45:16mm){};
			\node[v:main] () at (\x*45:21mm){};
			\node[v:main] () at (\x*45:26mm){};
			\node[v:mainempty] () at (\x*45+22.5:11mm){};
			\node[v:mainempty] () at (\x*45+22.5:16mm){};
			\node[v:mainempty] () at (\x*45+22.5:21mm){};
			\node[v:mainempty] () at (\x*45+22.5:26mm){};
		}

		\begin{pgfonlayer}{background}
			\foreach \x in {1,...,8}
			{
				\draw[e:coloredborder,bend right=13] (\x*45:11mm) to (\x*45+22.5:11mm);
				\draw[e:coloredborder,bend right=13] (\x*45:16mm) to (\x*45+22.5:16mm);
				\draw[e:coloredborder,bend right=13] (\x*45:21mm) to (\x*45+22.5:21mm);
				\draw[e:coloredborder,bend right=13] (\x*45:26mm) to (\x*45+22.5:26mm);
			}
		\end{pgfonlayer}
		
	\end{tikzpicture}}{
\scalebox{.174}{\includegraphics{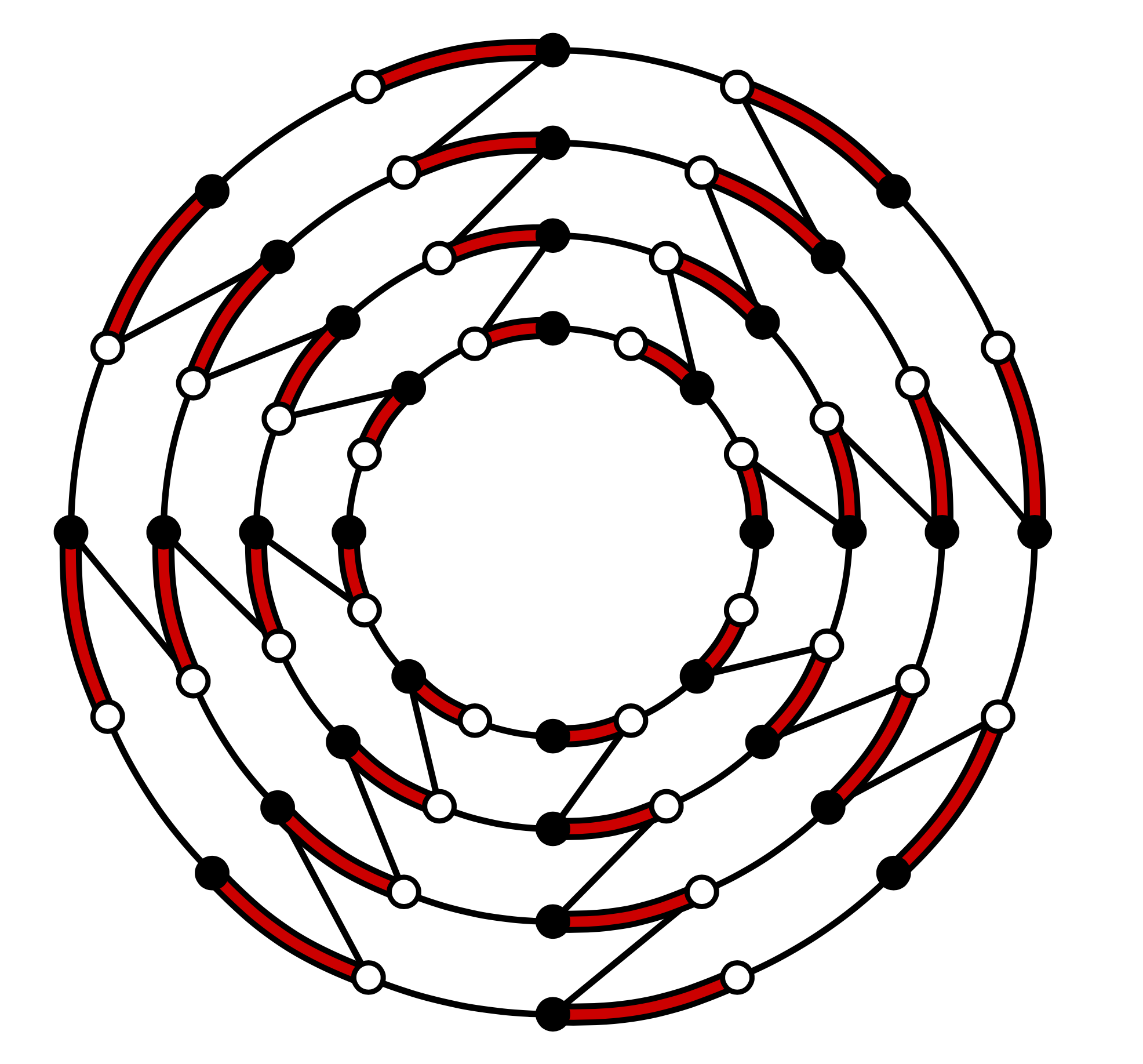}}}
	\caption{The \hyperref[def:matchinggrid]{cylindrical matching grid} of order $4$ with the canonical matching.}
	\label{fig_cylindricalgrid}
\end{figure}

\begin{proposition}[\cite{hatzel2019cyclewidth}]\label{thm_matchinggrid}
There exists a function $\mathsf{mg}\colon\N\rightarrow\N$ such that for every $k\in\N,$ every bipartite graph $B$ with a perfect matching and $\pmw{B}\geq \mathsf{mg}(k)$ contains a conformal bisubdivision of the cylindrical matching grid of order $k.$
\end{proposition}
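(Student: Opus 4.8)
We would deduce this from the \emph{Directed Grid Theorem} of Kawarabayashi and Kreutzer via the standard digraph encoding of a bipartite graph with a distinguished perfect matching. As a harmless normalisation, first delete all inadmissible edges of $B$ (this affects no perfect matching, hence leaves $\mathbf{mp}_B$ and therefore $\pmw{B}$ unchanged) and pass to a connected component attaining $\pmw{B}$; so we may assume $B$ is matching covered. Fix any $M\in\Perf{B}$ and let $D=\vec{B}_M$ be the digraph obtained from $B$ by contracting every edge of $M$ and orienting each remaining edge $uv$ (with $u\in V_1$, so $v\in V_2$) from the vertex of the $M$-edge covering $u$ to the vertex of the $M$-edge covering $v$. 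One checks the following dictionary: the $M$-alternating cycles of $B$ are exactly the directed cycles of $D$; a bicontraction at a degree-two vertex of $B$ is exactly a butterfly contraction of the corresponding arc of $D$; deleting a non-matching edge is deleting an arc; and deleting an $M$-conformal vertex set is deleting the corresponding set of vertices of $D$. Consequently, if $D$ contains the cylindrical directed grid $\vec{G}_r$ as a butterfly minor then $B$ contains $CG_k$ as an $M$-minor, hence as a matching minor, provided $r$ is large enough relative to $k$ (only a constant factor is lost when reconciling the two grid conventions). Since $CG_k$ has maximum degree three, by \cref{lemma_confmathingminors} it then suffices to produce $CG_k$ as a matching minor of $B$.

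It remains to force a large $\vec{G}_r$ in $D$ when $\pmw{B}$ is large, and here the key input is the width comparison $\pmw{B}\leq\psi\big(\mathbf{dtw}(D)\big)$ for some function $\psi$, where $\mathbf{dtw}$ is directed treewidth. Equivalently, a directed (arboreal) decomposition of $D$ of small width pulls back, by uncontracting the edges of $M$, to a perfect matching decomposition of $B$ of comparable width. The reason the pullback works is that any perfect matching of $B$ differs from $M$ by a disjoint union of $M$-alternating cycles, that is, by directed cycles of $D$, and a directed cycle cannot cross a directed separation of $D$ of small order more than a bounded number of times; this bounds the matching porosity of each cut produced by the pulled-back decomposition. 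It is cleanest to route this through an intermediate width parameter -- ``cyclewidth'' -- which on the one hand is polynomially tied to $\pmw$ and on the other hand coincides, up to constants, with $\mathbf{dtw}(D)$.

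Granting this, the proof closes. By the Directed Grid Theorem there is a function $t$ such that every digraph of directed treewidth at least $t(r)$ contains $\vec{G}_r$ as a butterfly minor. Pick $r$ large enough (in $k$) that the encoding of $CG_k$ is a butterfly minor of $\vec{G}_r$, and set $\mathsf{mg}(k):=\psi\big(t(r)\big)+1$ with $\psi,t$ taken nondecreasing. If $\pmw{B}\geq\mathsf{mg}(k)$ then $\mathbf{dtw}(D)>t(r)$, so $D$ contains $\vec{G}_r$ as a butterfly minor, hence $B$ contains $CG_k$ as a matching minor; since $CG_k$ has maximum degree three, \cref{lemma_confmathingminors} upgrades this to a conformal bisubdivision of $CG_k$ in $B$.

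The main obstacle is the width comparison $\pmw{B}\leq\psi(\mathbf{dtw}(D))$: perfect matching width is a property of $B$ alone whereas $D$ is built from the chosen matching $M$, so one must verify that a single small directed decomposition of $D$ yields a perfect matching decomposition whose width is bounded uniformly over \emph{all} perfect matchings of $B$, which forces a careful analysis of how conformal alternating cycles interact with directed separations. A secondary, milder difficulty is that the deletion of inadmissible edges and the passage to a component must be carried out without losing any conformal bisubdivision of $CG_k$; one could in principle bypass the digraph encoding and attempt a direct bramble/tangle argument mimicking the proof of the undirected Grid Theorem, but making the matching-respecting connectivity function behave well appears strictly harder than importing the directed theory wholesale.
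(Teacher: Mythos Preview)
The paper does not prove this proposition; it is imported from \cite{hatzel2019cyclewidth} as a black box. Your sketch is essentially the argument of that reference: the $M$-direction $D=\vec{B}_M$ translates matching minors into butterfly minors, cyclewidth serves as the intermediate width parameter tied on one side to $\pmw{B}$ and on the other to $\mathbf{dtw}(D)$, and the Directed Grid Theorem supplies the cylindrical grid. So you have correctly reconstructed the intended proof, and there is nothing further to compare.

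One small remark: your worry that $D$ depends on the choice of $M$ while $\pmw{B}$ does not is real but is handled in \cite{hatzel2019cyclewidth} precisely by showing that cyclewidth is invariant under the choice of $M$ (any two $M$-directions of a matching covered bipartite graph are related by reversing arc-disjoint dicycles, which does not change cyclewidth), so the inequality $\pmw{B}\leq\psi(\mathbf{dtw}(\vec{B}_M))$ holds for every $M$ simultaneously.
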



\section{Societies, Renditions and Flatness}\label{sec_rendition}

Next we lay out the foundation for the concurrent steps in the development of a theory of matching minors.
We will need robust and versatile matching theoretic analogues of $Σ$-decompositions and societies.
In particular, this entails \hyperref[def_society]{societies}, \hyperref[def_sigmadecomposition]{$\Sigma$-decompositions}, \hyperref[def_vortex]{vortices}, and \hyperref[def_rendition]{renditions}.
Moreover, in the second part of this section, we lift \cref{lemma_goodcrossesmeanK33} to a societal version.
Please note that some of the concepts we introduce here do not find full-depth applications in this paper, they are rather designed to remain useful in future applications.

\subsection{Matching renditions}

\begin{definition}[Extended $\Sigma$-decomposition]\label{@circunspetta}
Let $\Sigma$ be a surface.
An \emph{extended $\Sigma$-decomposition} of a matching covered bipartite graph $B$ is a triple $(\Gamma,\mathcal{V},\mathcal{D}),$ where
\begin{itemize}
	\item $\Gamma$ is a drawing with crossings of $B$ on $\Sigma,$
	\item $\mathcal{V}$ and $\mathcal{D}$ are collections of closed disks, each a subset of $\Sigma,$ we call the disks in $\mathcal{V}$ the \emph{big vertices}, and
	\item $\Gamma,$ $\mathcal{V},$ and $\mathcal{D}$ satisfy axioms \textbf{\textsf{ED1}} to \textbf{\textsf{ED11}}.
\end{itemize}
Given a big vertex $\mathsf{v}\in\mathcal{V},$ the edges of $B$ drawn by $\Gamma$ such that they have at least one point on either side\footnote{If $C$ is a contractible curve in a surface $Σ,$ its {\em sides} are the connected components of $Σ\setminus C.$} of $\Boundary{\mathsf{v}}$ are called the \emph{crossing edges of $\mathsf{v}$}.
\begin{description}
	\item[ED1] The disks in $\mathcal{V}$ (respectively $\mathcal{D}$) have pairwise-disjoint interiors.
	\item[ED2] For every $Δ\in\mathcal{D}$ and every $\mathsf{v}\in\mathcal{V},$ $Δ\cap \mathsf{v}$ is a connected subset of the intersection of the boundaries of $Δ$ and $\mathsf{v}.$
	\item[ED3] For every  $Δ\in\mathcal{D},$ if $C$ is a connected component of $\Boundary{Δ\cap U(Γ)},$ then either $C\in V(Γ)$ or there exists some edge $e\in E(Γ)$ such that (i) $C\subseteq e$ and (ii) there exists some $\mathsf{v}\in\mathcal{V}$ such that $e$ is crossing for $\mathsf{v}.$
	\item[ED4] For every $\mathsf{v},$ no vertex of $B$ is drawn on the boundary of $\mathsf{v}$ by $\Gamma.$
	Moreover, for every $\mathsf{v},$ the collection $X$ of all vertices drawn by $\Gamma$ in the interior of $\mathsf{v}$ induces a non-trivial tight cut in $B.$
	We say that $\mathsf{v}$ \emph{belongs to the colour class} $V_i$ if $X\cap V_i$ is the \hyperref[obs_tightcutminoritymajority]{majority} of $X.$
	\item[ED5] If $\Delta_1,\Delta_2\in\mathcal{D}$ are distinct, then $\Delta_1\cap \Delta_2\subseteq V(\Gamma).$
	\item[ED6] If $\mathsf{u},\mathsf{v}\in\mathcal{V}$ are distinct, let us denote by $E(\mathsf{u},\mathsf{v})$ the set of edges which are crossing for both $\mathsf{u}$ and $\mathsf{v}.$
	In case $E(\mathsf{u},\mathsf{v})\neq\emptyset$ there exists a disk $\Delta\in\mathcal{D}$ which intersects both $\mathsf{u}$ and $\mathsf{v}$ but does not intersect any other big vertex, nor any vertex of $\Gamma.$
	Moreover, $\Delta$ contains all points of the drawings of all edges from $E(\mathsf{u},\mathsf{v})$ in $\Gamma$ that are not in the interiors of $\mathsf{u}$ or $\mathsf{v}.$
	Finally, $\Delta$ is disjoint from the drawings of all edges of $B$ which do not belong to $E(\mathsf{u},\mathsf{v}).$
	\item[ED7] If $\mathsf{v}\in\mathcal{V}$ is a big vertex and $u$ is a vertex of $\Gamma$ which is not contained in the interior of some disk in $\mathcal{V}\cup\mathcal{D},$ we denote by $E(u,\mathsf{v})$ the collection of all edges of $\Gamma$ which are incident with $u$ and crossing for $\mathsf{v}.$
	In case $E(u,\mathsf{v})\neq\emptyset$ there exists a disk $\Delta\in\mathcal{V}$ such that $\Delta$ intersects $\mathsf{v},$ $u$ is drawn on the boundary of $\Delta,$ and $\Delta$ is disjoint from any other big vertex or vertex of $\Gamma.$
	Moreover, $\Delta$ contains all points of the drawings of all edges from $E(u,\mathsf{v})$ in $\Gamma$ that are not in the interiors of $\mathsf{v}.$
	Finally, $\Delta$ is disjoint from the drawings of all edges of $B$ which do not belong to $E(u,\mathsf{v}).$
	\item[ED8] Every edge $e\in E(\Gamma)$ either belongs to the interior of one of the disks in $\mathcal{D}\cup\mathcal{V},$ there exist $\mathsf{u},\mathsf{v}\in\mathcal{V}$ such that $e\in E(\mathsf{u},\mathsf{v}),$ or there exist $u\in V(\Gamma)$ and $\mathsf{v}\in\mathcal{V}$ such that $e\in E(u,\mathsf{v}).$
	\item[ED9] If $u$ is a vertex of $\Gamma$ drawn in the interior of some disk $\Delta\in\mathcal{D}$ and $\mathsf{v}$ is a big vertex such that $u$ is incident to some crossing edge $e$ of $\mathsf{v},$ then $e\subseteq\mathsf{v}\cup\Delta.$
	\item[ED10] For every $\Delta\in\mathcal{D},$ the set $X$ of all vertices drawn by $\Gamma$ in the interior of $\Delta$ is conformal and $\InducedSubgraph{B}{X}$ has a perfect matching.
	\item[ED11] For every $\Delta\in\mathcal{D}$ let $X$ be the set of all vertices drawn on the boundary of $\Delta$ together with all big vertices which intersect $\Delta$ and let $\Omega_{\Delta}$ be a cyclic permutation induced by the order in which the members of $X$ appear\footnote{Note that for every member of $X,$ its intersection with $\Delta$ is a unique connected subset of $\Boundary{\Delta},$ these subsets are pairwise-disjoint and thus $\Omega_{\Delta}$ is well-defined up to the choice of the direction of traversing $\Boundary{\Delta}.$} on $\Boundary{\Delta}.$
	Then, if $x_1\in X$ is an immediate predecessor or successor of some $x_2\in X$ with respect to $\Omega_{\Delta},$ $x_1$ and $x_2$ belong to different colour classes of $B.$
	Moreover, $(X\cap V(\Gamma))\cup(\cupall(X\cap\mathcal{V})\cap V(\Gamma))$ is conformal.

	\begin{figure}[ht]
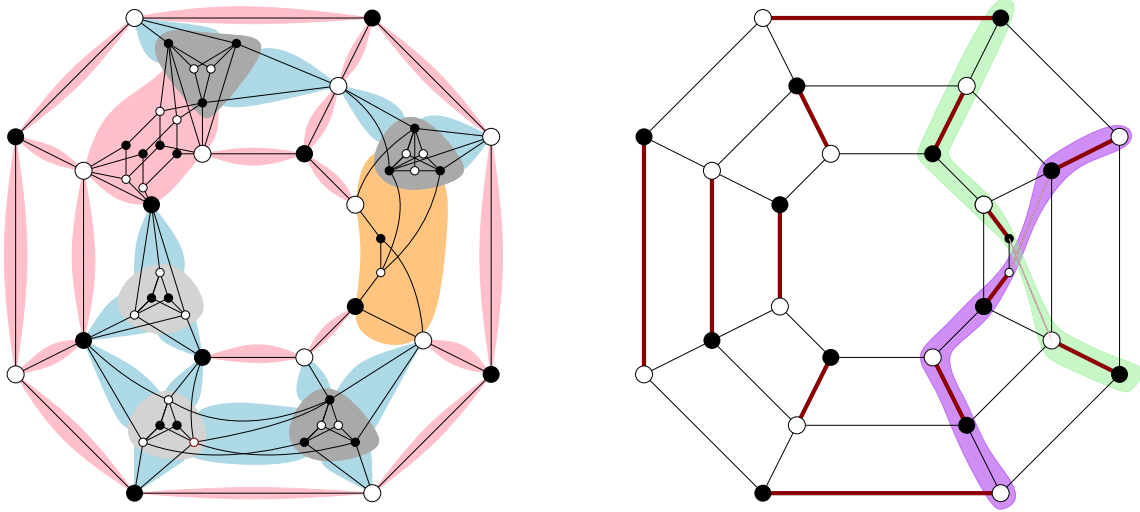

		\centering
\makefast{\scalebox{.8}{\hspace{-3cm}\input{figures/struc_all2.tex}}}{\includegraphics[scale=0.8]{figures/struc_all2}}
		\caption{An extended $\Sigma$-decomposition of a bipartite graph $B$ (on the left) and its reduction, drawn along with a perfect matching $M$ (on the right). 
The big vertices (i.e., the elements of $\mathcal{V}$) are drawn in grey and in light grey according to the colour class they belong to.	The elements of $\mathcal{D}$ are drawn blue or red. The blue ones 
correspond to edges of the reduction obtained by contracting the big vertices. The red ones are the cells of the   extended  $\Sigma$-decomposition, among which the orange one is a vortex.
		The society $(B,Ω)$ defined by the cyclic ordering of the outermost vertices is not matching flat.  A  cross of  $(B,Ω)$ is indicated by the purple and the green path in the reduction.}
		\label{fig_cylindercross}
	\end{figure}

\end{description}
We define containment, isomorphism, and restrictions of extended $\Sigma$-decompositions in the natural way.
Let $N$ be the set of all vertices of $\Gamma$ that do not belong to the interior of the disks in $\mathcal{D}\cup\mathcal{V}$ together with all big vertices of $\delta.$
We will refer to the elements of $N$ as the \emph{nodes} of $\delta.$
Note that any big vertex can be identified into a single vertex using a tight cut contraction, which justifies us treating the elements of $\mathcal{V}$ as vertices for the purpose of an extended $\Sigma$-decomposition.
Let $\Delta\in\mathcal{D}$ and $X\subseteq N.$
By $\Delta-X$ we denote the set $\Delta-(X\cap V(\Gamma))-\cupall(\mathcal{V}\cap X)$, and we refer to the set $\Delta-N$ as a \emph{cell} of $\delta.$
We denote the set of nodes of $\delta$ by $N(\delta)$ and the set of cells by $C(\delta).$
For a cell $c\in C(\delta),$ we denote by $\widetilde{c}$ the set of vertices of $\Gamma$ that lie on the boundary of the closure of $c,$ together with all big vertices that are intersected by the closure of $c.$
Thus, the cells $c$ of $\delta$ with $\widetilde{c}\neq\emptyset$ form the hyperedges of a hypergraph with vertex set $N(\delta),$ where $\widetilde{c}$ is the set of vertices incident with $c\in C(\delta).$
For a cell $c\in C(\delta),$ we define $\sigma_{\delta}(c)$ (or $\sigma(c)$ when $\delta$ is understood from the context) to be the subgraph of $B$ consisting of all vertices and edges drawn in the union of the closure of $c$ and all big vertices it intersects.
We define $\pi_{\delta}\colon N(\delta)\rightarrow\V{B}\cup (\V{B}\times 2^{V(B)})$ to be the mapping that assigns to every vertex of $\Gamma$ in $N(\delta)$ its identity and to every big vertex $\mathsf{v}\in\mathcal{V}$ the set $X$ of vertices of $B$ corresponding to the vertices contained in the closure of $\mathsf{v}.$

Let $x\in N(\delta).$
If $x$ is a big vertex, we say that an edge is \emph{incident with $x$} if it is a crossing edge of $x.$
\end{definition}
Given the above, we also say that the boundary of a disk $\Delta$ \emph{intersects $\Gamma$ in a big vertex $\mathsf{v}\in\mathcal{V}$} if $\Boundary{\Delta}\cap\Boundary{\mathsf{v}}$ consists of a single connected component.\smallskip

Let $\delta=(\Gamma,\mathcal{V},\mathcal{D})$ be an extended $\Sigma$-decomposition of some bipartite matching covered graph $B.$
Consider a big vertex $\mathsf{v}\in\mathcal{V}.$ 
Since the collection $X$ of vertices drawn by $\Gamma$ in the interior of $\mathsf{v}$ induce a non-trivial tight cut in $B,$ we may perform a tight cut contraction to reduce $X$ to a single vertex $r_{\mathsf{v}}$ which belongs to the same colour class as $\mathsf{v}.$
We call this operation the \emph{contraction of $\mathsf{v}$}.
We will usually work on the \emph{reduction} $\hat{\delta}=(\hat{B},\hat{\Gamma},\hat{\mathcal{D}})$ of $\delta$ obtained by contracting all members of $\mathcal{V}$ where $\hat{\Gamma}$ and $\hat{\mathcal{D}}$ are obtained from $\Gamma$ and $\mathcal{D}$ respectively such that $(\hat{\Gamma},\emptyset,\hat{\mathcal{D}})$ is an extended $\Sigma$-decomposition of $\hat{B}$ without any big vertices.
We usually identify $(\hat{\Gamma},\emptyset,\hat{\mathcal{D}})$ and $\hat{\delta}.$
Note that any cell $c\in C(\delta)$ corresponds to a disk $\Delta\in\mathcal{D}.$
By $\Omega_c$ we refer to the ordering $\Omega_{\Delta}$ which we usually identify with the induced ordering of the vertices in the graph $\hat{B}$
It follows that $(\Omega_c,\sigma_{\hat{\delta}}(c))$ is a society with a series of additional properties.

\begin{definition}[Matching society]\label{@klammerausdrucks}
	A \emph{matching society} is a pair $(B,\Omega),$ where $B$ is a bipartite graph with a perfect matching and $\Omega=(a_1,b_1,a_2,b_2,\dots,a_{\ell},b_{\ell})$ is a cyclic permutation  of some vertices of $B$ such that $\Set{a_1b_1,a_2b_2,\dots,a_{\ell}b_{\ell}}$ is contained in \textsl{some} perfect matching of $B.$
\end{definition}

In the case of $\Sigma$-decompositions, a cell would either have a small boundary, which contains at most three vertices, or it would be declared a vortex.
The number three here originates from the Two Paths Theorem, as here any separation of order at most three can be replaced by a tiny clique that maintains the existence of disjoint paths but, since it is so small, does not allow for crossings to occur.
In the matching setting we have to take care of this in two ways.
First of all, crossings cannot pass through tight cuts and return and thus we may allow for tight cut contractions.
Since these correspond to extremely small separations but may contain large obstructions to embeddability in the plane they have to be dealt with through contractions instead of vertex separations.
This is the role of the big vertices.
After tight cuts are removed, the only kind of separation we have to pay special attention to is the $4$-cycle sum.
Here two cases can occur:
If the graph ``above'' the separating $4$-cycle is Pfaffian, it cannot produce a cross suitable for matching theoretic applications, but still might create a cross in the usual sense.
So, in this case, we have to allow for these ``flaps'' to exist as we cannot use them to create an excluded minor.
However, if whatever is separated by the $4$-cycle contains a $K_{3,3}$ that cannot be separated from this cycle with a tight cut, then \cref{lemma_goodcrossesmeanK33} guarantees the existence of a matching theoretic cross.
Hence, it is crucial to differentiate between the two cases.
Any cell with a larger boundary will immediately be classified as a vortex.
Recall that $|\widetilde{c}|$ must be even for all cells of an extended $\Sigma$-decomposition.

\begin{definition}[Vortex]\label{@cowardliness}
	Let $B$ be a bipartite matching covered graph, $\Sigma$ be a surface and $\delta=(\Gamma,\mathcal{V},\mathcal{D})$ be an extended $\Sigma$-decomposition of $B$ with reduction $\hat{\delta}=(\hat{B},\hat{\Gamma},\hat{\mathcal{D}}).$
	A cell $c\in C(\delta)$ is called a \emph{vortex} if it meets one of the following requirements:
	\begin{itemize}
		\item $\Abs{\widetilde{c}}=2$ and $|V(\sigma(c))|\geq 3,$
		\item $\Abs{\widetilde{c}}=4$ and, if $\hat{c}=\Set{a_1,b_1,a_2,b_2}$ is the cell of $\hat{\delta}$ which corresponds to $c,$ $\sigma_{\hat{\delta}}(\hat{c})+\Set{a_1b_1, b_1a_2, a_2b_2, b_2a_1}$ contains a conformal bisubdivision of $K_{3,3}$ which contains the $4$-cycle $(a_1,b_1,a_2,b_2)$ as a subgraph, or
		\item $\Abs{\widetilde{c}}\geq 6.$
	\end{itemize} 
\end{definition}

With this preparation we are finally ready to define a matching theoretic version of renditions.
This will almost be the last step towards a society version of the matching theoretic Two Paths Theorem.

\begin{definition}[Matching rendition]\label{@condescension}
	Let $(B,\Omega)$ be a \hyperref[def:matchingsociety]{matching society} and $\Sigma$ be a surface with one boundary component $G.$
	A \emph{matching rendition} of $(B,\Omega)$ in $\Sigma$ is an extended $\Sigma$-decomposition $\rho$ of $B$ such that no vertex of $V(\Omega)$ is contained in a big vertex and the mapping of the image under $\pi_{\rho}$ of $N(\rho)\cap G$ is $V(\Omega),$ mapping one of the cyclic orders of $B$ to the order of $\Omega.$
	If $(B,\Omega)$ has a vortex-free matching rendition in the disk, we say that $(G,\Omega)$ is $\emph{matching flat}.$
\end{definition}

\subsection{A matching theoretic society lemma}

The problem with a direct application of \cref{lemma_goodcrossesmeanK33} to a \hyperref[def:matchingsociety]{matching society} $(B,\Omega)$ is that it requires  $B$ to be a brace and either $V(\Omega)$ to induce a $4$-cycle or the addition of artificial edges  to  create a $4$-cycle using only vertices of $V(\Omega).$
While $B$ being a brace can probably be handled by using big vertices, the other two requirements are either extremely restrictive or will make it hard to fit two crossing alternating paths on the society $(B,\Omega)$ back into the framework of a larger graph.
Hence, we need to find some kind of workaround to allow for an application of \cref{lemma_goodcrossesmeanK33} without changing the graph too much.
The idea is to augment a society with a small grid-like structure that allows us to ``project'' a conformal cross over an artificial four-cycle onto a cycle of arbitrary length.

\begin{definition}[Reinforced society]\label{@investigates}
A \emph{reinforced society} is a tuple $\mathscr{S}=(B,H,\Psi,\Omega)$ where 
\begin{itemize}
	\item $B$ and $H$ are bipartite matching covered graphs with $H$ being a conformal subgraph of $B,$
	\item $(B,\Psi)$ and $(B-(H-V(\Omega)),\Omega)$ are matching societies,
	\item $H$ contains a conformal bisubdivision of the \hyperref[def:matchinggrid]{cylindrical matching grid} of order $3$ with concentric cycles $C_1,$ $C_2,$ and $C_3$ such that $V(\Psi)=V(C_1)$ and $V(\Omega)=V(C_3),$
	\item $(V(B)-(H-V(\Omega)),H)$ is a separation with $V(\Omega)$ being the separator, and
	\item $H$ has a vortex-free extended $\Delta$-decomposition $\rho$ in some disk $Δ,$ which is simultaneously a matching rendition of $(H,\Psi)$ and of $(H,\Omega).$
\end{itemize}

We denote by $B_{\mathscr{S}}$ the graph $B-(H-V(\Omega)).$
We say that $\mathscr{S}$ has a \emph{cross}, if the cycle $C_1$ with vertex set $V(\Psi)$ has a \hyperref[def:matchingcross]{conformal cross} in $B.$
\end{definition}

With these definitions in place we are finally ready to state our key result, that is a society based Two Paths Theorem for bipartite matching covered graphs.

\begin{theorem}\label{thm_societytwopaths}
	Let $\mathscr{S}=(B,H,\Psi,\Omega)$ be a reinforced society.
	The matching society $(B,\Psi)$ is matching flat if and only if $\mathscr{S}$ does not have a cross.
\end{theorem}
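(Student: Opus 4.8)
The plan is to prove both implications by reducing to the brace\nobreakdash-and\nobreakdash-four\nobreakdash-cycle regime in which \cref{lemma_goodcrossesmeanK33}, \cref{thm_4cycleK33} and \cref{thm_trisums} apply, and to use the cylindrical matching grid of order $3$ sitting inside $H$ as a device for trading the (long) cycle $C_1$ carrying $V(\Psi)$ against an artificial four\nobreakdash-cycle. The starting observation is that a non\nobreakdash-trivial tight cut cannot pierce a family of pairwise\nobreakdash-disjoint conformal cycles, so the three concentric cycles $C_1,C_2,C_3$ of the grid together with its spokes all lie inside a single brace $B_0$ of the tight\nobreakdash-cut decomposition of $B$; the remaining braces attach to $B_0$ along contracted tight cuts and will simply be absorbed into big vertices (which the axioms of an extended $\Sigma$-decomposition permit without any extra condition on their interiors). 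A routine ``a conformal cross cannot pass through a tight cut and return'' argument shows that $C_1$ has a conformal cross in $B$ if and only if it has one in $B_0$, so it suffices to work inside $B_0$.

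For the direction \emph{matching flat $\Rightarrow$ no cross}: assume $(B,\Psi)$ has a vortex\nobreakdash-free matching rendition in the disk. It induces a vortex\nobreakdash-free matching rendition of the brace $B_0$ with $V(\Psi)=V(C_1)$ on the boundary, and since $B_0$ is a brace this rendition has no big vertices, so every cell has at most four boundary nodes and every four\nobreakdash-node cell is a non\nobreakdash-vortex. Replacing each four\nobreakdash-node cell by its bounding four\nobreakdash-cycle produces a planar graph $B_0^{\mathrm{pl}}$ with $V(\Psi)$ on the outer face. If $C_1$ had a conformal cross $(P_1,P_2)$ in $B_0$, then inside each four\nobreakdash-node cell the pair $(P_1,P_2)$ could not realize a conformal cross over the bounding four\nobreakdash-cycle — by the non\nobreakdash-vortex clause of \cref{@cowardliness} together with \cref{lemma_goodcrossesmeanK33} applied to the brace of that cell plus its four\nobreakdash-cycle — so $P_1$ and $P_2$ could be rerouted along the bounding four\nobreakdash-cycles to give two vertex\nobreakdash-disjoint paths of $B_0^{\mathrm{pl}}$ whose ends interleave on the outer face, contradicting planarity. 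Hence there is no conformal cross over $C_1$, and by the first step none in $B$.

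For the direction \emph{no cross $\Rightarrow$ matching flat}: starting from the brace $B_0\supseteq C_1\cup C_2\cup C_3$, I would attach along $C_1$, on the side of $C_1$ away from the grid, a small planar (hence Pfaffian) \emph{projection gadget} $Z$ carrying a distinguished four\nobreakdash-cycle $C^{\sharp}$, conformally and so that the resulting graph $B^{\sharp}_0$ is again a brace and $C^{\sharp}$ has a conformal cross in $B^{\sharp}_0$ if and only if $C_1$ has one in $B_0$; it is precisely the cycles and spokes of the order\nobreakdash-$3$ grid that allow the endpoints of any conformal cross over $C_1$ to be rerouted, in either direction, between $C_1$ and $C^{\sharp}$. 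As $B^{\sharp}_0$ still contains a conformal bisubdivision of the cylindrical matching grid of order $3$, which has far more vertices than the Heawood graph, $B^{\sharp}_0$ is not the Heawood graph. Now the absence of a conformal cross over $C_1$ yields the absence of one over $C^{\sharp}$; by \cref{lemma_goodcrossesmeanK33} the four\nobreakdash-cycle $C^{\sharp}$ lies in no conformal bisubdivision of $K_{3,3}$; by the contrapositive of \cref{thm_4cycleK33} the brace $B^{\sharp}_0$ contains no $K_{3,3}$; and by \cref{thm_trisums} together with the exclusion of the Heawood graph, $B^{\sharp}_0$ is obtained from planar braces by repeated trisums. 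From such a decomposition one reads off a vortex\nobreakdash-free extended $\Sigma$-decomposition — each planar brace drawn planarly, each gluing four\nobreakdash-cycle becoming a Pfaffian (hence non\nobreakdash-vortex) flap cell, and each contracted tight\nobreakdash-cut shore of $B$ reinstated as a big vertex. Since the grid is highly connected and conformal it is confined to one planar brace, so $C_1$ bounds a face there; deleting $Z$ frees the face on the gadget side of $C_1$, and re\nobreakdash-rooting the drawing so that $C_1$ bounds the outer face — legitimate because $\rho$ renders $(H,\Psi)$ and $(H,\Omega)$ simultaneously — gives a vortex\nobreakdash-free matching rendition of $(B,\Psi)$ in the disk, i.e.\ $(B,\Psi)$ is matching flat.

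I expect the main obstacle to be the projection gadget itself: one must genuinely construct $Z$ and $C^{\sharp}$ and prove the two\nobreakdash-way transfer of conformal crosses between the four\nobreakdash-cycle $C^{\sharp}$ and the arbitrarily long cycle $C_1$, which is where the connectivity of the order\nobreakdash-$3$ cylindrical matching grid (through the extendability characterisations of \cref{thm_bipartiteextendibility}) is really used, and where order $3$ rather than order $2$ appears to be needed to leave enough room for the rerouting while keeping everything conformal and keeping $B^{\sharp}_0$ a brace. A pervasive secondary difficulty is the tight\nobreakdash-cut bookkeeping throughout — checking that tight\nobreakdash-cut contractions neither create nor destroy conformal crosses over $C_1$, and that reinstating contracted shores as big vertices preserves vortex\nobreakdash-freeness — which ultimately relies on the uniqueness of the brace decomposition of a bipartite matching covered graph.
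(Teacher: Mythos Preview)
Your overall architecture matches the paper's: reduce to a single brace using \cref{lemma_tightcutsandconformalcycles}, then trade the long cycle $C_1$ for a $4$-cycle so that \cref{lemma_goodcrossesmeanK33}, \cref{thm_4cycleK33} and \cref{thm_trisums} apply. Two of your steps, however, have genuine gaps.

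\textbf{Flat $\Rightarrow$ no cross via rerouting.} Suppose in a non\nobreakdash-vortex $4$-cell $c$ with boundary $a_1,b_1,a_2,b_2$ the paths $P_1,P_2$ traverse $c$ in the interleaving pattern $a_1\to a_2$ and $b_1\to b_2$. You want to say this is a conformal cross over the $4$-cycle in $\sigma(c)+\{a_1b_1,b_1a_2,a_2b_2,b_2a_1\}$ and invoke \cref{lemma_goodcrossesmeanK33}. But the perfect matching that makes $C_1+P_1+P_2$ conformal in $B_0$ need not restrict to a perfect matching of the cell plus its $4$-cycle (the cell boundary is not a tight cut), so you do not get a \emph{conformal} cross inside the cell; and \cref{lemma_goodcrossesmeanK33} is stated for braces, which $\sigma(c)$ plus the $4$-cycle need not be. The paper sidesteps this entirely: it pivots through $K_{3,3}$-containment, showing that a conformal cross over $C_1$ survives deletion of most of $H$ down to a brace carrying a genuine $4$-cycle with a conformal cross, whence $K_{3,3}$ appears --- and a brace with a vortex\nobreakdash-free disk rendition is obtained from planar braces by trisums, hence Pfaffian, hence $K_{3,3}$-free.

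\textbf{The gadget and the face claim.} The paper does not attach an external gadget outside $C_1$. It first forms the \emph{reduct} by deleting the $C_1$--$C_2$ annulus, and then the \emph{enhancement} $B^+$ by adding a $4$-cycle $C=(a,b,c,d)$ with four legs onto prescribed degree-$3$ vertices of $C_2$. The crucial point is that $B^+$ is a \emph{matching minor} of $B$ (the $4$-cycle with its legs sits as a conformal bisubdivision inside the discarded annulus), so any $K_{3,3}$ found in $B^+$ transfers to $B$ for free. Your external $Z$ does not enjoy this; you would have to argue separately that $B_0^{\sharp}$ is a brace and that adding $Z$ neither creates nor hides a $K_{3,3}$. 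Finally, ``the grid is confined to one planar brace, so $C_1$ bounds a face'' is not enough: one must rule out that some $4$-cycle of the trisum decomposition separates the vertices of the attached $4$-cycle. The paper's Claim~4 does exactly this, by showing that such a separating $4$-cycle would force a conformal cross over $C$ (built from the grid's infrastructure on $C_2$) and hence a $K_{3,3}$ in a supposedly Pfaffian brace.
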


Before we can prove \cref{thm_societytwopaths} we need to discuss two important tools.
First we describe how out choice for the existence of a matching cylindrical grid of order $3$ as a conformal subdivision allows us to modify the ``outer'' parts of $H$ without any influence on $(B_{\mathscr{S}},\Omega).$ 

\begin{observation}\label{lemma_tightcutsandconformalcycles}
Let $G$ be a matching covered graph and $C_1,$ $C_2$ be two disjoint cycles such that there exists a perfect matching $M$ of $G$ for which both $C_1$ and $C_2$ are $M$-conformal.
Now let $X\subseteq V(G)$ be a set of vertices with $\Cut{X}\cap E(C_i)\neq\emptyset$ for both $i\in[2],$ then $\Cut{X}$ is not a tight cut of $G.$
\end{observation}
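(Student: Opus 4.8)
The plan is to argue by contradiction: assume $\Cut{X}$ is a tight cut and use the $M$-alternating structure of the cycles to produce a perfect matching meeting $\Cut{X}$ in at least two edges. First I would record what $M$-conformality buys us: for each $i\in[2]$, since $V(C_i)$ is $M$-conformal and $M$ is a perfect matching of $G$, the vertices of $C_i$ are matched by $M$ among themselves; as every vertex of $C_i$ has degree two in $C_i$, the set $M\cap E(C_i)$ is one of the two perfect matchings of the (necessarily even) cycle $C_i$, and $E(C_i)\setminus M$ is the other. Consequently $M_i\coloneqq M\triangle E(C_i)$ agrees with $M$ off $C_i$ and equals the complementary perfect matching on $C_i$, so $M_i$ is again a perfect matching of $G$. (This is the only place the hypothesis on the cycles is used; everything after this is counting.)

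Next I would compare the sizes $\Abs{M\cap\Cut{X}}$ and $\Abs{M_i\cap\Cut{X}}$. Writing $a_i=\Abs{M\cap\Cut{X}\cap E(C_i)}$ and $b_i=\Abs{(\Cut{X}\cap E(C_i))\setminus M}$, the symmetric difference $M\triangle E(C_i)$ only alters edges lying in $E(C_i)$, so $\Abs{M_i\cap\Cut{X}}=\Abs{M\cap\Cut{X}}-a_i+b_i$. If $\Cut{X}$ is tight, then both $\Abs{M\cap\Cut{X}}$ and $\Abs{M_i\cap\Cut{X}}$ equal $1$, which forces $a_i=b_i$. Combined with $a_i+b_i=\Abs{\Cut{X}\cap E(C_i)}\geq 1$ (this is exactly the hypothesis $\Cut{X}\cap E(C_i)\neq\emptyset$), we get $a_i\geq 1$, i.e.\ $M$ contains an edge of $\Cut{X}$ lying on $C_i$.

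Finally I would extract the contradiction. Applying the previous paragraph to both $i=1$ and $i=2$, $M$ contains an edge of $\Cut{X}$ on $C_1$ and an edge of $\Cut{X}$ on $C_2$. Since $C_1$ and $C_2$ are vertex-disjoint, $E(C_1)\cap E(C_2)=\emptyset$, so these are two distinct edges; hence $\Abs{M\cap\Cut{X}}\geq 2$, contradicting the assumption that $\Cut{X}$ is tight. Therefore $\Cut{X}$ is not a tight cut of $G$. The only point requiring care is the verification that $M\triangle E(C_i)$ is a perfect matching, which I anticipate as the ``main obstacle'' only in the sense that it is where the $M$-conformality assumption must be invoked precisely; the rest is an edge count and needs no parity argument.
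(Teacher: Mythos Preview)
Your argument is correct and rests on the same idea as the paper's proof: switching $M$ along an $M$-conformal cycle $C_i$ to obtain a second perfect matching and then comparing intersection sizes with $\Cut{X}$. The only cosmetic difference is the finishing count---the paper picks one $j$ with $\Cut{X}\cap E(C_j)\cap M=\emptyset$, uses the even-cycle/cut parity to get $\Abs{\Cut{X}\cap E(C_j)}\geq 2$, and finds two cut edges in $M_j$, whereas you run the tightness equation for both $i$ to force $a_i\geq 1$ and read off two cut edges in $M$ itself.
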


\begin{proof}
Observe that if $\Cut{X}$ is a tight cut, it cannot contain an edge of $M\cap E(C_i)$ for both $i\in[2].$
Hence, there exists $j\in[2]$ such that $\Cut{X}\cap E(C_j)\cap M=\emptyset.$
However, $\Abs{\Cut{X}\cap E(C_j)}\geq 2$ as $C_j$ is a cycle, and $M'=(M\setminus E(C_j))\cup (E(C_j)\setminus M)$ is a perfect matching of $G$ with $\Abs{\Cut{X}\cap M'}\geq 2.$
Hence, $\Cut{G}$ cannot be tight.
\end{proof}
Let $\mathscr{S}=(B,H,\Psi,\Omega)$ be a reinforced society, let $\rho=(\Gamma,\mathcal{V},\mathcal{D})$ be a matching rendition of $(H,\Psi)$ in a disk $\Delta$ which can also be interpreted as a matching rendition of $(H,\Omega)$ in some disk $\Delta'.$
Observe that there exists a closed curve $\gamma$ in $\Delta$ which follows\footnote{We call such a curve the \emph{trace} of the cycle, similar to the corresponding notion from \cite{kawarabayashi2020quickly}.} the way the cycle $C_2$ of the conformal bisubdivision of the cylindrical matching grid $CG_3$ in $H$ is embedded into $\Delta$ by $\Gamma.$
Let $\Delta''$ be obtained from $\Delta$ by deleting the component of $\Delta-\gamma$ which is disjoint from the boundary of $\Delta$ and let $H''$ be the subgraph of $H$ which is drawn in $\Delta''$ by $\Gamma.$
Finally, let $H'\coloneqq H-(H''-C_2).$
Then $H'$ and $H''$ intersect exactly in the cycle $C_2.$
Let $\Psi'$ be the cyclic permutation of $V(C_2)$ obtained by traversing along $C_2$ in clockwise order.
Then $(H',\Psi')$ is a society that has a matching rendition in a disk.
Finally, let $B'=B-(H''-C_2).$
We call $\mathscr{S}^-=(H',B',\Psi',\Omega)$ the \emph{reduct} of $\mathscr{S}.$

It follows from \cref{lemma_tightcutsandconformalcycles} that any non-trivial tight cut of $B'$ that does not correspond to a non-trivial tight cut of $B$ must have a shore $X\subseteq V(B')$ for which $X\subseteq (H'-C_3).$
Hence, a set $X\subseteq (B'-(H'-C_3))$ induces a non-trivial tight cut in $B'$ if and only if it induces a non-trivial tight cut in $B.$ 

We now enhance the graphs $H'$ and $B'$ by introducing four fresh vertices $a,c\in V_1$ and $b,d\in V_2$ together with the edges $ab,$ $bc,$ $cd,$ $ad,$ $av^2_{2},$ $bv^2_{5},$ $cv^2_{6},$ and $dv^2_{1},$ where the vertices $v^2_i$ are the branch vertices of $CG_3$ of the corresponding vertices on $C_2.$
Let $H^+$ and $B^+$ be the resulting graphs.
Observe that $H^+$ and $B^+$ are matching minors of $H$ and $B$ respectively.
Hence, any matching minor of $B^+$ will also be a matching minor of $B.$
Finally, let $\Psi^+$ be the cyclical permutation $(a,b,c,d).$
We call $\mathscr{S}^+=(B^+,H^+,\Psi^+,\Omega)$ the \emph{enhancement} of $\mathscr{S}^{-}.$
See \cref{fig_enhancedsociety} for an illustration.

\begin{figure}
	\centering
\makefast{

	\begin{subfigure}{0.32\textwidth}
		\centering
	\begin{tikzpicture}[scale=0.68]

		\pgfdeclarelayer{background}
		\pgfdeclarelayer{foreground}
		\pgfsetlayers{background,main,foreground}

		\draw[e:main] (0,0) circle (16mm);
		\draw[e:main] (0,0) circle (23mm);
		\draw[e:main] (0,0) circle (30mm);
		
		\foreach \x in {1,...,3}
		{
			\draw[e:main] (\x*120-15:23mm) -- (\x*120+30-15:16mm);
			\draw[e:main] (\x*120-15:30mm) -- (\x*120+30-15:23mm);
		}
		
		\foreach \x in {1,...,3}
		{
			\draw[e:main] (\x*120-30-15:23mm) -- (\x*120-60-15:16mm);
			\draw[e:main] (\x*120-30-15:30mm) -- (\x*120-60-15:23mm);
			
		}
		
		\foreach \x in {1,...,6}
		{
			\draw[e:coloredthin,color=BostonUniversityRed,bend right=13] (\x*60-15:16mm) to (\x*60+30-15:16mm);
			\draw[e:coloredthin,color=BostonUniversityRed,bend right=13] (\x*60-15:23mm) to (\x*60+30-15:23mm);
			\draw[e:coloredthin,color=BostonUniversityRed,bend right=13] (\x*60-15:30mm) to (\x*60+30-15:30mm);
		}
		
		\foreach \x in {1,...,6}
		{
			\node[v:main] () at (\x*60-15:16mm){};
			\node[v:main] () at (\x*60-15:23mm){};
			\node[v:main] () at (\x*60-15:30mm){};
			\node[v:mainempty] () at (\x*60+30-15:16mm){};
			\node[v:mainempty] () at (\x*60+30-15:23mm){};
			\node[v:mainempty] () at (\x*60+30-15:30mm){};
		}
		
		\node[v:ghost] () at (90:33mm) {};
		\node[v:ghost] () at (270:40mm) {$\mathscr{S}=(B,H,\Psi,\Omega)$}; 
		
		\begin{pgfonlayer}{background}
			\path[fill=AO,opacity=0.2] (0,0) circle [radius=3];
			\path[fill=white] (0,0) circle [radius=1.6];
			\path[fill=CornflowerBlue,opacity=0.4] (0,0) circle [radius=1.6];
			
			\node () at (0,0) {$B_{\mathscr{S}}$};
			\node () at (90:26.3mm) {\textcolor{AO}{$H$}};
			
			\foreach \x in {1,...,6}
			{
				\draw[e:coloredborder,bend right=13] (\x*60-15:16mm) to (\x*60+30-15:16mm);
				\draw[e:coloredborder,bend right=13] (\x*60-15:23mm) to (\x*60+30-15:23mm);
				\draw[e:coloredborder,bend right=13] (\x*60-15:30mm) to (\x*60+30-15:30mm);
			}
		\end{pgfonlayer}
		
	\end{tikzpicture}
	\end{subfigure}
	\begin{subfigure}{0.32\textwidth}
		\centering
		\begin{tikzpicture}[scale=0.68]

			\pgfdeclarelayer{background}
			\pgfdeclarelayer{foreground}
			\pgfsetlayers{background,main,foreground}

			\draw[e:main] (0,0) circle (16mm);
			\draw[e:main] (0,0) circle (23mm);
			
			\foreach \x in {1,...,3}
			{
				\draw[e:main] (\x*120-15:23mm) -- (\x*120+30-15:16mm);
			}
			
			\foreach \x in {1,...,3}
			{
				\draw[e:main] (\x*120-30-15:23mm) -- (\x*120-60-15:16mm);
				
			}
			
			\foreach \x in {1,...,6}
			{
				\draw[e:coloredthin,color=BostonUniversityRed,bend right=13] (\x*60-15:16mm) to (\x*60+30-15:16mm);
				\draw[e:coloredthin,color=BostonUniversityRed,bend right=13] (\x*60-15:23mm) to (\x*60+30-15:23mm);
			}
			
			\foreach \x in {1,...,6}
			{
				\node[v:main] () at (\x*60-15:16mm){};
				\node[v:main] () at (\x*60-15:23mm){};
				\node[v:mainempty] () at (\x*60+30-15:16mm){};
				\node[v:mainempty] () at (\x*60+30-15:23mm){};
			}
			
			\node[v:ghost] () at (90:33mm) {};
			\node[v:ghost] () at (270:40mm) {$\mathscr{S}^-=(B',H',\Psi',\Omega)$}; 
			
			\begin{pgfonlayer}{background}
				\path[fill=AO,opacity=0.2] (0,0) circle [radius=2.3];
				\path[fill=white] (0,0) circle [radius=1.6];
				\path[fill=CornflowerBlue,opacity=0.4] (0,0) circle [radius=1.6];
				
				\node () at (0,0) {$B_{\mathscr{S}}$};
				\node () at (90:19.3mm) {\textcolor{AO}{$H'$}};
				
				\foreach \x in {1,...,6}
				{
					\draw[e:coloredborder,bend right=13] (\x*60-15:16mm) to (\x*60-15+30:16mm);
					\draw[e:coloredborder,bend right=13] (\x*60-15:23mm) to (\x*60-15+30:23mm);
				}
			\end{pgfonlayer}
			
		\end{tikzpicture}
	\end{subfigure}
	\begin{subfigure}{0.32\textwidth}
		\centering
		\begin{tikzpicture}[scale=0.68]

			\pgfdeclarelayer{background}
			\pgfdeclarelayer{foreground}
			\pgfsetlayers{background,main,foreground}

			\draw[e:main] (0,0) circle (16mm);
			\draw[e:main] (0,0) circle (23mm);
			
			\foreach \x in {1,...,3}
			{
				\draw[e:main] (\x*120-15:23mm) -- (\x*120-15+30:16mm);
			}
			
			\foreach \x in {1,...,3}
			{
				\draw[e:main] (\x*120-30-15:23mm) -- (\x*120-60-15:16mm);
				
			}
			
			\foreach \x in {1,...,6}
			{
				\draw[e:coloredthin,color=BostonUniversityRed,bend right=13] (\x*60-15:16mm) to (\x*60+30-15:16mm);
				\draw[e:coloredthin,color=BostonUniversityRed,bend right=13] (\x*60-15:23mm) to (\x*60+30-15:23mm);
			}
			
			\foreach \x in {1,...,6}
			{
				\node[v:main] () at (\x*60-15:16mm){};
				\node[v:main] () at (\x*60-15:23mm){};
				\node[v:mainempty] () at (\x*60+30-15:16mm){};
				\node[v:mainempty] () at (\x*60+30-15:23mm){};
			}
		
			\node[draw, circle, scale=1.1, thick,color=DarkTangerine,fill=white,inner sep=0.7mm] () at (45:32mm){};
			\node[draw, circle, scale=1.1, thick,color=DarkTangerine,fill=DarkTangerine,inner sep=0.7mm] () at (135:32mm){};
			\node[draw, circle, scale=1.1, thick,color=DarkTangerine,fill=white,inner sep=0.7mm] () at (165:32mm){};
			\node[draw, circle, scale=1.1, thick,color=DarkTangerine,fill=DarkTangerine,inner sep=0.7mm] () at (15:32mm){};
			
			\node[v:ghost] () at (90:33mm) {};
			\node[v:ghost] () at (270:40mm) {$\mathscr{S}^+=(B^+,H^+,\Psi^+,\Omega)$}; 
			
			\begin{pgfonlayer}{background}
				\path[fill=AO,opacity=0.2] (0,0) circle [radius=2.3];
				\path[fill=white] (0,0) circle [radius=1.6];
				\path[fill=CornflowerBlue,opacity=0.4] (0,0) circle [radius=1.6];
				
				\node () at (0,0) {$B_{\mathscr{S}}$};
				\node () at (90:19.3mm) {\textcolor{AO}{$H'$}};
				
				\draw[e:colored,color=DarkTangerine] (0,0) circle (32mm);
				
				\draw[e:colored,color=DarkTangerine] (135:32mm) to (150-15:23mm);
				\draw[e:colored,color=DarkTangerine] (165:32mm) to (180-15:23mm);
				\draw[e:colored,color=DarkTangerine] (15:32mm) to (30-15:23mm);
				\draw[e:colored,color=DarkTangerine] (45:32mm) to (60-15:23mm);
				
				\draw[e:coloredborder,bend right=13,color=DarkTangerine] (135:32mm) to (165:32mm);
				\draw[e:coloredborder,bend right=13,color=DarkTangerine] (15:32mm) to (45:32mm);
				
				\draw[e:colored,bend right=13] (135:32mm) to (165:32mm);
				\draw[e:colored,bend right=13] (15:32mm) to (45:32mm);
				
				\foreach \x in {1,...,6}
				{
					\draw[e:coloredborder,bend right=13] (\x*60-15:16mm) to (\x*60+30-15:16mm);
					\draw[e:coloredborder,bend right=13] (\x*60-15:23mm) to (\x*60+30-15:23mm);
				}
			\end{pgfonlayer}
			
		\end{tikzpicture}
	\end{subfigure}}{\scalebox{.55}{\includegraphics{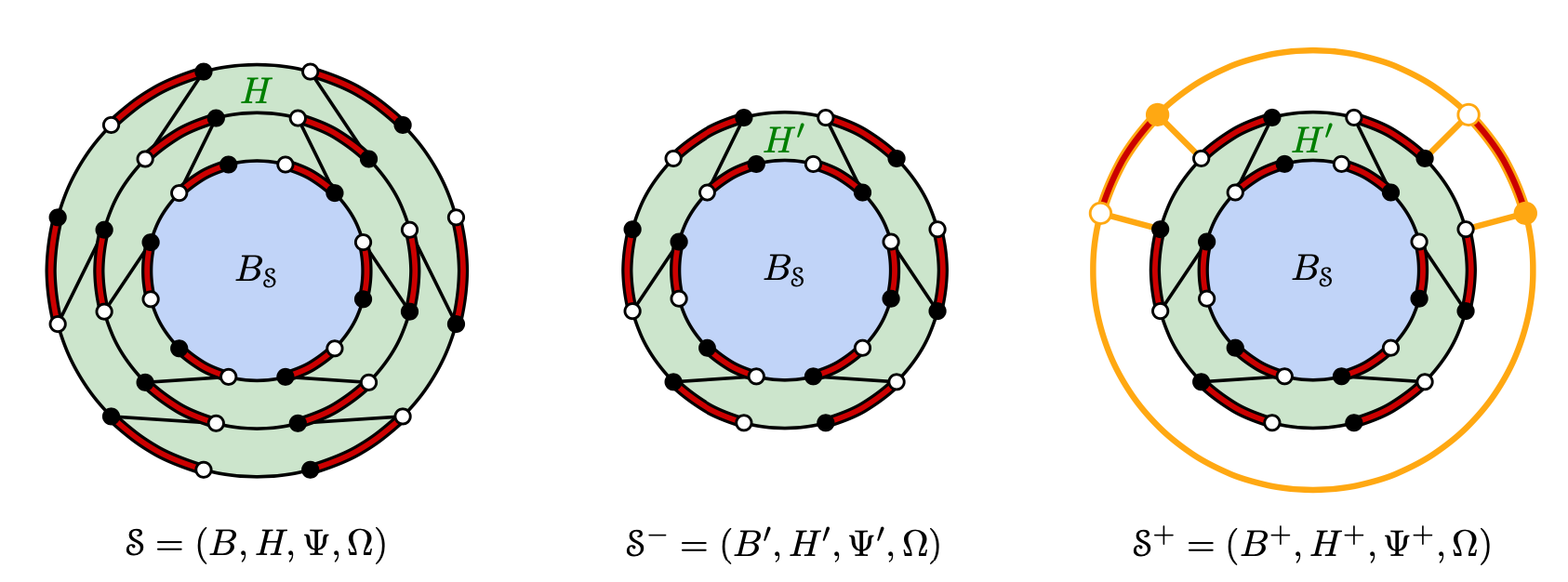}}}
	\caption{A reinforced society $\mathscr{S}=(B,H,\Psi,\Omega)$ (leftmost). The blue area indicates the graph $B_{\mathscr{S}},$ while the green are indicates the graph $H$ together with its conformal bisubdivision of a cylindrical matching grid of order $3.$ The \emph{reduct} $\mathscr{S}^{-}$ (middle) is obtained from $\mathscr{S}$ by deleting the subgraph of $H$ captured between the cycles $C_1$ and $C_2.$
	Finally, the \emph{enhancement} $\mathscr{S}^{+}$ (rightmost) of $\mathscr{S}^-$ is obtained from $\mathscr{S}^{-}$ by adding a $4$-cycle which can be found as a conformal bisubdivision in the part of $H$ that was removed to obtain $\mathscr{S}^{-}.$}
	\label{fig_enhancedsociety}
\end{figure}

A second tool is needed to identify the ``boundaries'' of two matching societies with vortex-free renditions by joining these renditions.
This lemma will find additional applications later on.

\begin{lemma}\label{lemma_joiningoverlappingsocieties}
	Let $(B_1,\Omega)$ and $(B_2,\Omega)$ be two matching societies with $V(B_1)\cap V(B_2)=V(\Omega).$
	Moreover, let $\Sigma_1$ and $\Sigma_2$ be two surfaces such that for each $i\in[2]$ the surface $\Sigma_i$ has a boundary component $G_i$ and the society $(B_i,\Omega)$ has a matching rendition $\rho_i$ in $\Sigma_i.$
	Let $\Sigma$ be the surface obtained from $\Sigma_1$ and $\Sigma_2$ by identifying $G_1$ and $G_2,$ then $B_1\cup B_2$ has a vortex-free extended $\Sigma$-decomposition $\delta$ which contains both $\rho_1$ and $\rho_2.$
\end{lemma}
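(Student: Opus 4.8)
The plan is to produce $\delta$ by gluing the two renditions along $\Omega$. Write $\rho_i=(\Gamma_i,\mathcal V_i,\mathcal D_i)$. First I would form $\Sigma$ by identifying the boundary circles $G_1$ and $G_2$ through a homeomorphism that carries the cyclic order of the $\Omega$-nodes on $G_1$ to that on $G_2$; such a homeomorphism exists because, by the definition of a matching rendition, $\pi_{\rho_i}$ maps the nodes of $\rho_i$ on $G_i$ bijectively onto $V(\Omega)$ while respecting the cyclic orders. Since in a matching rendition no vertex of $V(\Omega)$ lies in a big vertex, and by \textbf{ED4} no other vertex is drawn on the boundary of a big vertex, no big vertex of $\rho_1$ or $\rho_2$ meets $G_i$; hence after a harmless modification of the two drawings in a small collar of the boundary I may assume that, for each arc $\gamma$ of $G_i$ between two consecutive $\Omega$-nodes, the only objects of $\rho_i$ meeting $\gamma$ are its two endpoints, with the edge $a_jb_j$ (when $\gamma$ joins $a_j$ to $b_j$) drawn slightly inside $\Sigma_i$ so that its closure touches $G_i$ only at $a_j$ and $b_j$. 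Now set $\Gamma:=\Gamma_1\cup\Gamma_2$ (identifying the $\Omega$-nodes and discarding the redundant second copy of each edge $a_jb_j$, which by the matching-society hypothesis belongs to both $B_1$ and $B_2$), $\mathcal V:=\mathcal V_1\cup\mathcal V_2$, $\mathcal D:=\mathcal D_1\cup\mathcal D_2$, and $\delta:=(\Gamma,\mathcal V,\mathcal D)$; this is a drawing with crossings of $B_1\cup B_2$ on $\Sigma$ equipped with disk families.

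Next I would verify the axioms \textbf{ED1}--\textbf{ED11} for $\delta$. The crucial observation is that $\Sigma_1$ and $\Sigma_2$ overlap only along $G:=G_1=G_2$, that $G$ is disjoint from every big vertex, and that the only nodes on $G$ are the vertices of $V(\Omega)$. Consequently each of these axioms is a local statement that holds entirely inside $\Sigma_1$ or entirely inside $\Sigma_2$ --- where it is inherited from $\rho_1$ or $\rho_2$ --- except when it simultaneously mentions a disk of $\mathcal D_1$ and a disk of $\mathcal D_2$; after the normalisation the only such disks that still meet are pairs $\Delta^1_\gamma,\Delta^2_\gamma$ on opposite sides of a common arc $\gamma$, and those intersect exactly in the two vertices $a_j,b_j\in V(\Gamma)$. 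This disposes of \textbf{ED1}, \textbf{ED2}, \textbf{ED3} and \textbf{ED5}; \textbf{ED6}, \textbf{ED7} and \textbf{ED9} are untouched since they speak only of big vertices; \textbf{ED8} holds because every edge lies in a disk of $\mathcal D_1\cup\mathcal D_2$; \textbf{ED10} is inherited cell by cell; and \textbf{ED11} holds for each cell $c$ crossing $G$ because $\widetilde c=\{a_j,b_j\}$ with $a_j,b_j$ in different colour classes (as $\Omega$ alternates) and conformal. I would also check that $B_1\cup B_2$ is matching covered: it is connected and consistently bipartite, and if $e\in E(B_i)$, then a perfect matching of $B_i$ through $e$ together with a perfect matching of $B_{3-i}-V(\Omega)$ --- which exists because $(B_{3-i},\Omega)$ is a matching society, so $\{a_1b_1,\dots,a_\ell b_\ell\}$ extends to a perfect matching of $B_{3-i}$ --- is a perfect matching of $B_1\cup B_2$ through $e$. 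Thus $\delta$ is a genuine extended $\Sigma$-decomposition of $B_1\cup B_2$.

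Finally, every cell of $\delta$ is a cell of $\rho_1$, a cell of $\rho_2$, or a cell crossing $G$, and a cell of the last kind has $|\widetilde c|=2$ and $|V(\sigma(c))|=2$, hence is not a vortex; since $\rho_1$ and $\rho_2$ are vortex-free, so is $\delta$, and by construction $\delta$ restricts to $\rho_1$ on $\Sigma_1$ and to $\rho_2$ on $\Sigma_2$, so it contains both. I expect the main obstacle to be the normalisation step and the bookkeeping at the interface $G$: one has to argue carefully that each rendition can be redrawn near its boundary without affecting the rest, that the two drawings of each matching edge $a_jb_j$ can be reconciled into one, and that the disks of $\mathcal D_1$ and $\mathcal D_2$ adjacent to $G$ meet only in $\Omega$-vertices so that \textbf{ED1}--\textbf{ED5} are not violated; the remaining verifications are routine, if somewhat lengthy.
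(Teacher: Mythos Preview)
Your approach is correct and is the same as the paper's---glue the two renditions along the identified boundary---though the paper dispatches the whole thing in four sentences as ``immediate from the definition of matching renditions,'' while you supply the collar normalisation, the axiom-by-axiom check, and the matching-covered verification that it omits. One small point: with $\mathcal D=\mathcal D_1\cup\mathcal D_2$ every disk lies entirely in $\Sigma_1$ or in $\Sigma_2$, so there are no genuine ``cells crossing $G$'' and your final case distinction is unnecessary; the paper simply observes that any vortex of $\delta$ would already be a vortex of some $\rho_i$ (and, like you, tacitly assumes the $\rho_i$ are vortex-free, which the statement does not literally say but which is how the lemma is used throughout).
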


\begin{proof}
	This lemma follows immediately from the definition of \hyperref[def:matchingrendition]{matching renditions}.
	To see this observe that neither in $\rho_1$ nor in $\rho_2$ a vertex of $V(\Omega)$ may be drawn within a big vertex.
	Hence, for both $\rho_i$ the entirety of $V(\Omega)$ is drawn properly on the respective boundary component $G_i$ of $\Sigma_i.$
	Thus, $\rho_1$ and $\rho_2$ coincide in the way the vertices of $\Omega$ are drawn which allows us to unify both renditions into an extended $\Sigma$-decomposition $\delta.$
	Any vortex of $\delta$ would be a disk that is either completely contained in $\Sigma_1$ or $\Sigma_2$, and thus it would correspond to a vortex of one of the $\rho_i$ which cannot exist.
\end{proof}

\begin{proof}[Proof of \Cref{thm_societytwopaths}]
Consider the reinforced society $\mathscr{S}=(B,H,\Psi,\Omega)$ and the enhancement $\mathscr{S}^+=(B^+,H^+,\Psi^+,\Omega)$ of its reduct.
We prove the result by induction on $\Abs{V(B)}+\Abs{E(B)}.$\smallskip

As a first step, we show that we may assume  $B$ to be a brace.
To see this suppose there exists a non-trivial tight cut $\Cut{X}$ in $B.$
By \cref{lemma_tightcutsandconformalcycles} either $X$ or $\Complement{X},$ w.\@l.\@o.\@g.\@ we may assume it to be $\Complement{X},$ contains at least two of the three cycles $C_1,$ $C_2,$ and $C_3$ of the conformal bisubdivision $J$ of $CG_3$ in $H.$
Let $B'$ be obtained from $B$ by contracting $X$ into a single vertex $v_X$ and observe that $B'$ still contains a conformal bisubdivision of $CG_3.$
Moreover, we may adjust $H,$ $\Psi,$ and $\Omega$ to generate a reinforced society $\mathscr{S}'=(B',H,',\Psi',\Omega').$
By induction $(B',\Psi')$ is matching flat, that is it has a vortex-free matching rendition in the disk, if and only if $\mathscr{S}'$ does not have a cross.

\begin{claim}\label{@contrasentidos}
$\mathscr{S}$ has a cross if and only if $\mathscr{S}'$ has a cross.
\end{claim}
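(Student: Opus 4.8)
The plan is to prove the two directions separately, using that tight-cut contractions behave predictably both on perfect matchings and on the three concentric cycles $C_1,C_2,C_3$ of the conformal bisubdivision $J$ of $CG_3$ inside $H$. I would begin by making precise in what sense $\mathscr{S}'$ is again a reinforced society. Let $M$ be the canonical matching of $CG_3$, so that $C_1,C_2,C_3$ are pairwise disjoint and simultaneously $M$-conformal. As recorded above, $\overline X$ contains at least two of them, and the matching-flip argument of \cref{lemma_tightcutsandconformalcycles}, applied to a single $C_i$, shows in addition that $\Abs{\partial(X)\cap E(C_i)}\in\Set{0,2}$ for each $i$; hence $X$ splits at most one $C_i$, and does so in a single arc. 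Consequently, when $X$ is contracted to a vertex $v_X$, at least two of the concentric cycles are untouched and the third, if affected at all, is merely shortened to a cycle through $v_X$, so the bisubdivision of $CG_3$ survives in $B'$ with $v_X$ playing the role of a vertex on the affected cycle; moreover $v_X$ belongs to the colour class dictated by the majority of $X$, which is the one compatible with that position. Up to the symmetric relabellings this is exactly the adjustment $(B,H,\Psi,\Omega)\mapsto(B',H',\Psi',\Omega')$; in particular $V(\Omega')=V(\Omega)$ (and, when $C_1$ is among the untouched cycles, $V(\Psi')=V(\Psi)$).

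For the implication ``$\mathscr{S}$ has a cross $\Rightarrow \mathscr{S}'$ has a cross'', start with a conformal cross $(P_1,P_2)$ over $C_1$ in $B$, so that $F\coloneqq C_1+P_1+P_2$ --- a subdivision of $K_4$ whose four branch vertices $s_1,s_2,t_1,t_2$ appear in this cyclic order on $C_1$ --- is conformal, witnessed by a perfect matching $M_0$ of $B$ with $M_0\cap E(F)$ a perfect matching of $F$. Because $\partial(X)$ is tight, $M_0$ uses exactly one edge of $\partial(X)$, so the image $M_0'$ of $M_0$ under the contraction is a perfect matching of $B'$. The substance of this direction is then to check that the image $F'$ of $F$ is again a $K_4$-subdivision whose branch vertices occur on $C_1'$ in the same cyclic order, and that $M_0'\cap E(F')$ is a perfect matching of $F'$; this exhibits the images $P_1',P_2'$ of $P_1,P_2$ as a conformal cross over $C_1'$ in $B'$. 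The only way $F'$ could degenerate is if two of $s_1,s_2,t_1,t_2$ were absorbed into the arc $X\cap V(C_1)$ (forcing $P_1'$ and $P_2'$ to meet at $v_X$), and I would exclude this using that $X\cap V(C_1)$ is a single arc: if it contained two consecutive cross-endpoints, then $\partial(X)$ would be forced to separate $F$ into pieces in a way incompatible with $\partial(X)$ being tight and $F$ being conformal.

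For the converse, take a conformal cross $(P_1',P_2')$ over $C_1'$ in $B'$. If it avoids $v_X$, it already is a conformal cross over $C_1$ in $B$. Otherwise, use that the ``inside'' of the tight cut $\partial(X)$ --- the graph on $X$ together with the cut edges --- is matching covered and that every edge incident with the contracted vertex extends to a perfect matching of it: this lets me replace each passage of the cross through $v_X$ by an internally conformal alternating segment inside $B[X]$ joined to the relevant cut edge, and to witness conformality of the resulting subgraph of $B$ by combining a perfect matching of the $X$-side with the lift of $M_0'$ on the $\overline X$-side; the result is a conformal cross over $C_1$ in $B$. The step I expect to be the main obstacle is the forward direction in the sub-case $X\cap V(C_1)\neq\emptyset$ (equivalently $v_X\in V(\Psi')$): one has to be sure the quotient neither collapses two of the four cross-endpoints nor fuses $P_1$ with $P_2$, and that a perfect matching of the $K_4$-subdivision $F$ descends to $F'$. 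The leverage for all of this is the parity fact $\Abs{\partial(X)\cap E(C_i)}\in\Set{0,2}$ from the first step, which says the tight cut is ``thin'' relative to the grid and therefore cannot tear the cross apart.
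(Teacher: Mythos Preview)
Your reverse direction ($\mathscr{S}'\Rightarrow\mathscr{S}$) matches the paper's argument and is fine.

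The forward direction has a real gap. You claim that ``the only way $F'$ could degenerate is if two of $s_1,s_2,t_1,t_2$ were absorbed into the arc $X\cap V(C_1)$'', but this is false: even if all four branch vertices lie in $\overline X$, the shore $X$ may contain \emph{internal} vertices of both $P_1$ and $P_2$ (which are free to wander through $B_{\mathscr{S}}$, far from $H$). After contracting $X$, both $P_1'$ and $P_2'$ then pass through $v_X$ and are no longer disjoint, so $F'$ is not a $K_4$-subdivision and no cross survives. Your parity fact $\Abs{\partial(X)\cap E(C_i)}\in\Set{0,2}$ controls only how the tight cut meets the concentric cycles of $H$; it says nothing about $\partial(X)\cap E(P_1)$ or $\partial(X)\cap E(P_2)$, and the vague appeal that $\partial(X)$ is ``thin relative to the grid'' does not cover this.

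Ruling out precisely this scenario is the substance of the paper's proof, and it uses a matching-flip trick you are missing. For each $i$ there is an arc $Q_i$ of $C_1$ such that $P_i+Q_i$ is an $M$-conformal cycle; let $M_i$ be obtained from $M$ by flipping along $P_{3-i}+Q_{3-i}$. Tightness of $\partial(X)$ with respect to all three matchings $M,M_1,M_2$ (each meets $\partial(X)$ in exactly one edge) is then played off in a short case analysis on the numbers of $M$-edges and non-$M$-edges of each $P_i$ in $\partial(X)$ to show that $X$ cannot contain vertices of both $P_1$ and $P_2$. Once that is established, contraction visibly preserves the cross. Without this step your argument does not go through.
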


\noindent\emph{Proof of Claim 1:}
In case $\mathscr{S}'$ has a cross, then we may also find a cross for $\mathscr{S}$ by simply expanding $v_X$ back to the set $X$ and ``patching'' either $C_1$ or one of the two paths of the cross with an appropriate path through $X,$ which has to exist since $B'$ is a matching minor of $B$ \cite{mccuaig2001brace}.

Now suppose $\mathscr{S}$ has a cross.
Then there exists a perfect matching $M$ together with two disjoint paths $P_1,$ $P_2$ such that $K=C_1+P_1+P_2$ is an $M$-conformal subgraph of $B$ and $P_1$ and $P_2$ form a cross over $C_1.$
Observe that for each of the two paths $P_i$ there exists a subpath $Q_i$ of $C_1$ such that $P_i+Q_i$ is an $M$-conformal cycle.
Moreover, for each $i\in[2]$ there exists a path $Q_i'$ such that $Q_i$ and $Q_i'$ coincide in their endpoints only and $C_1=Q_i+Q_i'.$
Let $M_i$ denote the perfect matching $(M\setminus E(P_{3-i}+Q_{3-i}))\cup(E(P_{3-i}+Q_{3-i})\setminus M)$ of $K.$
Let $\Cut{Y}$ be any non-trivial tight cut of $B.$
Notice that $\Cut{Y}$ either contains exactly two edges of $C_1$ or none and, if it contains edges of $C_1,$ exactly one of those belongs to $M.$
As before, we may assume $\Complement{Y}$ to contain at least two of the three cycles $C_1,$ $C_2,$ and $C_3.$
Suppose $Y$ contains vertices of both $P_1$ and $P_2.$
From the observation above it follows that $Y$ contains at most one of the four endpoints of the two paths.
If there exists $j\in[2]$ such that $\Cut{Y}\cap E(P_j)$ contains two non-$M$-edges, then it must contain at least two edges of the perfect matching $M_{3-j}.$
Thus, each $P_i$ has at least one edge in $\Cut{Y}$ and if $\Cut{Y}$ has at least two edges of $P_i,$ then these are exactly two, one of them belonging to $M.$
It follows that there exists $j\in[2]$ such that $\Cut{Y}$ contains exactly two edges of $P_j$ and exactly one edge of $P_{3-j}$ which does not belong to $M.$
However, this also means that  $\Cut{Y}$ contains an edge of $M_j\cap E(P_{3-j})$ and the edge in $E(P_j)\cap M\cap M_j$ contradicting $\Cut{Y}$ to be a tight cut.
Hence, contracting $Y$ into a single vertex preserves the existence of a conformal cross over $C_1$ (or what remains\footnote{Which is at least a cycle of length four.} of $C_1$ after contracting $Y$).
Thus, the assumption that $\mathscr{S}$ has a cross implies that $\mathscr{S}'$ has a cross.
This completes the proof of \cref{@contrasentidos}.

Hence, we may assume $(B',\Psi')$ to be matching flat.
Let $\rho'$ be a vortex-free rendition of $(B',\Psi')$ in the disk.
If $V(C_1)\cap Y=\emptyset$ we can obtain a vortex-free rendition of $(B,\Psi)$ by replacing all occurrences of $v_X$ in $\rho'$ by a big vertex in whose interior we draw all the vertices of $X.$
Thus, in this case we are done and therefore we may assume $V(C_1)\cap X\neq\emptyset.$
However, in this case we know that $X\subseteq V(H)\setminus V(H^+)$ by \cref{lemma_tightcutsandconformalcycles} as otherwise this would contradict the fact that $(H,\Psi)$ is matching flat.
So we may use the vortex-free rendition of $(H,\Psi)$ in the disk to obtain from $\rho'$ a vortex-free rendition of $(B,\Psi)$ in the disk by adjusting $\rho'$ to also properly embed the vertices of $X.$

Hence, from now on we may assume $B$ to be a brace.\medskip

From now on let $\mathscr{S}^-=(B',H',\Psi',\Omega)$ be the \textsl{reduct} of $\mathscr{S}.$

\begin{claim}\label{@extravaganza}
	$B^+$ contains $K_{3,3}$ as a matching minor if and only if $B$ contains $K_{3,3}$ as a matching minor.
\end{claim}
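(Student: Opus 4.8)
The forward implication is immediate from the observation preceding the claim: $B^{+}$ is a matching minor of $B$, since it arises from $B$ by deleting the conformal vertex set $V(C_1)$ (which lies inside the flat graph $H$), performing bicontractions, and re‑attaching the conformal $4$‑cycle $(a,b,c,d)$ at four branch vertices of $C_2$. As a matching minor of a matching minor is again a matching minor, any certificate of $K_{3,3}$ as a matching minor of $B^{+}$ is also one for $B$.

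For the reverse implication I would argue as follows. Assume $B$ — which, after the reduction carried out above, we may take to be a brace — contains $K_{3,3}$ as a matching minor; by \cref{lemma_confmathingminors} it then contains a conformal bisubdivision $J$ of $K_{3,3}$. Write $R\coloneqq H''-C_2$ for the part of $H$ deleted when passing to the reduct $\mathscr{S}^{-}$, so that $B'=B-R$, the graph $B^{+}$ is obtained from $B'$ by gluing $\Psi^{+}=(a,b,c,d)$ onto the vertices $v^2_1,v^2_2,v^2_5,v^2_6$ of $C_2$, and $B'$ is a conformal subgraph of $B^{+}$ (because $(a,b,c,d)$ is a conformal $4$‑cycle, so $\{a,b,c,d\}$ is a conformal set of $B^{+}$). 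Since $B'$ is conformal in $B^{+}$, it suffices to show that $J$ may be chosen to avoid $R$ after a matching‑respecting rerouting, which then lives inside $B^{+}$.

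The engine for this is the matching‑flatness of $H$. Its vortex‑free rendition in the disk restricts to a vortex‑free rendition of $H''$ with $C_2$ on the boundary; hence $V(C_2)$ separates $V(R)$ from the rest of $B$, and $H''$ over $C_2$ forbids conformal crosses over $C_2$ confined to $R$ — more generally it forces any conformal system of paths through $R$ to meet $C_2$ in a non‑crossing pattern. Thus each connected piece of $J\cap R$ is attached to the rest of $J$ only through vertices of $C_2$, and in a planar pattern along $C_2$. Using the explicit rung‑structure of the conformal bisubdivision of $CG_3$ inside $H$ — in particular that $v^2_1v^2_2$ and $v^2_5v^2_6$ are matched edges of the canonical matching of $C_2$, and that the segments of $C_2$ between the six vertices of $C_2$ incident with rungs to $C_1$ all remain in $B'$ — one replaces every such piece by a system of internally disjoint conformal paths running along $C_2$ and through the fresh $4$‑cycle $(a,b,c,d)$, whose sole role in the enhancement is precisely to supply the room needed to keep the replacement paths conformal and disjoint. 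Iterating over the finitely many pieces of $J\cap R$ yields a conformal bisubdivision of $K_{3,3}$ inside $B^{+}$. If one prefers to phrase the conclusion through the $4$‑cycle, one first records that $B^{+}$ is a brace — which follows from \cref{thm_bipartiteextendibility}, since the conformal bisubdivision of $CG_3$ supplies enough internally disjoint conformal paths across $C_2$ to witness $2$‑extendability after the surgery — and then reads off a conformal cross over $(a,b,c,d)$ via \cref{lemma_goodcrossesmeanK33} and \cref{thm_4cycleK33}.

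The main obstacle is exactly this rerouting step. In the purely topological Two Paths setting one would simply slide paths along $C_2$; here every diverted piece must remain matching‑conformal, so one must track which edges of $C_2$ and of the gadget lie in the perfect matching under consideration. Flatness of $R$ controls the \emph{topology} of $J\cap R$ (no crossings on $C_2$), while the canonical matching of $CG_3$ together with tight‑cut arguments of the kind in \cref{lemma_tightcutsandconformalcycles} control the \emph{parity}; reconciling the two — and verifying that the four attachment vertices $v^2_1,v^2_2,v^2_5,v^2_6$ together with the gadget genuinely suffice, rather than requiring all six rung‑endpoints of $C_2$ — is where the real work lies.
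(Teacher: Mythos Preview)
Your forward direction is fine and matches the paper: $B^{+}$ is a matching minor of $B$, so any $K_{3,3}$ in $B^{+}$ lives in $B$.

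The reverse direction, however, has a genuine gap that you yourself flag. You propose to take a conformal bisubdivision $J$ of $K_{3,3}$ in $B$ and reroute the pieces of $J$ that meet $R=H''-C_2$ along $C_2$ and through the $4$-cycle gadget. But nothing guarantees this can be done conformally: branch vertices of $J$ may lie inside $R$, several paths of $J$ may traverse $R$ simultaneously, and the four attachment points $v^2_1,v^2_2,v^2_5,v^2_6$ together with a single $4$-cycle do not obviously provide enough room for an arbitrary planar, colour-alternating system of detours. Flatness of $H''$ controls the topology on $C_2$, not the matching parity of the rerouted paths, and your closing paragraph concedes that reconciling the two ``is where the real work lies''. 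That work is never done, so this is not a proof. A side issue: you assert that $B^{+}$ is a brace, but this is not justified --- passing from $B$ to $B'=B-R$ may create new non-trivial tight cuts with a shore inside $H'-C_3$, and the paper deliberately works with the brace $\hat{B}^{+}$ of $B^{+}$ containing $B_{\mathscr{S}}$ rather than $B^{+}$ itself.

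The paper avoids rerouting altogether and argues contrapositively through the Pfaffian structure theorem. It first reduces Claim~2 to the analogous statement for the brace $\hat{B}^{+}$ (tight cuts of $B^{+}$ have a shore inside $H'$, which is $K_{3,3}$-free). Assuming $\hat{B}^{+}$ is $K_{3,3}$-free, \cref{thm_trisums} writes $\hat{B}^{+}$ as a trisum of planar braces; a separate argument (Claim~4) shows the artificial $4$-cycle $C=(a,b,c,d)$ bounds a face in one of these pieces. This yields a vortex-free matching rendition of the relevant society, which combines with the rendition of $(H,\Omega)$ via \cref{lemma_joiningoverlappingsocieties} to give a vortex-free rendition of $(B,\Psi)$. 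Hence $B$ is itself obtainable from planar braces by trisums and, by \cref{thm_trisums} again, $K_{3,3}$-free. The key difference is that the paper never touches a specific $K_{3,3}$ model in $B$; it transfers \emph{Pfaffian-ness} from $\hat{B}^{+}$ to $B$ globally, which is exactly what the flat rendition machinery is built for.
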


\noindent\emph{Proof of Claim 2:}
Let $\hat{B}^+$ be the unique brace of $B^+$ that contains $B_{\mathscr{S}}.$
Notice that $\hat{B}^+$ must still contain the artificial $4$-cycle $C=(a,b,c,d)$ for which   $B^+-C=B'$ holds.
This is a consequence of \cref{lemma_tightcutsandconformalcycles}.
Since every non-trivial tight cut of $B^+$ must contain edges of $C_2$ and one of its shores must be completely contained within $H',$ which does not contain $K_{3,3}$ as a matching minor, it follows that $\hat{B}^+$ contains a $K_{3,3}$-matching minor if and only if $B^+$ does so.

So instead of \cref{@extravaganza} it suffices to show \cref{@memorialization}.

\begin{claim}\label{@memorialization}
	$\hat{B}^+$ contains $K_{3,3}$ as a matching minor if and only if $B$ contains $K_{3,3}$ as a matching minor.
\end{claim}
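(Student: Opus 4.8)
The plan is to prove the two implications of \cref{@memorialization} separately; the first is immediate and the second carries all the content. Suppose first that $\hat{B}^+$ contains $K_{3,3}$ as a matching minor. As already observed in the proof of \cref{@extravaganza}, $B^+$ then contains $K_{3,3}$ as a matching minor, and since $B^+$ is a matching minor of $B$ (and every matching minor of $B^+$ is a matching minor of $B$), $B$ contains $K_{3,3}$ as a matching minor.

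\textbf{The hard direction: setup.} Now suppose $B$ contains $K_{3,3}$ as a matching minor; by the proof of \cref{@extravaganza} it suffices to produce a $K_{3,3}$ matching minor in $B^+.$ Since $B$ is a brace and $K_{3,3}$ has maximum degree three, \cref{lemma_confmathingminors} yields a conformal bisubdivision $K$ of $K_{3,3}$ in $B.$ Recall that in forming the reduct $\mathscr{S}^-$ the only vertices deleted are those of $H''-C_2,$ that $H'$ and $H''$ meet precisely in $C_2,$ and that $V(C_3)$ is the separator of the separation $(V(B)-(H-V(\Omega)),H)$; hence $(V(H''),V(B'))$ is a separation of $B$ with separator exactly $V(C_2),$ and $B'=B-(H''-C_2).$ Moreover $H''$ inherits from the vortex-free extended $\Delta$-decomposition $\rho$ of $H$ a vortex-free decomposition with $C_2$ on the boundary, and neither $H'$ nor $H''$ contains $K_{3,3}$ as a matching minor (the former fact is already used in the proof of \cref{@extravaganza}, and both follow from $H$ being matching flat). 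Consequently the part of $K$ lying in $H''-C_2$ appears in $\rho$ as a family of pairwise disjoint, pairwise non-crossing conformal arcs with all endpoints on $C_2,$ possibly together with some branch vertices of $K$ drawn inside $H''-C_2$; flatness and $K_{3,3}$-freeness of $H''$ prevent these pieces from producing a conformal cross over $C_2$ internally.

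\textbf{The hard direction: rerouting and conclusion.} The heart of the argument is to reroute $K$ out of $H''-C_2.$ I would treat the arcs one at a time: an arc of $K$ with endpoints $v^2_i,v^2_j$ on $C_2$ is replaced either by the conformal subpath of $C_2$ joining $v^2_i$ to $v^2_j,$ when its parity is compatible with the perfect matching witnessing conformality of $K,$ or else by a detour through the artificial $4$-cycle $C=(a,b,c,d)$ along its spokes $av^2_2,bv^2_5,cv^2_6,dv^2_1,$ which contributes exactly the parity freedom that a bare subpath of $C_2$ cannot. Because the arcs sit flatly and without crossings inside $H'',$ this rerouting can be organised so that the new pieces remain pairwise disjoint, and the matching witnessing conformality of $K,$ once restricted away from $H''-C_2,$ is completed on the remaining vertices of $C_2$ using the perfect matching $\{ab,cd\}$ of $C.$ The result is a conformal bisubdivision $K'$ of $K_{3,3}$ in $B$ with $V(K')\cap V(H''-C_2)=\emptyset.$ Since $B^+-C=B'=B-(H''-C_2)$ and the new vertices $a,b,c,d$ with their incident edges do not obstruct conformality, $K'$ is a conformal bisubdivision of $K_{3,3}$ in $B^+,$ so $B^+$ contains $K_{3,3}$ as a matching minor, completing the proof.

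\textbf{The main obstacle.} I expect the rerouting step to be the real difficulty: one must show simultaneously that the flatness and $K_{3,3}$-freeness of $H''$ force every traversal of $K$ through $H''-C_2$ to be rearrangeable, and that conformality of the whole bisubdivision is preserved throughout --- which is precisely why the enhancement $\mathscr{S}^+$ was built by attaching the $4$-cycle $C$ together with its four spokes rather than by merely deleting $H''-C_2.$ A secondary, routine point, entirely analogous to \cref{@contrasentidos}, is that conformal crosses and conformal bisubdivisions survive the contraction $B^+\to\hat{B}^+,$ so one may pass freely between $B^+$ and $\hat{B}^+.$
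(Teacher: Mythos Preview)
Your easy direction is fine and matches the paper. For the hard direction, however, your approach diverges sharply from the paper's and, as you suspect, the rerouting step does not go through as stated. The pieces of $K$ inside $H''-C_2$ need not be a small number of arcs: several branch vertices of the bisubdivision may sit in $H''-C_2$, and even when only arcs remain, there can be many of them. Replacing each arc by a subpath of $C_2$ runs into the obvious disjointness problem (non-crossing arcs on a disk can still require overlapping intervals on the boundary cycle), and the single artificial $4$-cycle $C$ with its four spokes gives you at most one detour; it cannot absorb several independent parity corrections, nor can it host displaced branch vertices. So the ``one arc at a time'' rerouting is not an argument, and I do not see how to repair it without essentially reproving the structure theorem for $K_{3,3}$-free braces inside $H''$.

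The paper avoids all of this by proving the contrapositive: assume $\hat{B}^+$ is $K_{3,3}$-free, and deduce that $B$ is. By \cref{thm_trisums}, $\hat{B}^+$ is built from planar braces by trisums; Claim~4 (\cref{@pronunciarle}) shows that the artificial $4$-cycle $C$ bounds a face in one of these planar braces, which yields a vortex-free matching rendition of $(\hat{B}^+,\Psi^+)$. Since $C_3$ meets no non-trivial tight cut of $B^+$ (\cref{lemma_tightcutsandconformalcycles}), this rendition contains a matching rendition of $(B_{\mathscr{S}},\Omega)$, which can be glued via \cref{lemma_joiningoverlappingsocieties} to the given rendition of $(H,\Omega)$ to produce a vortex-free rendition of $(B,\Psi)$. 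Hence $B$ is built from planar braces by trisums and is $K_{3,3}$-free. The point is that the structure theorem does the heavy lifting globally, so no local surgery on a hypothetical $K_{3,3}$ model is ever needed.
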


\noindent\emph{Proof of Claim 3:}
Since $\hat{B}^+$ is a matching minor of $B,$ any matching minor of $\hat{B}^+$ is also one of $B.$
Hence, we may assume $\hat{B}^+$ to be $K_{3,3}$-matching-minor-free.
By \cref{thm_trisums} it follows that $\hat{B}^+$ can be obtained from planar braces via trisums.
We need to show that $C$ bounds a face in one of these braces in order to obtain matching flatness for  
the matching society $(\hat{\Psi},\hat{B}^+).$

Once we have established this we may use the resulting matching rendition $\hat{\rho}$ together with \cref{lemma_joiningoverlappingsocieties} to extend the matching rendition $\tilde{\rho}$ of $(H,\Omega),$ which is guaranteed by the definition of reinforced societies, to one of $(B,\Psi).$
This is possible since $C_3$ does not contain an edge of any non-trivial tight cut of $B^+$ by \cref{lemma_tightcutsandconformalcycles} and thus $\hat{\rho}$ contains a matching rendition of $(B-(H-V(\Omega)),\Omega)$ which can be combined with $(H,\Omega).$
From the existence of a matching rendition of $(B,\Psi)$ it follows that $B$ may be obtained from planar braces by trisums and thus, by \cref{thm_trisums}, $B$ does not contain $K_{3,3}$ as a matching minor which completes the proof of \cref{@memorialization}.

Therefore, the next step is to prove \cref{@pronunciarle}.

\begin{claim}\label{@pronunciarle}
	Let $G_1,\dots,G_{\ell}$ be planar braces such that $\hat{B}^+$ is isomorphic to a graph that can be obtained from the $G_i$ by trisums.
	Then there exists $i\in[\ell]$ such that $C\subseteq G_i$ and $C$ bounds a face of $G_{i}.$
\end{claim}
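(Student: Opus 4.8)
The plan is to exploit the fact that $\hat{B}^+$ contains a conformal bisubdivision $J$ of the cylindrical matching grid $CG_3$ whose middle cycle is $C_2$, together with the artificial $4$-cycle $C=(a,b,c,d)$ which is attached to the branch vertices $v_1^2,v_2^2,v_5^2,v_6^2$ of $C_2$. Since $CG_3$ is a highly connected matching minor and $C$ lives ``inside'' it (the edges $av_2^2,\dots,dv_1^2$ join $C$ to $C_2$), the whole substructure $J\cup C$ is $3$-connected in the matching sense, and in particular it cannot be split across a trisum. First I would record the basic structural fact underlying the trisum decomposition: by \Cref{@denomination}, each trisum step glues along a conformal $4$-cycle and may delete some of its edges, so the shores of the corresponding $4$-cycle-separations in $\hat{B}^+$ are separated by a (conformal) $4$-cycle. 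The key observation — which is a matching-theoretic analogue of the classical fact that a $3$-connected subgraph lies inside a single piece of a clique-sum decomposition — is that $J\cup C$ must be contained, up to the identifications made during the trisums, inside one of the planar braces $G_i$. I would prove this by induction on $\ell$: a single $4$-cycle-separation of $\hat{B}^+$ has one side containing all of $C_2$ (this is exactly \Cref{lemma_tightcutsandconformalcycles} applied to two of the concentric cycles $C_1,C_2,C_3$, none of which can be pierced by the separating $4$-cycle which carries only a single matching edge), and since $C$ is attached to four branch vertices of $C_2$ it lies on the same side; iterating, $J\cup C$ survives into a single $G_i$.

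Second, having located $G_i$ with $C\subseteq G_i$, I would argue that $C$ bounds a face of $G_i$. Here I would use the interaction of $C$ with the cylindrical grid more carefully. Inside $G_i$ (a planar brace) we have the conformal bisubdivision $J$ of $CG_3$ with concentric cycles $C_1,C_2,C_3$, and $C$ is joined to $C_2$ via the four spokes $av_2^2,bv_5^2,cv_6^2,dv_1^2$. In a planar embedding of $G_i$, the cycle $C_2$ separates the disk into an ``inner'' region (containing $C_1$ and the spokes from $C_2$ inward) and an ``outer'' region (containing $C_3$). The four branch vertices $v_1^2,v_2^2,v_5^2,v_6^2$ appear on $C_2$ in this cyclic order, and the vertices $a,b,c,d$ together with the edges $ab,bc,cd,da$ and the four spokes form, combinatorially, exactly the inner cylinder $C_1$ of $CG_3$ sitting between $C_1$ and $C_2$ — i.e.\ $C$ plays the role that was vacated when we passed from $\mathscr{S}$ to its reduct $\mathscr{S}^-$ and then re-added the artificial $4$-cycle in the enhancement $\mathscr{S}^+$. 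Since $C_1$ was the innermost cycle of the grid, and since $G_i$ is a brace, the region of the plane enclosed by $C$ on the side away from $C_2$ must be empty of further vertices: any vertex $x$ drawn in the interior of the $4$-cycle $C$ but not in the interior of $C_1$'s region would, by planarity and $2$-extendability of the brace $G_i$ (\Cref{thm_braces}), force the existence of a conformal cross over $C$ — contradicting that $G_i$, being a planar brace, is $K_{3,3}$-matching-minor-free via \Cref{lemma_goodcrossesmeanK33}. Hence $C$ bounds a face.

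The main obstacle I anticipate is the second step, not the first. The first step is a fairly standard ``$3$-connected-subgraph-stays-together'' argument, and the tool to run it — \Cref{lemma_tightcutsandconformalcycles}, which says non-trivial tight cuts (and hence the $4$-cycles of trisums) cannot pierce two disjoint conformal cycles — is already in hand, with the three concentric cycles of $CG_3$ providing exactly the three ``obstacles'' a single $4$-cycle cannot simultaneously avoid. The delicate point is the second step: I must turn the planarity of $G_i$ and the combinatorial position of $C$ relative to the grid into the statement that $C$ bounds an actual \emph{face}. The risk is that $C$ could be planar but have something nontrivial drawn ``inside'' it that does not immediately produce a forbidden matching minor. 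I expect to rule this out by a careful case analysis of how a vertex or edge drawn inside $C$ interacts with the spokes to $C_2$: either it can be routed into a conformal cross over $C$ (giving $K_{3,3}$ by \Cref{lemma_goodcrossesmeanK33}, contradicting planarity of $G_i$), or it can be ``pushed out'' through $C_2$ and the grid into the outer cylinder, contradicting $C\subseteq G_i$ being on the inner side. Making this routing argument airtight — in particular controlling the conformality of the resulting paths with respect to a single perfect matching — is where the real work lies, and it is essentially a local, finite-configuration check once the global position of $C$ inside $G_i$ is pinned down.
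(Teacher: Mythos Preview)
Your first step contains a genuine gap: \cref{lemma_tightcutsandconformalcycles} is about \emph{tight cuts}---edge cuts meeting every perfect matching in exactly one edge---whereas the separations arising from a trisum decomposition are \emph{vertex} separations along a conformal $4$-cycle $S$. A trisum $4$-cycle does not ``carry only a single matching edge'': it is conformal, so some perfect matching contains two of its edges, and neither of the associated edge cuts is tight. Your capacity intuition (``two disjoint cycles crossing a $4$-vertex separator would use all four vertices'') is not the content of that observation and, even if made precise, would only show that at most two of three concentric cycles can be pierced by $S$; it would not prevent the $4$-cycle $C$ itself from straddling $S$, which is exactly the configuration you must rule out.

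The paper takes a different route. It assumes that some trisum $4$-cycle $S$ separates the vertices of $C$, deduces that $S$ must contain exactly one colour class of $C$ (say $\{b_1,b_2\}$) and that the remaining two vertices $s_1,s_2$ of $S$ lie on $\hat{C}_2$, and then works in $K\coloneqq \hat{B}^{+}+b_1s_1+b_1s_2+b_2s_1+b_2s_2$, which is still $K_{3,3}$-free since $S$ is a trisum cycle. It then \emph{explicitly constructs} a perfect matching $N$ of $K$ (containing $a_1b_1$, $a_2v_2$, $s_2b_2$, and a suitable completion along $\hat{C}_2$) for which an $N$-conformal cross over $C$ is visible by inspection; \cref{lemma_goodcrossesmeanK33} then yields $K_{3,3}$ in $K$, a contradiction. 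The idea missing from your plan is that the putative separator $S$, once its $4$-cycle edges are reinstated, \emph{itself} supplies the two crossing paths over $C$. Your second-step worry is then largely misplaced: $a,b,c,d$ are fresh degree-$3$ vertices with nothing on one side of $C$ in $\hat{B}^{+}$, so once $C$ sits in a single planar piece it bounds a face for connectivity reasons alone, without any further cross-finding argument.
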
 

\noindent\emph{Proof of Claim 4:}
Suppose there exists a set $S\subseteq V(\hat{B}^+)$ such that $\Abs{S\cap V_1}=\Abs{S\cap V_2}=2$ and $\hat{B}^+$ contains two distinct components, both of which contain a vertex of $C.$
First, observe that this is possible only if there exists $j\in[2]$ such that $S\cap V(C)=V_j\cap V(C).$
Moreover, notice that there exist two internally vertex disjoint paths between the two neighbours of the vertices in $V_{3-j}\cap V(C)$ in what remains of $H'$ in $\hat{B}^+.$
Hence, $S$ must contain a vertex from each of these paths.
Indeed, notice that two such paths exist on the remainder $\hat{C_2}$ of $C_2$ within $\hat{B}^+.$
Without loss of generality let us assume $j=2$ and let $a_1,a_2$ be the two vertices of $V(C)\setminus S\subseteq V_1$ while $\Set{b_1,b_2}=S\cap V(C).$
Let $v_1,v_2\in V_2$ be the neighbours of $a_1,a_2$ on the remainder of $C_2$ respectively.
Finally, let $s_1,s_2\in S\cap V_1$ the two remaining vertices of $S.$
Notice that $K\coloneqq \hat{B}^++b_1s_1+b_1s_2+b_2s_1+b_2s_2$ must still be $K_{3,3}$-matching-minor-free if $S$ is the vertex set of some four cycle that occurs in the construction of $\hat{B}^+$ from the $G_i.$
Without loss of generality, we may assume that $s_1$ and $s_2$ lie on $\hat{C_2}.$
Indeed, we may even further assume that there is some perfect matching $\hat{M}$ of $\hat{B}^+,$ which can be obtained from a perfect matching $M$ of $B$ which contains the canonical matching of  the  cylindrical matching grid of order three that spans $C_1,$ $C_2,$ and $C_3,$ for which the two disjoint paths that connect $v_1$ to $s_1$ and $v_2$ to $s_2$ respectively are $\hat{M}$-conformal.
Notice that $s_1$ and $s_2$ must be vertices of the $\hat{M}$-conformal paths which are the bisubdivisions of the two matching edges of $C_2$ which cover $v_1$ and $v_2.$
This holds as $v_1$ and $v_2$ were selected to be degree three vertices of the bisubdivision of $CG_3.$
Now we may change $\hat{M}$ to the perfect matching $N$ as follows.
Let $a_1b_1\in N$ as well as $a_2v_2$ and $s_2b_2.$
Furthermore, add all edges of $\hat{M}\cap E(\hat{B}^+)\setminus E(\hat{C_2}).$
Finally, note that $\hat{C_2}-v_2-s_2$ consists of the disjoint paths of odd length since $v_2$ and $s_2$ belong to different colour classes.
So finally we add the edges of the unique perfect matchings of these two paths.
It is straightforward to confirm that we may now find an $N$-conformal cross over the cycle $C$ in $K.$
By \cref{lemma_goodcrossesmeanK33} this means that $K$ contains $K_{3,3}$ as a matching minor.
This is a contradiction and thus the proof of \cref{@pronunciarle} is complete.\medskip

We are now ready to finalise the proof of our theorem.
Suppose $B$ contains $K_{3,3}$ as a matching minor.
Then, by \cref{@memorialization}, so does $\hat{B}^+.$
Hence, \cref{thm_4cycleK33} provides a conformal bisubdivision of $K_{3,3}$ in $\hat{B}^+$ which contains $C$ as a subgraph.
\Cref{lemma_goodcrossesmeanK33} in turn yields a conformal cross over $C$ in $\hat{B}^+.$
Since $\hat{B}^+$ is a matching minor of $B$ and $C,$ in particular, is represented by the cycle $C_1,$ we may expand this conformal cross over $C$ to a conformal cross over $C_1$ in $B$ and thus $\mathscr{S}$ has a cross.

Suppose now that  $B$ does not contain $K_{3,3}$ as a matching minor. 
Then, $(B,\Psi)$ is matching flat as a consequence of \cref{@pronunciarle} and \cref{lemma_joiningoverlappingsocieties},  described above.
To complete the proof, it remains to show that, in this situation, $\mathscr{S}$ does not have a cross.
Suppose there exists a perfect matching $M$ together with two disjoint paths $P_1,$ $P_2$ such that $K=C_1+P_1+P_2$ is an $M$-conformal subgraph of $B$ and $P_1$ and $P_2$ form a cross over $C_1.$
Let us delete from $B$ everything that belongs to $V(H)\setminus (V(H^+)\setminus V(K))$ and everything that belongs to $E(H)\setminus (E(H^+)\setminus E(K)).$
Let $G$ be the unique brace of the resulting graph that contains $B_{\mathscr{S}}$ and notice that there exists a $4$-cycle $C'$ which is all that remains of $C_1.$
Hence, $G$ is a brace with a $4$-cycle $C'$ and a conformal cross over $C'.$
By \cref{thm_4cycleK33} this means that $G$ contains $K_{3,3}$ as a matching minor.
This, however, implies that also $B$ contains a $K_{3,3}$-matching minor, which  contradicts our assumption.
\end{proof}


\section{Proof of the structure theorem}
\label{sec_proof_struct}

Recall the four steps for the proof of the structure theorem for SCM-free graphs.
The tight cut decomposition provides the means necessary for a matching theoretic analogue of step ii), which was the decomposition of the graph into its quasi-$4$-connected components.
While braces are not as highly connected as these components, for the purposes of our proof they provide enough rigidity.

So what is left to do are the steps i), iii), and iv), where i) is the identification of a universal obstruction, iii) is a refinement of \cref{thm_matchinggrid} to find a structure that is \hyperref[def_matchingflat]{matching flat} and contains a large wall, and finally iv) which shows that also the brace outside this flat area must be matching flat.
We combine steps iii) and iv) into a single step. The proofs for this step can be found in \cref{subsec_flatgrid}.

\subsection{A universal obstruction}\label{subsec_obstruction}

Let us start by formally introducing our obstruction.

\begin{definition}[single-crossing matching grid]\label{def_singlecrossingmatchinggrid}
Let $k\in\N$ be some positive integer.
The \emph{single-crossing matching grid} of order $k$ is obtained from the $(2k\times 2k)$-grid by identifying the vertices (and edges) of the $4$-cycle $((k,k)(k,k+1)(k+1,k+1)(k+1,k))$ with the vertices (and edges) of a $4$-cycle of $K_{3,3}.$
\end{definition}

For an illustration see \cref{fig_singlecrossingmatchinggrid}.
We say that a bipartite graph $B$ is a \emph{single-crossing matching minor} (or \emph{SCMM} for short) if there exists some $k$ such that $B$ is a matching minor of the single-crossing matching grid of order $k.$

Similar to the case of SCM-free graphs, we will encounter the single-crossing matching grid not directly in our proof.
Instead, in many cases one of three equivalent graphs might appear.

Let $k\in\N$ be a positive integer
The \emph{inside-out single-crossing matching grid} of order $k$ is obtained from the $(2k\times 2k)$-grid and $K_{3,3}$ by selecting a $4$-cycle in $K_{3,3},$ identifying its vertices with the vertices $(1,1),$ $(1,2k),$ $(2k,2k),$ and $(2k,1)$ and removing its edges.
The \emph{cylindrical single-crossing grid} of order $k$ is obtained from the cylindrical matching grid of order $4k$ by adding the edges $v^{1}_{1}v^{1}_{8k+4}$ and $v^{1}_{4k+1}v^{1}_{12k+4}.$
Finally, the \emph{cylindrical single-jump grid} of order $k$ is obtained from the cylindrical matching grid of order $k$ by adding the edge $v^k_1v^1_2$ (see \cref{fig_gridbrothers}).

\begin{figure}[ht]
		\centering
\makefast{
\begin{subfigure}{0.32\textwidth}
		\centering
	\begin{tikzpicture}[scale=0.76]
		\pgfdeclarelayer{background}
		\pgfdeclarelayer{foreground}
		\pgfsetlayers{background,main,foreground}
		\node[v:ghost] (C) {};
		
		\foreach \x in {1,3,5,7}
		\foreach \y in {2,4,6,8}
		{	
			\node[v:main] (v_\x_\y) at (\x*0.6,\y*0.6){};
		}
		
		\foreach \x in {2,4,6,8}
		\foreach \y in {1,3,5,7}
		{	
			\node[v:main] (v_\x_\y) at (\x*0.6,\y*0.6){};
		}
		
		\foreach \x in {1,3,5,7}
		\foreach \y in {1,3,5,7}
		{	
			\node[v:mainempty] (v_\x_\y) at (\x*0.6,\y*0.6){};
		}
		
		\foreach \x in {2,4,6,8}
		\foreach \y in {2,4,6,8}
		{	
			\node[v:mainempty] (v_\x_\y) at (\x*0.6,\y*0.6){};
		}
		
		\node[v:main,position=285:12mm from v_5_1] (a) {};
		\node[v:mainempty,position=255:12mm from v_4_1] (b) {};
		
		\begin{pgfonlayer}{background}
			
			\foreach \x in {1,...,8}
			{
				\draw[e:main] (v_\x_1) to (v_\x_8);
				\draw[e:main] (v_1_\x) to (v_8_\x);
			}
			
			\foreach \z in {1,...,8}
			{
				\draw[e:coloredborder] (v_1_\z) to (v_2_\z);
				\draw[e:coloredborder] (v_3_\z) to (v_4_\z);
				\draw[e:coloredborder] (v_5_\z) to (v_6_\z);
				\draw[e:coloredborder] (v_7_\z) to (v_8_\z);
			}
			
			\foreach \z in {1,...,8}
			{
				\draw[e:colored] (v_1_\z) to (v_2_\z);
				\draw[e:colored] (v_3_\z) to (v_4_\z);
				\draw[e:colored] (v_5_\z) to (v_6_\z);
				\draw[e:colored] (v_7_\z) to (v_8_\z);
			}
			
			\draw[e:coloredborder] (a) to (b);
			\draw[e:main] (b) to (v_8_1);
			\draw[e:main,bend left=75] (b) to (v_1_8);
			\draw[e:main] (a) to (v_1_1);
			\draw[e:main,bend right=75] (a) to (v_8_8);
			
			\draw[e:colored] (a) to (b);
			
		\end{pgfonlayer}
	\end{tikzpicture}
	\end{subfigure}
	\begin{subfigure}{0.32\textwidth}
		\centering
			\begin{tikzpicture}[scale=0.68]

			\pgfdeclarelayer{background}
			\pgfdeclarelayer{foreground}
			\pgfsetlayers{background,main,foreground}

			\draw[e:main] (0,0) circle (11mm);
			\draw[e:main] (0,0) circle (16mm);
			\draw[e:main] (0,0) circle (21mm);
			\draw[e:main] (0,0) circle (26mm);
			
			\foreach \x in {1,...,4}
			{
				\draw[e:main] (\x*90-11.25:16mm) -- (\x*90-22.5-11.25:11mm);
				\draw[e:main] (\x*90-11.25:21mm) -- (\x*90-22.5-11.25:16mm);
				\draw[e:main] (\x*90-11.25:26mm) -- (\x*90-22.5-11.25:21mm);
			}
			
			\foreach \x in {1,...,4}
			{
				\draw[e:main] (\x*90-67.5-11.25:16mm) -- (\x*90-45-11.25:11mm);
				\draw[e:main] (\x*90-67.5-11.25:21mm) -- (\x*90-45-11.25:16mm);
				\draw[e:main] (\x*90-67.5-11.25:26mm) -- (\x*90-45-11.25:21mm);
				
			}
		
				\draw[e:colored,color=DarkMagenta,bend left=20] (90-11.25-67.5-22.5:11mm) to (270-22.5-11.25-45-67.5-22.5:11mm);
				\draw[e:colored,color=CornflowerBlue,bend right=20] (-11.25-67.5-67.5-22.5:11mm) to (180-11.25-67.5-22.5:11mm);
			
			\foreach \x in {1,...,8}
			{
				\draw[e:coloredthin,color=BostonUniversityRed,bend right=13] (\x*45-22.5-11.25:11mm) to (\x*45-11.25:11mm);
				\draw[e:coloredthin,color=BostonUniversityRed,bend right=13] (\x*45-22.5-11.25:16mm) to (\x*45-11.25:16mm);
				\draw[e:coloredthin,color=BostonUniversityRed,bend right=13] (\x*45-22.5-11.25:21mm) to (\x*45-11.25:21mm);
				\draw[e:coloredthin,color=BostonUniversityRed,bend right=13] (\x*45-22.5-11.25:26mm) to (\x*45-11.25:26mm);
			}
			
			\foreach \x in {1,...,8}
			{
				\node[v:main] () at (\x*45-22.5-11.25:11mm){};
				\node[v:main] () at (\x*45-22.5-11.25:16mm){};
				\node[v:main] () at (\x*45-22.5-11.25:21mm){};
				\node[v:main] () at (\x*45-22.5-11.25:26mm){};
				\node[v:mainempty] () at (\x*45-11.25:11mm){};
				\node[v:mainempty] () at (\x*45-11.25:16mm){};
				\node[v:mainempty] () at (\x*45-11.25:21mm){};
				\node[v:mainempty] () at (\x*45-11.25:26mm){};
			}

			\begin{pgfonlayer}{background}
				\foreach \x in {1,...,8}
				{
					\draw[e:coloredborder,bend right=13] (\x*45-22.5-11.25:11mm) to (\x*45-11.25:11mm);
					\draw[e:coloredborder,bend right=13] (\x*45-22.5-11.25:16mm) to (\x*45-11.25:16mm);
					\draw[e:coloredborder,bend right=13] (\x*45-22.5-11.25:21mm) to (\x*45-11.25:21mm);
					\draw[e:coloredborder,bend right=13] (\x*45-22.5-11.25:26mm) to (\x*45-11.25:26mm);
				}
			\end{pgfonlayer}
			
		\end{tikzpicture}
	\end{subfigure}
	\begin{subfigure}{0.32\textwidth}
		\centering
		\begin{tikzpicture}[scale=0.70]

			\pgfdeclarelayer{background}
			\pgfdeclarelayer{foreground}
			\pgfsetlayers{background,main,foreground}

			\draw[e:main] (0,0) circle (16mm);
			\draw[e:main] (0,0) circle (23mm);
			\draw[e:main] (0,0) circle (30mm);
			
			\foreach \x in {1,...,3}
			{
				\draw[e:main] (\x*120-15:23mm) -- (\x*120+30-15:16mm);
				\draw[e:main] (\x*120-15:30mm) -- (\x*120+30-15:23mm);
			}
			
			\foreach \x in {1,...,3}
			{
				\draw[e:main] (\x*120-30-15:23mm) -- (\x*120-60-15:16mm);
				\draw[e:main] (\x*120-30-15:30mm) -- (\x*120-60-15:23mm);
				
			}
			
			\foreach \x in {1,...,6}
			{
				\draw[e:coloredthin,color=BostonUniversityRed,bend right=13] (\x*60-15:16mm) to (\x*60+30-15:16mm);
				\draw[e:coloredthin,color=BostonUniversityRed,bend right=13] (\x*60-15:23mm) to (\x*60+30-15:23mm);
				\draw[e:coloredthin,color=BostonUniversityRed,bend right=13] (\x*60-15:30mm) to (\x*60+30-15:30mm);
			}
			
			\foreach \x in {1,...,6}
			{
				\node[v:main] () at (\x*60-15:16mm){};
				\node[v:main] () at (\x*60-15:23mm){};
				\node[v:main] () at (\x*60-15:30mm){};
				\node[v:mainempty] () at (\x*60+30-15:16mm){};
				\node[v:mainempty] () at (\x*60+30-15:23mm){};
				\node[v:mainempty] () at (\x*60+30-15:30mm){};
			}
			
			\draw[e:colored,color=CornflowerBlue,bend right=100,line width=2.5pt] (105:16mm) to (135:30mm);
			
			\begin{pgfonlayer}{background}

				\foreach \x in {1,...,6}
				{
					\draw[e:coloredborder,bend right=13] (\x*60-15:16mm) to (\x*60+30-15:16mm);
					\draw[e:coloredborder,bend right=13] (\x*60-15:23mm) to (\x*60+30-15:23mm);
					\draw[e:coloredborder,bend right=13] (\x*60-15:30mm) to (\x*60+30-15:30mm);
				}
			\end{pgfonlayer}
			
		\end{tikzpicture}
	\end{subfigure}}{\scalebox{.2}{\includegraphics{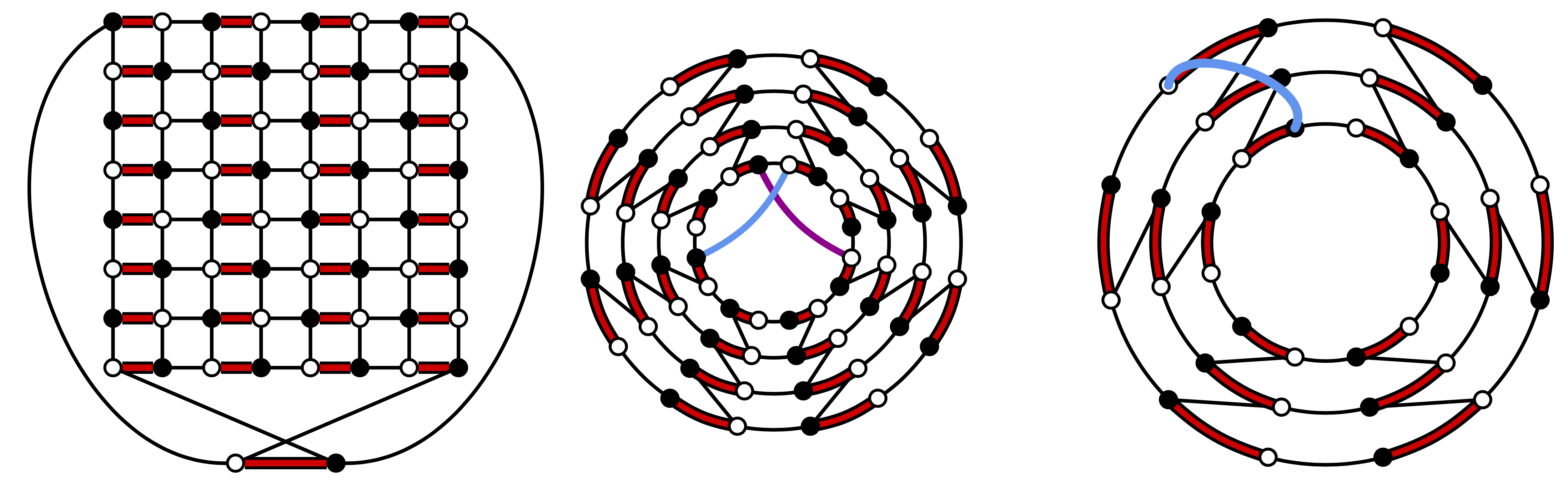}}}
	\caption{The inside-out single-crossing matching grid of order $4$ (left), the cylindrical single-crossing grid of order $1$ (middle), and the cylindrical single-jump grid of order $3$ (right).}
	\label{fig_gridbrothers}
\end{figure}

In what follows we show that both, the cylindrical single-crossing grid and the cylindrical single-jump grid, if chosen to be of adequate size, contain a large inside-out single-crossing  matching grid as a matching minor.
In a last step we then show that a large inside-out single-crossing matching grid contains a still large single-crossing matching grid as a matching minor.
In later stages of the proof this will give us the right to stop whenever we encounter one of the four structures.

As an intermediate step, let us first show that we can obtain a $(2k\times 2k)$-grid as a matching minor in a sufficiently large cylindrical matching grid such that only a small part of the cylindrical grid is used.
This will allow us to then make use of the remaining infrastructure to attach the crossing paths to the four corners of the grid.

Let, for some $k\in\N,$ $CG_k$ be the \hyperref[def_matchinggrid]{cylindrical matching grid} of order $k$ together with its canonical matching $M.$
Moreover, let $h\leq k$ be another positive integer.
A \emph{slice} $S$ of $CG_k$ of \emph{breadth $h$} is a subgraph of $CG_k$ induced by $h$ consecutive cycles $C_i,\dots, C_{i+h-1}$ of $CG_k.$
We say that $S$ \emph{starts at $i$}.
If $B$ is a bisubdivision of $CG_k,$ we denote by $B_{[i,h]}$ the subgraph of $B$ corresponding to the slice of $CG_k$ of breadth $h$ starting at $i.$

Now suppose  that $k$ is an even number.
Then we denote by $M^*$ the perfect matching of $CG_k$ which is obtained from $M$ by exchanging $M\cap E(C_i)$ with $E(C_i)\setminus M$ for all odd $i\in[k].$
We say that $M^*$ is the \emph{alternating matching} of $CG_k.$
Moreover, if $M$ is the matching of $B$ corresponding to the canonical matching of $CG_k,$ then by $M^*$ we denote the perfect matching of $B$ which corresponds to the alternating matching of $CG_k.$

Note that $CG_k$ contains $k$ pairs $\mathscr{P}_i=(P_i,P_{i+1})$ of disjoint conformal paths where $P_i=(v^1_{2i},v^1_{2i-1},v^2_{2i},v^2_{2i-1},\dots,v^k_{2i},v^k_{2i-1})$ and $P_{i+1}=(v^1_{2i+1},v^1_{2i+2},v^2_{2i+1},v^2_{2i+2},\dots,v^k_{2i+1},v^k_{2i+2})$ for every $i\in[k].$
We call $\mathscr{P}_i$ the \emph{$i$th row of $CG_k$} and extend this definition in the natural way to bisubdivisions of $CG_k.$
Moreover, we denote by $V(\mathscr{P}_i)$ the vertex set $V(P_i)\cup V(P_{i+1}).$ 

\begin{lemma}\label{lemma_gridinthecylinder}
Let $k\in\N$ be some positive integer and let $G$ be a cylindrical matching grid of order $h\geq 8k.$
Moreover, let $S$ be a slice of breadth $2k$ of $G$ and consider the alternating matching $M^*$ of $CG_k.$
Next, let $\ell\in[h-8k+1]$ be some integer.
Finally, let $S'$ be the maximal $M^*$-conformal subgraph of $S$ which is contained in the subgraph of $S$ induced by the vertices $(\bigcup_{i=\ell}^{\ell+8k-1}V(\mathscr{P}_i))\cap V(S).$
Then $S'$ contains the $(2k\times 2k)$-grid as a matching minor which spans the entirety of $S'.$
\end{lemma}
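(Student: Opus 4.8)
The plan is to exhibit an explicit matching-minor model of the $(2k\times 2k)$-grid inside $S'$ whose branch sets partition $V(S')$; since the grid is not subcubic we cannot route this through \cref{lemma_confmathingminors} and instead produce the model, and the sequence of bicontractions realising it, by hand. First I would fix notation for the window: write $S$ on the consecutive cycles $C_{i_0},\dots,C_{i_0+2k-1}$ of $G$, and observe that $\bigcup_{i=\ell}^{\ell+8k-1}V(\mathscr{P}_i)$ meets each cycle $C_j$ in a single arc of $16k+2$ consecutive vertices, so that $S'':=S[(\bigcup_{i=\ell}^{\ell+8k-1}V(\mathscr{P}_i))\cap V(S)]$ is the ``box'' obtained from $S$ by cutting the cylinder open along a radial line and keeping $2k$ layers. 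Since the alternating matching $M^*$ consists only of edges lying on the cycles $C_j$ and of no connecting edge, restricting it to $S''$ leaves unmatched only certain endpoint vertices of these $2k$ arcs; hence $S'$, the maximal $M^*$-conformal subgraph of $S''$, is obtained from $S''$ by deleting at most $4k$ such boundary vertices, and $M^*\cap E(S')$ is a perfect matching of $S'$. In particular $S'$ is again essentially a clean $2k$-layer box, with degree-two vertices confined to the two extreme layers $C_{i_0},C_{i_0+2k-1}$ and to neighbourhoods of the two radial cuts.

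Next I would tile this box. Partition its $16k$-or-so angular positions into $2k$ working blocks, each spanning two full periods of the connecting-edge pattern, together with a sacrificial block at either end whose only purpose is to absorb the trimmed boundary vertices; the slack between the requested $8k$ rows $\mathscr{P}_i$ and the $2k$ columns produced is exactly this margin. For $p\in[2k]$ and $q\in[2k]$ I would designate, inside the intersection of layer $C_{i_0+p-1}$ with block $q$, a branch set $W_{p,q}$ that is a subpath of $C_{i_0+p-1}$ of odd order; an arc $R^h_{p,q}$ of $C_{i_0+p-1}$ of odd length joining $W_{p,q}$ to $W_{p,q+1}$; and, for $p<2k$, a path $R^v_{p,q}$ of odd length from $W_{p,q}$ to $W_{p+1,q}$ consisting of one connecting edge between $C_{i_0+p-1}$ and $C_{i_0+p}$ together with short arc-pieces on these two layers. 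The role of $M^*$, rather than the canonical matching, is to make it possible to choose all of these pieces so that they partition $V(S')$, so that every $M^*$-edge of $S'$ lies inside one of them, and so that each $R^h_{p,q}$ and $R^v_{p,q}$ is internally $M^*$-conformal and each $W_{p,q}$ is an $M^*$-conformal path (the jagged extreme layers and the two cuts being mopped up by the sacrificial blocks and by slightly enlarging nearby pieces). Then the union $H'$ of all the $W_{p,q}$, $R^h_{p,q}$, and $R^v_{p,q}$ is a conformal subgraph of $S'$ with $V(H')=V(S')$, and $M^*\cap E(H')$ is a perfect matching of $H'$.

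To finish, I would bicontract. Every vertex of $H'$ that lies in the interior of one of the paths $R^h_{p,q}$, $R^v_{p,q}$ or in the interior of a branch set $W_{p,q}$ acquires degree two at the appropriate moment of the process and is removed by a bicontraction; because each $R$-path has odd length and each branch set has odd order, iterating this collapses each $R^h_{p,q}$ to the edge between the two surviving vertices of $W_{p,q}$ and $W_{p,q+1}$, each $R^v_{p,q}$ to the corresponding vertical edge, and each $W_{p,q}$ to a single vertex $w_{p,q}$. What remains is precisely the grid $G_{2k,2k}$ on the vertex set $\{w_{p,q}\}$, and since not a single vertex of $S'$ was deleted, this matching minor spans all of $S'$; by \cref{def_matchingminor} this is exactly the assertion of the lemma.

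The step I expect to be the real obstacle is making the tiling robust against the trimming. The maximal $M^*$-conformal subgraph $S'$ can be slightly jagged along the two radial cuts and on the two extreme layers $C_{i_0},C_{i_0+2k-1}$, and I must check that no connecting edge needed for some $R^v_{p,q}$ is lost, that the parities that render the rung-arcs internally $M^*$-conformal survive the trimming, and that every surviving ``excess'' vertex really can be swept into an adjacent branch set or $R$-path while keeping all orders and lengths odd. Spending the sacrificial outermost blocks on exactly these boundary effects should neutralise them, after which the remaining work is a local, admittedly tedious, degree- and matching-count verification carried out one tile at a time.
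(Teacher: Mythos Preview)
Your approach is sound in outline but genuinely different from the paper's. You attempt a direct, one-shot tiling of the box $S'$ into branch sets $W_{p,q}$ and rung-paths $R^h_{p,q},R^v_{p,q}$, with sacrificial margin blocks absorbing the ragged boundary left after trimming to the maximal $M^*$-conformal subgraph. The paper instead proceeds by induction on $k$: for $k=1$ a single spanning $M^*$-conformal cycle already realises the $2\times 2$-grid, and for $k+1$ one locates inside $S'$ an $M^*$-conformal sub-box $S''$ meeting the hypotheses for $k$, applies the inductive hypothesis to get a spanning $(2k\times 2k)$-grid model in $S''$, and then wraps one additional ``L-shaped'' layer around it (two new rows and two new columns) using the strip $S'\setminus S''$, verifying that the extension is again spanning. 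The paper's argument offloads essentially all the parity and boundary bookkeeping onto the inductive step, which is uniform and small; the jagged edges you worry about in your final paragraph simply never arise, because the outermost layer of the construction is always the freshly added L-shape, not a trimmed boundary. Your direct approach would work, but the verification you flag as the ``real obstacle'' --- that every trimmed boundary vertex can be swept into an adjacent piece while preserving all the odd-length constraints and without losing a needed connecting edge --- is precisely the case analysis that the inductive argument sidesteps.
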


\begin{proof}
In case $k=1$ a single $M^*$-conformal cycle suffices.
Observe that such in this case $S'$ contains such a cycle as a spanning subgraph.
Now suppose we are given $S'$ for the case $k+1.$
Imagine that $S'$ is drawn as in \cref{@darstellungsmittel} and observe that, starting from the upper left corner, one can find an $M^*$-conformal subgraph $S''\subseteq S'$ which meets the requirements for the case $k.$
In \cref{@darstellungsmittel} this is the subgraph in the pink box.
By induction, we may assume that $S''$ contains a spanning matching minor model of the $(2k\times 2k)$-grid as a conformal subgraph.
Moreover, let us assume this model was created by the inductive procedure we outline here.
Compare the subgraph in green from \cref{@darstellungsmittel}.
We now complete this model to a matching minor model of the $(2(k+1)\times2(k+1))$-grid as illustrated in \cref{@darstellungsmittel}.
Notice that the result is again a spanning subgraph of $S'.$
\begin{figure}[ht]
	\centering
\makefast{
	\begin{tikzpicture}[scale=1.15]
		\pgfdeclarelayer{background}
		\pgfdeclarelayer{foreground}
		\pgfsetlayers{background,main,foreground}
		
		\foreach \x in {1,...,12}
		\foreach \y in {1,...,3}
		{	
			\node[v:mainempty] (v_\x_\y) at (\x*1.2,\y*0.8){};
			\node[v:main] (u_\x_\y) at (0.6+\x*1.2,\y*0.8){};
		}
		\foreach \x in {1,...,11}
		\foreach \y in {1,...,3}
		{	
			\node[v:main] (a_\x_\y) at (0.6+\x*1.2,\y*0.8-0.4){};
			\node[v:mainempty] (b_\x_\y) at (1.2+\x*1.2,\y*0.8-0.4){};
		}
	
		\begin{pgfonlayer}{background}
		
		\node[rectangle, draw = DarkMagenta,fill = DarkMagenta,opacity=0.15,minimum width = 10.85cm, minimum height = 1.9cm] (r) at (5.7,1.8) {};
		
		\draw[e:marker,color=Amber] (0.6+8*1.2,3*0.8) to (15,3*0.8);
		\draw[e:marker,color=Amber] (8*1.2,3*0.8-0.4) to (14.4,3*0.8-0.4);
		\draw[e:marker,color=Amber] (0.6+8*1.2,2*0.8) to (15,2*0.8);
		\draw[e:marker,color=Amber] (8*1.2,2*0.8-0.4) to (14.4,2*0.8-0.4);
		\draw[e:marker,color=Amber] (1*1.2,1*0.8) to (15,1*0.8);
		\draw[e:marker,color=Amber] (0.6+1*1.2,1*0.8-0.4) to (14.4,1*0.8-0.4);

		\draw[e:marker,color=AO,opacity=0.25] (1*1.2,3*0.8) to (0.6+8*1.2,3*0.8);
		\draw[e:marker,color=AO,opacity=0.25] (0.6+1*1.2,3*0.8-0.4) to (8*1.2,3*0.8-0.4);
		\draw[e:marker,color=AO,opacity=0.25] (1*1.2,2*0.8) to (0.6+8*1.2,2*0.8);
		\draw[e:marker,color=AO,opacity=0.25] (0.6+1*1.2,2*0.8-0.4) to (8*1.2,2*0.8-0.4);
		
		\foreach \x in {1,5}
		\foreach \y in {2,3}
		{
			\draw[e:marker,color=AO,opacity=0.25] (1.2*\x,0.8*\y) to (0.6+1.2*\x,0.8*\y-0.4);
		}
		\foreach \x in {4,8}
		\foreach \y in {2,3}
		{
			\draw[e:marker,color=AO,opacity=0.25] (1.2*\x+0.6,0.8*\y) to (1.2*\x,0.8*\y-0.4);
		}
		\foreach \x in {2,6}
		\foreach \y in {2}
		{
			\draw[e:marker,color=AO,opacity=0.25] (1.2*\x+0.6,0.8*\y+0.4) to (1.2*\x,0.8*\y);
		}
		\foreach \x in {3,7}
		\foreach \y in {2}
		{
			\draw[e:marker,color=AO,opacity=0.25] (1.2*\x,0.8*\y+0.4) to (0.6+1.2*\x,0.8*\y);
		}

		\foreach \x in {1,5,9}
		\foreach \y in {1}
		{
			\draw[e:marker,color=Amber] (1.2*\x,0.8*\y) to (0.6+1.2*\x,0.8*\y-0.4);
		}
		\foreach \x in {9}
		\foreach \y in {2,3}
		{
			\draw[e:marker,color=Amber] (1.2*\x,0.8*\y) to (0.6+1.2*\x,0.8*\y-0.4);
		}
		\foreach \x in {11}
		\foreach \y in {2}
		{
			\draw[e:marker,color=Amber] (1.2*\x,0.8*\y+0.4) to (0.6+1.2*\x,0.8*\y);
		}
		\foreach \x in {3,7,11}
		\foreach \y in {1}
		{
			\draw[e:marker,color=Amber] (1.2*\x,0.8*\y+0.4) to (0.6+1.2*\x,0.8*\y);
		}
		\foreach \x in {4,8,12}
		\foreach \y in {1}
		{
			\draw[e:marker,color=Amber] (1.2*\x+0.6,0.8*\y) to (1.2*\x,0.8*\y-0.4);
		}
		\foreach \x in {12}
		\foreach \y in {2,3}
		{
			\draw[e:marker,color=Amber] (1.2*\x+0.6,0.8*\y) to (1.2*\x,0.8*\y-0.4);
		}
		\foreach \x in {10}
		\foreach \y in {2}
		{
			\draw[e:marker,color=Amber] (1.2*\x+0.6,0.8*\y+0.4) to (1.2*\x,0.8*\y);
		}
		\foreach \x in {2,6,10}
		\foreach \y in {1}
		{
			\draw[e:marker,color=Amber] (1.2*\x+0.6,0.8*\y+0.4) to (1.2*\x,0.8*\y);
		}

		\foreach \x in {1,...,3}
		{
			\draw[e:mainthin] (1.2,\x*0.8) to (15,\x*0.8);
			\draw[e:mainthin] (1.8,\x*0.8-0.4) to (14.4,\x*0.8-0.4);
		}
		\foreach \x in {1,3,5,7,9,11}
		\foreach \y in {1,...,3}
		{
			\draw[e:mainthin] (1.2*\x,0.8*\y) to (0.6+1.2*\x,0.8*\y-0.4);
		}
		\foreach \x in {2,4,6,8,10,12}
		\foreach \y in {1,...,3}
		{
			\draw[e:mainthin] (1.2*\x+0.6,0.8*\y) to (1.2*\x,0.8*\y-0.4);
		}
		\foreach \x in {3,5,7,9,11}
		\foreach \y in {1,...,2}
		{
			\draw[e:mainthin] (1.2*\x,0.8*\y+0.4) to (0.6+1.2*\x,0.8*\y);
		}
		\foreach \x in {2,4,6,8,10}
		\foreach \y in {1,...,2}
		{
			\draw[e:mainthin] (1.2*\x+0.6,0.8*\y+0.4) to (1.2*\x,0.8*\y);
		}
		\foreach \x in {1,...,12}
		\foreach \y in {1,...,3}
		{
			\draw[e:coloredborder] (1.2*\x,0.8*\y) to (0.6+1.2*\x,0.8*\y);
			\draw[e:colored,color=BostonUniversityRed] (1.2*\x,0.8*\y) to (0.6+1.2*\x,0.8*\y);
		}
		\foreach \x in {1,...,11}
		\foreach \y in {1,...,3}
		{
			\draw[e:coloredborder] (0.6+1.2*\x,0.8*\y-0.4) to (1.2+1.2*\x,0.8*\y-0.4);
			\draw[e:colored,color=CornflowerBlue] (0.6+1.2*\x,0.8*\y-0.4) to (1.2+1.2*\x,0.8*\y-0.4);
		}
		
		\end{pgfonlayer}
\end{tikzpicture}}{
	\includegraphics[scale=0.57]{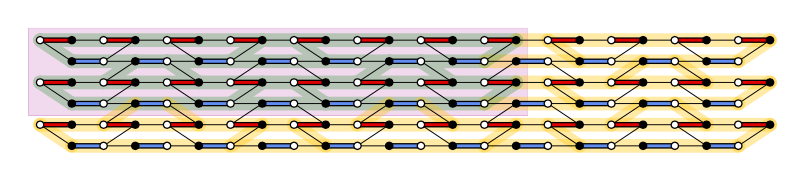}}
	\caption{An illustration for the cases $k=2$ (in the pink box) and $k+1=3$ for the construction from \cref{lemma_gridinthecylinder}. The orange part shows how to extend the matching minor model from the case $k$ to the case $k+1.$}
	\label{@darstellungsmittel}
\end{figure}
\end{proof}

\begin{lemma}\label{lemma_singlecrossingcylinder}
Let $k\in\N$ be some positive integer and let $G$ be a cylindrical single-crossing matching grid of order $h\geq 8k.$
Then $G$ contains the inside-out matching grid of order $k$ as a matching minor.\end{lemma}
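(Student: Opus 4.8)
The plan is to construct an explicit matching minor model of the inside-out single-crossing matching grid of order $k$ in $G$, realising the grid part cheaply via \cref{lemma_gridinthecylinder} and the $K_{3,3}$-part via the two crossing chords. Write $G$ as the cylindrical matching grid $Q=CG_{4h}$, with canonical matching $M$ and concentric cycles $C_1,\dots,C_{4h}$ of length $16h$, together with the two chords $e_1=v^1_1v^1_{8h+4}$ and $e_2=v^1_{4h+1}v^1_{12h+4}$ of the outermost cycle $C_1$; their four endpoints occur on $C_1$ in this cyclic order, so $e_1$ and $e_2$ cross. Unravelling the definition, the inside-out single-crossing matching grid of order $k$ is obtained from the $(2k\times 2k)$-grid by adding two vertices $x,y$ and the five edges $x(1,1)$, $x(2k,2k)$, $y(1,2k)$, $y(2k,1)$, $xy$ --- that is, it is the $(2k\times 2k)$-grid with the two new vertices joined to the two diagonal corner pairs and to each other, which is exactly where the non-planarity (and hence the need for crossing chords) comes in.

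First I would fix a slice $S$ of $Q$ of breadth $2k$ contained in $C_2,\dots,C_{2k+1}$, living in an angular block of $8k$ consecutive rows $\mathscr{P}_\ell,\dots,\mathscr{P}_{\ell+8k-1}$ placed away from the four chord endpoints --- possible since $4h\ge 32k$ --- and apply \cref{lemma_gridinthecylinder} with parameter $k$ to $S$ and the alternating matching $M^*$. This yields an $M^*$-conformal subgraph $S'\subseteq Q$, spanning all of $S'$, that bicontracts to the $(2k\times 2k)$-grid; its four corner vertices $c_{1,1},c_{1,2k},c_{2k,1},c_{2k,2k}$ have degree two in the model, so each retains a free spoke or $C_1$--$C_2$ edge leaving $S'$, while the whole outer shell of $Q$ (all of $C_1$ except its chord endpoints, the $C_1$--$C_2$ spokes, and most of $C_2$) is untouched. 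I would then set $x:=v^1_1$, $y:=v^1_{12h+4}$ and route five internally disjoint conformal branch paths through this shell: a spoke from $x$ continuing inward to $c_{1,1}$; the chord $e_1$ followed by an inward route from $v^1_{8h+4}$ to $c_{2k,2k}$; a short $C_1$-walk and a spoke from $y$ continuing inward to $c_{1,2k}$; the chord $e_2$ followed by the spoke at $v^1_{4h+1}$ and an inward route to $c_{2k,1}$; and the arc of $C_1$ from $x$ to $y$ that misses the other two chord endpoints as the branch path of the edge $xy$. Since the only crossing among all these pieces is the crossing of $e_1$ with $e_2$, the union of $S'$ with the five paths bicontracts to precisely the inside-out single-crossing matching grid of order $k$, once this union is known to be conformal.

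Conformality I would verify by hand: inside $S'$ use the perfect matching of $Q-V(S')$ supplied by \cref{lemma_gridinthecylinder}; on each branch path use the canonical $M$-alternating matching obtained, exactly as in the definition of $M^*$, by flipping $M$ on the cycles the path crosses; and finish with a parity count, as in the proof of \cref{lemma_gridinthecylinder}, showing that the residual vertices of $Q$ still admit a perfect matching. The main obstacle is precisely this bookkeeping: producing one globally coherent perfect matching witnessing conformality of the whole model while simultaneously forcing each branch path to have the length-parity that makes it bicontractible down to a single edge and keeps all bipartitions compatible. Because $Q$ is bipartite, the parity of every branch path is forced by the colour classes of its endpoints --- this is why $x,y$ and the inward spokes above must be chosen as they are, and why the two odd-length arcs of $C_1$ may be used only for paths of odd length --- and one must also place $S'$ so that its four corners are reachable by pairwise disjoint alternating paths running all the way out to $C_1$ without disturbing the $M^*$-conformality of $S'$. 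As in \cref{lemma_gridinthecylinder}, I expect the cleanest presentation is to fix a concrete drawing of $G$, write the model and its complementary matching down explicitly on it, and check the finitely many local conditions by inspection.
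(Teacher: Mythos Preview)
Your proposal is correct and follows essentially the same approach as the paper: embed the $(2k\times 2k)$-grid via \cref{lemma_gridinthecylinder} in a slice of breadth $2k$, take an $M^*$-conformal arc $Q$ of $C_1$ between one endpoint of each chord as the model of the edge $xy$, and route four internally $M^*$-conformal paths (two through the chords) from the endpoints of $Q$ to the four grid corners, deferring the final routing and conformality check to an explicit drawing. The only difference is cosmetic: the paper places its slice starting at $C_9$, leaving eight outer cycles free, which makes it a little easier to keep the four corner paths disjoint than your placement at $C_2,\dots,C_{2k+1}$.
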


\begin{proof}
	Notice that, since $G$ is a cylindrical single-crossing matching grid of order $h,$ it consists of the cylindrical matching grid $CG_{4h}$ together with two crossing edges $e_1$ and $e_2.$
	Let $S$ be the slice of breath $2k$ of $CG_{4h}$ which starts at $9.$
	Moreover, consider the rows $\mathscr{P_i}$ for $i\in[1,8k].$
	By \cref{lemma_gridinthecylinder}, $\InducedSubgraph{S}{V(\bigcup_{i=1}^{8k}V(\mathscr{P}_i))}$ contains an $M^*$-conformal subgraph $S'$ which hosts a spanning matching minor model of the $(2k\times 2k)$-grid.
	Observe that $e_1$ and $e_2$ each contain an endpoint $x_1,$ $x_2$ respectively, such that the subpath $Q$ of $C_1,$ which is otherwise disjoint from $e_1$ and $e_2,$ is $M^*$-conformal.
	It suffices now to find the correct internally $M^*$-conformal paths that connect each endpoint of $Q$ to two non-adjacent corners of the grid-matching minor in $S'.$
	Since $e_1$ and $e_2$ are crossing, all of their endpoints are mutually far apart within $CG_k,$ and $S'$ starts with the cycle $C_{8k}$ it is relatively straight forward to find these paths.
	See \cref{fig_cylindercross} for an example.
	\begin{figure}[ht]
		\centering
		\makefast{\input{figures/cylindercross.tex}}{\includegraphics[scale=0.48]{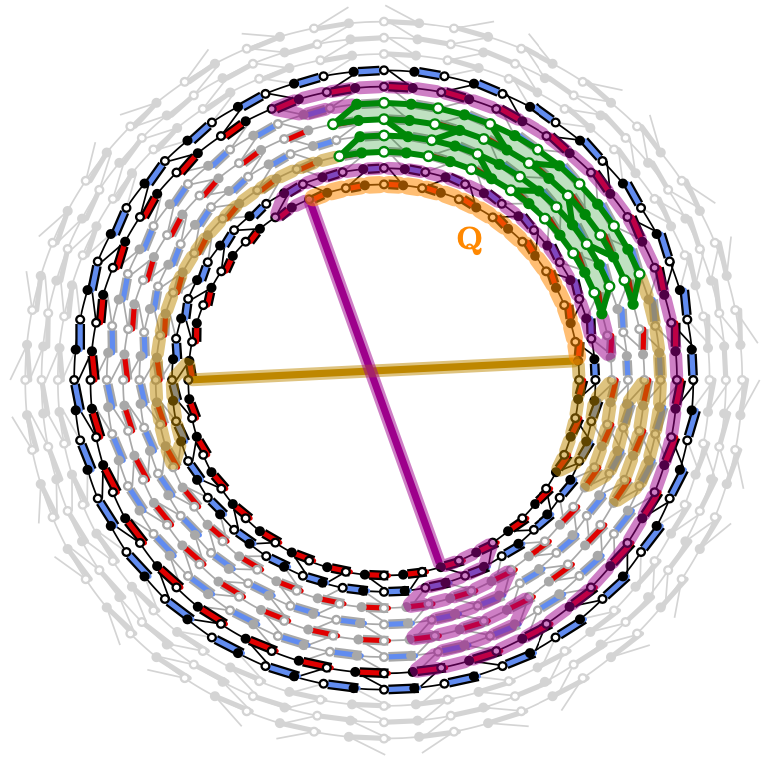}}
		\caption{A model of the inside-out single-crossing matching grid of order $2$ as a matching minor within the cylindrical single-crossing grid of order $8.$}
		\label{fig_cylindercross}
	\end{figure}
\end{proof}

\begin{lemma}\label{lemma_jumpinggrid}
Let $k\in\N$ be some positive integer and let $G$ be a cylindrical single-jump grid of order $8(k+2).$
Then $G$ contains the inside-out single-crossing matching grid of order $k$ as a matching minor.
\end{lemma}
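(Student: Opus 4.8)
The plan is to mimic the proof of \cref{lemma_singlecrossingcylinder}. Write $n=8(k+2)$, so that $G$ consists of the cylindrical matching grid $CG_{n}$ together with the single jump edge $e=v^{n}_{1}v^{1}_{2}$ joining the outermost cycle $C_{n}$ to the innermost cycle $C_{1}$; note that the two endpoints of $e$ lie in different colour classes, $v^{n}_{1}\in V_{1}$ and $v^{1}_{2}\in V_{2}$. I would (i) realise a $(2k\times 2k)$-grid as a conformal matching minor inside a ``middle'' annular slice of $CG_{n}$, leaving several inner cycles, many outer cycles, and a block of angular rows entirely unused; and then (ii) use the jump edge $e$ together with this free infrastructure to attach to the four corners of that grid a conformal model of $K_{3,3}$ with the edges of the distinguished $4$-cycle removed, thereby assembling a matching minor model of the inside-out single-crossing matching grid of order $k$.

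For step (i) I would argue exactly as in \cref{lemma_singlecrossingcylinder}. Take the slice $S$ of breadth $2k$ of $CG_{n}$ starting at the cycle $C_{9}$ and the rows $\mathscr{P}_{1},\dots,\mathscr{P}_{8k}$. Since $n\ge 8k$, \cref{lemma_gridinthecylinder}, applied with the alternating matching $M^{*}$, produces an $M^{*}$-conformal subgraph $S'\subseteq \InducedSubgraph{S}{\bigcup_{i=1}^{8k}V(\mathscr{P}_{i})}$ that carries a \emph{spanning} matching minor model of the $(2k\times 2k)$-grid. In this model the four grid corners are realised by four identified vertices, two lying on the cycle $C_{9}$ and two on the cycle $C_{2k+8}$, with the two corners of each colour class diagonally opposite. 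Crucially, $S'$ uses only the cycles $C_{9},\dots,C_{2k+8}$ and only angular positions within the rows $\mathscr{P}_{1},\dots,\mathscr{P}_{8k}$, so the inner cycles $C_{1},\dots,C_{8}$, the outer cycles $C_{2k+9},\dots,C_{n}$, and the $16$ rows $\mathscr{P}_{8k+1},\dots,\mathscr{P}_{8k+16}$ --- which cross every cycle, the slice cycles included --- remain free.

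For step (ii) I would route the gadget around $S'$ using this free infrastructure. Take as the two hubs of the gadget the two endpoints of $e$, with $x_{3}\coloneqq v^{n}_{1}\in V_{1}$ and $y_{3}\coloneqq v^{1}_{2}\in V_{2}$, so that $e$ itself realises the central edge $x_{3}y_{3}$. From $x_{3}$, which sits on the outermost cycle, route two internally disjoint, internally $M^{*}$-conformal paths of odd length that run angularly along $C_{n}$ into distinct free rows and then radially inward, one through the outer cycles only and one all the way across the slice region at free-row angular positions (hence avoiding $S'$), terminating at the two grid corners that lie in $V_{2}$; symmetrically, from $y_{3}$ on the innermost cycle, route two such paths through the free inner cycles $C_{1},\dots,C_{9}$ and further free rows, terminating at the two grid corners that lie in $V_{1}$. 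Because the endpoints of $e$ are as far apart as possible --- one on the outer, one on the inner boundary --- and the $16$ free rows give ample angular slack, these four paths can be taken pairwise disjoint and internally disjoint from $S'$. Finally, choose a perfect matching $M$ of $G$ that agrees with the $M^{*}$-conformal structure on $S'$, makes all four gadget paths internally $M$-conformal, and leaves every vertex outside the resulting conformal subgraph $H'$ covered --- the cycle arcs and row segments not used by $H'$ are matched by the restriction of $M^{*}$. Bicontracting all degree-two vertices of $H'$ collapses the grid part to the $(2k\times 2k)$-grid and the gadget part to $K_{3,3}$ with the distinguished $4$-cycle deleted, glued along the four corners; this is exactly the inside-out single-crossing matching grid of order $k$, so $G$ contains it as a matching minor.

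The part I expect to be the genuine obstacle is the matching bookkeeping in step (ii): one must guarantee that all four gadget paths can be made internally $M$-conformal for one common perfect matching $M$, while the grid model of \cref{lemma_gridinthecylinder} keeps its conformality and $G-V(H')$ still has a perfect matching. This forces a parity-sensitive choice of which angular rows and which radial columns the paths follow, and of where $M^{*}$ is flipped along them, and it is precisely the reason the statement demands order $8(k+2)$ rather than $8k$: the extra $16$ rows are spent providing pairwise-disjoint routes of the correct parity for the gadget. The purely topological content --- that the jump edge together with the free inner and outer cycles suffices to realise the crossing of the two corner-joining paths --- is routine and is illustrated, in the analogous single-crossing-grid case, by \cref{fig_cylindercross}.
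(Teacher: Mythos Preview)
Your argument is correct and follows the same plan as the paper: both plant the $(2k\times 2k)$-grid model via \cref{lemma_gridinthecylinder} in the slice starting at $C_{9}$ on rows $\mathscr{P}_{1},\dots,\mathscr{P}_{8k}$, and then use the free inner cycles, free outer cycles, and the spare angular rows to route the $K_{3,3}$-minus-$C_{4}$ gadget to the four grid corners. The one cosmetic difference is the role of the jump edge: the paper takes an $M^{*}$-conformal arc $Q\subseteq C_{1}$ as the model of the hub edge $ab$ (so both hubs sit on $C_{1}$) and uses $e$ as the first edge of one of the four corner paths, whereas you take $e$ itself as the $ab$-model with one hub on $C_{1}$ and one on $C_{n}$; the paper's choice keeps the whole construction $M^{*}$-conformal and so sidesteps exactly the re-matching bookkeeping you correctly flag as the delicate point.
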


\begin{proof}
The construction is quire similar to the one from \cref{lemma_singlecrossingcylinder}.
First notice that there exists a unique edge $e$ in $G$ such that $G-e$ is the cylindrical matching grid of order $8(k+2).$
Let $M$ be its canonical matching and $M^*$ be its alternating matching.
Let $S$ be the slice of breath $2k$ of $G-e$ which starts at $9.$
By \cref{lemma_gridinthecylinder}, $\InducedSubgraph{S}{V(\bigcup_{i=1}^{8k}V(\mathscr{P}_i))}$ contains an $M^*$-conformal subgraph $S'$ which hosts a spanning matching minor model of the $(2k\times 2k)$-grid.
Let $Q$ be the unique $M^*$ conformal subpath of $C_{1}$ whose first edge belongs to $\mathscr{P}_1,$ whose last edge belongs to $\mathscr{P}_{8k},$ and which is disjoint from $V(\mathscr{P}_9).$
Notice $Q$ shares an endpoint with $e.$
All that is left to do is to find four pairwise internally $M^*$ conformal paths which are internally disjoint from $S'$ such that two of them link the endpoint $Q$ shares with $e$ to the two corners of the $(2k\times 2k)$-grid-matching minor in $S'$ that lie in $V_2,$ while the other two paths joint the other endpoint of $Q$ to the remaining two corners.
As $e$ connects $C_1$ to $C_{8(k+2)}$ and $S'$ is disjoint from the innermost and the outermost $C_i$ while also avoiding at least $8$ rows of $G-e$ this is clearly possible.
\end{proof}

It remains to discuss how to find the single-crossing matching grid within a sufficiently large inside-out single-crossing matching grid.

\begin{lemma}\label{lemma_reversinginsideout}
Let $k\in\N$ be some positive integer and let $G$ be an inside-out single-crossing matching grid of order $h\geq 2k,$ then $G$ contains the single-crossing matching grid of order $k$ as a matching minor.
\end{lemma}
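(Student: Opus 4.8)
The plan is to unpack both graphs and reduce to a purely grid-theoretic statement. Writing $H$ for the $(2h\times 2h)$-grid, the inside-out single-crossing matching grid $G$ of order $h$ is $H$ together with two new vertices $a_3,b_3$ and the five edges $b_3(1,1)$, $b_3(2h,2h)$, $a_3(1,2h)$, $a_3(2h,1)$, $a_3b_3$ -- what is left of a $K_{3,3}$ once its four-cycle edges, which are not grid edges, have been removed. The single-crossing matching grid of order $k$ is, in exactly the same way, a $(2k\times 2k)$-grid with the \emph{same} two-vertex gadget $\{a_3,b_3\}$ joined to the four vertices of its central four-cycle $(k,k)(k,k+1)(k+1,k+1)(k+1,k)$. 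Hence it suffices to realise, inside $H$ and respecting some fixed perfect matching, the $(2k\times 2k)$-grid as a matching minor whose central four-cycle is the image of the four corners of $H$ in the correct cyclic order (the corners $(1,1),(2h,2h)$ carrying the $b_3$-edges must map to the two $b_3$-vertices of the target four-cycle, the other two corners to the $a_3$-vertices); the surviving gadget edges then reattach exactly as needed. Since $h\ge 2k$ there is room to spare, and the construction is to turn $H$ inside out.

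For the construction, let $(C^1,\dots,C^h)$ be the centred layering of $H$, so $C^1$ is the central four-cycle, $C^h$ is the perimeter and carries the gadget at its four corners $c_1,\dots,c_4$, and the band between consecutive rings $C^m$ and $C^{m+1}$ (with $C^m$ the inner one) contains exactly $8m$ radial edges. Fix a perfect matching $M$ of $H$ restricting to a perfect matching of every ring $C^i$ -- such an $M$ exists because the $C^i$ are vertex-disjoint even cycles partitioning $V(H)$ -- so that $M$ uses no radial edge and $M\cup\{a_3b_3\}$ is a perfect matching of $G$. Discard the $h-k$ innermost rings: the set $V(C^1)\cup\dots\cup V(C^{h-k})$ spans the central $(2(h-k))\times(2(h-k))$ subgrid, which has a perfect matching, so what remains -- the thick annulus on $C^{h-k+1},\dots,C^h$ together with the gadget -- is a conformal subgraph of $G$. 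Now bicontract this annulus inside out: for each $j\in[k]$ the ring $C^{h-j+1}$ (a cycle of length $8(h-j+1)-4$, hence at least $8k-4$) is bicontracted down to a cycle $D^j$ of length $8j-4$, retaining between $D^j$ and $D^{j+1}$ precisely the $8j$ radial edges that a $(2k\times 2k)$-grid requires there, chosen among the $8(h-j)\ge 8j$ radial edges of the $C^{h-j}$--$C^{h-j+1}$ band and positioned so that $D^j$ becomes a genuine grid ring; every radial edge not retained -- in particular every radial edge incident with a discarded ring -- is deleted. Radial edges are $M$-non-edges, so each such deletion keeps the subgraph conformal, and between deletions every maximal subpath of degree-two vertices along a ring can be bicontracted to a single edge. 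The four corners $c_1,\dots,c_4$ survive as the four vertices of $D^1$, so the dangling edges $b_3c_1$, $b_3c_3$, $a_3c_2$, $a_3c_4$ become the four gadget edges of the single-crossing matching grid of order $k$, and the fully reduced graph is isomorphic to it.

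The hard part will be carrying out the ``bicontract $C^{h-j+1}$ to $D^j$ with the prescribed retained rungs, without breaking conformality or parities'' reduction precisely, and above all the step $j=1$: the four vertices of the central four-cycle $D^1$ must have degree five (two ring edges, two radial rungs to $D^2$, one gadget edge), whereas the corners $c_i$ of $H$ have degree only three. So one cannot simply collapse the four arcs of $C^h$ to edges; each vertex of $D^1$ must instead be synthesised as a short branch set straddling a corner and two suitably placed radial rungs of the $C^h$--$C^{h-1}$ band, and one has to verify that this branch set does collapse under bicontractions once the superfluous edges around it have been removed. This is precisely where the hypothesis $h\ge 2k$ (rather than merely $h\ge k$) enters -- the slack $8(h-j)\ge 8j$ in every outer band is what lets the retained rungs be placed in non-conflicting positions all the way down to $D^1$. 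I would make this step explicit through a concrete model together with an accompanying figure, in the spirit of Figures~\ref{@darstellungsmittel} and~\ref{fig_cylindercross}.
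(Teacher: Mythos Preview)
Your proposal is correct and follows essentially the same strategy as the paper: reduce to the statement that the $(2h\times 2h)$-grid admits a matching-minor model of the $(2k\times 2k)$-grid in which the four corners become the central four-cycle, then realise this by the inside-out construction on the concentric ring decomposition with a ring-respecting perfect matching. The paper first assumes $h=2k$ without loss of generality rather than explicitly discarding the inner $h-k$ rings, and it makes the branch sets completely explicit via \emph{corner paths} $Q^i_c$ of length $4(k-i)$ on each ring $C_i$ together with prescribed matchings between consecutive aligned sides, supported by a figure for the case $k=3$; this is exactly the concrete model you say you would supply for the delicate $j=1$ step.
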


\begin{proof} 
Without loss of generality we may assume $h=2k.$
Let $e$ be the unique edge which does not have an endpoint in the $(4k\times4k)$-grid $G'$ in $G.$
It suffices to show that $G'$ contains a matching minor model of the $(2k)\times2k)$-grid $G''$ such that the outermost cycle of $G'$ is mapped to the central $4$-cycle of $G''$ in such a way that the four paths which contain the four corners of the grid will be contracted into vertices of the same colour class of their respective corner.
We fix a particular perfect matching of $G'$:
Removing the outer cycle $C_1$ of $G'$ leaves a graph which is isomorphic to the $((4k-2)\times(4k-2))$-grid.
Hence, $C_1$ is a conformal cycle of $G'.$
Moreover, this procedure can be repeated $2k$ times to obtain a family of pairwise disjoint cycles $C_1,\dots,C_{2k}.$
In fact, this cycle family is a $2$-factor of $G'.$
For each of these cycles we may now choose a perfect matching $M_i$ such that $\bigcup_{i=1}^{2k}M_i$ is a perfect matching of $G'.$
Moreover, let the $M_i$ be chosen such that the two endpoints of any $e'\in M_i$ are adjacent to endpoints of two different edges of $M_{i+1}$ for all $i\in[2k-1].$
Finally, let $M_1$ be chosen such that the path between the upper two corners is internally $M_1$-conformal.
See \cref{fig_insideoutcross} for an illustration.
\begin{figure}[ht]
	\centering
	\makefast{	
	\begin{tikzpicture}[scale=1]
		\pgfdeclarelayer{background}
		\pgfdeclarelayer{foreground}
		\pgfsetlayers{background,main,foreground}
		\node[v:ghost] (C) {};

		\foreach \x in {1,3,5,7,9,11}
		\foreach \y in {1,3,5,7,9,11}
		{
			\node[v:mainempty] (v_\x_\y) at (0.6*\x,0.6*\y) {};
		}
	
		\foreach \x in {2,4,6,8,10,12}
		\foreach \y in {2,4,6,8,10,12}
		{
			\node[v:mainempty] (v_\x_\y) at (0.6*\x,0.6*\y) {};
		}
	
		\foreach \x in {2,4,6,8,10,12}
		\foreach \y in {1,3,5,7,9,11}
		{
			\node[v:main] (v_\x_\y) at (0.6*\x,0.6*\y) {};
		}
	
		\foreach \x in {1,3,5,7,9,11}
		\foreach \y in {2,4,6,8,10,12}
		{
			\node[v:main] (v_\x_\y) at (0.6*\x,0.6*\y) {};
		}

		\node[v:mainempty] (b) at (2.4,-1) {};
		\node[v:main] (a) at (5.4,-1) {};

		\begin{pgfonlayer}{background}
			
		\draw[e:marker,color=Amber] (a) to (b);
		\draw[e:marker,color=Amber] (a) to (v_1_1);
		\draw[e:marker,color=Amber] (b) to (v_12_1);
		
		\draw[e:marker,color=Amber,bend right=75] (a) to (v_12_12);
		\draw[e:marker,color=Amber,bend left=75] (b) to (v_1_12);

		\draw[e:coloredborder] (a) to (b);
		\draw[e:colored,color=BostonUniversityRed] (a) to (b);
		
		\draw[e:mainthin] (a) to (v_1_1);
		\draw[e:mainthin] (b) to (v_12_1);
		
		\draw[e:mainthin,bend right=75] (a) to (v_12_12);
		\draw[e:mainthin,bend left=75] (b) to (v_1_12);

		\foreach \x in {1,...,12}
		{
			\draw[e:mainthin] (v_\x_1) to (v_\x_12);
			\draw[e:mainthin] (v_1_\x) to (v_12_\x);
		}
	
		\draw [e:marker,color=Amber] (v_1_1) to (v_1_12) to (v_12_12) to (v_12_1) to (v_1_1);
		\draw [e:marker,color=Amber] (v_2_2) to (v_2_11) to (v_11_11) to (v_11_2) to (v_2_2);
		\draw [e:marker,color=Amber] (v_3_3) to (v_3_10) to (v_10_10) to (v_10_3) to (v_3_3);
		
		\draw[e:marker,color=Amber] (v_1_5) to (v_2_5);
		\draw[e:marker,color=Amber] (v_1_8) to (v_2_8);
		\draw[e:marker,color=Amber] (v_5_1) to (v_5_2);
		\draw[e:marker,color=Amber] (v_8_1) to (v_8_2);
		\draw[e:marker,color=Amber] (v_12_5) to (v_11_5);
		\draw[e:marker,color=Amber] (v_5_12) to (v_5_11);
		\draw[e:marker,color=Amber] (v_12_8) to (v_11_8);
		\draw[e:marker,color=Amber] (v_8_12) to (v_8_11);
		
		\draw[e:marker,color=Amber] (v_2_5) to (v_3_5);
		\draw[e:marker,color=Amber] (v_2_8) to (v_3_8);
		\draw[e:marker,color=Amber] (v_5_2) to (v_5_3);
		\draw[e:marker,color=Amber] (v_8_2) to (v_8_3);
		\draw[e:marker,color=Amber] (v_11_5) to (v_10_5);
		\draw[e:marker,color=Amber] (v_5_11) to (v_5_10);
		\draw[e:marker,color=Amber] (v_11_8) to (v_10_8);
		\draw[e:marker,color=Amber] (v_8_11) to (v_8_10);
		
		\draw[e:marker,color=Amber] (v_2_4) to (v_3_4);
		\draw[e:marker,color=Amber] (v_2_9) to (v_3_9);
		\draw[e:marker,color=Amber] (v_4_2) to (v_4_3);
		\draw[e:marker,color=Amber] (v_9_2) to (v_9_3);
		\draw[e:marker,color=Amber] (v_11_4) to (v_10_4);
		\draw[e:marker,color=Amber] (v_4_11) to (v_4_10);
		\draw[e:marker,color=Amber] (v_11_9) to (v_10_9);
		\draw[e:marker,color=Amber] (v_9_11) to (v_9_10);
		
		\foreach \x in {1,3,5,7,9,11}
		{
			\draw[e:coloredborder] (0.6*1,0.6*\x) to (0.6*1,0.6*\x+0.6);
			\draw[e:coloredborder] (0.6*12,0.6*\x) to (0.6*12,0.6*\x+0.6);
			
			\draw[e:colored,color=BostonUniversityRed] (0.6*1,0.6*\x) to (0.6*1,0.6*\x+0.6);
			\draw[e:colored,color=BostonUniversityRed] (0.6*12,0.6*\x) to (0.6*12,0.6*\x+0.6);
			
		}
		\foreach \x in {2,4,6,8,10}
		{
			\draw[e:coloredborder] (0.6*\x,0.6*1) to (0.6*\x+0.6,0.6*1);
			\draw[e:coloredborder] (0.6*\x,0.6*12) to (0.6*\x+0.6,0.6*12);
			
			\draw[e:colored,color=BostonUniversityRed] (0.6*\x,0.6*1) to (0.6*\x+0.6,0.6*1);
			\draw[e:colored,color=BostonUniversityRed] (0.6*\x,0.6*12) to (0.6*\x+0.6,0.6*12);
		}
		
		\foreach \x in {2,4,6,8,10}
		{
			\draw[e:coloredborder] (0.6*2,0.6*\x) to (0.6*2,0.6*\x+0.6);
			\draw[e:coloredborder] (0.6*11,0.6*\x) to (0.6*11,0.6*\x+0.6);
			
			\draw[e:colored,color=BostonUniversityRed] (0.6*2,0.6*\x) to (0.6*2,0.6*\x+0.6);
			\draw[e:colored,color=BostonUniversityRed] (0.6*11,0.6*\x) to (0.6*11,0.6*\x+0.6);
		}
		\foreach \x in {3,5,7,9}
		{
			\draw[e:coloredborder] (0.6*\x,0.6*2) to (0.6*\x+0.6,0.6*2);
			\draw[e:coloredborder] (0.6*\x,0.6*11) to (0.6*\x+0.6,0.6*11);
			
			\draw[e:colored,color=BostonUniversityRed] (0.6*\x,0.6*2) to (0.6*\x+0.6,0.6*2);
			\draw[e:colored,color=BostonUniversityRed] (0.6*\x,0.6*11) to (0.6*\x+0.6,0.6*11);
		}
		
		\foreach \x in {3,5,7,9}
		{
			\draw[e:coloredborder] (0.6*3,0.6*\x) to (0.6*3,0.6*\x+0.6);
			\draw[e:coloredborder] (0.6*10,0.6*\x) to (0.6*10,0.6*\x+0.6);
			
			\draw[e:colored,color=BostonUniversityRed] (0.6*3,0.6*\x) to (0.6*3,0.6*\x+0.6);
			\draw[e:colored,color=BostonUniversityRed] (0.6*10,0.6*\x) to (0.6*10,0.6*\x+0.6);
		}
		\foreach \x in {4,6,8}
		{
			\draw[e:coloredborder] (0.6*\x,0.6*3) to (0.6*\x+0.6,0.6*3);
			\draw[e:coloredborder] (0.6*\x,0.6*10) to (0.6*\x+0.6,0.6*10);
			
			\draw[e:colored,color=BostonUniversityRed] (0.6*\x,0.6*3) to (0.6*\x+0.6,0.6*3);
			\draw[e:colored,color=BostonUniversityRed] (0.6*\x,0.6*10) to (0.6*\x+0.6,0.6*10);
		}
	
		\foreach \x in {4,6,8}
		{
			\draw[e:coloredborder] (0.6*4,0.6*\x) to (0.6*4,0.6*\x+0.6);
			\draw[e:coloredborder] (0.6*9,0.6*\x) to (0.6*9,0.6*\x+0.6);
			
			\draw[e:colored,color=BostonUniversityRed] (0.6*4,0.6*\x) to (0.6*4,0.6*\x+0.6);
			\draw[e:colored,color=BostonUniversityRed] (0.6*9,0.6*\x) to (0.6*9,0.6*\x+0.6);
		}
		\foreach \x in {5,7}
		{
			\draw[e:coloredborder] (0.6*\x,0.6*4) to (0.6*\x+0.6,0.6*4);
			\draw[e:coloredborder] (0.6*\x,0.6*9) to (0.6*\x+0.6,0.6*9);
			
			\draw[e:colored,color=BostonUniversityRed] (0.6*\x,0.6*4) to (0.6*\x+0.6,0.6*4);
			\draw[e:colored,color=BostonUniversityRed] (0.6*\x,0.6*9) to (0.6*\x+0.6,0.6*9);
		}
	
		\foreach \x in {5,7}
		{
			\draw[e:coloredborder] (0.6*5,0.6*\x) to (0.6*5,0.6*\x+0.6);
			\draw[e:coloredborder] (0.6*8,0.6*\x) to (0.6*8,0.6*\x+0.6);
			
			\draw[e:colored,color=BostonUniversityRed] (0.6*5,0.6*\x) to (0.6*5,0.6*\x+0.6);
			\draw[e:colored,color=BostonUniversityRed] (0.6*8,0.6*\x) to (0.6*8,0.6*\x+0.6);
		}
		\foreach \x in {6}
		{
			\draw[e:coloredborder] (0.6*\x,0.6*5) to (0.6*\x+0.6,0.6*5);
			\draw[e:coloredborder] (0.6*\x,0.6*8) to (0.6*\x+0.6,0.6*8);
			
			\draw[e:colored,color=BostonUniversityRed] (0.6*\x,0.6*5) to (0.6*\x+0.6,0.6*5);
			\draw[e:colored,color=BostonUniversityRed] (0.6*\x,0.6*8) to (0.6*\x+0.6,0.6*8);
		}
		
		\foreach \x in {6}
		{
			\draw[e:coloredborder] (0.6*6,0.6*\x) to (0.6*6,0.6*\x+0.6);
			\draw[e:coloredborder] (0.6*7,0.6*\x) to (0.6*7,0.6*\x+0.6);
			
			\draw[e:colored,color=BostonUniversityRed] (0.6*6,0.6*\x) to (0.6*6,0.6*\x+0.6);
			\draw[e:colored,color=BostonUniversityRed] (0.6*7,0.6*\x) to (0.6*7,0.6*\x+0.6);
		}
		\end{pgfonlayer}
\end{tikzpicture}}{\includegraphics[scale=0.57]{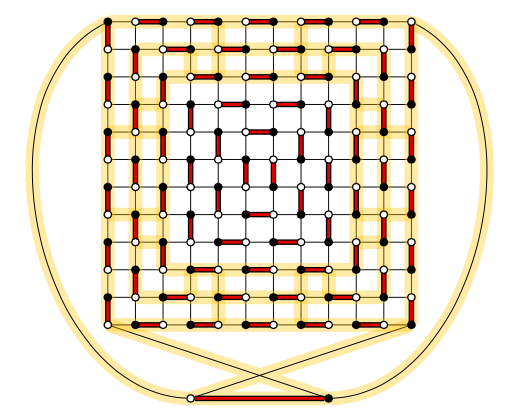}}
	\caption{A model of the single-crossing matching grid of order $k=3$ as a matching minor within the inside-out single-crossing matching grid of order $2k=6.$}
	\label{fig_insideoutcross}
\end{figure}
Now, for each cycle $C_i$ we define a set of paths of even length which will represent the vertices on this cycle.
Consider some cycle $C_i$ and notice that $C_i$ is the outer cycle of a $((4k-2(i-1))\times(4k-2(i-1)))$-subgrid $H_i$ of $G'.$
A \emph{side} of $C_i$ is a shortest path on $C_i$ connecting two consecutive corners of $H_i.$
Each of the four corners of $H_i$ belongs to two sides of $C_i,$ each of these sides has length $4k-2(i-1)-1$
Let $c$ be a corner of $H_i$ and $S$ a side of $H_i$ which contains $c,$ then let $Q^i_{c,S}$ be the subpath of $S$ of length $2(k-i)$ which contains $c.$
Finally, let $Q^i_c$ be the union of the two paths $Q^i_{c,S},$ $Q^i_{c,S'},$ where $S$ and $S'$ are the two sides that contain $c.$
Observe that each $Q^i_{c,S}$ has even length and the two endpoints of $Q^i_c$ belong to the same colour class as $c$ itself.
We call the $Q^i_c$ the \emph{corner paths} of $C_i.$

Given $i\in[k-1]$ we say that a side $S_i$ of $C_i$ and a side $S_{i+1}$ of $C_{i+1}$ are \emph{aligned} if there exists an edge of $G'$ which connects an internal vertex of $S_i$ to an internal vertex of $S_{i+1}.$
Let $S_i$ be a side of $C_i$ and $S_{i+1}$ be the unique side of $C_{i+1}$ which is aligned with $S_i.$
Moreover, let $P$ be the subpath of $S_i$ which shares exactly its endpoints with the two corner paths of $C_i$ that intersect $S_i.$
Now we choose two disjoint subpaths of $P,$ let us call them $Q_1$ and $Q_2,$ of length $i-1,$ each of them containing an endpoint of $P.$
Each of the two $Q_j$ has exactly $i$ vertices, and we may select a perfect matching between $Q_j$ and $S_{i+1}.$
Observe that, for $i\geq 2$ this choice ensures that all but one vertex of each of the $Q_j$ is also covered by an edge chosen in the previous step.

Let $G''$ be the union of the cycles $C_1,\dots,C_k$ together with all matchings for all sides chosen as above.
Notice that all corner paths may be contracted into single vertices in $G''.$
Afterwards some degree two vertices remain.
Four of those are the four corner paths of $C_k$ which all have length zero.
Bicontract vertices of degree two until the only remaining vertices of degree two are exactly those four.
The result is a $(2k\times 2k)$-grid and our construction is complete.
\end{proof}

Using the inverse argument of the construction in the proof of \cref{lemma_reversinginsideout} one can also observe that a large single-crossing matching grid contains a still large inside-out single-crossing matching grid as a matching minor.
Moreover, a large inside-out single-crossing matching grid contains a cylindrical single-crossing matching grid as a matching minor which shows that, as universal obstructions, all three can be seen as being (parametrically) equivalent.

\subsection{A structure theorem for SCMM-free braces}\label{subsec_flatgrid}

We proceed by showing that any brace with a large enough cylindrical matching grid as a matching minor must either contain a single-crossing matching grid as a matching minor or be Pfaffian.
In combination with the \hyperref[thm_matchinggrid]{matching theoretic grid theorem} this yields our structural \hyperref[thm_matchingmainthm]{main result}.

Let $H$ be a cylindrical matching grid of order $3k$ for some positive integer $k.$
For every $i\in[3]$ let $H_i$ be the subgraph of $H$ induced by $\bigcup_{j\in[(i-1)k+1,ik]}V(C_j).$
We call $(H_1,H_2,H_3)$ the \emph{tripartition} of $H.$
We extend the definition of tripartitions in the natural way to subdivisions of the cylindrical matching grid of order $3k.$

\begin{lemma}\label{lemma_matchinglongjump}
Let $k\in\N$ be some positive integer.
Let $B=H+P$ be a bipartite matching covered graph where $H$ is the cylindrical matching grid of order $96k,$ $M$ is a perfect matching of $B$ which contains the canonical matching of $H,$ and $P$ is an internally $M$-conformal path with endpoints $a$ and $b$ such that $V(P)\cap V(H)=\Set{a,b}.$
Let $(H_1,H_2,H_3)$ be the tripartition of $H.$
Suppose $a\in V(H_2).$
If there exists some $j\in[96k]$ such that $C_j$ separates $a$ from $b$ within $H,$ then $B$ contains the single-crossing matching grid of order $k$ as a matching minor.
\end{lemma}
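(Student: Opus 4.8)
The plan is to run the proof of \cref{lemma_jumpandcrossfreegrid} in the cylindrical, matching\mbox{-}theoretic setting, with the matching society Two Paths Theorem taking over the role of the classical one.

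\emph{Reductions.} Up to reversing the cyclic ordering $C_1,\dots,C_{96k}$ of the concentric cycles of $H$ — which, possibly after relabelling the colour classes and the canonical matching, is an automorphism of the cylindrical matching grid — I may assume that the separating cycle is $C_j$ with $j<i$, where $a\in V(C_i)$ and $32k<i\le 64k$; then $b\in V(C_\ell)$ for some $\ell<j<i$. Since $V(P)\cap V(H)=\{a,b\}$, the path $P$ meets only the cycles $C_\ell$ and $C_i$; hence every cycle $C_{j'}$ with $\ell<j'<i$ is conformal in $B$ and separates $a$ from $b$ in $H$ (by \cref{lemma_tightcutsandconformalcycles} all the cycles in play stay conformal), while the inner band $H_3$ and the outer corridor $C_1,\dots,C_{\ell-1}$ are disjoint from $P$. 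As is standard for constructions inside cylindrical matching grids, whenever I carve a subgrid out of $H$ I equip it with a perfect matching agreeing with the \emph{alternating} matching $M^{*}$, exchanging the matching on finitely many cycles so that the result, together with $M$ on $P$, is conformal and its matching model descends to a matching minor of $B$; this is precisely the bookkeeping in the proof of \cref{lemma_gridinthecylinder}.

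\emph{Long jump.} Suppose first $i-\ell-1\ge 8(2k+2)$. Then the cycles strictly between $C_\ell$ and $C_i$ span a clean sub\mbox{-}cylinder $S$ of order $\ge 8(2k+2)$, and, extending $P$ by one spoke at each end, I obtain an internally $M^{*}$\mbox{-}conformal path $\hat P$ with one endpoint on $C_{\ell+1}$ and the other on $C_{i-1}$ whose interior meets $H$ only in $a\in V(C_i)$ and $b\in V(C_\ell)$, hence is disjoint from $S$. Thus $S\cup\hat P$ is a conformal bisubdivision of the cylindrical single\mbox{-}jump grid of order $8(2k+2)$; by \cref{lemma_jumpinggrid} it contains the inside\mbox{-}out single\mbox{-}crossing matching grid of order $2k$, and then by \cref{lemma_reversinginsideout} the single\mbox{-}crossing matching grid of order $k$.

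\emph{Short jump.} Now suppose $i-\ell-1<8(2k+2)$. Then $\ell,i,j$ lie well inside $[1,96k]$, so besides $H_3$ the corridor $C_1,\dots,C_{16k}$ is also clean and the conformal cycle $C_j$ lies strictly outside $C_{64k+1}$. Choose a clean sub\mbox{-}cylinder $D$ of $H$ whose extremal boundary cycle $C_m$ separates $C_\ell$ from $C_i$ in $H$; distinguishing according to where $b$ and $C_j$ sit relative to the bands, one checks — and this is the point where inflating the order of $H$ to $96k$ is used — that $D$ can be taken to contain a conformal bisubdivision of $CG_{64k}$ with $C_m$ as an extremal cycle straddled by $P$. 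The part of $B$ lying on the other side of $C_m$ is a cylindrical matching grid carrying the jump $P$ over the separating conformal cycle $C_j$, so the matching society it forms with the cyclic order of $C_m$ is \emph{not} matching flat: a vortex\mbox{-}free matching rendition in the disk would, after contracting the big vertices and resolving the $4$-cycle\mbox{-}sum ``flaps'', realise $H+P$ almost planarly with $C_j$ still separating $a$ from $b$ and no conformal cross, contradicting the presence of $P$ by \cref{lemma_goodcrossesmeanK33}. Building a reinforced society around $D$ (three concentric cycles of $D$ furnishing the cylindrical matching grid of order $3$ required by the definition) and applying \cref{thm_societytwopaths} therefore produces a conformal cross over $C_m$ in $B$ that, apart from its four endpoints, is disjoint from the clean $CG_{64k}$\mbox{-}subgrid. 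Together these form a conformal bisubdivision of the cylindrical single\mbox{-}crossing matching grid of order $16k$; \cref{lemma_singlecrossingcylinder} then yields the inside\mbox{-}out single\mbox{-}crossing matching grid of order $2k$, and \cref{lemma_reversinginsideout} the single\mbox{-}crossing matching grid of order $k$.

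\emph{Main obstacle.} There are two delicate points. The lesser one is quantitative: the grid brothers \cref{lemma_jumpinggrid}, \cref{lemma_singlecrossingcylinder} and \cref{lemma_reversinginsideout} each cost a constant factor, so the order of $H$ must be inflated (to $96k$), and in the short\mbox{-}jump case one must verify that at least one clean side of the jump — the corridor $C_1,\dots,C_{16k}$, the band $H_3$, or a clean stretch of $H_2$ next to $a$ — still accommodates a clean copy of $CG_{64k}$ whose extremal cycle is straddled by $P$. The main obstacle, however, is the short\mbox{-}jump construction of the cross: unlike in \cref{lemma_jumpandcrossfreegrid}, where the classical Two Paths Theorem suffices, a crossing pair of ordinary paths need not be compatible with $M$, so one genuinely needs \cref{thm_societytwopaths} — and hence the ``reinforcement'' by a cylindrical matching grid of order $3$ hidden in the clean part of $H$ — to guarantee that the cross may be chosen to consist of alternating paths and therefore to combine with the clean grid into the desired matching minor.
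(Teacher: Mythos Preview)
The paper's proof is much more direct than yours and does not touch \cref{thm_societytwopaths}. It distinguishes only whether $b\in V(H_2)$ or $b\in V(H_j)$ for some $j\in\{1,3\}$, and in each case \emph{writes down} the two crossing internally $M$-conformal paths by hand: four almost equidistant vertices $a_1,a_2,b_1,b_2$ are fixed on the boundary cycle of a clean sub-cylinder ($H_1$, respectively $H_i$ for $i\in\{1,3\}\setminus\{j\}$), and the rows of $H_2+P$ (respectively $H_j\cup H_2+P$) are used to route one $a_1$--$b_1$ path through $P$ and one $a_2$--$b_2$ path entirely through the grid. The resulting conformal bisubdivision of the cylindrical single-crossing grid of order $8k$, together with \cref{lemma_singlecrossingcylinder}, finishes the proof. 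No long/short-jump dichotomy is needed; your long-jump reduction to the single-jump grid is really the mechanism of the \emph{next} lemma, \cref{lemma_matchinglongjump2}, not this one.

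Your short-jump argument has a real gap. You assert that the society on the $P$-side of $C_m$ is not matching flat because a vortex-free rendition would draw things ``almost planarly'' with $C_j$ still separating $a$ from $b$, and then the presence of $P$ would give a contradiction via \cref{lemma_goodcrossesmeanK33}. But matching flatness is not planarity: a non-vortex cell with $|\widetilde c|=4$ may contain an arbitrary Pfaffian, hence possibly non-planar, subgraph, so nothing prevents $P$ from sitting inside such a cell and bridging across the trace of $C_j$ without any topological obstruction. \cref{lemma_goodcrossesmeanK33} is about $K_{3,3}$ through a $4$-cycle in a brace; you have not exhibited any $K_{3,3}$, and doing so is essentially equivalent to producing the cross you are after. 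Even granting non-flatness, \cref{thm_societytwopaths} hands you a cross over the \emph{outer} cycle of the reinforcing $CG_3$ at uncontrolled positions, and you still have to convert that into the specific near-antipodal crossings required by the cylindrical single-crossing grid---work you do not do. The detour through the society theorem is therefore both incomplete and unnecessary; the cross can simply be routed explicitly, as the paper does.
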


\begin{proof}
The proof is similar to the one of \cref{lemma_jumpandcrossfreegrid}.
We distinguish two cases: either there exists $j\in\{1,3\}$ such that $b\in V(H_j)$ or $b\in V(H_2).$

In the later case it is possible to select two vertices $a_1$ and $a_2$ of $V_1$ and two vertices $b_1$ and $b_2$ of $V_2$ from the cycle $C_{8k}$ which are almost equidistant such that $a_1,$ $a_2,$ $b_1,$ and $b_2$ appear on $C_{32k}$ in the order listed.
We can then use the infrastructure of $H_2+P$ to route two disjoint internally $M$-conformal paths, one joining $a_1$ and $b_1,$ the other joining $a_2$ and $b_2$ while both are internally disjoint from $H_1.$
Observe that the resulting graph is a bisubdivision of the cylindrical single-crossing grid of order $8k.$

In case $b\in V(H_j)$ for $j\in\{1,3\}$ let $i\in\{1,3\}$ be distinct from $j.$
We may now repeat the construction from before for $H_i$ by using the infrastructure of $H_j+H_2$ and again obtain an $M$-conformal bisubdivision of the cylindrical single-crossing grid of order $8k$ as a result.

The claim now follows from \cref{lemma_singlecrossingcylinder}.
\end{proof}

\begin{lemma}\label{lemma_matchinglongjump2}
Let $k\in\N$ be some positive integer.
Let $B=H+P$ be a bipartite matching covered graph where $H$ is the cylindrical matching grid of order $96(k+2),$ $M$ is a perfect matching of $B$ which contains the canonical matching of $H,$ and $P$ is an internally $M$-conformal path with endpoints $a$ and $b$ such that $V(P)\cap V(H)=\Set{a,b}.$
Let $(H_1,H_2,H_3)$ be the tripartition of $H.$
If $P$ has one endpoint in $H_1$ and the other in $H_3,$ then $B$ contains the single-crossing matching grid of order $k$ as a matching minor.
\end{lemma}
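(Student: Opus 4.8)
The plan is to mirror the strategy of \cref{lemma_matchinglongjump}: use the rigidity of $H$ together with the path $P$ to produce an $M$-conformal bisubdivision of the cylindrical single-jump grid of sufficiently large order, and then invoke \cref{lemma_jumpinggrid} followed by \cref{lemma_reversinginsideout} to extract the single-crossing matching grid of order $k$.

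First, set up notation. Write $r\coloneqq 32(k+2)$, so $H$ is the cylindrical matching grid of order $3r$, with $H_1$ spanned by $C_1,\dots,C_r$, $H_2$ by $C_{r+1},\dots,C_{2r}$, and $H_3$ by $C_{2r+1},\dots,C_{3r}$. By symmetry we may assume $a\in V(H_1)$ and $b\in V(H_3)$; say $a$ lies on $C_s$ with $s\le r$ and $b$ lies on $C_t$ with $t\ge 2r+1$. Since $P$ is internally $M$-conformal it has odd length, so $a$ and $b$ lie in different colour classes of $B$; moreover $V(P)\cap V(H)=\{a,b\}$ means $P$ is internally disjoint from $H_2$.

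The main step is to extend $P$ on both ends to a ``jump path'' joining the innermost cycle $C_{r+1}$ of $H_2$ to its outermost cycle $C_{2r}$, staying internally $M$-conformal and internally disjoint from $H_2$. Using that $M$ contains the canonical matching of $H$ (so every radial edge of $H$ is $M$-free), I would route an internally $M$-conformal path $R_a$ inside $H_1$ from a prescribed vertex $a'\in V(C_{r+1})$ — chosen in the colour class of $v^1_2$ in the cylindrical single-jump grid — through the cycles $C_s,\dots,C_r$ to $a$, and symmetrically an internally $M$-conformal path $R_b$ inside $H_3$ from $b$ through the cycles $C_{2r+1},\dots,C_t$ to a prescribed vertex $b'\in V(C_{2r})$ in the colour class of $v^r_1$. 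Any parity obstruction that arises when attaching $R_a$ to $P$ at $a$ (respectively $R_b$ to $P$ at $b$) is absorbed by detouring along a short conformal subarc of an intermediate cycle, or by switching $M$ locally along a conformal cycle of $H$ lying inside $H_1$ (respectively $H_3$); this is the same kind of routing already carried out in the proofs of \cref{lemma_singlecrossingcylinder} and \cref{lemma_jumpinggrid}, and it succeeds because $H_1$ and $H_3$ each contain $r$ concentric cycles, far more than needed. Then $R_a\cup P\cup R_b$ is an internally $M$-conformal path from $a'$ to $b'$ whose interior avoids $H_2$, so $H_2\cup R_a\cup P\cup R_b$ is an $M$-conformal bisubdivision of the cylindrical single-jump grid of order $r=32(k+2)$.

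To finish, observe $32(k+2)=8\bigl((4k+6)+2\bigr)$, so \cref{lemma_jumpinggrid} (with parameter $4k+6$) shows $B$ contains the inside-out single-crossing matching grid of order $4k+6$ as a matching minor; since $4k+6\ge 2k$, \cref{lemma_reversinginsideout} then gives the single-crossing matching grid of order $k$ as a matching minor of $B$. The main obstacle is the routing step: arranging that $R_a$ and $R_b$ are simultaneously internally $M$-conformal, compatibly attachable to $P$ so that the concatenation is a single internally $M$-conformal path, and terminate at the prescribed vertices $a',b'$, so that the outcome is literally a bisubdivision of the cylindrical single-jump grid rather than merely some cylindrical matching grid with an extra inner-to-outer path. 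Each requirement is easy to meet in isolation given the slack in $H_1$ and $H_3$; coordinating all of them is the technical heart of the argument, while everything afterwards is a black-box application of the grid-reduction lemmas already established in this subsection.
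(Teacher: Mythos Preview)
Your overall strategy is exactly the paper's: extend $P$ on both sides into an internally $M$-conformal jump path linking the two extreme cycles of a cylindrical matching grid sitting inside $H_2$, then invoke \cref{lemma_jumpinggrid} followed by \cref{lemma_reversinginsideout}. The arithmetic in your final paragraph checks out.

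There is, however, a genuine gap in the central claim that ``$H_2\cup R_a\cup P\cup R_b$ is an $M$-conformal bisubdivision of the cylindrical single-jump grid of order $r=32(k+2)$''. The graph $H_2$ is a \emph{slice} of breadth $r$ of $CG_{3r}$: it consists of $r$ concentric cycles each of length $12r$, with $6r$ radial edges between every consecutive pair of cycles. A bisubdivision of $CG_r$, by contrast, has exactly $2r$ radial paths between consecutive cycles, since subdividing edges cannot create additional branch vertices. So $H_2$ carries three times too many radial connections to be a bisubdivision of $CG_r$; a slice of breadth $r$ of a larger cylindrical grid is simply not the same object as $CG_r$ or any bisubdivision thereof, and adding your jump path does not yield a bisubdivision of the single-jump grid.

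The paper avoids this by not using $H_2$ wholesale. It explicitly selects an $M$-conformal bisubdivision $W$ of $CG_{8(2k+2)}$ strictly inside $H_2$, contained between the cycles $C_{35(k+2)}$ and $C_{61(k+2)}$. This leaves buffer cycles of $H_2$ on both sides of $W$, which together with $H_1$ and $H_3$ provide ample room to route internally $M$-conformal paths from the $M$-mates of the endpoints of $P$ to prescribed branch vertices on the boundary cycles of $W$. Then $W$ together with the resulting jump path is genuinely a bisubdivision of the cylindrical single-jump grid of order $8(2k+2)$, and the two reduction lemmas finish the argument. Your proof can be repaired in the same way: replace $H_2$ by a proper conformal bisubdivision of some $CG_m$ inside it (with $m$ large enough for \cref{lemma_jumpinggrid}), and use the leftover portion of $H_2$ as extra routing space when attaching $R_a$ and $R_b$ to the correct branch vertices.
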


\begin{proof}
First observe that $H_2$ contains $32(k+2)$ many concentric cycles of $H.$
Hence, we may find an $M$-conformal bisubdivision $W$ of the cylindrical matching grid of order $8(2k+2)=16k+16$ within $H_2$ such that $W$ is contained between the cycles $C_{35(k+2)}$ and $C_{61(k+2)}.$
Without loss of generality let us assume that $P$ has one endpoint, say $a_1,$ in $V_1\cap V(H_1)$ and let $b_1\in V_2$ be the other endpoint of the edge of $M$ covering $a_1.$
Then the other endpoint, say $b_2$ belongs to $V_2\cap V(H_3).$
Let $a_2$ be the other endpoint of the edge of $M$ which covers $b_2.$
It is now easy to see that we can link any vertex $a$ of $V_1\cap V(C_{35(k+2)})$ which has a neighbour not on this cycle but on a path of $W$ joining $C_{35(k+2)}$ to $C_{35(k+2)-1}$ to $b_1$ with an internally $M$-conformal path which is otherwise disjoint from $W$ and $H_3.$
Similarly, we may join some vertex $b$ of $V_2\cap V(C_{61(k+2)})$ to $a_2$ without otherwise touching $W$ or $H_1.$
Let $Q$ be the resulting internally $M$-conformal path joining $a$ and $b.$
Notice that $W+Q$ is a bisubdivision of the cylindrical single-jump grid of order $8(2k+2).$
By applying \cref{lemma_jumpinggrid} we may not find the inside-out single-crossing matching grid of order $2k$ as a matching minor.
This graph, finally, may be transformed by \cref{lemma_reversinginsideout} into the single-crossing matching grid of order $k$ and thus our proof is complete.
\end{proof}

\begin{lemma}\label{lemma_matchingseparatinggridexternally}
Let $k$ be some integer and $B$ be a brace.
Let $W$ be a conformal bisubdivision of the cylindrical matching grid of order $96(k+2)+6$ together with a perfect matching $M$ of $B$ such that $M$ contains the canonical matching of $W$ and the tripartition $(W_1,W_2,W_3)$ of $W.$
Then either $B$ contains the single-crossing matching grid of order $k$ as a matching minor or there exist separations $(X_1,Y_2)$ and $(X_2,Y_2)$ of $B$ such that $V(W_2)\subseteq X_1\cap X_2,$ $V(C_1)\subseteq Y_1,$ $V(V_{96(k+2)+6})\subseteq Y_2,$ $V(C_{32(k+2)+2})\cup V(C_{32(k+2)+3})= X_1\cap Y_1,$ and $V(C_{64(k+2)+4})\cup V(C_{64(k+2)+5})= X_2\cap Y_2.$
\end{lemma}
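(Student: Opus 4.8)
The plan is to mimic the proof of \cref{lemma_separatingcycle} from the undirected setting, with the single-crossing grid of order $k$ replaced by the single-crossing matching grid and \cref{lemma_jumpandcrossfreegrid} replaced by its matching-theoretic counterparts \cref{lemma_matchinglongjump} and \cref{lemma_matchinglongjump2}. First I would set $n\coloneqq 96(k+2)+6$ and work with the centred layering $(C_1,\dots,C_n)$ and the tripartition $(W_1,W_2,W_3)$ of $W$, where $W_2$ occupies the cycles $C_{32(k+2)+3},\dots,C_{64(k+2)+4}$ (the middle third). Let $W_2'$ be the maximal subgraph of $W_2$ strictly contained "between" the two boundary double-cycles, i.e.\ the part of $W_2$ induced by the cycles $C_{32(k+2)+4},\dots,C_{64(k+2)+3}$; the key point of the bisubdivision structure is that the two pairs $\{C_{32(k+2)+2},C_{32(k+2)+3}\}$ and $\{C_{64(k+2)+4},C_{64(k+2)+5}\}$ each form (the trace of) a conformal $4$-cycle ladder that, when matched by $M$, forms a minimal separator one can "cut along'' — this is exactly the role played by a single cycle in the undirected argument, but doubled because in the matching setting a separator has to be matching-respecting.

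The core step is the following dichotomy, proven by contradiction in the same way as \cref{lemma_separatingcycle}. Suppose no separation $(X_1,Y_1)$ of the stated form exists; I claim $B$ then contains the single-crossing matching grid of order $k$. If $(X_1,Y_1)$ does not exist, then in $B$ there is no way to separate $V(W_2)$ from $V(C_1)$ along $V(C_{32(k+2)+2})\cup V(C_{32(k+2)+3})$, so there is a path in $B$ from some vertex of $W_2$ deep inside (on $W_2'$) to some vertex of $C_1$ that avoids this separator; because $B$ is matching covered and $M$ contains the canonical matching of $W$, I can promote such a path to an \emph{internally $M$-conformal} path $P$ with endpoints $a\in V(W_2')$ and $b\in V(C_1)\subseteq V(W_1)$, internally disjoint from $W$ and disjoint from $V(C_{32(k+2)+2})\cup V(C_{32(k+2)+3})$. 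Since $C_{32(k+2)+2}$ then separates $a$ from $b$ within $W$ (with $a$ in the $W_2$-side and $b$ in $W_1$), \cref{lemma_matchinglongjump} — after checking that $96k$ cycles are available, which they are because $W_2$ alone has $32(k+2)$ concentric cycles and $H_1\cup H_2$ supplies the rest — yields the single-crossing matching grid of order $k$ as a matching minor of $B$. The symmetric argument using the $Y_2$-side and \cref{lemma_matchinglongjump} (with $W_2$ playing the role of $H_2$, $W_3$ the role of $H_3$) handles the separation $(X_2,Y_2)$. One also has to treat the mixed case where an $M$-conformal path runs from $W_1$ all the way to $W_3$ without being cut by either double-cycle; here \cref{lemma_matchinglongjump2} applies directly and again produces the desired matching minor. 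Finally one observes the separators can be tightened to equal $V(C_{32(k+2)+2})\cup V(C_{32(k+2)+3})$ and $V(C_{64(k+2)+4})\cup V(C_{64(k+2)+5})$ exactly (rather than merely contained in them), because any internal vertex of such a double-cycle that is not needed for the separator can be absorbed into one of the two sides without creating an $M$-edge across the cut.

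The main obstacle I anticipate is bookkeeping the \emph{matching-respecting} nature of the separations: in the undirected lemma one just invokes quasi-$4$-connectivity and any old path, but here I must simultaneously (i) arrange that the promoting path $P$ is internally $M$-conformal so that \cref{lemma_matchinglongjump}/\cref{lemma_matchinglongjump2} can be applied, and (ii) be careful that the "separator'' is a doubled cycle carrying exactly one $M$-edge across it, so that \cref{lemma_tightcutsandconformalcycles}-style reasoning shows a would-be separating set of the wrong parity cannot exist. Establishing the existence of the internally $M$-conformal promoting path — essentially that matching-covered-ness plus the presence of $M\supseteq$ canonical matching lets one reroute an arbitrary connecting path into an alternating one without leaving the prescribed region — is where the real work lies; everything else is a faithful translation of the undirected argument. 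The index arithmetic (which cycles belong to $W_2$, and that after removing a double-cycle one still has enough concentric cycles left to feed into \cref{lemma_matchinglongjump}, which needs order $96k$, and \cref{lemma_matchinglongjump2}, which needs order $96(k+2)$) should be routine once the constant $96(k+2)+6$ is unpacked.
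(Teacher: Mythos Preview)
Your high-level plan matches the paper's: argue by symmetry for $(X_1,Y_1)$, set $Z=V(C_{32(k+2)+2})\cup V(C_{32(k+2)+3})$, suppose $Z$ fails to separate, and feed a ``jump'' into \cref{lemma_matchinglongjump} or \cref{lemma_matchinglongjump2}. The gap is precisely the step you flag as the main obstacle --- promoting an arbitrary $B-Z$ path to an internally $M$-conformal one --- and your proposed justification (``matching-covered-ness plus $M\supseteq$ canonical matching lets one reroute an arbitrary connecting path into an alternating one'') is not correct in general.

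The issue is that $B-Z$ need not be matching covered, and the elementary component of $B-Z$ containing $C_1$ may be genuinely different from, and \emph{incomparable} with, the one containing $C_{32(k+2)+4}$. When that happens there is \emph{no} internally $M$-conformal path in $B-Z$ between the two sides, full stop, so no rerouting argument can produce one. The paper resolves this with the Dulmage--Mendelsohn structure theorem: the two elementary components $H_1\ni C_1$ and $H_2\ni C_{32(k+2)+4}$, though incomparable, must (since $Z$ is not a vertex separator) have a common upper or lower bound $G$ in the DM partial order on elementary components of $B-Z$. Using that $B$ is a brace (\cref{thm_bipartiteextendibility}), one then obtains an internally $M$-conformal path $Q$ from $V_1\cap V(G)$ to some $b\in V(W)\cap V_2$, necessarily landing on $C_{32(k+2)+2}$, on $C_{32(k+2)+3}$, or on one of the short $M$-conformal connector paths between them. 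For each of these three landing positions one extracts, via further $M$-conformal paths $P_1,P_2$ through $G$, a single internally $M$-conformal path whose endpoints are separated by one of the two cycles in $Z$, and only then does \cref{lemma_matchinglongjump}/\cref{lemma_matchinglongjump2} apply. Without the DM machinery and this three-case analysis, the promotion step cannot be carried out.
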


\begin{proof}
By symmetry, it suffices to show the existence of the separation $(X_1,Y_1).$
Towards a contradiction, suppose there does not exist such a separation and let $Z\coloneqq V(C_{32(k+2)+2})\cup V(C_{32(k+2)+3}).$
Moreover, $H$ be the maximal matching covered subgraph\footnote{Hence, $H$ is the so called \emph{elementary component} of $B-Z$ which contains the cycle $C_{32(k+2)+4}.$} of $B-Z$ which contains the cycle $C_{32(k+2)+4}.$

Suppose there exists an internally $M$-conformal path $P$ which is internally disjoint from $W,$ disjoint from $Z,$ has one endpoint in $W_1-Z$ and the other endpoint in $W-W_1-Z.$
Then one of the cycles $C_{32(k+2)+2},$ $C_{32(k+2)+3}$ separates the endpoints of $P$ within $W.$
If the endpoint of $P$ in $W-W_1-Z$ belongs to $W_2,$ \cref{lemma_matchinglongjump} implies the existence of the single-crossing matching grid of order $k$ as a matching minor.
Otherwise, it belongs to $W_3$ and thus \cref{lemma_matchinglongjump2} yields the desired matching minor.
Hence, we may assume no such path exists.

Since $Z$ does not separate $C_{32(k+2)+4}$ from $W_1-Z$ by assumption, there must still be some path between the two in $B-Z.$

Recall the \emph{Dulmage-Mendelsohn structure theorem} \cite{dulmage1958coverings}.
The theorem says that for the elementary components\footnote{As above, these are the maximal matching covered and conformal subgraphs of a graph with a perfect matching.} of a bipartite graph with a perfect matching there exists a partial order $\leq_{\text{DM}}$ such that for any two components $H_1,$ $H_2$ with $H_1\leq_{\text{DM}} H_2$ and any perfect matching $M'$ of the graph there exists an internally $M'$-conformal path from a vertex of $V(H_1)\cap V_1$ to a vertex of $V(H_2)\cap V_2.$
Moreover, there is no edge between $V(H_2)\cap V_1$ and $V(H_1)\cap V_2$ and there are no edges between incomparable elementary components.

Let $H_1$ be the elementary component of $B-Z$ that contains $C_1$ and let $H_2$ be the elementary component of $B-Z$ which contains $C_{32(k+2)+4}.$
From the discussion above it follows that $H_1$ and $H_2$ are incomparable w.\@r.\@t.\@ $\leq_{\text{DM}}.$
However, the two must have a common upper or lower bound w.\@r.\@t.\@ $\leq_{\text{DM}}.$
Without loss of generality let us assume that there exists an elementary component $G$ of $B-Z$ which is a common upper bound for both $H_1$ and $H_2.$
Since $B$ is a brace there must exist an internally $M$-conformal $Q$ path starting in $V_1\cap V(G)$ and ending in $V(W)\cap V_2.$
Moreover, we may assume $Q$ to be internally disjoint from $W.$
Since $H_1$ and $H_2$ are distinct elementary components of $B-Z$ and there is no internally $M$-conformal path in $B-Z$ connecting then, it follows that $Q$ must end on one of the two cycles $C_{32(k+2)+2},$ or $C_{32(k+2)+3},$ or somewhere in $W-H_1-H_2.$
Let $b$ be the endpoint of $Q$ in $W.$
Furthermore, let $P_1$ be an internally $M$-conformal path from $V(H_1)\cap V_1$ to $b$ which is internally disjoint from $W\cap(H_1+H_2)$ and let $P_2$ be an internally $M$-conformal path from $V(H_1)\cap V_1$ to $b$ which is internally disjoint from $W.$
For each $i\in[1,2]$ let us denote the other endpoint of $P_i,$ the one that is not $b,$ by $a_i.$
The existence of both paths is guaranteed by $B$ being a brace, \cref{thm_bipartiteextendibility}, the existence of $Q,$ and the definition of $\leq_{\text{DM}}.$
We now distinguish three cases depending on the position of $b.$
In all cases we will find an internally $M$-conformal path with one endpoint in $H_1\cap W$ and the other in $H_2\cap W$ such that the path is internally disjoint from $W$ and its endpoints are separated by one of the cycles $C_{32(k+2)+2}$ or $C_{32(k+2)+3}.$
Whenever we find such a configuration we have found our single-crossing matching grid of order $k$ as a matching minor by applying \cref{lemma_matchinglongjump} or \cref{lemma_matchinglongjump2}.

\textbf{Case 1:} $b\in V(C_{32(k+2)+2}).$
In this case $C_{32(k+2)+3}$ separates $a_2$ and $b$, and thus we are done as $P_2$ is the desired path.

\textbf{Case 2:} $b\in V(C_{32(k+2)+3}).$
Here $C_{2(k+2)k+2}$ separates $a_1$ and $b,$ and $b$ belongs to $H_2\cap W.$
Hence, $P_1$ is the desired path.

\textbf{Case 3:} $b\in V(W-H_1-H_2).$
In this case we have to discuss two subcases as every vertex of $V(W-H_1-H_2)$ belongs to some internally $M$-conformal path of $W$ which joins $C_{32(k+2)+2}$ and $C_{32(k+2)+3}.$
Let $R$ be the path which contains $b.$
In case $R$ has one endpoint in $V_1\cap V(C_{32(k+2)+2})$ we may imagine $R$ as being ``oriented towards'' $C_{32(k+2)+3}.$
Hence, in this case, the cycle separating the vertices $a_1$ and $b$ is $C_{32(k+2)+2}$ and since $C_{32(k+2)+3}$ belongs to $W_2$ we are done.
Otherwise, $R$ has one endpoint in $V_2\cap C_{32(k+2)+2}$ and thus one may imagine it being  ``oriented towards'' $C_{32(k+2)+2}.$
In this case, finally we may choose $C_{32(k+2)+3}$ to be the cycle that separates $a_2$ and $b$, and thus we are done.
\end{proof}

\begin{theorem}\label{thm_pfaffianbrace}
Let $k\in\N$ be some positive integer and $B$ be a brace with a perfect matching $M.$
If $B$ contains an $M$-conformal bisubdivision of the cylindrical matching wall of order $96(k+2)+6,$ then it either contains the single-crossing matching grid of order $k$ as a matching minor, or it is Pfaffian.
\end{theorem}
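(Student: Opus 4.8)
The plan is to adapt the proof of \cref{cor_planarbags} from the ordinary setting to the matching setting, using the society version of the matching Two Paths Theorem (\cref{thm_societytwopaths}) in place of the undirected one (\cref{thm_twopaths}). First I would fix the $M$-conformal bisubdivision $W$ of the cylindrical matching grid of order $96(k+2)+6$ guaranteed by hypothesis, with its tripartition $(W_1,W_2,W_3)$, and set $C_1,\dots,C_{96(k+2)+6}$ for its concentric cycles. Applying \cref{lemma_matchingseparatinggridexternally}, I may assume $B$ contains no single-crossing matching grid of order $k$ and hence obtain the two separations $(X_1,Y_1)$ and $(X_2,Y_2)$: these confine the ``outer'' and ``inner'' parts of $B$ so that nothing outside the middle ring $W_2$ can jump across the cycles bounding it. Concretely, this gives me that the subgraph of $B$ hanging off the annulus between $C_{32(k+2)+2}$ and $C_{64(k+2)+5}$ is separated from everything else by these two conformal double-cycles, which (by \cref{lemma_tightcutsandconformalcycles}) cannot be pierced by non-trivial tight cuts.

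Next I would build a reinforced society. The inner region $H$ consisting of $W_2$ together with everything of $B$ captured inside it between the two boundary rings plays the role of the ``flat'' piece; within $W_2$ there are more than $32(k+2)$ concentric cycles, so I can certainly pick out a conformal bisubdivision of the cylindrical matching grid of order $3$ with concentric cycles $C_1',C_2',C_3'$ sitting inside $W_2$, setting $\Psi$ to be the cyclic order on $V(C_1')$ and $\Omega$ the cyclic order on $V(C_3')$. The separation hypotheses ensure that $(V(B)-(H-V(\Omega)),H)$ is a separation with separator $V(\Omega)$, and that $H$, being drawn in the annulus, has a vortex-free extended $\Delta$-decomposition serving as a matching rendition both of $(H,\Psi)$ and of $(H,\Omega)$. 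This makes $\mathscr{S}=(B,H,\Psi,\Omega)$ a reinforced society in the sense of \cref{@investigates}. Now \cref{thm_societytwopaths} applies: either $(B,\Psi)$ is matching flat, or $\mathscr{S}$ has a cross, i.e.\ a conformal cross over $C_1'$ in $B$. In the latter case, the standard infrastructure argument — routing the two crossing alternating paths through $W_2$ out to four well-spread corners of a $(2k\times 2k)$-grid found inside $W_2\cup W_3$ via \cref{lemma_gridinthecylinder}, exactly as in \cref{lemma_singlecrossingcylinder} — produces an $M$-conformal bisubdivision of the cylindrical single-crossing grid (or single-jump grid) of order $8k$, which by \cref{lemma_singlecrossingcylinder}, \cref{lemma_jumpinggrid} and \cref{lemma_reversinginsideout} contains the single-crossing matching grid of order $k$ as a matching minor, contradicting our assumption.

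So from now on $(B,\Psi)$ is matching flat, meaning $B$ has a vortex-free matching rendition in the disk. It remains to convert ``$B$ admits a vortex-free matching rendition in the disk'' into ``$B$ is Pfaffian''. The key here is the structural characterisation of $K_{3,3}$-matching-minor-free braces (\cref{thm_trisums}): a vortex-free matching rendition of $B$ in the disk, after contracting the big vertices, exhibits $B$ as being built from planar braces by $4$-cycle sums (trisums), since every big vertex corresponds to a non-trivial tight cut and the interior of every non-vortex cell with $|\widetilde c|\le 4$ is, by \cref{@cowardliness}, either trivial or a Pfaffian ``flap'' attachable along a conformal $4$-cycle without creating a conformal cross, hence (by \cref{lemma_goodcrossesmeanK33}) without a $K_{3,3}$. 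Running the tight cut decomposition down to braces and invoking \cref{thm_trisums} in the reverse direction, each brace of $B$ is either the Heawood graph (impossible here, as the Heawood graph has no conformal cross over any $4$-cycle but is also not planar — one must be slightly careful and rule it out directly, or simply note that whatever non-planarity it carries would already have been caught inside the rendition) or obtainable from planar braces by trisums, hence $K_{3,3}$-matching-minor-free, i.e.\ Pfaffian; and Pfaffianness of all braces yields Pfaffianness of $B$.

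The main obstacle I expect is the bookkeeping in passing between the ``flat'' picture produced by \cref{thm_societytwopaths} and the trisum description of \cref{thm_trisums} — in particular, being precise about how the big vertices of the matching rendition correspond to the shores of the tight cuts in the tight cut decomposition, and how the $4$-cell vortices being excluded translates exactly into every $4$-cycle sum being at a face-bounding conformal cycle of a planar brace. A secondary subtlety is handling the Heawood graph exception: one has to argue that it cannot arise as a brace of a graph possessing such a disk rendition (it is not planar, so it cannot be one of the planar pieces, and being $2$-extendable with no non-trivial tight cut it cannot be decomposed further), which requires observing that any matching rendition in the disk of a brace forces that brace to be planar outright. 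This last point is essentially the matching analogue of the remark after \cref{thm_twopaths} that a quasi-$4$-connected flat society is planar, and is where the bulk of the remaining care goes.
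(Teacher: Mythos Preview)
Your approach has a genuine structural gap in the construction of the reinforced society. You attempt to build a \emph{single} reinforced society $\mathscr{S}=(B,H,\Psi,\Omega)$ covering all of $B$, with $H$ taken to be $W_2$ together with everything captured inside the middle annulus and $C_1',C_2',C_3'$ three cycles sitting inside $W_2$. But the definition of a reinforced society (\cref{@investigates}) requires that $(V(B)-(H-V(\Omega)),H)$ be a separation with separator $V(\Omega)=V(C_3')$, and that $H$ admit a rendition in a disk with $V(\Psi)=V(C_1')$ on the boundary. Neither holds here: the cylindrical grid is an annulus with \emph{two} ends, so the part of $B$ outside $H$ (which contains pieces of both $W_1$ and $W_3$) attaches to $H$ along both boundary rings of $W_2$, not through $V(C_3')$ alone; and since $C_1'$ lies in the interior of the annulus $W_2$, it cannot bound the disk in any rendition of $H$. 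Consequently $\mathscr{S}$ is not a reinforced society and \cref{thm_societytwopaths} does not apply.

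The paper handles this by building \emph{two} reinforced societies, one for each end of the cylinder. Concretely, it takes $U_1$ to be the three cycles $C_{32(k+2)+1},C_{32(k+2)+2},C_{32(k+2)+3}$ and $U_2$ the three cycles $C_{64(k+2)+4},C_{64(k+2)+5},C_{64(k+2)+6}$, and for each $i$ sets $H_i\coloneqq U_i\cup B[X_i]$; the separations from \cref{lemma_matchingseparatinggridexternally} then make $(H_i,U_i,\Psi_i,\Omega_i)$ a legitimate reinforced society, with the outer cycle of $U_i$ on the boundary of the disk and everything on the $X_i$-side inside it. Applying \cref{thm_societytwopaths} to each $H_i$ separately yields either a cross (routed, as you correctly describe, through the remaining $W_1$- or $W_3$-part to produce the single-crossing matching grid) or matching flatness of $(H_i,\Psi_i)$. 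With both renditions in hand, the paper picks a middle cycle $C=C_{48(k+2)+3}\subseteq H_1\cap H_2$, restricts each rendition to the region bounded by the trace of $C$, and glues the two resulting disk renditions along $C$ via \cref{lemma_joiningoverlappingsocieties} to obtain a vortex-free extended decomposition of $B$ on the \emph{sphere}. Because $B$ is a brace, this sphere decomposition has no big vertices, so $B$ is obtained from planar braces by trisums and is Pfaffian by \cref{thm_trisums}. This also makes your worry about the Heawood graph moot: the sphere decomposition exhibits the trisum structure directly, so the Heawood exception never enters.
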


\begin{proof}
Let $W$ be an $M$-conformal bisubdivision of the cylindrical matching wall of order $96(k+2)+6$ such that $M$ contains its canonical matching.
Let $(W_1,W_2,W_3)$ be the tripartition of $W.$
Moreover, let $U_1$ be an $M$-conformal bisubdivision of the cylindrical matching grid containing the cycles $C_{32(k+2)+1},C_{32(k+2)+2},$ and $C_{32(k+2)+3}.$ 
We say that $C_{32(k+2)+1}$ is the \emph{outer cycle} of $U_1,$ while $C_{32(k+2)+3}$ is its \emph{inner cycle}.
Similarly, let $U_2$ be an $M$-conformal bisubdivision of the cylindrical matching grid containing the cycles $C_{64(k+2)+4},C_{64(k+2)+5},$ and $C_{64(k+2)+6}.$ 
We say that $C_{64(k+2)+6}$ is the \emph{outer cycle} of $U_2,$ while $C_{64(k+2)+4}$ is its \emph{inner cycle}.

Next we call upon \cref{lemma_matchingseparatinggridexternally}.
The lemma either provides us with the single-crossing matching grid of order $k$ as a matching minor, in which case we are done, or we obtain two separations $(X_1,Y_1)$ and $X_2,Y_2$ such that $V(W_2)\subseteq X_1\cap X_2,$ $V(C_1)\subseteq Y_1,$ $V(V_{96(k+2)+6})\subseteq Y_2,$ $V(C_{32(k+2)+2})\cup V(C_{32(k+2)+3})= X_1\cap Y_1,$ and $V(C_{64(k+2)+4})\cup V(C_{64(k+2)+5})= X_2\cap Y_2.$
Notice that $V(B)=X-1\cup X_2.$
For each $i\in[1,2]$ let $H_i\coloneqq U_i\cup \InducedSubgraph{B}{X_i}.$
Moreover, let $\Psi_i$ be the cyclic permutation of the vertex set of the outer cycle of $U_i$ obtained by traversing it in clockwise direction.
Also, let $\Omega_i$ be the cyclic permutation of the vertex set of the inner cycle of $U_i$ obtained by traversing it in clockwise direction.
Notice that for both $i\in[1,2]$ the tuple $(H_i,U_i,\Psi_i,\Omega_i)$ is a reinforced society.

\textbf{Claim 1:} For each $i\in[1,2],$ the matching society $(H_i,\Psi_i)$ is matching flat or $B$ contains the single-crossing matching grid of order $k$ as a matching minor.

By symmetry, it suffices to prove the claim for $i=1.$
Let $W'$ be the union of the component of $W_1-C_{32(k+2)+1}$ which contains the cycle $C_1$ with the cycle $C_{32(k+2)+1}.$
Suppose $(H_1,\Psi_1)$ is not matching flat, then \cref{thm_societytwopaths} provides us with a conformal cross over $C_{32(k+2)+1}$ within $H_1.$
Let $P$ and $Q$ be the two paths of this cross and let $M''$ be a perfect matching of $C_{32(k+2)+1}+P+Q.$
We may now choose a perfect matching $M'$ of $W'+P+Q$ such that $M'\cap E(W'-C_{32(k+2)+1})\subseteq M$ and $M''\subseteq M'.$
Observe that $W'+P+Q$ contains the cylindrical single-crossing grid of order $8k$ as a matching minor and thus, by \cref{lemma_singlecrossingcylinder} we find the single-crossing matching grid of order $k$ as a matching minor in $B.$
Hence, \textbf{Claim 1} follows.\smallskip

So from now on we may assume the matching societies $(H_i,\Psi_i)$ to be matching flat. 
For each $i\in [1,2]$ let $\rho_i$ be the matching rendition of $H_i$ in the disk provided by the matching flatness of $(H_i,\Psi_i).$
Now let $C\coloneqq C_{48(k+2)+3}.$
Notice that $C\subseteq H_1\cap H_2.$
Moreover, as $B$ is a brace it follows from \cref{lemma_tightcutsandconformalcycles} that, for both $i\in[1,2],$ no non-trivial tight cut of $H_i$ contains an edge of $C.$
Let $G_i$ be the subgraph obtained from the union of all components of $H_i-C$ whose vertices are drawn inside the disk $\Delta_i$ bounded by the trace of $C$ in $\rho_i$ together with $C,$ where $\Delta_i$ does not contain the trace of the inner cycle of $U_i.$
Notice that every component of $H_i-C$ is either fully drawn within $\Delta_i$ or has no intersection with the disk since $\rho_i$ is a matching rendition.
Let now $\Phi$ be the cyclic permutation of the vertices of $C$ obtained by traversing $C$ in clockwise direction.
It follows from the matching flatness of both $(H_1,\Psi_1)$ and $(H_2,\Psi_2)$ that the matching societies $(G_1,\Phi)$ and $(G_2,\Phi)$ are matching flat.
By \cref{lemma_joiningoverlappingsocieties} this means that $B=G_1\cup G_2$ has a vortex-free extended $\Sigma$-decomposition $\delta$ where $\Sigma$ is the sphere.
Hence, since $B$ is a brace, $\delta$ is free of big vertices.  
Therefore,  $B$ may be obtained from planar braces by repeated applications of the trisum operation.
Thus, by \cref{thm_trisums}, $B$ is a Pfaffian brace.
\end{proof}

With this we are ready to prove \cref{thm_matchingmainthm}.

\begin{proof}[Proof of \Cref{thm_matchingmainthm}]
Let $S\in\mathcal{S}$ be some single-crossing matching minor and let $k$ be the smallest integer such that $S$ is a matching minor of the single-crossing matching grid of order $k.$
We set
\begin{align*}
	h_1(S)\coloneqq \mathsf{mg}(96(k+2)+6),
\end{align*}
where $\mathsf{mg}$ denotes the function from \cref{thm_matchinggrid}.

Let $B$ be some brace with a perfect matching $M$ such that $B$ excludes $S$ as a matching minor.
Now suppose $\pmw{B}>h_1(S).$
Then, by \cref{thm_matchinggrid}, $B$ contains the cylindrical matching grid $W$ of order $96(k+2)+6$ as an $M$-conformal bisubdivision such that $M$ contains the canonical matching of $W.$

Observe that \cref{thm_pfaffianbrace} has two possible outcomes.
Either $B$ is Pfaffian or it contains the single-crossing matching grid of order $k$ as a matching minor.
In the first case we are done, so we may assume the second case.
By choice of $k$ however this implies that $B$ contains $S$ as a matching minor, thereby contradicting our assumption and thus the proof is complete.
\end{proof}


\section{Counting perfect matchings}\label{sec_countingperfectmatchings}

In this section we deal with the algorithmic complexity of counting perfect matchings on bipartite graphs.
We dedicate \cref{@desfavorecer} to the positive results that are applicable for graph classes where some single-crossing matching minor is excluded.
The negative results are presented in Subsections \ref{@interconnected} and \ref{@unanswerable}.
Here we show that counting perfect matchings is $\#\mathsf{P}$-hard on bipartite graphs excluding $K_{5,5}$ as a matching minor.
Indeed, we identify a matching minor closed class of graphs that excludes $K_{5,5}$ but contains $K_{4,4}$ on which counting perfect matchings is hard.

\subsection{An algorithm for the permanent}
\label{@desfavorecer}

In this subsection we show how use to the structural result in \cref{thm_matchingmainthm} in order to compute the number perfect matchings on bipartite graphs excluding a single-crossing matching minor.
In fact, this result will follow by  dealing with a more general problem of the computation of the labelled generating function of the matchings of  edge weighted bipartite graphs.

\paragraph{Notational conventions.}
Before we start, we give a series of conventions concerning the running time of the algorithms.
Let $f:\mathbb{N}\to\mathbb{N}$ be a function.
Given two non-negative integers $n$ and $k,$ 
we say that $f=\xp(n,k)$ if there is a function $g:\mathbb{N}\to\mathbb{N}$ such that $f=O(n^{g(k)}).$
Also, we say that $f=\fpt(n,k)$ if there is a function $g:\mathbb{N}\to\mathbb{N}$ such that 
$f=g(k)\cdot n^{O(1)}.$ 

\begin{definition}[Arborescence]
An \emph{arborescence} is an orientation $T$ of a tree such that all vertices of $T$ have in-degree at most one.
The unique vertex of $T$ that has indegree 0 is  \emph{the root} of $T$ while all vertices with out-degree 0 are called \emph{leaves} of $T.$
The vertices of $T$ that have out-degree 0 are called the \emph{leaves} of $T$ and their incident edges are called \emph{leaf edges} of $T.$ 
Given a $t\in V(T)$ we define the descendants of $T$ as $\mathsf{desc}_{T}(t)=\{t'\mid \mbox{there is a directed path in $T$ from $t$ to $t'$}\}.$
Also, if $t\in V(T),$ we denote $T_{t}=T[\mathsf{desc}_{T}(t)].$
Notice that for every tree $T$ and $r\in V(T),$ there is a unique arborescence whose underlying graph is $T$ and whose root is $r.$
We call this arborescence the \emph{arborescence of $T$ rooted at $r$}. 
\end{definition}

We also need the following algorithmic result.

\begin{proposition}[\cite{giannopoulou2021excluding}]
\label{@representarme}
There is a function $f:\mathbb{N}\to \mathbb{N}$ and an algorithm that, given a bipartite graph $B,$ outputs a perfect matching decomposition of $B$ of width at most $f(\pmw{B})$ in time $\fpt(n,\pmw{B}).$
\end{proposition}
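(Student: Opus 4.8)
The plan is to reduce the construction of a perfect matching decomposition of a bipartite graph $B$ to the construction of a decomposition of an associated digraph for a digraph width measure that admits an FPT approximation, and then to translate the decomposition back to $B$. First I would dispose of the trivial cases: a perfect matching of $B$ can be found (or shown not to exist) in polynomial time, and we may assume $B$ is connected; moreover non-admissible edges do not affect $\mathbf{mp}_B$ and the elementary components and the intermediate bipartite graphs produced along the way combine in a routine fashion, so it suffices to treat $B$ matching covered. Fix once and for all a perfect matching $M\in\Perf{B}$. From $(B,M)$ I form the \emph{conformal digraph} $\vec{D}=\vec{D}(B,M)$: contract every edge of $M$, so $\V{\vec{D}}$ is in bijection with $M$, and for every non-matching edge $vu'$ with $v\in V_2$ and $u'\in V_1$ add an arc from the $M$-edge covering $v$ to the $M$-edge covering $u'$. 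Then directed paths and directed cycles of $\vec{D}$ correspond exactly to internally $M$-conformal paths and $M$-conformal cycles of $B$, and conversely; in particular the matching porosity $\mathbf{mp}_B$ of an edge cut of $B$ separating $M$-classes is controlled by the number of arcs of $\vec{D}$ crossing the corresponding vertex partition (counting both orientations), since a perfect matching meeting $\partial_B(X)$ in many edges yields a family of disjoint alternating subgraphs crossing the cut, and vice versa.

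Next I would establish, constructively, a two-way functional equivalence between $\pmw{B}$ and the cyclewidth (equivalently, directed treewidth) of $\vec{D}$. Concretely, there are functions $f_1,f_2$ such that: (i) a perfect matching decomposition of $B$ of width $w$ yields in polynomial time a directed decomposition of $\vec{D}$ of width at most $f_1(w)$, using the correspondence above together with the standard passage from a branch-like decomposition of bounded cut size to a decomposition of comparable width; and (ii) a directed decomposition of $\vec{D}$ of width $w'$ yields in polynomial time a perfect matching decomposition of $B$ of width at most $f_2(w')$, by expanding each contracted vertex of $\vec{D}$ back to its $M$-edge in $B$, refining the underlying arborescence into a cubic tree whose leaves are $\V{B}$, and verifying cut-by-cut that each resulting edge cut of $B$ has bounded matching porosity. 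This equivalence is the constructive refinement of the machinery behind the matching grid theorem, \cref{thm_matchinggrid}, from \cite{hatzel2019cyclewidth}.

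With (i) and (ii) in hand the algorithm is assembled by the usual doubling trick. For $k=1,2,4,8,\dots$ I invoke the known FPT algorithm that, given $\vec{D}$ and $k$, in time $\fpt(\Abs{\V{\vec{D}}},k)$ either certifies that the directed treewidth (cyclewidth) of $\vec{D}$ exceeds $k$ or outputs a directed decomposition of $\vec{D}$ of width $O(k)$. The first success occurs for some $k\le 2\cdot\operatorname{dtw}(\vec{D})\le 2f_1(\pmw{B})$; applying (ii) to the returned decomposition produces a perfect matching decomposition of $B$ of width at most $f(\pmw{B})$ for the function $f=f_2\circ O(\,\cdot\,)\circ(2f_1)$, and the whole procedure runs in time $\fpt(n,\pmw{B})$, as required.

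The main obstacle is the constructive direction (ii): turning the loose "bags plus guard sets on an arborescence" shape of a directed decomposition into the rigid cubic, leaf-indexed shape of a perfect matching decomposition while keeping the matching porosity of \emph{every} cut bounded. Unlike the ordinary cut function, $\mathbf{mp}_B$ is a maximum over all perfect matchings and is not obviously submodular, so one cannot simply invoke the generic branch-decomposition / tangle machinery and must bound each cut directly via disjoint alternating subgraphs. A secondary technical point is to ensure that the arbitrary choice of $M$ and the induced orientation of the non-matching edges do not inflate the width; this is handled by observing that reversing $M$ along a conformal subgraph merely permutes the directed structures of $\vec{D}$, so the relevant digraph width is, up to the functions $f_1,f_2$, independent of $M$.
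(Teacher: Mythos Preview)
The paper does not prove this proposition at all: it is quoted verbatim as a black-box result from \cite{giannopoulou2021excluding} and is used only as an ingredient in the dynamic programming of \cref{@objectivement}. There is therefore no ``paper's own proof'' to compare your proposal against.

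That said, your sketch is essentially the route taken in the cited work and in \cite{hatzel2019cyclewidth}: pass from $(B,M)$ to the $M$-direction $\vec{D}(B,M)$, use the equivalence between perfect matching width and cyclewidth (which in turn is tied to directed treewidth), invoke an FPT approximation for directed treewidth, and translate the resulting decomposition back into a cubic leaf-labelled perfect matching decomposition of $B$. Your identification of step~(ii) as the delicate point is accurate; in the actual development this is handled not by bounding $\mathbf{mp}_B$ cut-by-cut from scratch, but by going through cyclewidth, whose cut function \emph{is} symmetric and submodular, so the standard branch-decomposition machinery applies there and only a final linear comparison between cyclewidth and perfect matching width is needed. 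Your worry about the choice of $M$ is also correctly resolved in the literature: the relevant digraph widths are invariant under the choice of perfect matching, up to the fixed comparison functions.
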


\paragraph{The generating function of matchings.}
 Let $\mathbb{Z}[x]$ be the set of all polynomials with integer coefficients.
 Given a graph $G$ and a function $\mathbf{p}\colon E(G)\rightarrow{\mathbb{Z}[x]}$ we call the pair $(G,\mathbf{p})$ an \emph{(edge) labelled graph}, and we refer to $\mathbf{p}$ as its \emph{labeling}.
For simplicity, we make the convention that, when we refer to a subgraph $G'$ of $G,$ instead of writing $(G',\mathbf{p}|_{V(G')})$ we just write $(G',\mathbf{p}).$

We use the term \emph{weighted graph} for a pair $(G,\mathbf{w})$ where $G$ is a graph and $\mathbf{w}: E(G)\to \mathbb{Z}$ is a weighting of its edges.
We derive a labelled graph $(G,\mathbf{p}_{\mathbf{w}})$ from $(G,\mathbf{w})$ by setting, for every $e\in E(G),$ $\mathbf{p}_{\mathbf{w}}(e)\coloneqq x^{\mathbf{w}(e)}.$

Now assume we are given a labelled graph $(G,\mathbf{p})$ and a perfect matching $M$ of $G.$
We express the \emph{total weight of $M$ under} $\mathbf{p}$ as the polynomial $$\MatchingMonomial{\mathbf{p}}{M}\coloneqq \prod_{e\in M}\mathbf{p}(e).$$

\begin{definition}[Generating Functions of matchings]\label{def_generatingfunction} 
Let $(G,\mathbf{p})$ be a labelled graph with a perfect matching.
The \emph{labelled generating function of the matchings of $G$}, usually abbreviated as the \emph{$\mathbf{p}$-generating function} or the \emph{generating function}, is defined to be the polynomial $\GenerateMatchings{G,\mathbf{p}}\coloneqq \sum_{M\in\Perf{G}}\MatchingMonomial{\mathbf{p}}{M}.$
Also, given a vertex $v\in V(G),$ we define the function $\mathsf{VertexGen}_{G,\mathbf{p},v}:\partial_{G}(\{v\})\to \mathbb{Z}[x]$ as the function mapping each edge $e=uv$ incident to $v$ to the polynomial $\GenerateMatchings{G-v-u,\mathbf{p}}\cdot \mathbf{p}(e).$ 
\end{definition}

Let $(G,\mathbf{w})$ be a weighted graph and $v\in V(G).$ Then the \emph{generating function of all weighted perfect matchings in $G$} is the polynomial 
\begin{align*} 
\GenerateMatchings{G,\mathbf{w}}\coloneqq \sum_{M\in\Perf{G}}\MatchingMonomial{\mathbf{p}_{\mathbf{w}}}{M}, 
\end{align*}
which, by definition, is equal to $\sum_{e\in \partial_{G}(\{v\})}\mathsf{VertexGen}_{G,\mathbf{p}_{\mathbf{w}},v}(e),$ for every $v\in V(G).$

The purpose of this section is to prove the following algorithmic result.

\begin{theorem}\label{crsazy_gidrst}
	There exists an algorithm that, given a bipartite graph $H\in\mathfrak{S}$ and a weighted bipartite graph $(B,\mathbf{w})$ that excludes $H$ as a matching minor, outputs $\GenerateMatchings{B,\mathbf{w}}$ in time \xp$(|G|,|H|).$
\end{theorem}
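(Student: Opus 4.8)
\textbf{Proof plan for \Cref{crsazy_gidrst}.}

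The plan is to combine the structural decomposition afforded by \Cref{thm_matchingmainthm} with a dynamic programming procedure over the tight cut decomposition and over the perfect matching decompositions of the individual braces. First I would reduce the problem to computing generating functions on braces: given $(B,\mathbf{w})$, compute the tight cut decomposition of $B$ (this is essentially the brace decomposition of \Cref{@inconvinientes}, computable in polynomial time), obtaining a tree-like arrangement whose nodes are the braces of $B$. The key point here is a ``splice'' identity (the analogue of \cref{thm_splicegeneratingfunctions} referred to in the introduction): the generating function $\GenerateMatchings{B,\mathbf{w}}$ can be recovered from the generating functions of the braces, with the edge labels of a brace obtained from a child node being replaced by the relevant $\mathsf{VertexGen}$-type polynomials computed recursively from the subtree below. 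Since every brace of $B$ still excludes $H$ as a matching minor (by the analogue of the fact that matching minors of $B$ which are braces appear inside braces of $B$, \cite{lucchesi2015thin}), it suffices to give an $\xp(|B|,|H|)$ algorithm for a single brace. The bookkeeping here — that the polynomials passed up the tree have degree and bit-size bounded polynomially in $|B|$ and that there are polynomially many of them — is routine but must be checked so that the running time stays $\xp(|B|,|H|)$.

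Next I would handle a single brace $B'$. Let $k$ be minimal with $H$ a matching minor of the single-crossing matching grid of order $k$, and set a threshold $h_1(|H|)$ as in \Cref{thm_matchingmainthm}. First test whether $\pmw{B'} > h_1(|H|)$: using \Cref{@representarme} we can attempt to compute a perfect matching decomposition of width $f(h_1(|H|))$, and if this fails we know $\pmw{B'}>h_1(|H|)$. In that case \Cref{thm_matchingmainthm} tells us $B'$ is Pfaffian, and we fall into the Pfaffian case below. Otherwise $B'$ admits a perfect matching decomposition $(T,\delta)$ of width $w = f(h_1(|H|))$, which depends only on $|H|$. I would then run a standard dynamic programming over the arborescence of $T$: rooting $T$, for each edge $e$ of $T$ we have the cut $\partial(e)$ of size $\mathbf{mp}_{B'}(\partial(e)) \le w$, and the table stored at $e$ records, for each subset $S$ of the at most $w$ cut edges that could be ``chosen'' by a perfect matching of $B'$ restricted to one side of the cut, the generating polynomial of the partial matchings of the corresponding subgraph that saturate exactly the endpoints dictated by $S$. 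Merging two children at a node is a convolution over compatible subsets; since there are at most $2^{w}$ subsets per cut and the polynomials have polynomially bounded degree, each merge costs $\xp(|B'|,|H|)$ and the whole sweep costs $\xp(|B'|,|H|)$. Reading off the root table gives $\GenerateMatchings{B',\mathbf{w}}$.

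For the Pfaffian case I would invoke the structural characterisation from \Cref{thm_trisums}: a $K_{3,3}$-free brace is the Heawood graph or is built from planar braces by trisums, and \Cref{cor_pfaffianalg} lets us detect Pfaffianness in cubic time. For a Pfaffian bipartite graph one has a Pfaffian orientation, computable in polynomial time by the algorithm behind \cite{robertson1999permanents,mccuaig2004polya}, so that the number of perfect matchings equals $|\mathsf{det}|$ of the signed biadjacency matrix. To get the \emph{weighted} generating function $\GenerateMatchings{B',\mathbf{w}}$ rather than just the count, I would replace each $(0,1)$-entry by the label $\mathbf{p}_{\mathbf{w}}(e) = x^{\mathbf{w}(e)}$ (with the Pfaffian sign), and compute the determinant of the resulting matrix over $\mathbb{Z}[x]$; since the matching number is at most $|B'|/2$, the degrees stay polynomially bounded and the determinant of a matrix over $\mathbb{Z}[x]$ is computable in time polynomial in $|B'|$ and the degree. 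Finally I would splice: the generating functions of all braces are combined up the tight cut decomposition tree using the splice identity, and the partial information from each subtree is carried on the edge labels exactly as in the trick used in \cite{ThilikosW22killi1} — this is why working with labelled/weighted graphs rather than plain counts is essential, as it lets the recursion compose cleanly at each tight cut.

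\textbf{Main obstacle.} I expect the principal difficulty to be the interface between the tight-cut recursion and the per-brace algorithms: one must verify that contracting a shore of a tight cut and summarising everything below it by finitely many edge-labels is \emph{exactly} compatible with both the perfect-matching-decomposition dynamic program and the Pfaffian-determinant computation, and that the labels produced remain honest polynomials of controlled size. Equivalently, one needs a clean statement that $\GenerateMatchings{\cdot,\cdot}$ of a graph with a tight cut factors through the $\mathsf{VertexGen}$-labels at the contraction vertex, and that iterating this over the whole laminar family of tight cuts incurs only a polynomial blow-up. Once that splicing lemma is in place, the two ``leaf'' cases (bounded perfect matching width via \Cref{@representarme} plus standard DP, and Pfaffian via \Cref{thm_trisums}, \Cref{cor_pfaffianalg} and a symbolic determinant) are essentially routine, and the running time is $\xp(|B|,|H|)$ as claimed.
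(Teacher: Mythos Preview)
Your plan is essentially the paper's proof: reduce to matching covered, then to braces via the splicing identity (this is precisely \cref{thm_splicegeneratingfunctions}), and on each brace invoke \cref{thm_matchingmainthm} to land in either the bounded-$\mathsf{pmw}$ case (handled by the dynamic program you sketch, which is \cref{@objectivement}) or the Pfaffian case (handled by a symbolic determinant, which is \cref{little_pr}). The ``main obstacle'' you anticipate is exactly the content of \cref{thm_splicegeneratingfunctions}, and its proof is the one-line relabelling you describe.

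One small technical wrinkle: you propose to decide between the two cases by running \cref{@representarme} and seeing whether it ``fails''. But \cref{@representarme} never fails; it always returns a decomposition of width $\le f(\mathsf{pmw}(B'))$ in time $\fpt(n,\mathsf{pmw}(B'))$. If the brace happens to be Pfaffian with unbounded $\mathsf{pmw}$, this call is not $\xp(n,|H|)$. The fix is the one you already have in hand: test Pfaffianness first with \cref{cor_pfaffianalg} in cubic time, and only call \cref{@representarme} on braces that are \emph{not} Pfaffian, where \cref{thm_matchingmainthm} guarantees $\mathsf{pmw}(B')\le h_1(|H|)$ and hence the call is $\xp(n,|H|)$. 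This is exactly the order the paper uses.
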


\paragraph{Dynamic programming.}
Our first step is to give a dynamic programming algorithm for the computation of $\mathsf{VertexGen}_{B,\mathbf{p},v},$ when $B$ is a bipartite graph of bounded perfect matching width.
This result can be seen as a generalisation of the dynamic programming algorithm proposed in \cite{giannopoulou2021excluding} for counting the number of perfect matchings of a bipartite graph of bounded perfect matching width.

\begin{lemma}
\label{@objectivement}
There exist a function $f:\mathbb{N}\to\mathbb{N}$ and an algorithm that, given a labelled bipartite graph $(B,\mathbf{p})$ and a vertex $v\in V(B),$ outputs the function $\mathsf{VertexGen}_{B,\mathbf{p},v}$
 in time $\xp(|B|,\mathbf{pmw}(B)).$
\end{lemma}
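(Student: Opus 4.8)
\textbf{Proof plan for \Cref{@objectivement}.}
The plan is to run a bottom-up dynamic program over a perfect matching decomposition $(T,\delta)$ of $B$ of width $w=f_0(\mathbf{pmw}(B))$, obtained by \Cref{@representarme} in time $\fpt(|B|,\mathbf{pmw}(B))$. First I would root $T$ at a leaf, or more conveniently insert an auxiliary leaf adjacent to the leaf carrying $v$, to obtain an arborescence; for each node $t$, the edge above $t$ induces a bipartition $\{X_t, V(B)\setminus X_t\}$ whose cut $\partial_B(X_t)$ has matching porosity at most $w$. The key observation, exactly as in the bounded-perfect-matching-width algorithm of \cite{giannopoulou2021excluding}, is that any perfect matching $M$ of $B$, when restricted to the subgraph $B_t\coloneqq \InducedSubgraph{B}{X_t}$, is a matching that covers $X_t$ except for the at most $w$ endpoints of $M\cap\partial_B(X_t)$ lying in $X_t$; hence the ``partial state'' of $M$ at $t$ is determined by the subset $S\subseteq \partial_B(X_t)$ of cut edges used by $M$, and there are at most $\sum_{j\le w}\binom{|\partial_B(X_t)|}{j}$ such subsets — but since $\mathbf{mp}_B(\partial_B(X_t))\le w$, only subsets of size at most $w$ among the cut edges can be extended to a perfect matching of $B$, so the number of relevant states is $|B|^{O(w)}$.

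The table I would maintain at node $t$ is a function $A_t$ that assigns, to each admissible subset $S\subseteq\partial_B(X_t)$ with $|S|\le w$, the polynomial
\[
A_t(S)\coloneqq \sum_{M'}\MatchingMonomial{\mathbf{p}}{M'},
\]
where the sum ranges over all matchings $M'\subseteq E(B_t)$ such that $M'\cup S$ covers exactly $X_t$ (equivalently, $M'$ is a perfect matching of $B_t$ minus the $X_t$-endpoints of the edges in $S$), the value being $0$ if no such $M'$ exists. At a leaf $t$ with $\delta(t)=u$, the set $X_t=\{u\}$ and $A_t(S)$ is $1$ if $S$ is a single edge incident with $u$ (and $0$ is not a perfect matching of the empty graph — we set $A_t(\emptyset)=0$ unless $X_t=\emptyset$), so the base case is trivial. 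At an internal node $t$ with children $t_1,t_2$, every cut edge of $\partial_B(X_t)$ is either a cut edge of one of the two children or an edge with both endpoints in $X_{t_1},X_{t_2}$ respectively; I would combine $A_{t_1}$ and $A_{t_2}$ by iterating over compatible pairs $(S_1,S_2)$ of admissible child-states and a choice of ``internal'' edges joining $X_{t_1}$ to $X_{t_2}$, checking that the induced matching on $X_{t_1}\cup X_{t_2}=X_t$ is consistent, and accumulating $A_{t_1}(S_1)\cdot A_{t_2}(S_2)$ times the labels of the internal edges into the appropriate entry $A_t(S)$. Since each table has $|B|^{O(w)}$ entries, each a polynomial of degree polynomially bounded in $|B|$ and $\max_e\deg\mathbf{p}(e)$ with coefficients of polynomially bounded bit-length, this merge costs $|B|^{O(w)}$ arithmetic operations on such polynomials, hence $\xp(|B|,\mathbf{pmw}(B))$ overall.

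Finally, to extract $\mathsf{VertexGen}_{B,\mathbf{p},v}$ I would read off the value at the auxiliary leaf (or the root chosen to be $v$): for the root $r=v$ with $X_r=\{v\}$, $A_r(\{e\})$ for each $e=uv\in\partial_B(\{v\})$ equals exactly $\GenerateMatchings{B-v-u,\mathbf{p}}\cdot\mathbf{p}(e)$, which is the required function; summing these over $e$ yields $\GenerateMatchings{B,\mathbf{p}}$ as a byproduct. The correctness argument is the usual induction on the arborescence: the definition of $A_t$ is preserved by the merge step because $\partial_B(X_{t_1})$ and $\partial_B(X_{t_2})$ partition (up to the internal edges between the two child-shores) the information needed to describe how a partial matching of $B_t$ meets $\partial_B(X_t)$, and the matching-porosity bound guarantees we never need to store or combine states of size larger than $w$, which is what keeps the state space polynomial.

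The main obstacle I anticipate is the bookkeeping in the merge step: one must be careful that the ``internal'' edges between the two child-shores are neither double-counted nor omitted, and that the admissibility test (``$S$ can be extended to a perfect matching of $B$'') is applied correctly so that the state space genuinely stays of size $|B|^{O(w)}$ rather than $2^{|\partial_B(X_t)|}$ — this is exactly where the inequality $\mathbf{mp}_B(\partial_B(X_t))\le w$ is used, and it requires checking that a subset $S$ of cut edges of size exceeding $w$ can be discarded because it is never a subset of $M\cap\partial_B(X_t)$ for any $M\in\Perf{B}$. Everything else is a routine adaptation of the counting algorithm of \cite{giannopoulou2021excluding}, with integer counts replaced by polynomials in $\mathbb{Z}[x]$ and addition/multiplication of counts replaced by addition/multiplication of polynomials.
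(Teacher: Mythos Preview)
Your approach is correct and essentially identical to the paper's: both root the perfect matching decomposition at the leaf carrying $v$, maintain for each tree edge a table indexed by matchings of size at most the width in the corresponding cut (your $A_t(S)$ is precisely the paper's $\mathsf{Table}_e(F)=\GenerateMatchings{B_e-\bigcup F,\mathbf{p}}$), combine these tables bottom-up at internal nodes, and read off $\mathsf{VertexGen}_{B,\mathbf{p},v}$ from the table at the edge incident to the root. Your final extraction step has a minor indexing slip---the table at the root's unique child evaluated at $\{e\}$ equals $\GenerateMatchings{B-v-u,\mathbf{p}}$, without the extra factor $\mathbf{p}(e)$, and it is this child's table (not the root's) that carries the answer---but this is cosmetic and does not affect correctness.
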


\begin{proof}
Let $n\coloneqq|B|$ and $k\coloneqq f(\pmw{B}),$ where $f$ is the function from \cref{@representarme}.
By \cref{@representarme}, we may assume that we have a perfect matching decomposition $(T,δ)$ of $B$ of width $k.$
We may assume the arborescence $T$ to be rooted at $r=σ^{-1}(v).$
Let $e=(t,t')\in E(T).$ We say that $e$ is a \emph{leaf edge} if $t'$ is a leaf of $T.$
Let also $$\mathcal{M}_e=\{M\subseteq \partial(e)\mid \mbox{$F$ is a matching and~}|F|\leq k\}.$$
Notice that $|\mathcal{M}_e|=\xp(n,k).$
Let $e=(d,t),$ then by $B_e$ we denote the graph $B[\delta(T_t)].$
Given an $F\in\mathcal{M}_{e},$ we define $B_{e}^{F}\coloneqq B_{e}-\cupall F.$
Also, we define the function $\mathsf{Table}_{e}\colon\mathcal{M}_{e}\to{\mathbb{Z}[x]}$ mapping each $F\in\mathcal{M}_{e}$ to the function $\GenerateMatchings{B_{e}^{F},\mathbf{p}}.$
Observe the following:
\begin{itemize}
\item If $e$ is a leaf edge, then for every $F\in \mathcal{M}_{e},$ $\mathsf{Table}_{e}(F)=\mathbf{1}.$

\item If $e$ is the edge incident to the root $r,$ then for every $\{f\}\in \mathcal{M}_{e}$ it holds that $\mathsf{Table}_{e}(\{f\})=\mathsf{VertexGen}_{B,\mathbf{p},v}(f).$
Therefore, if we can compute $\mathsf{Table}_{e}$ we can also compute $\mathsf{VertexGen}_{B,\mathbf{p},v}.$

\item If $e$ is not a leaf edge then it shares the endpoint $t'$ with two other edges, say $e_1$ and $e_2.$
Notice that this, given some $F\in \mathcal{M}_{e},$ implies
$$\mathsf{Table}_{e}(F)=\prod_{\mbox{$(F_{1},F_{2})\in \mathcal{M}_{e_{1}}\times {\mathcal{M}_{e_{1}}:}\atop{F\subseteq F_{1}\cup F_{2}},~F_{1}\setminus F=F_{2}\setminus F$}}\Big(\mathsf{Table}_{e_1}(F_1)\cdot \mathsf{Table}_{e_2}(F_2)\cdot \mathbf{p}(F_{1}\cap F_{2})\Big)$$
and thus, given $\mathsf{Table}_{e_i},$ for both $i\in[2],$ the values of $\mathsf{Table}_{e}$ can be computed in time $\xp(n,k).$
\end{itemize}
These three observations define a dynamic programming procedure that can compute $\mathsf{VertexGen}_{B,\mathbf{p},v}$ in time $\xp(n,k)=\xp(|B|,\mathbf{pmw}(B)),$ as required.
\end{proof}

Notice that the only reason that, in \cref{@objectivement}, we demand $B$ to be bipartite is that this permits us to compute a perfect matching decomposition of bounded width by using \cref{@representarme}.
If such a decomposition is given as part of the input, the bipartiteness condition may be dropped from \cref{@objectivement}.

\paragraph{Reduction to braces} Our next step is to reduce the problem to braces. For this we need first the definition of the splicing operation.

\begin{definition}[Splicing]
 Let $B_{i}, i\in[2]$ be two matching covered bipartite graphs and let $v_{i}\in V(B_{i}), i\in[i]$ such that $v_{i}\in V_{i}, i\in[2].$
 We say that a bipartite graph $B$ is \emph{a splicing of $B_{1}$ and $B_{2}$ at $v_{1}$ and $v_{2}$} if $B$ can be obtained by considering the disjoint union of $B_{1}-v_{1}$ and $B_{2}-v_{2}$ and then adding edges between $N_{B_{1}}(v_{1})$ and $N_{B_{2}}(v_{2})$ such that each vertex in $N_{B_{1}}(v_{1})$ has some neighbour in $N_{B_{2}}(v_{2})$ and vice-versa.
 We also say that $B$ is \emph{a splicing of $B_{1}$ and $B_{2}$} if there are vertices $v_{i}\in V(B_{i}), i\in[2]$
 such that $B$ is a splicing of $B_{1}$ and $B_{2}$ at $v_{1}$ and $v_{2}.$
 \end{definition}
 
Notice that if $B$ is a splicing of $B_{1}$ and $B_{2}$ at $v_{1}$ and $v_{2}$ then $F\coloneqq\partial_{B}(V(B_{1}-v_{1}))=\partial_{B}(V(B_{2}-v_{2}))$ is a non-trivial tight cut of $B.$
Moreover, $B_{1}$ and $B_{2}$ are the two tight cut contractions of $F.$
Consequently, both $B_{1}$ and $B_{2}$ have strictly fewer vertices than $B.$ 
 
\begin{lemma}\label{thm_splicegeneratingfunctions}
Let $\mathcal{B}$ be a class of labelled matching covered bipartite graphs. 
If there is a polynomial-time algorithm computing \textsf{VertexGen} on the braces of $\mathcal{B},$ then there is also a polynomial algorithm for computing \textsf{VertexGen} on all graphs in $\mathcal{B}.$
 \end{lemma}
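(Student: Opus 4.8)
\textbf{Proof plan for Lemma~\ref{thm_splicegeneratingfunctions}.}
The plan is to induct on the number of vertices and to use the tight cut decomposition of a matching covered bipartite graph into its braces. First I would observe that if $B$ is itself a brace, there is nothing to prove, so assume $B$ has a non-trivial tight cut $\partial_B(X).$ By the discussion following the definition of splicing, $B$ is then a splicing of the two tight cut contractions $B_1$ and $B_2$ at suitable vertices $v_1\in V_1(B_1)$ and $v_2\in V_2(B_2),$ and each $B_i$ has strictly fewer vertices than $B.$ Both contractions inherit a labelling: each edge of $B_i$ that is not incident with $v_i$ keeps its label from $B,$ while each edge $e=v_iw$ incident with the contracted vertex $v_i$ should be labelled by the polynomial carrying the ``aggregated'' information from the other side, namely $\mathbf{p}_i(v_iw)\coloneqq\mathsf{VertexGen}_{B_{3-i},\mathbf{p},v_{3-i}}$ summed appropriately over the edges of $B_{3-i}$ at $v_{3-i}$ that correspond, through the splicing, to $e;$ this is exactly the $\mathbf{p}$-generating function of the part of a perfect matching living on the $B_{3-i}$ side, conditioned on which matching edge of the tight cut is used. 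Since tightness means a perfect matching of $B$ uses exactly one edge of $\partial_B(X),$ this conditioning is well defined.

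The key identity is then that for any vertex $v$ and any edge $e$ incident to $v,$ the value $\mathsf{VertexGen}_{B,\mathbf{p},v}(e)$ can be recovered from the two labelled contractions: if $v$ lies, say, in $V(B_1-v_1),$ then the matchings of $B-v-u$ decompose along the tight cut into a matching on the $B_1$ side avoiding $v,u$ and using one cut edge $f,$ and a matching on the $B_2$ side using the partner of $f,$ so that $\mathsf{VertexGen}_{B,\mathbf{p},v}(e)$ equals a sum over cut edges of a product of an evaluation of $\mathsf{VertexGen}$ in the labelled $B_1$ (with the aggregated labels on the $v_1$-star) and of a coefficient extracted from $\mathsf{VertexGen}$ in the labelled $B_2.$ Both of these quantities are computable by the inductive hypothesis applied to $B_1$ and $B_2,$ which are matching covered (tight cut contractions preserve matching covering, as noted after Definition~\ref{@inconvinientes}) and belong to the tight cut decomposition, hence ultimately to $\mathcal{B}$ once we also observe that every brace arising in the decomposition of a graph of $\mathcal{B}$ is one of the braces of $\mathcal{B}.$ To turn this into a genuine polynomial-time algorithm I would first compute a maximal laminar family of non-trivial tight cuts, giving the tight cut decomposition as a tree whose leaves are the braces of $B$ (this family, and hence the decomposition, is polynomially computable, e.g.\ by repeatedly searching for a barrier via Observation~\ref{obs_tightcutminoritymajority} and a max-flow/Hall-type subroutine), then process this tree from the leaves upward, at each internal node invoking the brace algorithm on the contracted brace with the labels aggregated from the already-processed children, and finally at the root reading off $\mathsf{VertexGen}_{B,\mathbf{p},v}.$ The number of braces is linear in $|V(B)|,$ each brace algorithm call is polynomial by hypothesis, the label aggregation at each step is a polynomial number of polynomial multiplications and additions, and the polynomials stay of polynomially bounded degree (bounded by $|E(B)|$ times the maximal label degree, which is part of the input), so the whole procedure runs in polynomial time.

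The main obstacle I expect is purely bookkeeping rather than conceptual: making the ``aggregated labels'' on the star of a contracted vertex precise and checking that the splitting identity for $\mathsf{VertexGen}$ is exactly the one above. Concretely, one must verify that when a perfect matching of $B$ is restricted to the two shores of a tight cut and then transported to $B_1$ and $B_2$ via the contraction, the monomial $\mathsf{mon}_{\mathbf{p}}$ of the whole matching factors correctly as the product of the monomials of the two pieces in their respective labelled contractions, with the single cut edge's label appearing in exactly one of the two factors (so it is not double-counted); one also has to be careful that the aggregation for $B_1$ uses $\mathsf{VertexGen}$ of $B_2$ and conversely, so that the recursion is well-founded and does not circularly reference $B$ itself. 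Once the monomial bookkeeping across a single tight cut is pinned down, iterating it along the tight cut decomposition is routine, and the bipartiteness of $\mathcal{B}$ is used only to guarantee that the pieces of the decomposition are braces (as opposed to bricks), so that the hypothesis about braces applies verbatim.
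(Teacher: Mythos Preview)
Your proposal is correct and follows essentially the same approach as the paper: decompose along a non-trivial tight cut, aggregate the generating-function information from one side into the edge labels on the star of the contracted vertex on the other side, and recurse. The paper organises this slightly more economically by always choosing the tight cut so that one of the two contractions, say $B_1$, is already a brace (such a cut exists by the tight cut decomposition theory); then the brace hypothesis applies directly to $B_1$, the aggregated labelling $\mathbf{p}'$ on $B_2$ is computed once, one has the clean identity $\mathsf{VertexGen}_{B,\mathbf{p},v}=\mathsf{VertexGen}_{B_2,\mathbf{p}',v}$, and one recurses only on $B_2$ --- avoiding the symmetric two-sided bookkeeping and the explicit tree construction you describe, though the underlying computation is the same.
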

 
\begin{proof}
Suppose that $(B,\mathbf{p})$ is a labelled matching covered bipartite graph and let $v\in V(B).$ 
It is known, see e.g. \cite{lovasz1987matching,lovasz2009matching}, that if $B$ is not a brace then it can be obtained as the result of a splicing of two matching covered bipartite graphs $B_{1},$ $B_{2},$ where $B_{1}$ is a brace and $v\in V(B_{2}).$
Moreover, by {\cite{lovasz1987matching,lovasz2009matching}}, there is a procedure that can find $B_{1}$ and $B_{2}$ in time polynomial in $|B|.$ 
We define an edge-labelling $\mathbf{p}': E(B_{2})\to\mathbb{Z}[x]$ by, given an edge $e\in E(G_{2}),$ setting 
$$
\mathbf{p}'(e)=\begin{cases}
~~~~~~\mathbf{p}(e) & \mbox{if $e\in E(B_{2}-v_{2})$}\\
{\displaystyle\sum_{y\in N_{B}(x)\cap V(B_{1})}\textsf{VertexGen}_{B_{1},\mathbf{p}_{1}^{},v_{1}}(\{y,v_{1}\})} & \mbox{if $e=v_{2}x\in \partial_{B_{2}}(\{v_{2}\})$}
\end{cases}
$$ 
Notice that the label of each edge $v_{2}x$ sums the weights of all matchings in $B_{1}$ for the edges of $B$ that are incident with $x.$
This implies that $\textsf{VertexGen}_{B,\mathbf{p},v}=\textsf{VertexGen}_{B_{2},\mathbf{p}',v}.$

Assuming that we have a procedure for computing \textsf{VertexGen} on the braces of $\mathcal{B}$ and, given that $B_{1}$ is a brace, we can produce in polynomial time the edge-labeling $\mathbf{p}'$ of $B_{2}$ above.
As $|B_{2}|<|B|,$ this completes the proof, as we may apply the reduction of $(B,\mathbf{p})$ to $(B_{2},\mathbf{p}'),$ as described above, repetitively as long as the resulting $B_{2}$ is not a brace.
 \end{proof}
 
\paragraph{Proofs of \cref{crsazy_gidrst} and \cref{thm_algomainthm}.} 
We need the following result that is an algorithmic consequence of \cref{cor_pfaffianalg}.

\begin{proposition}[\cite{mccuaig2004polya,robertson1999permanents}]
\label{little_pr}
There is a polynomial-time algorithm that, given a labelled Pfaffian brace $(B,\mathbf{p}),$ outputs $\GenerateMatchings{B,\mathbf{p}}.$
\end{proposition}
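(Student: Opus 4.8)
\textbf{Proof strategy for \Cref{little_pr}.}
The plan is to combine the structural characterisation of Pfaffian braces in \Cref{thm_trisums} with the classical theory of Pfaffian orientations, in the style of Kasteleyn. First I would recall that a bipartite graph $B$ with a perfect matching is Pfaffian precisely when it admits an orientation $\vec{B}$ such that for some (equivalently, any) perfect matching $M_0$, every $M_0$-conformal cycle is \emph{oddly oriented}; given such an orientation, the number of perfect matchings of $B$ equals $|\mathsf{Pf}(A_{\vec{B}})| = \sqrt{|\det(A_{\vec{B}})|}$ where $A_{\vec{B}}$ is the (skew-symmetric) signed biadjacency matrix. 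The key point for our purposes is that this computation is not merely a \emph{counting} device but a determinant evaluation, which is naturally compatible with edge labels from $\mathbb{Z}[x]$: if we replace each nonzero entry $a_{i,j}$ of the biadjacency matrix (corresponding to an edge $e=u_iv_j$) by $\pm\mathbf{p}(e)$ with the sign dictated by the Pfaffian orientation, then the resulting determinant, viewed as a polynomial in $\mathbb{Z}[x]$, has absolute value exactly $\GenerateMatchings{B,\mathbf{p}}^2$, and its ``square root'' in the polynomial ring (which exists and is a polynomial since $\GenerateMatchings{B,\mathbf{p}}$ has nonnegative coefficients and no sign cancellation occurs once signs are fixed) is $\GenerateMatchings{B,\mathbf{p}}$.

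The first concrete step is to \emph{find} a Pfaffian orientation of $B$ in polynomial time. Here I would invoke \Cref{cor_pfaffianalg}: the algorithm underlying it not only decides whether $B$ contains $K_{3,3}$ as a matching minor but, following McCuaig and Robertson--Seymour--Thomas, actually produces the decomposition of $B$ via the trisum operation from planar braces (and possibly the Heawood graph), as described in \Cref{thm_trisums}. One then orients each planar brace using Kasteleyn's planar orientation (which is polynomial-time computable from a plane embedding), orients the Heawood graph by a fixed lookup table, and glues these orientations along the four-cycles of the trisums; the standard lemma here is that a trisum of Pfaffian graphs along conformal four-cycles is again Pfaffian, and the witnessing orientation can be assembled from the orientations of the pieces after possibly reversing all edges of some pieces. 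This is routine bookkeeping using the sum formula for perfect matchings across a tight cut.

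The second step is the \emph{polynomial evaluation} itself. Once the signed, $\mathbf{p}$-labelled skew-symmetric matrix $A$ over the ring $\mathbb{Z}[x]$ is in hand, I would compute $\mathsf{Pf}(A)$ directly by Gaussian-elimination-style Pfaffian computation (the $O(n^3)$ arithmetic-operations algorithm for Pfaffians over a commutative ring), carrying out all arithmetic in $\mathbb{Z}[x]$. The only subtlety is that intermediate quotients during elimination need not be polynomials, so I would instead use the division-free combinatorial recursion for Pfaffians (expanding along a row), or work in the field of fractions and clear denominators at the end, or simply note that the \emph{determinant} can be computed division-free via the Berkowitz/Samuelson--Berkowitz algorithm in $\mathsf{poly}(n)$ ring operations and then take the integer-polynomial square root by the standard nonnegative-coefficient argument. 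Each ring operation multiplies or adds polynomials whose degrees are bounded by $n\cdot\max_e \deg\mathbf{p}(e)$, so the total running time is polynomial in $|B|$ and the bit-size of the input labels. This yields $\GenerateMatchings{B,\mathbf{p}}$ as required.

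The main obstacle I anticipate is the interaction between the trisum gluing and the edge labels: one must check that the Pfaffian sign pattern produced by the gluing is consistent with \emph{all} the labels $\mathbf{p}(e)$ simultaneously, i.e.\ that the signed labelled matrix $A$ still satisfies the Pfaffian property ``every conformal cycle is oddly oriented'' \emph{independently of the labels}, since the Pfaffian/determinant identity $\det(A_{\vec B}) = \bigl(\sum_M \prod_{e\in M}\pm\mathbf{p}(e)\bigr)^2$ with a uniform sign on each $M$ requires the orientation to be genuinely Pfaffian for the underlying unlabelled graph. This is in fact automatic once we observe that the Pfaffian property depends only on the orientation and not on any weighting, so the unlabelled argument transfers verbatim; the labels ride along as ring elements without affecting the cancellation structure. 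A secondary, purely implementational point is ensuring division-free polynomial arithmetic throughout, which is handled by the remarks above. With these points dispatched, the proposition follows.
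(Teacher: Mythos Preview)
The paper does not actually prove \Cref{little_pr}; it is stated with citation to \cite{mccuaig2004polya,robertson1999permanents} and used as a black box. Your proposal therefore supplies strictly more than the paper does, and its core idea---find a Pfaffian orientation via the algorithm behind \Cref{cor_pfaffianalg}, then evaluate a signed determinant or Pfaffian over $\mathbb{Z}[x]$---is correct and standard.

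Two small technical remarks. First, for a \emph{bipartite} Pfaffian graph the natural object is the $n\times n$ signed \emph{biadjacency} matrix $A$ (rows indexed by $V_1$, columns by $V_2$, entry $\pm\mathbf{p}(e)$ with sign from the orientation): the Pfaffian property gives $\det(A)=\pm\GenerateMatchings{B,\mathbf{p}}$ directly, with no squaring and hence no square-root extraction. Equivalently, the Pfaffian of the $2n\times 2n$ skew-symmetric adjacency matrix equals this determinant up to sign, so your ``compute $\mathsf{Pf}$ directly'' route is fine too. Second, the fallback you sketch---compute the determinant of the skew-symmetric matrix and then take a polynomial square root via the ``nonnegative-coefficient argument''---does not quite work here: the labels $\mathbf{p}(e)\in\mathbb{Z}[x]$ are arbitrary polynomials and may have negative coefficients, so $\GenerateMatchings{B,\mathbf{p}}$ need not have nonnegative coefficients, and the sign of the root is genuinely ambiguous from the square alone. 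This is harmless because the direct Pfaffian/biadjacency-determinant route avoids the issue entirely, but you should drop that alternative from the write-up.
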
 

\begin{proof}[Proof of \cref{crsazy_gidrst}]
We may readily assume that $B$ is a matching covered bipartite graph, if not, we compute $\GenerateMatchings{B,\mathbf{w}}$ on the elementary components of $B$ and multiply the resulting generating functions (see \cite{lovasz2009matching}).
For every $k\in\mathbb{N},$ we define $\mathcal{B}_{k}$ as the class of bipartite graphs that are either Pfaffian or have perfect matching width at most $k.$
From \cref{thm_matchingmainthm}, we know that $B\in\mathcal{B}_{f(|S|)}$ for some function $f\colon\mathbb{N}\to\mathbb{N}.$ 
Moreover, from \cref{thm_splicegeneratingfunctions} we may also assume that $B$ is a brace.
If $B$ is Pfaffian then we apply \cref{little_pr} and if $\pmw{B}\leq f(|S|),$ then we apply \cref{@objectivement}.
\end{proof} 

\begin{proof}[Proof of \cref{thm_algomainthm}] 
Recall that, if ${\mathbf{1}}$ is the weighing function assigning unit weights on the edges of $B,$ then the permanent of the biadjacency matrid of $B$ is equal to $\GenerateMatchings{B,{\mathbf{1}}}=c\cdot x^{|B|/2}$ where $c=|\Perf{G}|$ is the number of perfect matchings of $G.$
Therefore, the result is a direct corollary of \cref{crsazy_gidrst}, as we may use it in order to compute $\GenerateMatchings{B,{\mathbf{1}}}$ and then return the coefficient of $x^{|B|/2}.$
\end{proof}

We stress that \cref{crsazy_gidrst} provides much more information than the one used in the proof of \cref{thm_algomainthm}.
In fact, the computation of $\GenerateMatchings{G,\mathbf{w}}$ for some weighted graph $(G,\mathbf{w})$ is known as the \emph{dimer problem}.
Moreover, the computation of $\GenerateMatchings{G,{\mathbf{w}}}$ also permits to solve the \textsc{Exact Perfect Matching} problem: given an edge-weighted graph and some non-negative integer $k,$ decide whether there is a perfect matching of total weight exactly $k.$ 
\textsc{Exact Perfect Matching}  was defined by Papadimitriou and Yannakakis in \cite{PapadimitriouY82theco}, has been extensively studied \cite{GurjarKMT17exact,ZhuLM08exact}, with several applications in \cite{BlazewiczFKSW07apoly,liu2021efficient}.
\Cref{crsazy_gidrst} implies that these problems are polynomially solvable on each matching minor closed class of bipartite graphs with perfect matchings that excludes some single-crossing matching minor as a matching minor.


\subsection{Hardness on shallow vortex matching minors}

\label{@interconnected}

In this subsection our aim is to show that the problem of counting the number of perfect matchings of bipartite graphs is $\classP$-hard
in a graph class called \textit{shallow vortex matching minors.}
Our proof is heavily inspired by and follows along the lines of and analogous result of Curticapean and Xia~\cite{CurticapeanXia22}.

Let us start with the necessary definitions.

Given a weighted graph $(G,w)$ we define 
$$\pmprob(G,w)\coloneqq \sum_{M\in \Perf{G}} \prod_{\substack{e\in M}}w(e).$$
Note that whenever the given graph $G$ is unweighted, the function $\pmprob$ with $w(e)=1$ for every $e\in E(G)$ calculates the 
number of perfect matchings of $G,$ that is, $$\frac{2}{|V(G)|}\cdot \pmprob(G,w)=|\mathcal{M}(G)|.$$
We define as $\pmprob$ the problem where, given as input a weighted graph $(G,w),$ outputs the quantity $\pmprob(G,w).$
In the following we restrict ourselves to this case and write $\pmprob(G)$ instead of $\pmprob(G,w),$ where $w$ is the all one weight function above.

\begin{definition}[Shallow vortex matching grid]\label{def_shallowvortexgrid}
	The \emph{shallow vortex matching grid} $\svmg_{k}$ of order $k$ is defined as follows.
	Let $C_{1},\dots,C_{2k}$ be $2k$ vertex disjoint cycles of length $8k.$
	For every $i\in[2k]$ let $C_{i}=\Brace{v_{1}^{i},v_{2}^{i},\dots,v_{8k}^{i}},$ $V_{1}^{i}\coloneqq\CondSet{v_{j}^{i}}{j\in\Set{1,3,5,\dots,8k-1}},$ $V_{2}^{i}\coloneqq\Fkt{V}{C_{i}}\setminus V_{1}^{i},$ and $M_{i}\coloneqq\CondSet{v_{j}^{i}v_{j+1}^{i}}{v_{j}^{i}\in V_{1}^{i}}.$
	Then $\svmg_{k}$ is the graph obtained from the union of the $C_{i}$ by adding
	\begin{align*}
		\CondSet{v_{j}^{i}v_{j+1}^{i+1}}{i\in[k-1]~\text{and}~j\in\Set{1,5,9,\dots,8k-3}}&\text{, and}\\
		\CondSet{v_{j}^{i}v_{j+1}^{i-1}}{i\in[2,k]~\text{and}~j\in\Set{3,7,11,\dots,8k-1}}&\text{, and}\\
		\CondSet{v_{j}^{1}v_{(j+5)\text{mod}8k}^{1}}{j\in\Set{2,6,\dots,8k-2}}&
	\end{align*}
	to the edge set.
	We call $M\coloneqq\bigcup_{i=1}^{k}M_{i}$ the \emph{canonical matching} of $\svmg_{k}.$
	We also call the edges of $\CondSet{v_{j}^{1}v_{(j+5)\text{mod}8k}^{1}}{j\in\Set{2,6,\dots,8k-2}}$ the \emph{crossing edges} of $\svmg_{k}.$
	See \cref{fig_shallowvortexgrid} for an illustration.
\end{definition}

\begin{figure}[!ht]
	\centering
\makefast{
\begin{subfigure}{0.49\textwidth}
\centering
	\begin{tikzpicture}[scale=0.8]

		\pgfdeclarelayer{background}
		\pgfdeclarelayer{foreground}
		\pgfsetlayers{background,main,foreground}
		%
		
		
		\draw[e:main] (0,0) circle (11mm);
		\draw[e:main] (0,0) circle (16mm);
		\draw[e:main] (0,0) circle (21mm);
		\draw[e:main] (0,0) circle (26mm);
		
		\foreach \x in {1,...,4}
		{
			\draw[e:main] (\x*90:16mm) -- (\x*90+22.5:11mm);
			\draw[e:main] (\x*90:21mm) -- (\x*90+22.5:16mm);
			\draw[e:main] (\x*90:26mm) -- (\x*90+22.5:21mm);
		}
		
		\foreach \x in {1,...,4}
		{
			\draw[e:main] (\x*90-22.5:16mm) -- (\x*90-45:11mm);
			\draw[e:main] (\x*90-22.5:21mm) -- (\x*90-45:16mm);
			\draw[e:main] (\x*90-22.5:26mm) -- (\x*90-45:21mm);
			
		}
		
		\foreach \x in {1,...,8}
		{
			\draw[e:coloredthin,color=BostonUniversityRed,bend right=13] (\x*45:11mm) to (\x*45+22.5:11mm);
			\draw[e:coloredthin,color=BostonUniversityRed,bend right=13] (\x*45:16mm) to (\x*45+22.5:16mm);
			\draw[e:coloredthin,color=BostonUniversityRed,bend right=13] (\x*45:21mm) to (\x*45+22.5:21mm);
			\draw[e:coloredthin,color=BostonUniversityRed,bend right=13] (\x*45:26mm) to (\x*45+22.5:26mm);
		}
		
		\foreach \x in {1,3,5,7}
		{
			\draw[e:main,color=myGreen,bend left=35] (\x*45:26mm) to (\x*45-37.5:33mm);
			\draw[e:main,color=myGreen,bend left=16] (\x*45-37.5:33mm) to (\x*45-70:33mm);
			\draw[e:main,color=myGreen,bend left=35] (\x*45-70:33mm) to (\x*45-112.5:26mm);
		}

		\foreach \x in {1,...,8}
		{
			\node[v:main] () at (\x*45:11mm){};
			\node[v:main] () at (\x*45:16mm){};
			\node[v:main] () at (\x*45:21mm){};
			\node[v:main] () at (\x*45:26mm){};
			\node[v:mainempty] () at (\x*45+22.5:11mm){};
			\node[v:mainempty] () at (\x*45+22.5:16mm){};
			\node[v:mainempty] () at (\x*45+22.5:21mm){};
			\node[v:mainempty] () at (\x*45+22.5:26mm){};
		}

		\begin{pgfonlayer}{background}
			\foreach \x in {1,...,8}
			{
				\draw[e:coloredborder,bend right=13] (\x*45:11mm) to (\x*45+22.5:11mm);
				\draw[e:coloredborder,bend right=13] (\x*45:16mm) to (\x*45+22.5:16mm);
				\draw[e:coloredborder,bend right=13] (\x*45:21mm) to (\x*45+22.5:21mm);
				\draw[e:coloredborder,bend right=13] (\x*45:26mm) to (\x*45+22.5:26mm);
			}
		\end{pgfonlayer}
	\end{tikzpicture}
\end{subfigure}
\begin{subfigure}{0.49\textwidth}
	\centering
		\begin{tikzpicture}[scale=0.8]

			\pgfdeclarelayer{background}
			\pgfdeclarelayer{foreground}
			\pgfsetlayers{background,main,foreground}

			\draw[e:main] (0,0) circle (11mm);
			\draw[e:main] (0,0) circle (16mm);
			\draw[e:main] (0,0) circle (21mm);
			\draw[e:main] (0,0) circle (26mm);

			\foreach \x in {1,...,8}
			{
				\draw[e:main] (\x*45:11mm) -- (\x*45:16mm);
				\draw[e:main] (\x*45:16mm) -- (\x*45:21mm);
				\draw[e:main] (\x*45:21mm) -- (\x*45:26mm);
			}

			\foreach \x in {2,4,6,8}
			{
			\draw[e:main,color=myGreen] (\x*45:26mm) to [bend right=30] (\x*45+45:32mm) to [bend right=25] (\x*45+90:32mm) to [bend right=30] (\x*45+135:26mm);
			}

			\foreach \x in {2,4,6,8}
			{
				\draw[e:coloredthin,color=BostonUniversityRed,bend right=18] (\x*45:11mm) to (\x*45+45:11mm);
				\draw[e:coloredthin,color=BostonUniversityRed,bend right=18] (\x*45:16mm) to (\x*45+45:16mm);
				\draw[e:coloredthin,color=BostonUniversityRed,bend right=19] (\x*45:21mm) to (\x*45+45:21mm);
				\draw[e:coloredthin,color=BostonUniversityRed,bend right=19] (\x*45:26mm) to (\x*45+45:26mm);
			}

			\foreach \x in {1,3,...,8}
			{
				\node[v:mainempty] () at (\x*45:11mm){};
				\node[v:main] () at (\x*45:16mm){};
				\node[v:mainempty] () at (\x*45:21mm){};
				\node[v:main] () at (\x*45:26mm){};
				\node[v:main] () at (\x*45+45:11mm){};
				\node[v:mainempty] () at (\x*45+45:16mm){};
				\node[v:main] () at (\x*45+45:21mm){};
				\node[v:mainempty] () at (\x*45+45:26mm){};
			}

			\begin{pgfonlayer}{background}
			\foreach \x in {2,4,6,8}
			{
				\draw[e:coloredborder,bend right=18] (\x*45:11mm) to (\x*45+45:11mm);
				\draw[e:coloredborder,bend right=18] (\x*45:16mm) to (\x*45+45:16mm);
				\draw[e:coloredborder,bend right=19] (\x*45:21mm) to (\x*45+45:21mm);
				\draw[e:coloredborder,bend right=19] (\x*45:26mm) to (\x*45+45:26mm);
			}
			\end{pgfonlayer}
			
		\end{tikzpicture}
\end{subfigure}}{\scalebox{.55}{\includegraphics{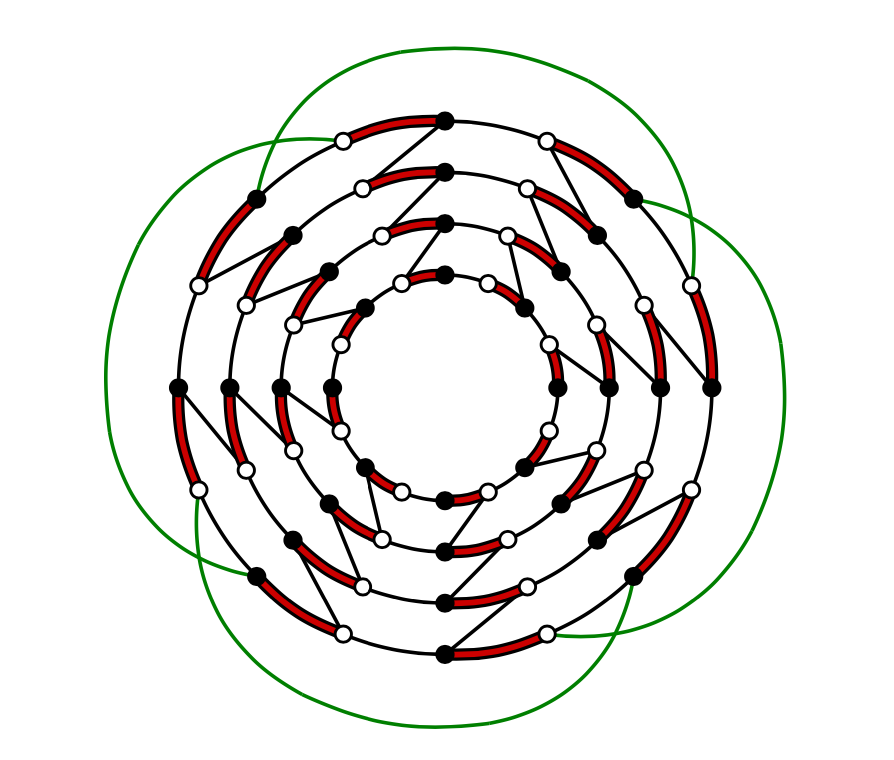}}}
	\caption{The \hyperref[def_shallowvortexgrid]{shallow vortex grid} of order $2$ with its canonical matching.}
	\label{fig_shallowvortexgrid}
\end{figure}

From the definitions of cylindrical matching grids and shallow vortex matching grids it immediately follows that.

\begin{observation}
For every $k\in \mathbb{N},$ $CG_{2k}$ is a conformal subgraph of $\svmg_{k}.$
\end{observation}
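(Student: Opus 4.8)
The plan is to observe that $CG_{2k}$ is literally a spanning subgraph of $\svmg_k$ --- namely the one obtained by deleting the crossing edges --- and that a spanning subgraph is automatically conformal. I would first compare \cref{def_shallowvortexgrid} with the definition of the cylindrical matching grid in \cref{@wholeheartedly}, instantiated at order $2k$. The grid $CG_{2k}$ consists of $2k$ pairwise-disjoint cycles $C_1,\dots,C_{2k}$, each of length $4\cdot 2k = 8k$, written $C_i = (v_1^i,\dots,v_{8k}^i)$, together with the rung edges $v_j^i v_{j+1}^{i+1}$ for $i\in[2k-1]$ and $j\in\{1,5,\dots,8k-3\}$, and $v_j^i v_{j+1}^{i-1}$ for $i\in[2,2k]$ and $j\in\{3,7,\dots,8k-1\}$; its canonical matching is $\bigcup_{i=1}^{2k} M_i$ with $M_i=\{v_j^i v_{j+1}^i : v_j^i\in V_1^i\}$. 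The graph $\svmg_k$ is defined on precisely the same vertex set $\{v_j^i : i\in[2k],\ j\in[8k]\}$, contains exactly these same $2k$ cycles and the same two rung families, and in addition only the crossing edges $v_j^1 v_{(j+5)\bmod 8k}^1$ for $j\in\{2,6,\dots,8k-2\}$, all of whose endpoints lie on $C_1$. Hence, under the identity bijection on vertices, $CG_{2k}$ is exactly the graph obtained from $\svmg_k$ by deleting its crossing edges.

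In particular $CG_{2k}$ is a subgraph of $\svmg_k$ with $V(CG_{2k}) = V(\svmg_k)$. A subgraph is conformal precisely when deleting its vertex set from the ambient graph leaves a graph with a perfect matching (see \cref{sec_background}); here the remaining graph is empty, and the empty graph trivially has a perfect matching, so $CG_{2k}$ is a conformal subgraph of $\svmg_k$. If one prefers to name a witnessing perfect matching of $\svmg_k$ explicitly, the canonical matching $\bigcup_{i=1}^{2k} M_i$ of $\svmg_k$ does the job, as it is contained entirely in $CG_{2k}$ and thus shows at once that $\svmg_k$ is matchable and that $CG_{2k}$ covers every vertex.

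I do not expect any genuine obstacle here. The only content is the routine index bookkeeping identifying the cycles and the two rung families of $CG_{2k}$ with those of $\svmg_k$ after setting the order to $2k$; once this identification is in place the conformality claim requires no further argument, since a spanning subgraph always leaves an empty remainder. The one point worth recording carefully is that the crossing edges are the only edges of $\svmg_k$ not already present in $CG_{2k}$, and that they introduce no new vertices.
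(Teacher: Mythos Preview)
Your proposal is correct and matches the paper's approach exactly: the paper states this as an observation with no proof beyond ``it immediately follows from the definitions,'' and you have simply spelled out why --- $\svmg_k$ is built from the same $2k$ cycles of length $8k$ and the same two rung families as $CG_{2k}$, with only the crossing edges on $C_1$ added, so $CG_{2k}$ is a spanning (hence conformal) subgraph. One minor remark: the index ranges $i\in[k-1]$ and $i\in[2,k]$ in \cref{def_shallowvortexgrid} are evidently typos for $i\in[2k-1]$ and $i\in[2,2k]$ (as the figure and this very observation confirm), and your write-up implicitly uses the intended ranges.
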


\begin{definition}[Shallow vortex matching minor]
We define $\mathcal{V}$ to be the class consisting of all bipartite matching covered graphs $Β$ which are matching minor of a shallow vortex matching grid of order $t$ for some $t.$ We call the graphs in $\mathcal{V}$ \emph{shallow vortex matching minors}. Note that $\mathcal{S}\subseteq \mathcal{V}.$
\end{definition}

To simplify our proofs we also need to define refined vortices as follows.


\begin{definition}[Refined vortex]\label{def_refinedvortex}
	The \emph{refined vortex} $RV_{k}$ of order $k$ is defined as follows.
	Let $C_{1},\dots,C_{2k}$ be $2k$ vertex disjoint cycles of length $4k.$
	$RV_{k}$ is the graph obtained from the union of the $C_{i}$ by adding
	$$\CondSet{v_{j}^{i}v_{j}^{i+1}}{i\in[2k-1]}\text{ and }	\CondSet{v_{j}^{1}v_{(j+3)\text{mod}2k}^{1}}{j\in\Set{2,6,\dots,4k}}$$
	to the edge set. We define the canonical matching of the refined vortices analogously to the canonical matching of the shallow vortex matching grids.
	See \cref{fig_shallowvortexgrid} for an illustration.
\end{definition}


%
%
%
%
%
%
\begin{lemma}\label{lem_rvintosvg}
The refined vortex of order $k$ is a matching minor of the shallow vortex matching grid of order $k$ for every $k\in\mathbb{N}.$
\end{lemma}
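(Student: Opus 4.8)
The plan is to realise the refined vortex $RV_k$ as a matching minor of the shallow vortex matching grid $\svmg_k$ by a straightforward ``coarsening'' of the cylindrical backbone together with a careful matching of the crossing edges. Observe first that $\svmg_k$ consists of $2k$ concentric cycles $C_1,\dots,C_{2k}$ each of length $8k$, endowed with its canonical matching, plus the crossing edges $\CondSet{v_j^1 v_{(j+5)\bmod 8k}^1}{j\in\Set{2,6,\dots,8k-2}}$ on the innermost cycle, while $RV_k$ consists of $2k$ concentric cycles of length $4k$, with straight radial rungs $v_j^i v_j^{i+1}$ between consecutive cycles, plus the crossing edges $\CondSet{v_j^1 v_{(j+3)\bmod 2k}^1}{j\in\Set{2,6,\dots,4k}}$. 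So conceptually both are ``cylinder plus one layer of long jumps on the inner rim'', and the difference is a factor of two in the girth of each concentric cycle and a reorganisation of the radial connections. Since both graphs are subcubic, by \cref{lemma_confmathingminors} it suffices to exhibit a conformal bisubdivision of $RV_k$ inside $\svmg_k$, or equivalently a conformal subgraph of $\svmg_k$ together with a sequence of bicontractions yielding $RV_k$; I would phrase the argument directly in terms of a matching minor model, specifying for each vertex of $RV_k$ a branch set (a path, necessarily of even order since these are the bisubdivision paths) and for each edge of $RV_k$ a connecting edge, all assembled so that the complement of the union of branch sets has a perfect matching.

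First I would set up the model on the cylindrical part. For each $i\in[2k]$ the length-$8k$ cycle $C_i$ of $\svmg_k$ should be used to represent the length-$4k$ cycle $C_i$ of $RV_k$: pair up consecutive vertices of $C_i$ (respecting the canonical matching, so that each pair is exactly one canonical matching edge) and contract each such pair, so that $8k$ vertices collapse to $4k$; this is an elementary bicontraction in the sense of the matrix discussion, and it turns $C_i$ into a $4k$-cycle while keeping it conformal. The diagonal radial edges $v_j^i v_{j+1}^{i+1}$ and $v_j^i v_{j+1}^{i-1}$ of $\svmg_k$ then descend, after contraction, to radial edges between consecutive contracted cycles, and one checks that the pairing can be chosen uniformly (shifting the pairing by a fixed offset from layer to layer as in the canonical matching, exactly the offsets $1,5,9,\dots$ versus $3,7,11,\dots$ appearing in \cref{def_shallowvortexgrid}) so that what remains between $C_i$ and $C_{i+1}$ is precisely a straight rung $v_j^i v_j^{i+1}$ for every $j\in[4k]$ — this is the required adjacency pattern of $RV_k$ from \cref{def_refinedvortex}. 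The key bookkeeping is to verify that the rungs not used as connecting edges, together with any leftover vertices, can be absorbed into the branch paths or deleted while preserving a perfect matching of the complement; because every layer carries a full canonical matching and the rungs come in a perfectly periodic pattern, this is a finite, periodic check.

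The main obstacle — and the part I expect to require the most care — is matching up the crossing edges on the inner rim. The crossing edges of $\svmg_k$ are $v_j^1 v_{(j+5)\bmod 8k}^1$ for $j\in\{2,6,\dots,8k-2\}$, i.e.\ $2k$ chords skipping $5$ positions on an $8k$-cycle, whereas $RV_k$ wants $k$ (more precisely the stated index set $\{2,6,\dots,4k\}$ gives $k$ of them, if $k$ is even) chords skipping $3$ positions on a $4k$-cycle. After the contraction that halves $C_1$, a chord skipping $5$ vertices becomes a chord skipping either $2$ or $3$ vertices depending on parity of the endpoints relative to the chosen pairing; so I would choose the pairing on $C_1$ so that every crossing chord of $\svmg_k$ that I wish to keep descends to a ``skip-$3$'' chord, discard the others (deleting a crossing edge is allowed, it only needs the remaining graph to stay matching covered with the complement perfectly matched), and check that the survivors land in exactly the positions $\{2,6,\dots,4k\}$ prescribed by \cref{def_refinedvortex}. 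One must also confirm that the two endpoints of each surviving chord lie in opposite colour classes (so the chord is a legitimate edge of the bipartite $RV_k$) and that adding these chords does not destroy conformality — which follows because a chord plus an alternating arc of $C_1$ forms a conformal cycle, so rerouting the canonical matching along such cycles stays inside $\Perf{\svmg_k}$. Once these periodic compatibility conditions are nailed down, assembling the branch-set map and invoking \cref{lemma_confmathingminors} (or simply the definition of matching minor via bicontractions of a conformal subgraph) completes the proof; I would present the construction with an explicit small picture for $k=2$ mirroring \cref{fig_shallowvortexgrid} and remark that the general case is the obvious periodic extension.
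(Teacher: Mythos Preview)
Your high-level intuition—that $RV_k$ should arise from $\svmg_k$ by some periodic ``coarsening'' of the cylindrical backbone, with the crossing chords on the rim lining up afterwards—is exactly the spirit of the paper's argument. The paper does not give a formal proof at all: it displays an explicit matching minor model of $RV_6$ in $\svmg_6$ in a figure and asserts that the construction is periodic and extends to every $k$. So your plan to ``present the construction with an explicit small picture\dots and remark that the general case is the obvious periodic extension'' is precisely what the paper does.

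However, the concrete mechanism you describe does not work, for two related reasons.

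First, your appeal to \cref{lemma_confmathingminors} is illegitimate: that lemma requires the target graph to have maximum degree three, but $RV_k$ does not—every vertex on an interior cycle $C_i$ with $1<i<2k$ has degree four (two cycle edges plus the two straight rungs $v_j^{i-1}v_j^i$ and $v_j^i v_j^{i+1}$), and the vertices on $C_1$ carrying a crossing chord also have degree four. So you cannot reduce the problem to finding a conformal bisubdivision; you genuinely need a matching minor model with nontrivial branch sets.

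Second, and more seriously, ``pair up consecutive vertices along a canonical matching edge and contract each such pair'' is not a matching minor operation. A bicontraction identifies a degree-two vertex with \emph{both} of its neighbours (three vertices collapse to one), not two matched vertices to one; contracting a single matching edge is not expressible by bicontractions and in fact destroys bipartiteness. Correspondingly, your proposed two-vertex branch sets are not barycentric trees in the sense of \cref{def_matchingminormodel}: a barycentric tree obtained by odd subdivision has its old vertices in a single colour class, and a two-vertex path cannot arise this way. Since each degree-four vertex of $RV_k$ must be modelled by a barycentric tree with at least two old vertices (and hence at least three vertices total) inside a $3$-regular host, your arithmetic ``$8k$ vertices halve to $4k$'' is too tight to be realised by legal operations. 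The paper's figure uses branch paths of three and five vertices on each host cycle (alternating), together with a pair of host cycles dedicated to routing the crossing chords; this is what you would need to reproduce, not a straight halving.
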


\begin{proof}
We skip the tedious proof and instead illustrate a matching minor model of $RV_{6}$ in $\svmg_{6}$ in Figure~\ref{fig_gridvortexinsvg}. It is straightforward to see that the same construction extends to any positive integer $k.$
\end{proof}

\begin{figure}[!ht]
	\centering
	\makefast{ 

\begin{tikzpicture}[scale=0.75]

		\pgfdeclarelayer{background}
		\pgfdeclarelayer{foreground}
		\pgfsetlayers{background,main,foreground}

		\draw[e:main] (0,0) circle (31mm);
		\draw[e:main] (0,0) circle (36mm);
		\draw[e:main] (0,0) circle (41mm);
		\draw[e:main] (0,0) circle (46mm);
		\draw[e:main] (0,0) circle (51mm);
		\draw[e:main] (0,0) circle (56mm);
		\draw[e:main] (0,0) circle (61mm);
		\draw[e:main] (0,0) circle (66mm);

		\foreach \x in {1,...,12}
		{
			\draw[e:main] (\x*30:36mm) -- (\x*30-7.5:31mm);
			\draw[e:main] (\x*30:41mm) -- (\x*30-7.5:36mm);
			\draw[e:main] (\x*30:46mm) -- (\x*30-7.5:41mm);
			\draw[e:main] (\x*30:51mm) -- (\x*30-7.5:46mm);
			\draw[e:main] (\x*30:56mm) -- (\x*30-7.5:51mm);
			\draw[e:main] (\x*30:61mm) -- (\x*30-7.5:56mm);
			\draw[e:main] (\x*30:66mm) -- (\x*30-7.5:61mm);
		}
		
		\foreach \x in {1,...,12}
		{
			\draw[e:main] (\x*30+7.5:36mm) -- (\x*30+15:31mm);
			\draw[e:main] (\x*30+7.5:41mm) -- (\x*30+15:36mm);
			\draw[e:main] (\x*30+7.5:46mm) -- (\x*30+15:41mm);
			\draw[e:main] (\x*30+7.5:51mm) -- (\x*30+15:46mm);
			\draw[e:main] (\x*30+7.5:56mm) -- (\x*30+15:51mm);
			\draw[e:main] (\x*30+7.5:61mm) -- (\x*30+15:56mm);
			\draw[e:main] (\x*30+7.5:66mm) -- (\x*30+15:61mm);
		}
		
		\foreach \x in {1,3,...,24}
		{
			\draw[e:main,color=myGreen,bend right=60] (\x*15:66mm) to (\x*15+37.5:66mm);
		}

		\foreach \x in {2,6,...,24}
		{
			\draw[e:marker,color=green,bend right=13] (\x*15:56mm) to (\x*15+30:56mm);
			\draw[e:marker,color=green,bend right=13] (\x*15:46mm) to (\x*15+30:46mm);
			\draw[e:marker,color=green,bend right=13] (\x*15:36mm) to (\x*15+30:36mm);

		}
		
		\foreach \x in {3,11,...,24}
		{
			\draw[e:marker,color=red] (\x*15:56mm) to (\x*15-7.5:61mm) to [bend right=13] (\x*15+7.5:61mm) to (\x*15+15:66mm) to (\x*15+7.5:66mm) to [bend left=60] (\x*15-30:66mm) to (\x*15-22.5:66mm) to [bend left=60] (\x*15-60:66mm) to [bend left=10] (\x*15-75:66mm) to (\x*15-82.55:61mm) to (\x*15-75:61mm) to (\x*15-82.5:56mm); 
		}

		\foreach \x in {7,15,...,24}
		{
			\draw[e:marker,Gray] (\x*15:56mm) to (\x*15-7.5:61mm) to [bend right=13] (\x*15+7.5:61mm) to (\x*15+15:66mm) to (\x*15+7.5:66mm) to [bend left=60] (\x*15-30:66mm) to (\x*15-22.5:66mm) to [bend left=60] (\x*15-60:66mm) to [bend left=10] (\x*15-75:66mm) to (\x*15-82.55:61mm) to (\x*15-75:61mm) to (\x*15-82.5:56mm); 
		}

		\foreach \x in {4,8,...,24}
		{
			\draw[e:marker,color=green,bend right=13] (\x*15:51mm) to (\x*15+30:51mm);
			\draw[e:marker,color=green,bend right=13] (\x*15:41mm) to (\x*15+30:41mm);
			\draw[e:marker,color=green,bend right=13] (\x*15:31mm) to (\x*15+30:31mm);

		}
		
		\foreach \x in {4,8,...,24}
		{
			\draw[e:marker,magenta,bend right=10] (\x*15+7.5:36mm) to (\x*15+22.5:36mm);
			\draw[e:marker,magenta,bend right=10] (\x*15+7.5:46mm) to (\x*15+22.5:46mm);
			\draw[e:marker,magenta,bend right=10] (\x*15+7.5:56mm) to (\x*15+22.5:56mm);
		}
		
		\foreach \x in {2,6,...,24}
		{
			\draw[e:marker,magenta,bend right=10] (\x*15+7.5:31mm) to (\x*15+22.5:31mm);
			\draw[e:marker,magenta,bend right=10] (\x*15+7.5:41mm) to (\x*15+22.5:41mm);
			\draw[e:marker,magenta,bend right=10] (\x*15+7.5:51mm) to (\x*15+22.5:51mm);
		}		
		
		\foreach \x in {1,...,24}
		{
			\draw[e:coloredthin,color=BostonUniversityRed,bend left=6] (\x*15:31mm) to (\x*15-7.5:31mm);
			\draw[e:coloredthin,color=BostonUniversityRed,bend left=6] (\x*15:36mm) to (\x*15-7.5:36mm);
			\draw[e:coloredthin,color=BostonUniversityRed,bend left=5] (\x*15:41mm) to (\x*15-7.5:41mm);
			\draw[e:coloredthin,color=BostonUniversityRed,bend left=5] (\x*15:46mm) to (\x*15-7.5:46mm);
			\draw[e:coloredthin,color=BostonUniversityRed,bend left=5] (\x*15:51mm) to (\x*15-7.5:51mm);
			\draw[e:coloredthin,color=BostonUniversityRed,bend left=5] (\x*15:56mm) to (\x*15-7.5:56mm);
			\draw[e:coloredthin,color=BostonUniversityRed,bend left=5] (\x*15:61mm) to (\x*15-7.5:61mm);
			\draw[e:coloredthin,color=BostonUniversityRed,bend left=5] (\x*15:66mm) to (\x*15-7.5:66mm);
		}
		
		\foreach \x in {1,...,24}
		{
			\node[v:mainempty] () at (\x*15:31mm){};
			\node[v:mainempty] () at (\x*15:36mm){};
			\node[v:mainempty] () at (\x*15:41mm){};
			\node[v:mainempty] () at (\x*15:46mm){};
			\node[v:mainempty] () at (\x*15:51mm){};
			\node[v:mainempty] () at (\x*15:56mm){};
			\node[v:mainempty] () at (\x*15:61mm){};
			\node[v:mainempty] () at (\x*15:66mm){};

			\node[v:main] () at (\x*15+7.5:31mm){};
			\node[v:main] () at (\x*15+7.5:36mm){};
			\node[v:main] () at (\x*15+7.5:41mm){};
			\node[v:main] () at (\x*15+7.5:46mm){};
			\node[v:main] () at (\x*15+7.5:51mm){};
			\node[v:main] () at (\x*15+7.5:56mm){};
			\node[v:main] () at (\x*15+7.5:61mm){};
			\node[v:main] () at (\x*15+7.5:66mm){};
		}

		\begin{pgfonlayer}{background}
			\foreach \x in {1,...,8}
			{
				\foreach \x in {1,...,24}
				{
					\draw[e:coloredborder,bend left=6] (\x*15:31mm) to (\x*15-7.5:31mm);
					\draw[e:coloredborder,bend left=6] (\x*15:36mm) to (\x*15-7.5:36mm);
					\draw[e:coloredborder,bend left=5] (\x*15:41mm) to (\x*15-7.5:41mm);
					\draw[e:coloredborder,bend left=5] (\x*15:46mm) to (\x*15-7.5:46mm);
					\draw[e:coloredborder,bend left=5] (\x*15:51mm) to (\x*15-7.5:51mm);
					\draw[e:coloredborder,bend left=5] (\x*15:56mm) to (\x*15-7.5:56mm);
					\draw[e:coloredborder,bend left=5] (\x*15:61mm) to (\x*15-7.5:61mm);
					\draw[e:coloredborder,bend left=5] (\x*15:66mm) to (\x*15-7.5:66mm);
				}
			}
		\end{pgfonlayer}
	\end{tikzpicture}}{\scalebox{.5}{\includegraphics{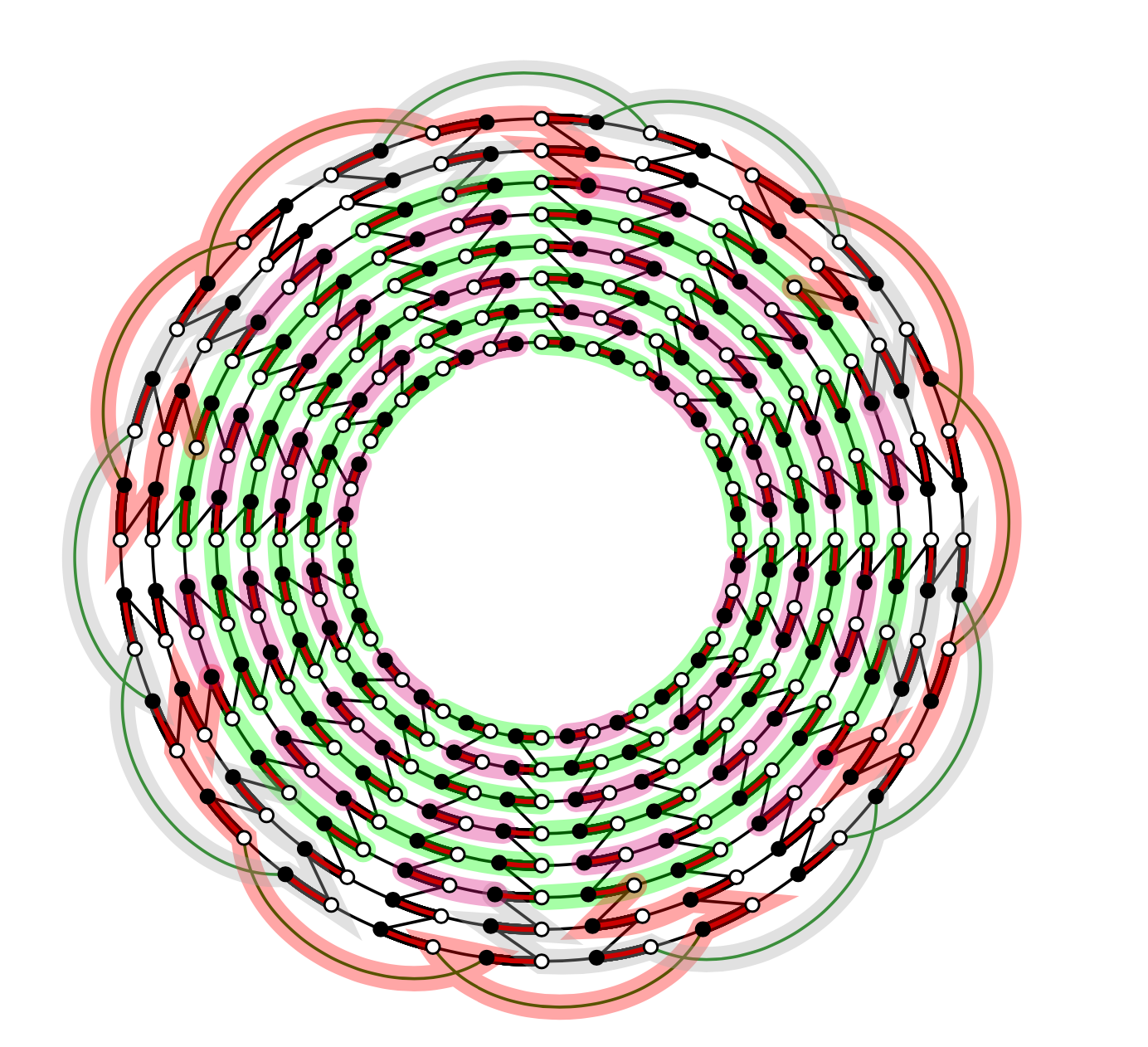}}}
	\caption{The refined vortex of order $6$ as a matching minor of $\svmg_{6}.$ Pink and green denote the bicontractions that result in the vertices of the refined vortex while orange and grey denote the bicontractions that result in its crossing edges.}
	\label{fig_gridvortexinsvg}
\end{figure}

The main result of this subsection is the following.
\begin{theorem}\label{thm_svmmhardness}
If $\mathcal{G}$ is a graph class that is closed under matching minors and contains a shallow vortex matching minor, then $\#\textsc{PerfMatch}$ is \classP-complete on $\mathcal{G}$.  
\end{theorem}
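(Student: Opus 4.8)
The plan is to follow the reduction strategy of Curticapean and Xia~\cite{CurticapeanXia22}, adapted to the matching-minor setting. Since $\mathcal{G}$ is closed under matching minors and contains some shallow vortex matching minor, it contains a matching minor of $\svmg_t$ for some $t$; by \Cref{lem_rvintosvg} and the observation that $CG_{2k}$ is a conformal subgraph of $\svmg_k,$ the class $\mathcal{G}$ is rich enough to contain, as matching minors, shallow vortex matching grids (or refined vortices) of \emph{every} order, provided we first argue that containing one shallow vortex matching minor forces containing arbitrarily large ones. In fact the cleanest route is: the shallow vortex matching grid $\svmg_k$ contains $\svmg_{k'}$ as a matching minor for $k'\le k$ only trivially, so instead I would observe that the statement ``$\mathcal{G}$ contains \emph{a} shallow vortex matching minor'' should be read together with the fact (to be proven as a preliminary lemma) that any shallow vortex matching minor which is not a single-crossing matching minor — in particular one that genuinely uses the ``vortex'' crossing edges $\{v^1_j v^1_{(j+5)\bmod 8k}\}$ — generates, under taking matching minors, refined vortices of unbounded order. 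This is where the detailed combinatorial work lies, and it mirrors the analogous step in~\cite{CurticapeanXia22} identifying vortices as the true obstruction.

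Granting that $\mathcal{G}$ contains refined vortices $RV_k$ for all $k,$ the core of the proof is a $\classP$-hardness reduction from counting perfect matchings in general bipartite graphs (which is $\classP$-hard by Valiant~\cite{Valiant79theco}) to $\#\textsc{PerfMatch}$ on $\mathcal{G}.$ First I would use the standard gadget technique: given an arbitrary bipartite graph $G,$ embed $G$ into a large refined vortex by routing its edges through the cylindrical grid part $CG_{2k}$ (which is planar and behaves like a ``wall'') and using the crossing edges of the vortex to realise a bounded number of edge-crossings per gadget. Because a single vortex provides only a shallow ``nest'' of crossing edges (depth bounded independently of $k$), a single vortex alone cannot embed an arbitrary graph; the trick, exactly as in~\cite{CurticapeanXia22}, is to use \emph{edge weights} and polynomial interpolation. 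One replaces each crossing in $G$ by a small weighted gadget whose perfect-matching-generating polynomial, evaluated at suitably many choices of the weight, lets one recover the crossing/non-crossing contributions by solving a Vandermonde system; the weights themselves are then simulated by the unweighted $\pmprob$ via the usual ``series-parallel'' weight-simulation gadgets (paths of parallel edges subdivided to respect bipartiteness and the matching structure), all of which can be drawn inside the grid portion of $RV_k$ without introducing new crossings.

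The concrete steps, in order, are: (i) prove the preliminary ``unbounded refined vortices'' lemma — that closure under matching minors plus one genuine shallow vortex matching minor yields $RV_k\in\mathcal{G}$ for all $k$; (ii) set up a crossing gadget: a constant-size bipartite weighted graph that can be placed at a crossing edge of a refined vortex, conformal with the canonical matching, whose generating function separates the two ``routings''; (iii) show that any bipartite graph $G$ on $n$ vertices with a drawing having $O(n^2)$ crossings can be realised as a weighted conformal subgraph of $RV_k$ for $k=\mathrm{poly}(n),$ using the grid part for the planar skeleton and one crossing gadget per crossing, so that $\pmprob$ of the weighted $RV_k$-graph encodes, via a polynomial in the gadget weights, the quantity $\pmprob(G)$; (iv) simulate the integer weights by unweighted constructions inside the grid (so the final instance lies in $\mathcal{G}$ as an \emph{unweighted} graph), and recover $\pmprob(G)$ by polynomial interpolation over $\mathrm{poly}(n)$ unweighted $\#\textsc{PerfMatch}$ queries; (v) conclude $\classP$-completeness (membership in $\classP$ being immediate since $\#\textsc{PerfMatch}$ is a counting problem with a polynomial-time-checkable witness relation). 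I expect step (i) to be the main obstacle: the other steps are faithful transcriptions of the ordinary-graph argument in~\cite{CurticapeanXia22}, but showing that a \emph{single} instance of the shallow vortex pattern bootstraps, under the matching-minor order, to the whole parametric family requires a careful analysis of how bicontractions and conformal-subgraph deletions interact with the vortex's crossing edges — essentially a matching-theoretic version of the fact that one ``deep enough'' vortex spawns vortices of every depth. The figures \Cref{fig_gridvortexinsvg} and \Cref{fig_shallowvortexgrid} already suggest the right models, so the work is to make those pictures into a rigorous inductive construction, exactly as \Cref{lem_rvintosvg} does for the single embedding $RV_k\hookrightarrow\svmg_k.$
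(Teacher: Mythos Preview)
Your step (i) is based on a misreading of the hypothesis, and as stated it is false. Under your reading---$\mathcal{G}$ contains \emph{one} element of $\mathcal{V}$---the theorem fails: every planar bipartite matching covered graph is a matching minor of some $CG_{2k}\subseteq\svmg_k$, so the class of planar bipartite graphs (which is matching-minor closed) ``contains a shallow vortex matching minor'' in your sense, yet $\#\textsc{PerfMatch}$ is polynomial there. More concretely, your proposed bootstrap lemma (``one genuine shallow vortex matching minor yields $RV_k$ for all $k$'') cannot hold, since any fixed $\svmg_t$ is a finite graph with only finitely many matching minors. The intended hypothesis is that $\mathcal{G}$ contains \emph{every} shallow vortex matching minor, i.e.\ $\mathcal{V}\subseteq\mathcal{G}$ (equivalently, $\svmg_t\in\mathcal{G}$ for all $t$; equivalently, $\mathcal{G}$ excludes no shallow vortex matching grid). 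This is exactly what is needed for the application to $K_{5,5}$-matching-minor-free graphs via \cref{cor_k55notsvg}, and it is how the conclusion section paraphrases the result (the function $f_2$ is unbounded). With the correct reading, your step (i) disappears entirely.

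Once $\mathcal{V}\subseteq\mathcal{G}$ is the hypothesis, the paper's route is shorter than your steps (ii)--(iv). Rather than designing a bespoke crossing gadget and embedding in refined vortices, the paper takes the Curticapean--Xia construction $B\mapsto B_{\mathrm{svmm}}$ verbatim (draw $B$ with vertices on a circle, push each crossing outside along a ray, replace the auxiliary crossings by the \emph{bipartite} planar sign-crossing gadget of \cref{fig_signcrossgadget}); this already gives $\pm1$ weights and $\pmprob(B_{\mathrm{svmm}})=\pmprob(B)$. The only new work is checking that $B_{\mathrm{svmm}}$ is bipartite (\cref{obs_bipsigncross}) and lies in $\mathcal{V}$: the planar part $B_{\mathrm{svmm}}'$ is a matching minor of some $CG_\omega$ by \cref{thm_excludingplanar} with the outer-face vertices hitting the outer cycle, and the remaining crossings attach there, so one realises them with the crossing edges of a sufficiently large $\svmg_\omega$. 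The $\pm1$ weights are then removed by the standard Turing reduction/interpolation from \cite{Curticapean15,CurticapeanXia22}, and hardness follows from $\#\textsc{PerfMatch}$ being $\classP$-hard on $3$-regular bipartite graphs. Your plan would work too once (i) is dropped, but the paper avoids refined vortices and custom gadgets in the reduction.
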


\begin{definition}
Let $G$ be a graph drawn on the plane in such a way that two of its edges, $e$ and $f$ cross. 
We denote by $G_{e,f}$ the graph obtained from $G$ by replacing the edges $e$ and
$f$ by the bipartite planar sign-crossing gadget as in Figure~\ref{fig_signcrossgadget}. Note that the planar sign-crossing gadget was defined in~\cite{CaiG14} 
and also used in the proof of~\cite{CurticapeanXia22}. However, since we are working with bipartite graphs we have to be careful in the way we apply the replacement. 
\end{definition}

We define the function $\chi_{e,f}(M)\coloneqq \begin{cases} -1 & \{e,f\}\subseteq M\\ 
1 & \text{otherwise}\end{cases}$

\begin{observation}\label{obs_bipsigncross}
If a bipartite graph $B$ is drawn on the plane in such a way that two of its edges, $f$ and $g,$ cross then
the graph $B_{f,g}$ obtained from the bipartite planar sign-crossing gadget replacement is also bipartite.
\end{observation}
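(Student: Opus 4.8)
\textbf{Proof plan for \Cref{obs_bipsigncross}.}
The statement asserts that the planar sign-crossing gadget replacement, when applied to a crossing pair of edges $f,g$ of a bipartite graph $B$ drawn in the plane, produces a graph $B_{f,g}$ that is again bipartite. The plan is to argue this purely by a parity/$2$-colouring argument, tracking how the bipartition classes $V_1,V_2$ of $B$ interact with the internal vertices of the gadget.

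First I would fix the bipartition $V(B)=V_1\cup V_2$ and recall from \Cref{fig_signcrossgadget} the precise structure of the bipartite planar sign-crossing gadget: it is a small planar bipartite graph with four ``terminal'' vertices, and the replacement deletes the edges $f=u_1u_2$ and $g=w_1w_2$ and identifies $u_1,u_2,w_1,w_2$ with the four terminals. Since $f$ was an edge of $B$, its endpoints $u_1,u_2$ lie in opposite classes, say $u_1\in V_1$, $u_2\in V_2$; likewise $w_1\in V_1$, $w_2\in V_2$ (after relabelling). The crucial point is that the gadget, being bipartite, admits a proper $2$-colouring, and — this is the content that must be checked against the explicit gadget — the two terminals corresponding to $u_1,w_1$ receive one colour while those corresponding to $u_2,w_2$ receive the other. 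If that colour pattern matches the way the paper has drawn and defined the bipartite version of the gadget (which is exactly why the authors remark that ``we have to be careful in the way we apply the replacement''), then one can extend the colouring $V_1\mapsto\text{black}, V_2\mapsto\text{white}$ of $B-\{f,g\}$ by the gadget's internal $2$-colouring so that the two colourings agree on the identified terminals.

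The key steps, in order, are: (i) extract from \Cref{fig_signcrossgadget} the bipartition classes of the gadget and verify the terminals split as $\{$terminal for $u_1$, terminal for $w_1\}$ versus $\{$terminal for $u_2$, terminal for $w_2\}$; (ii) observe that since $f,g\in E(B)$ their endpoints already alternate between $V_1$ and $V_2$, so the identification is consistent with the gadget's classes; (iii) conclude that the union of the two proper $2$-colourings is a proper $2$-colouring of $B_{f,g}$, since every edge of $B_{f,g}$ is either an old edge of $B-\{f,g\}$ (properly coloured by hypothesis) or an edge internal to the gadget (properly coloured by the gadget's bipartition), and no edge joins an old vertex to an internal gadget vertex except at the terminals, where the colours were arranged to match. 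Hence $B_{f,g}$ has no odd cycle and is bipartite.

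The main obstacle is step (i): the whole subtlety lies in confirming that the chosen orientation/embedding of the gadget in the replacement is compatible with the alternating colour pattern forced by $f$ and $g$. There is a genuine sign issue — the gadget comes in reflections, and only one of them respects bipartiteness with a prescribed terminal colouring — so the argument must pin down that the replacement in the paper is performed with that correct reflection. Once the gadget's bipartition and the matching of terminal colours are nailed down, the rest is the routine observation that gluing two proper $2$-colourings along vertices on which they agree yields a proper $2$-colouring, and everything else follows immediately.
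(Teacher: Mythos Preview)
Your plan is correct and matches the paper's approach: the paper does not give a formal proof of this observation at all, treating it as immediate from the explicit gadget in \Cref{fig_signcrossgadget} together with the caption remark that the replacement must be performed with the specific orientation shown. Your $2$-colouring argument is exactly the routine verification the paper leaves to the reader, and your emphasis on step~(i)---checking that the gadget's terminal colouring is compatible with the forced colours of the endpoints of $f$ and $g$---is precisely the ``care'' the paper alludes to.
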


\begin{figure}[!ht]
	\centering
		\makefast{ 

	\begin{tikzpicture}[scale=0.8]

		\pgfdeclarelayer{background}
		\pgfdeclarelayer{foreground}
		\pgfsetlayers{background,main,foreground}
	
		\begin{scope}[shift={(-3,0)}]
		\draw[e:main,color=myGreen,ultra thick] (-1.5,0) to (1.5,0);
		\draw[e:main,color=red,ultra thick] (0,-1.5) to (0,1.5);
		\node[v:main] () at (-1.5,0){};
		\node[v:mainempty] () at (1.5,0){};		
		\node[v:mainempty] () at (0,-1.5){};
		\node[v:main] () at (0,1.5){};		
		\end{scope}
		
		\begin{scope}[shift={(3,0)}]
		\draw[e:main,color=myGreen,ultra thick] (-1.5,0) to (-1,0);
		\draw[e:main,color=myGreen,ultra thick] (1.5,0) to (1,0);

		\draw[e:main,color=red,ultra thick] (0,-1.5) to (0,-1);
		\draw[e:main,color=red,ultra thick] (0,1.5) to (0,1);
		\draw[e:main] (1,0) to (0,1) to (-1,0) to (0,-1) to (1,0);
		\draw[e:main] (-0.5,0.5) to (0.5,-0.5);
		\node[v:main] () at (-1.5,0){};
		\node[v:mainempty] () at (1.5,0){};		
		\node[v:mainempty] () at (0,-1.5){};
		\node[v:main] () at (0,1.5){};	
		\node[v:mainempty] () at (-1,0){};
		\node[v:main] () at (1,0){};		
		\node[v:main] () at (0,-1){};
		\node[v:mainempty] () at (0,1){};
		\node[v:main] () at (-0.5,0.5){};
		\node[v:mainempty] () at (0.5,-0.5){};
		\end{scope}

		\begin{pgfonlayer}{background}
		\end{pgfonlayer}
		
	\end{tikzpicture}}{\scalebox{.45}{\includegraphics{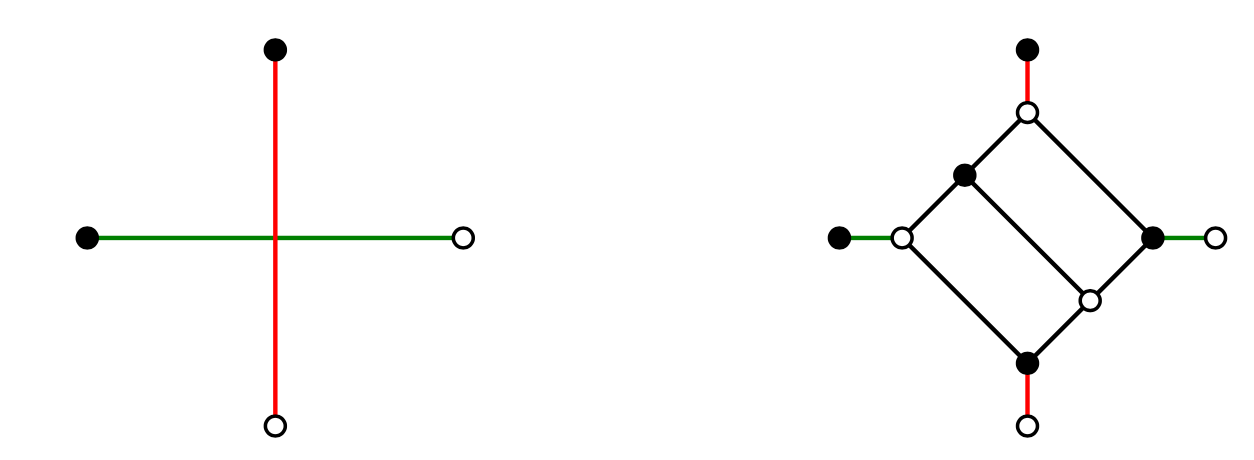}}}
	\caption{The edges $e$ and $f$ (on the left) and the bipartite sign crossing gadget (on the right). Observe that if we would not need the resulting graph to be bipartite we could have replaced it choosing any orientation of the hexagon. However, since we work on bipartite graphs we restrict the replacement to occur as in this figure.}
	\label{fig_signcrossgadget}
\end{figure}

\begin{lemma}[\cite{CurticapeanXia22}]
Let $G$ be a graph drawn on the plane and let $(e_{1},f_{1}), (e_{2},f_{2}),\dots, (e_{t},f_{t})$ be $t$ pairs of crossing edges.
If $G'$ is the graph obtained by repeatedly replacing each one of these crossings with a bipartite planar sign-crossing gadget, then
$$\displaystyle \# \textsc{PerfMatch}(G')=\sum_{M\in \Perf{G}}\prod_{i=1}^{t}\chi_{e_{i},f_{i}}(M).$$
\end{lemma}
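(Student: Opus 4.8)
The plan is to reduce the statement to the single-crossing case by an induction on the number $t$ of crossing pairs, and then establish the base case $t=1$ by a direct inspection of the gadget in Figure~\ref{fig_signcrossgadget}. The key observation driving the whole argument is \emph{locality}: the gadget replacing the pair $(e_i,f_i)$ touches $G$ only at the four endpoints of $e_i$ and $f_i$, so from the point of view of any perfect matching, choosing which edges of the gadget to use is independent of the choices made elsewhere in the graph, as long as we keep track of which subset of the four ``terminal half-edges'' is matched ``into'' the gadget versus ``into'' the rest of $G$. Concretely, for a fixed crossing pair $(e,f)$ with $e=ab$, $f=cd$, write $G^{(ef)}$ for the graph with $e,f$ deleted and the gadget attached to $a,b,c,d$. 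The plan for the base case is to enumerate the (constantly many) ways a perfect matching of the gadget can behave relative to the boundary vertices $a,b,c,d$: either all four are matched outside the gadget (so the gadget interior is perfectly matched internally), or exactly two of them — one from each colour class, forming one of the two ``through-routes'' corresponding to $e$ or to $f$ — are matched through the gadget and the interior of the gadget handles the rest. One checks that (i) each of the three ``states'' $\emptyset$, $\{a,b\}$, $\{c,d\}$ admits exactly one perfect matching of the gadget interior, contributing $+1$, $+1$, $+1$ respectively, while (ii) there is \emph{no} way for the gadget to simultaneously route both $e$ and $f$ with positive sign — instead, the configuration where \emph{both} through-routes are used corresponds, after cancellation among gadget matchings, to a net contribution of $-1$. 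This is precisely the content of $\chi_{e,f}$: a perfect matching $M$ of the original drawn graph $G$ decomposes according to whether it uses $e$, $f$, both, or neither, and the gadget faithfully reproduces the ``neither/only $e$/only $f$'' cases with weight $+1$ and the ``both'' case with weight $-1$. Summing over all $M\in\Perf{G}$ gives $\#\textsc{PerfMatch}(G^{(ef)})=\sum_{M}\chi_{e,f}(M)$.

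For the inductive step, I would replace the crossings one at a time. After replacing the first $j$ crossings to obtain $G_j$, the inductive hypothesis gives $\#\textsc{PerfMatch}(G_j)=\sum_{M\in\Perf{G}}\prod_{i=1}^{j}\chi_{e_i,f_i}(M)$, where the sum is still indexed by perfect matchings of the \emph{original} $G$ via the canonical bijection between $\Perf{G_j}$ and pairs (a perfect matching $M$ of $G$, a compatible internal gadget-matching choice). The point to check is that the $(j+1)$-st gadget replacement, being supported on the four endpoints of $e_{j+1},f_{j+1}$ — a set of vertices disjoint from the interiors of the previous $j$ gadgets, since distinct crossings of a drawing involve distinct edges — interacts with $G_j$ exactly as the base-case analysis describes: it multiplies the count associated to each $M$ by the further factor $\chi_{e_{j+1},f_{j+1}}(M)$. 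This uses Observation~\ref{obs_bipsigncross} repeatedly to guarantee we stay within bipartite graphs so the gadget of Figure~\ref{fig_signcrossgadget} can legitimately be inserted (with the prescribed orientation of the hexagon) at each step.

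I expect the \textbf{main obstacle} to be the bookkeeping in the base case: one must verify not merely that the three ``small'' gadget states contribute $+1$ each, but that the \emph{full} signed count over \emph{all} perfect matchings of the gadget interior, for each boundary behaviour, collapses to the claimed values $+1,+1,+1,-1$ — in particular that several internal matchings in the ``both routes used'' state cancel down to a single surviving $-1$, and that no parity or colour-class obstruction secretly kills one of the positive states (this is where the bipartite restriction on the hexagon's orientation, flagged in the caption of Figure~\ref{fig_signcrossgadget}, matters). This is a finite check on a fixed small graph, so it is routine in principle, but it is the crux and must be done carefully; the rest of the argument is a clean locality-plus-induction packaging, essentially identical to the non-bipartite version in \cite{CaiG14,CurticapeanXia22} with Observation~\ref{obs_bipsigncross} inserted to preserve bipartiteness throughout.
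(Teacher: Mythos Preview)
The paper does not prove this lemma at all; it is quoted verbatim from \cite{CurticapeanXia22} (building on the matchgate analysis in \cite{CaiG14}) and used as a black box. Your plan is exactly the standard argument from those references, and the induction-plus-locality packaging is fine.

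There is, however, a genuine omission in your base-case checklist. You enumerate only the four boundary states $\emptyset,\{a,b\},\{c,d\},\{a,b,c,d\}$ and claim target values $+1,+1,+1,-1$. But with the colouring forced by bipartiteness (cf.\ the caption of Figure~\ref{fig_signcrossgadget}: $a,d\in V_1$ and $b,c\in V_2$), there are two further \emph{balanced} boundary states, $\{a,c\}$ and $\{b,d\}$, on which the gadget could a priori have nonzero signature. In $G'$ nothing forbids $a$ from being matched into the gadget while $b$ is matched externally; any perfect matching of $G'$ realising such a mixed state has no counterpart in $\Perf{G}$, so unless the gadget's weighted signature \emph{vanishes} on $\{a,c\}$ and $\{b,d\}$ the identity fails. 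This vanishing is precisely what forces a $-1$ weight onto one of the hexagon edges: the unweighted hexagon-with-chord has strictly positive internal matching counts on the mixed states, and it is the cancellation induced by that $-1$ which kills them. So your finite check must certify six values $(+1,+1,+1,-1,0,0)$, not four; the two zeros are not ``parity or colour-class obstructions'' (those only dispose of $\{a,d\}$ and $\{b,c\}$) but genuine weighted cancellations.

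A small side remark: your justification ``distinct crossings of a drawing involve distinct edges'' is false---one edge may cross many others---but your conclusion survives, since the \emph{internal} vertices of each inserted gadget are fresh and hence disjoint from all previously inserted gadgets regardless of shared endpoints.
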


Before we proceed we also need the following reduction that was introduced in~\cite{CurticapeanXia22}. Curticapean and Xia originally introduced this reduction for the (ordinary) minor relation. We give the original definition of the construction as it can easily be adapted for bipartite graphs.

\paragraph{Reduction to shallow vortex matching minors} Let $G$ be an unweighted graph with $n$ vertices and $m$ edges. 
We construct the graph $G_{\mathrm{svmm}}$ as follows.

Let $C$ be a circle on the plane dividing it into two regions (we will assume that both regions contain $C$), the \emph{inner} 
and the \emph{outer} region.
\begin{enumerate}
\item We place the vertices $V(G)$ of $G$ on $C$ and draw the edges of $G$ as straight lines of the inner region of $C.$

\item\label{@deliberating} The edges are drawn in such a way that no three edges intersect in the same point and every ray from the centre to the perimeter of $C$ contains at most one crossing point or a vertex of $G.$

\item Let $p_{1},p_{2},\dots,p_{s}$ the locations of all $s\in \mathcal{O}(m^{2})$ crossings of $G$ and $P_{i}$ be the ray of $C$ that contains $p_{i}.$ Let also 
$e_{i},f_{i}$ denote the edges that intersect at $p_{i}.$
We denote by $\ell_{i}$ the segment of $P_{i}$ between $p_{i}$ and the perimeter of $C.$ From \ref{@deliberating} all segments are pairwise disjoint and do not
contain any vertices of $G.$ Observe, however, that each segment $\ell_{i}$ might intersect some edges of the graph, say $m_{i}$ of them. 
Namely, the edges $g_{i,1},g_{i,2},\dots,g_{i,m_{i}}.$

\item Let $N_{i}$ denote a sufficiently narrow neighbourhood of the segment $\ell_{i}.$
We bend the edges $e_{i}$ and $f_{i}$ and push them in such a way that their intersection is not contained in the inner region of $C.$
Notice that we introduce $2m_{i}$ new crossings with $e_{i}$ and $2m_{i}$ new crossing with $f_{i}.$
In particular, two crossings between $e_{i}$ and $g_{i,j}$ and two crossing between $f_{i}$ and $g_{i,j}$ for every $j\in m_{i}.$

\item After processing all crossing points $p_{1},p_{2},\dots,p_{s}$ in the above fashion, we replace the new crossings that were formed 
with the (bipartite) planar sign-crossing gadget. However, we do not replace any crosses on the outer part (that is, the ``original'' crosses that were ``pulled'' outside $C$). 
\end{enumerate}
Given a graph $G$ and a graph $G_{\mathrm{svmm}}$ constructed as above, we denote by $G_{\mathrm{svmm}}'$ the graph obtained from $G_{\mathrm{svmm}}$ by deleting all edges that participate in the remaining crosses of $G_{\mathrm{svmm}}.$
To prove the correctness of the reduction we need the following.

\begin{proposition}[\cite{giannopoulou2021excluding}]\label{thm_excludingplanar}
For every planar bipartite matching covered graph $H$ there exists a number
$\omega_{H} \in \mathbb{N}$ such that $H$ is a matching minor of $CG_{\omega_{H}}.$
Moreover, the models of the vertices of the outer face of $H$ will contain at least one vertex
from the outer face of $CG_{\omega_{H}}.$
\end{proposition}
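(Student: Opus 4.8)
The plan is to prove the proposition by a matching‑theoretic version of the classical fact that every planar graph is a minor of a sufficiently large grid: we draw $H$ inside the disk bounded by the outermost cycle of a large cylindrical matching grid in a way that respects a fixed perfect matching of $H$, and then read off a matching minor model by contracting the part of the grid not used by this drawing. Throughout, ``outer face of $CG_{\omega_H}$'' refers to the unbounded face of its natural plane embedding, bounded by its outermost cycle.

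First I would reduce to the subcubic case. Fix a plane embedding of $H$ with outer face $f_0$ and a perfect matching $M_H$ of $H.$ By a standard splitting of high‑degree vertices, replace every vertex $v$ of degree $d\ge 4$ by a planar bipartite matching covered caterpillar gadget with $d$ pendant ports and an appropriate number of interior degree‑two \emph{connector} vertices, chosen so that bicontracting the connectors merges the gadget back into a single vertex carrying exactly the $d$ edges of $v$ in their cyclic order. Doing this at every high‑degree vertex yields a planar bipartite matching covered graph $H'$ with $\Delta(H')=3$ and $f_0$ still the outer face, such that $H$ is a matching minor of $H'.$ By \cref{lemma_confmathingminors} it therefore suffices to exhibit, for some $\omega_H\in\mathbb{N},$ a conformal bisubdivision of $H'$ inside $CG_{\omega_H}$ whose branch vertices for the $f_0$‑vertices of $H'$ lie on the outermost cycle of $CG_{\omega_H}.$

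Next comes the routing step. Realise $H'$ in the annular mesh formed by a large block of consecutive concentric cycles of $CG_{\omega_H}$ (this mesh is precisely the kind of structure analysed in \cref{lemma_gridinthecylinder}): place the branch vertex of each $w\in V(H')$ on a grid vertex of the same colour class as $w,$ with the $f_0$‑vertices placed on the outermost cycle, the branch vertices far apart and pairwise non‑adjacent, and draw each edge $e=w_1w_2$ of $H'$ as a \emph{corridor} — a path of grid edges from the branch vertex of $w_1$ to that of $w_2,$ the corridors being pairwise disjoint, internally disjoint from all branch vertices, and separated from one another by many untouched concentric cycles and radial blocks. Since $CG_{\omega_H}$ is bipartite and $w_1,w_2$ lie in different colour classes, each corridor automatically has odd length, i.e.\ realises an even subdivision of $e.$ Let $J$ be the union of the branch vertices and the corridors; then $J$ is a bisubdivision of $H'.$ Fix a perfect matching $M'$ of $H'$ and prescribe the internal matching of each corridor so that it behaves like the corresponding edge: if $e\in M'$ the corridor together with its two endpoints is spanned by a matching, and if $e\notin M'$ only the interior of the corridor is matched; this defines a perfect matching $M_J$ of $J$ obtained from $M'$ through the subdivisions. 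Because $CG_{\omega_H}$ and $J$ are both balanced, $CG_{\omega_H}-V(J)$ is balanced, and — thanks to the generous padding — it decomposes into a bounded number of independent matching covered grid‑like pieces, each possessing a perfect matching; extending $M_J$ by such matchings yields a perfect matching $M^*$ of $CG_{\omega_H}$ with respect to which $J$ is conformal. Bicontracting the degree‑two vertices of the corridors turns $J$ into a copy of $H',$ so $H'$ — and hence $H$ — is a matching minor of $CG_{\omega_H};$ and since the $f_0$‑branch vertices were placed on the outermost cycle, their models meet the outer face of $CG_{\omega_H},$ as required. One may take $\omega_H$ to be a constant times the number of grid cells of the drawing, which is polynomial in $|H|.$

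The main obstacle I expect is the global coherence of the matching: a matching minor model must be conformal, so the high‑degree gadgets, the parities and internal matchings of the corridors, and the matching of the leftover region all have to be compatible with one single perfect matching $M^*$ of $CG_{\omega_H}.$ The way to control this is to keep all features of the drawing widely separated inside the grid, exactly so that each corridor only ever crosses blocks whose internal matching can be flipped locally — the same phenomenon exploited in the inductive construction of \cref{lemma_gridinthecylinder} and in the alternating‑matching argument of \cref{lemma_singlecrossingcylinder} — without disturbing its neighbours; then matching $CG_{\omega_H}-V(J)$ reduces to independently matching finitely many balanced grid pieces. Verifying that the high‑degree gadget is simultaneously bipartite, planar (with ports in the prescribed cyclic order), matching covered, and recovers $H$ under bicontraction of its connectors, while routine, is the other point that needs care; once the grid theorem \cref{thm_matchinggrid} and \cref{lemma_confmathingminors} are in hand, the rest of the argument is bookkeeping on the drawing.
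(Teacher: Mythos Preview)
This proposition is not proved in the paper: it is quoted verbatim from \cite{giannopoulou2021excluding} and used as a black box in the hardness reduction of Section~\ref{@interconnected}. There is therefore no ``paper's own proof'' to compare your proposal against.

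As for the proposal itself, the overall strategy (reduce to the subcubic case, then realise a conformal bisubdivision inside a large cylindrical matching grid) is the natural one and is essentially how such statements are argued in the matching minors literature. Two points deserve more care than you give them. First, your degree-reduction gadget must simultaneously be planar with ports in the correct cyclic order, bipartite, matching covered, and such that \emph{bicontractions} (not arbitrary contractions) recover the original vertex; writing down such a gadget is not hard, but ``caterpillar with connectors'' is not quite specific enough to verify all four conditions at once, and getting the colour classes right so that the gadget slots into $H'$ without breaking bipartiteness is exactly where careless constructions fail. Second, the conformality of $J$ in $CG_{\omega_H}$ is the real content of the result: your appeal to ``widely separated features'' and ``balanced grid pieces'' is the right intuition, but the complement $CG_{\omega_H}-V(J)$ is not obviously a disjoint union of matching covered pieces --- corridors can leave behind regions that are balanced yet have no perfect matching unless the routing is done with parity constraints in mind (this is precisely why \cref{lemma_gridinthecylinder} works with the alternating matching $M^*$ rather than the canonical one). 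A clean way to handle both issues is to route along rows and columns of a fixed $M^*$-conformal sub-mesh and to check that each corridor can be made internally $M^*$-conformal; then conformality of $J$ is automatic and no separate argument for the complement is needed.
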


\begin{lemma}
Given a bipartite graph $B,$ we may construct $B_{\mathrm{svmm}}$ in time $\mathcal{O}(n+m^{3})$ with the following properties:
\begin{enumerate}
\item its edge-weights will be $\pm 1$
\item all vertices of $B_{\mathrm{svmm}}$ are contained in a disk $\Delta$ of the plane
\item all edges of $B_{\mathrm{svmm}}$ intersect $D$ at most at their boundaries, 
\item all edges of $B_{\mathrm{svmm}}$ incident with weight -1 are on the inner region,
\item $\pmprob(B_{\mathrm{svmm}})=\pmprob(B),$ 
\item $B_{\mathrm{svmm}}$ is bipartite, and
\item $B_{\mathrm{svmm}}$ is a shallow vortex matching minor.
\end{enumerate}
\end{lemma}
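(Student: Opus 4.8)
The plan is to build $B_{\mathrm{svmm}}$ by following the reduction described above and then verifying each of the seven listed properties in turn; properties (i)--(v) are essentially bookkeeping about the construction, while (vi) and (vii) require a little more care and it is (vii) that is the main obstacle.

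First I would run the construction: place $V(B)$ on the circle $C$, draw the edges as straight segments in the inner region in general position so that no three edges meet at a point and each radial ray meets at most one crossing or vertex. The number of crossings is $s=\mathcal{O}(m^2)$, and for each crossing $p_i$ we bend the two edges $e_i,f_i$ outward through a narrow neighbourhood $N_i$ of the radial segment $\ell_i$, introducing $2m_i$ new crossings of $e_i$ and $2m_i$ of $f_i$ with the edges $g_{i,1},\dots,g_{i,m_i}$ that $\ell_i$ meets. Since $\sum_i m_i=\mathcal{O}(m^2)$ and each $m_i\le m$, the total number of newly created crossings is $\mathcal{O}(m^3)$. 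We then replace every \emph{new} crossing by the bipartite planar sign-crossing gadget of \cref{fig_signcrossgadget}, but leave the $s$ ``pulled-out'' original crossings untouched. This is clearly carried out in time $\mathcal{O}(n+m^3)$, giving the running time claim; property (i) is immediate since the gadget only uses edges of weight $\pm 1$ and $B$ is unweighted (weight $1$). Property (ii) holds because all vertices of $B$ stay on $C$ and all gadget vertices lie in the narrow neighbourhoods $N_i$, so we may take $\Delta$ to be a slightly enlarged disk bounded by $C$; property (iii) holds because every edge either lies inside $\Delta$ or, for the bent edges $e_i,f_i$, exits $\Delta$ only across its boundary $C$. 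Property (iv): the only negative-weight edges come from sign-crossing gadgets, and all of those were placed at the \emph{new} crossings, which by construction lie inside $N_i\subseteq\Delta$, hence in the inner region.

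For property (v), the key is the lemma (quoted from \cite{CurticapeanXia22}) that replacing a crossing by the bipartite planar sign-crossing gadget multiplies the contribution of each perfect matching by the factor $\chi_{e,f}(M)$. When we bend $e_i$ and $f_i$ outward, each of them picks up \emph{two} new crossings with every $g_{i,j}$; the corresponding two sign factors $\chi_{e_i,g_{i,j}}$ (and $\chi_{f_i,g_{i,j}}$) either are both $+1$ or both $-1$ for any fixed matching $M$, because whether $g_{i,j}\in M$ is a single yes/no condition, so their product is always $+1$. Hence all the new gadgets contribute a net factor of $1$ to every matching, and the remaining $s$ un-gadgeted original crossings do not affect the set of perfect matchings (they are just drawing artefacts). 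Therefore $\pmprob(B_{\mathrm{svmm}})=\pmprob(B)$. Property (vi) follows from \cref{obs_bipsigncross}: each gadget replacement applied to a crossing of a bipartite graph yields again a bipartite graph, and a finite sequence of such replacements preserves bipartiteness; the bending of edges does not change the graph, only its drawing.

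The hard part will be property (vii): showing $B_{\mathrm{svmm}}$ is a shallow vortex matching minor. The strategy is to observe that after all the gadget replacements, $B_{\mathrm{svmm}}$ is drawn in the plane so that its only crossings are the $s$ original ones, all of which have been pushed \emph{outside} the disk $\Delta$, while everything inside $\Delta$ is a planar bipartite matching covered graph whose outer face meets $\partial\Delta=C$. I would first argue that $B_{\mathrm{svmm}}'$ (obtained by deleting the edges involved in the $s$ remaining crosses) is planar bipartite and matching covered, so by \cref{thm_excludingplanar} it is a matching minor of some cylindrical matching grid $CG_{\omega}$ in which the models of the vertices on the outer face of $B_{\mathrm{svmm}}'$ each contain an outer-face vertex of $CG_{\omega}$. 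Then the $s$ pairs of crossing edges that were removed can be re-added: each such edge joins two vertices that, in the drawing, lie near the boundary $C$, so in the $CG_\omega$-model they can be routed along the outer cycle of $CG_\omega$ as additional ``chord'' edges. Adding all $s$ of these chords to $CG_\omega$ produces a graph which, for $\omega$ large enough, is itself a matching minor of the refined vortex $RV_k$ (one routes each chord using a distinct block of the outermost layer, exactly as the crossing edges of $RV_k$ are defined), and by \cref{lem_rvintosvg} $RV_k$ is a matching minor of $\svmg_k$. Composing these matching-minor relations gives that $B_{\mathrm{svmm}}$ is a matching minor of $\svmg_k$ for suitable $k=k(n,m)$, i.e.\ a shallow vortex matching minor, completing the proof. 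The delicate point to get right is the simultaneous routing of all $s$ outer chords along the single outer boundary of the grid without them interfering, which is why we need the general-position assumption (ii) and the fact that the chords inherit a consistent cyclic ordering of their endpoints on $C$.
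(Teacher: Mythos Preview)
Your proposal is correct and follows essentially the same approach as the paper. The paper is terser on (i)--(v), simply citing Lemma~3.1 of \cite{CurticapeanXia22} rather than unpacking the argument as you do, and for (vii) it embeds $B_{\mathrm{svmm}}'$ directly into the innermost cycles of a large $\svmg_\omega$ and routes the outer crossings using the crossing edges of $\svmg_\omega$ itself, whereas you take a small detour through $RV_k$ before invoking \cref{lem_rvintosvg}; both routes arrive at the same place with the same level of rigor on the ``delicate point'' you identify.
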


\begin{proof}
The running time of the algorithm and Properties i) to v) follow directly by Lemma 3.1 from~\cite{CurticapeanXia22}. Property vi) follows from~\cref{obs_bipsigncross}.
Thus in the rest of the proof we will focus solely on the proof of Property vii). For this, we need to prove that there exists a constant
$\omega_{B_{\mathrm{svmm}}}$ such that $B_{\mathrm{svmm}}$ is a matching minor of $\svmg_{\omega_{B_{\mathrm{svmm}}}}.$
Observe that it follows from the reduction that $B_{\mathrm{svmm}}'$ is planar.
By \cref{thm_excludingplanar}, there exists a constant $\omega_{B_{\mathrm{svmm}}'}$ such that
$B_{\mathrm{svmm}}'$ is a matching minor of $CG_{\omega_{B_{\mathrm{svmm}}'}}.$ Moreover, by construction, all crosses 
of $B_{\mathrm{svmm}}$ attach to vertices of $B'_{\mathrm{svmm}}$ that belong to its outer face.
Observe then that by choosing a big enough constant $\omega$ that depends on $\omega_{B_{\mathrm{svmm}}'}$ we may find
that graph $B_{\mathrm{svmm}}'$ as a matching minor in $\svmg_{\omega}$ induced by its innermost cycles and utilise the outer
cycles together with the crossing edges of $\svmg_{\omega}$ to obtain the models of the crossing edges of $B_{\mathrm{svmm}}$
in a fashion similar to \cref{fig_gridvortexinsvg}.
\end{proof}

With this, everything is in place for the proof of \cref{thm_svmmhardness}.

\begin{proof}[Proof of \cref{thm_svmmhardness}]
First, from~\cite{Curticapean15,CurticapeanXia22} the problem \pmprob~on shallow vortex matching minors with weights 
$\pm 1,$ where the edges with weights $-1$ are entirely contained in $B_{\mathrm{svmm}}'$ admits a polynomial Turing 
reduction to unweighted shallow vortex matching minors.
Then the \classP-hardness of the problem follows from the fact that the problem is \classP-hard on unweighted 3-regular 
bipartite graphs~\cite{DagumL92,Valiant79theco},
the above Turing reduction and the construction of the graphs $B_{\mathrm{svmm}}.$
\end{proof}

\subsection{\classP-hardness on $K_{5,5}$-matching-minor-free graphs.}
\label{@unanswerable}

We now  prove that $K_{5,5}$ is not a shallow vortex matching minor. This, combined with~\cref{thm_svmmhardness}, 
implies that computing the number of perfect matchings on $K_{5,5}$-matching-minor-free graphs is \classP-hard.
We conclude the section by showing that $K_{4,4}$ is a shallow vortex matching minor.

Before we proceed we need the following definition.

Let $T'$ be a tree and let $T$ be obtained from $T'$ by subdividing every edge an odd number of times.
Then $\Fkt{V}{T'} \subseteq \Fkt{V}{T}$.
The vertices of $T$ that belong to $T'$ are called \emph{old}, and the vertices in $\Fkt{V}{T} \setminus \Fkt{V}{T'}$ are called \emph{new}.
We say that $T$ is a \emph{barycentric tree}.

\begin{definition}[Matching Minor Model]\label{def_matchingminormodel}
	Let $G$ and $H$ be graphs with perfect matchings.
	An \emph{embedding} or \emph{matching minor model} of $H$ in $G$ is a mapping 
	\begin{equation*}
		\mu \colon \Fkt{V}{H} \cup \Fkt{E}{H} \to \CondSet{F}{F\subseteq G},
	\end{equation*}
	such that the following requirements are met for all $v,v' \in \Fkt{V}{H}$ and $e,e' \in \Fkt{E}{H}$:
	\begin{enumerate}
		\item $\Fkt{\mu}{v}$ is a barycentric subtree in $G$ called the model of  $v$ in $G$,
		
		\item if $v \neq v'$, then $\Fkt{\mu}{v}$ and $\Fkt{\mu}{v'}$ are vertex disjoint,
		
		\item $\Fkt{\mu}{e}$ is an odd path with no internal vertex in any $\Fkt{\mu}{v}$ called the model of $e$ in $G$, and if $e' \neq e$, then $\Fkt{\mu}{e}$ and $\Fkt{\mu}{e'}$ are internally vertex disjoint,
		
		\item if $e=u_1u_2$, then the ends of $\Fkt{\mu}{e}$ can be labelled by $x_1,x_2$ such that $x_i$ is an old vertex of $\Fkt{\mu}{u_i}$,
		
		\item if $v$ has degree one, then $\Fkt{\mu}{v}$ is exactly one vertex, and
		
		\item $G-\Fkt{\mu}{H}$ has a perfect matching, where $\Fkt{\mu}{H'} \coloneqq \bigcup_{x\in \Fkt{V}{H'} \cup \Fkt{E}{H'}}\Fkt{\mu}{x}$ for every subgraph $H'$ of $H$.
	\end{enumerate}	
	If $\mu$ is a matching minor model of $H$ in $G$ we write $\mu\colon H\rightarrow G$.
\end{definition}

\begin{lemma}[\cite{norine2007generating}]\label{lemma_matmodel}
	Let $G$ and $H$ be graphs with perfect matchings.
	There exists a matching minor model $\mu\colon H\rightarrow G$ if and only if $H$ is isomorphic to a matching minor of $G$.
\end{lemma}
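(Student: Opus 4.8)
The plan is to prove the two implications separately. The conceptual backbone of both is the remark following \cref{@inconvinientes} and \cref{def_matchingminor}: a bicontraction is exactly the tight cut contraction of the closed neighbourhood of a degree-two vertex, and the parity constraints built into \cref{def_matchingminormodel} — edge-models are \emph{odd} paths, vertex-models are \emph{barycentric} (odd-subdivision) trees — are precisely what guarantee that such contractions behave as honest edge/vertex contractions rather than vertex identifications, and that the conformality condition (item (vi)) survives each step.

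First I would prove that a matching minor model yields a matching minor. Given a model $\mu\colon H\to G$, observe that item (vi) of \cref{def_matchingminormodel} says exactly that $G-\mu(H)$ has a perfect matching, i.e.\ the vertex set of the subgraph $\mu(H)\subseteq G$ is conformal in $G$. It then suffices to exhibit a sequence of bicontractions turning $\mu(H)$ into $H$ — note that, per \cref{def_matchingminor}, these bicontractions act on the subgraph $\mu(H)$ and its successors, not on $G$, so the relevant degrees are degrees in the evolving graph. The key structural fact is that in $\mu(H)$ every subdivision (new) vertex of a barycentric tree $\mu(v)$, and every interior vertex of an odd path $\mu(e)$, has degree exactly $2$ in $\mu(H)$: edge-models attach only at old vertices (item (iv)) and vertex- and edge-models are internally disjoint (items (ii),(iii)). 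Bicontract such a vertex together with its two neighbours; this shortens the path or tree-edge subdivision it sits on by two, keeping it odd, or (in the length-one case) contracts a tree edge by identifying its two old endpoints. One checks the result is still a matching minor model of $H$ in the contracted graph, with strictly less total ``subdivision and blow-up'', so induction on $\sum_v(|V(\mu(v))|-1)+\sum_e(|E(\mu(e))|-1)$ finishes: in the base case every $\mu(v)$ is a single vertex and every $\mu(e)$ a single edge, so the contraction operation induced by $\mu$ is an isomorphism onto $H$.

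For the converse I would start from a witnessing sequence $J=J_0\to J_1\to\cdots\to J_k\cong H$, where $J$ is a conformal subgraph of $G$ and each $J_{i+1}$ arises from $J_i$ by a bicontraction, and build matching minor models $\mu_i\colon J_i\to G$ whose images all lie inside $J_0$. Take $\mu_0$ to be the identity model ($\mu_0(v)=\{v\}$, $\mu_0(e)=\{e\}$), which satisfies every item of \cref{def_matchingminormodel} trivially, since $G-J_0$ has a perfect matching. For the step, if $J_{i+1}$ is obtained from $J_i$ by bicontracting a degree-two vertex $v$ with neighbours $u_0,u_1$ into a new vertex $r$, set $\mu_{i+1}$ equal to $\mu_i$ away from $r$ and let
\[
\mu_{i+1}(r)\;\coloneqq\;\mu_i(u_0)\cup\mu_i(u_0v)\cup\mu_i(v)\cup\mu_i(vu_1)\cup\mu_i(u_1),
\]
with the edges of $J_{i+1}$ at $r$ carried over from $\mu_i$ (they attach at old vertices of $\mu_i(u_0)$ or $\mu_i(u_1)$, which remain old vertices of $\mu_{i+1}(r)$). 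The point is that gluing the barycentric trees $\mu_i(u_0),\mu_i(v),\mu_i(u_1)$ along the odd paths $\mu_i(u_0v),\mu_i(vu_1)$ \emph{at old vertices only} again produces a barycentric tree. When the bicontraction also deletes loops or parallel edges, the corresponding edge-models become orphaned, but since each such model is an odd path its interior has an even number of vertices that can be matched along the path, so $G-\mu_{i+1}(J_{i+1})$ still has a perfect matching. Composing $\mu_k$ with the isomorphism $H\cong J_k$ gives the desired model.

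The main obstacle, and where the real care is needed, is exactly this bookkeeping of parity and of the old/new (branch vs.\ subdivision) structure: one must verify that the vertices one wants to bicontract in the first direction are genuinely of degree two in the current subgraph (so that one never gets stuck before reaching a single vertex), that every gluing in the second direction lands at an old vertex (so barycentricity is preserved), and that loop/parallel-edge deletions never destroy the conformality in item (vi). All of these reduce to the elementary observation that bicontracting the two neighbours of a degree-two vertex on an odd path shortens it by two and keeps it odd, whereas on an even path it would identify the endpoints; it is precisely the oddness and barycentricity hypotheses of \cref{def_matchingminormodel} that exclude this degeneracy, so that throughout the argument we are manipulating conformal subgraphs and honest matching minors in the sense of \cref{def_matchingminor}.
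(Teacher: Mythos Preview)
The paper does not prove this lemma at all; it is stated with a citation to Norine and Thomas \cite{norine2007generating} and used as a black box. So there is no ``paper's proof'' to compare against, and your write-up would in fact be supplying something the paper omits.

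Your outline is the natural one and the forward direction is fine: the degree-two vertices you identify really are degree two in $\mu(H)$, bicontracting one of them shortens the model, and the induction terminates at an isomorphic copy of $H$.

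In the reverse direction, however, your claim that the glued object
\[
\mu_{i+1}(r)\;=\;\mu_i(u_0)\cup\mu_i(u_0v)\cup\mu_i(v)\cup\mu_i(vu_1)\cup\mu_i(u_1)
\]
is again barycentric with old set contained in (old vertices of $\mu_i(u_0)$) $\cup$ (old vertices of $\mu_i(u_1)$) is not automatic. The odd paths $\mu_i(u_0v)$ and $\mu_i(vu_1)$ put the old vertices of $\mu_i(v)$ in the \emph{opposite} bipartition class from those of $\mu_i(u_0)$ and $\mu_i(u_1)$, so if $\mu_i(v)$ has an old vertex of degree $\neq 2$ (a branch vertex, or a leaf that is not one of the two attachment points $b_0,b_1$) you cannot absorb it into either side and the glued tree fails to be barycentric for your declared old set. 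The fix is to maintain the invariant that every leaf of every vertex-model is an attachment point of some edge-model; this can be arranged by pruning pairs (old leaf, adjacent new vertex) and absorbing them into the perfect matching outside $\mu(H)$, after which a degree-two vertex $v$ of $J_i$ has $\mu_i(v)$ equal to an even \emph{path} between its two attachment points. With that invariant in place, the concatenation $P_0\cup\mu_i(v)\cup P_1$ is an even path with all interior vertices of degree two, the old sets of $\mu_i(u_0)$ and $\mu_i(u_1)$ do lie in a common colour class, and barycentricity follows. You gesture at exactly this kind of bookkeeping in your final paragraph, but the pruning step is the concrete missing ingredient.
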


\begin{observation}
By enhancing each barycentric tree of \cref{def_matchingminormodel} we may assume that every model of an edge consists
of only one edge.
\end{observation}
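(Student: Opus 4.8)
The plan is to start from an arbitrary matching minor model $\mu\colon H\to G$, whose existence for a matching minor $H$ of $G$ is guaranteed by \cref{lemma_matmodel}, and to successively ``absorb'' every edge model that is longer than a single edge into one of the vertex models incident with it, taking care to preserve all of the axioms (i)--(vi) of \cref{def_matchingminormodel}.

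First I would fix an edge $e=u_1u_2$ of $H$ whose model $\mu(e)$ is an odd path $P_e=(x_1=p_0,p_1,\dots,p_{2\ell+1}=x_2)$ of length $2\ell+1\geq 3$, where, by axiom (iv), $x_1$ is an old vertex of the barycentric tree $\mu(u_1)$ and $x_2$ is an old vertex of $\mu(u_2)$; the $2\ell$ internal vertices $p_1,\dots,p_{2\ell}$ lie in no other model. I would enhance $\mu(u_1)$ as follows: in the underlying tree of $\mu(u_1)$, attach a new pendant edge at $x_1$ to a fresh old vertex $q$, and realise the enhanced tree as a barycentric tree by subdividing this pendant edge exactly $2\ell-1$ times (an odd number, since $\ell\geq 1$), identifying the resulting $x_1$--$q$ path with $p_0,p_1,\dots,p_{2\ell}$, so that $q=p_{2\ell}$. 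Every pre-existing edge of the underlying tree keeps its odd subdivision number, so the enhanced tree $\mu'(u_1)$ is again barycentric, with $p_{2\ell}$ an old vertex. Then I would redefine $\mu'(e):=p_{2\ell}p_{2\ell+1}$, a single edge, and leave every other image of $\mu$ unchanged.

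It then remains to check the axioms of \cref{def_matchingminormodel} for $\mu'$, which I expect to be routine: (i) holds by the odd-subdivision count; (ii) and (iii) are preserved because only the internal vertices of $P_e$ change allegiance, and those were disjoint from every other vertex and edge model, while $\mu'(e)$ has no internal vertex at all; (iv) holds since the ends of $\mu'(e)$ are the old vertex $p_{2\ell}$ of $\mu'(u_1)$ and the old vertex $x_2$ of $\mu'(u_2)=\mu(u_2)$; and (vi) is immediate because, as vertex sets, $\mu'(H)=\mu(H)$, so its complement and the perfect matching therein are untouched. Iterating this step over all edges of $H$ finishes the argument: for distinct edges the reassigned vertex sets are the pairwise disjoint sets of internal vertices of the corresponding edge models, so the steps do not interfere and the order is irrelevant, and the resulting model has every edge model equal to a single edge.

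The one genuine obstacle is axiom (v): a degree-one vertex of $H$ must have a singleton model, so one must not absorb $P_e$ into $\mu(u_i)$ when $\deg_H(u_i)=1$. When at least one endpoint of $e$, say $u_2$, has degree at least two, I would simply absorb into $\mu(u_2)$ instead, which is unconstrained by (v). The remaining case is an edge $e=u_1u_2$ forming a $K_2$-component of $H$; here $P_e$ (a path on $2\ell+2$ vertices) has a perfect matching $Q$, and $N\cup Q$ is a perfect matching of $G-(\mu(H)\setminus V(P_e))$ for any perfect matching $N$ of $G-\mu(H)$. Picking any edge $y_1y_2\in N\cup Q$ and setting $\mu'(u_1):=\{y_1\}$, $\mu'(u_2):=\{y_2\}$, $\mu'(e):=y_1y_2$ yields a valid model, since $(N\cup Q)\setminus\{y_1y_2\}$ witnesses (vi), the vertices $y_1,y_2$ avoid all other component models, and $\mu'(u_1),\mu'(u_2)$ are singletons of old vertices. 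This disposes of the only delicate case, and in the settings where the observation is applied ($K_{5,5}$, $CG_k$, $\svmg_k$, and their kin) $H$ is connected on at least four vertices, so it does not even arise.
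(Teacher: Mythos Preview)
The paper states this as an \emph{Observation} without proof, so there is nothing to compare against; your argument is the natural unpacking of what the word ``enhancing'' is meant to suggest, and it is correct. The absorption step (pushing the first $2\ell$ edges of $P_e$ into $\mu(u_1)$ by attaching a pendant at $x_1$ subdivided $2\ell-1$ times, an odd number) preserves the barycentric structure and all axioms exactly as you say, and your treatment of axiom~(v) --- redirecting the absorption to a non-leaf endpoint, with the separate $K_2$-component fix --- is the right way to handle the only subtlety. Since the applications in the paper concern connected $H$ with minimum degree at least two, the $K_2$ case is indeed irrelevant in context, but it is good that you noticed and closed it.
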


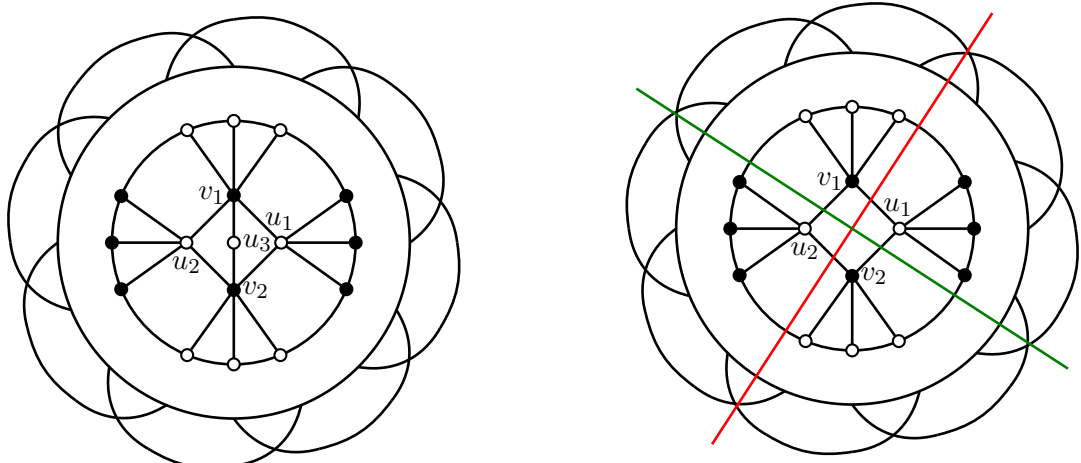
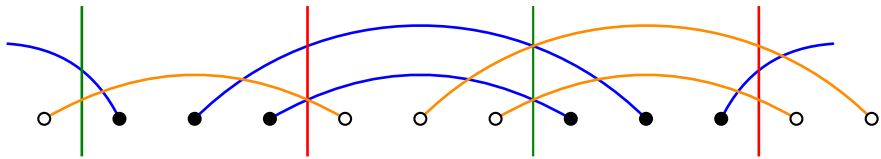
\begin{figure}[!ht]
\centering

	\begin{subfigure}{0.49\textwidth}
	\centering
		\begin{tikzpicture}[scale=0.9]

			\pgfdeclarelayer{background}
			\pgfdeclarelayer{foreground}
			\pgfsetlayers{background,main,foreground}

			\draw[e:main] (0,0) circle (18mm);
			\draw[e:main] (0,0) circle (26mm);
			
			\draw[e:main] (0:7mm) to (90:7mm) to (180:7mm) to (270:7mm) to (0:7mm);
			
			\foreach \x in {2,4,6,...,16}
			{
				\draw[e:main,bend right=35] (\x*22.5:26mm) to (\x*22.5+22.5:33mm); 
				\draw[e:main,bend right=20] (\x*22.5+22.5:33mm) to (\x*22.5+45:33mm); 
				\draw[e:main,bend right=35] (\x*22.5+45:33mm)  to (\x*22.5+67.5:26mm);
			}
			\foreach \x in {1,2}
			{	
			\draw[e:main ](\x*180+90:7mm) to (\x*180+90:18mm);
			\draw[e:main] (\x*180+90:7mm) to (\x*180+112.5:18mm);
			\draw[e:main] (\x*180+90:7mm) to (\x*180+67.5:18mm);
			}

			\foreach \x in {1,2}
			{	
			\draw[e:main ](\x*180:7mm) to (\x*180:18mm);
			\draw[e:main] (\x*180:7mm) to (\x*180+22.5:18mm);
			\draw[e:main] (\x*180:7mm) to (\x*180-22.5:18mm);
			\draw[e:main] (0:0mm) to (\x*180+90:7mm);
			}

			\foreach \x in {1,2}
			{
				\node[v:main] () at (\x*180+90:7mm){};				
				\node[v:mainempty] () at (\x*180:7mm){};
				\node[v:mainempty] () at (\x*180+90:18mm){};
				\node[v:mainempty] () at (\x*180+112.5:18mm){};
				\node[v:mainempty] () at (\x*180+67.5:18mm){};
				\node[v:main] () at (\x*180:18mm){};
				\node[v:main] () at (\x*180+22.5:18mm){};
				\node[v:main] () at (\x*180-22.5:18mm){};
			}
						\node () at (25:7.5mm){$u_{1}$};
			\node () at (205:7.7mm){$u_{2}$};
			\node () at (115:7.7mm){$v_{1}$};
			\node () at (295:7.7mm){$v_{2}$};
			\node () at (0.35,0){$u_{3}$};			
			\node[v:mainempty] () at (0:0mm){};		
		\end{tikzpicture}
	\caption{$\widehat{B}_{in}$ is not $K_{3,2}$}
\end{subfigure}
\begin{subfigure}{0.49\textwidth}
	\centering
		\begin{tikzpicture}[scale=0.9]

			\pgfdeclarelayer{background}
			\pgfdeclarelayer{foreground}
			\pgfsetlayers{background,main,foreground}

			\draw[e:main] (0,0) circle (18mm);
			\draw[e:main] (0,0) circle (26mm);
			
			\draw[e:main] (0:7mm) to (90:7mm) to (180:7mm) to (270:7mm) to (0:7mm);
			
			\foreach \x in {2,4,6,...,16}
			{
				\draw[e:main,bend right=35] (\x*22.5:26mm) to (\x*22.5+22.5:33mm); 
				\draw[e:main,bend right=20] (\x*22.5+22.5:33mm) to (\x*22.5+45:33mm); 
				\draw[e:main,bend right=35] (\x*22.5+45:33mm)  to (\x*22.5+67.5:26mm);
			}
			\foreach \x in {1,2}
			{	
			\draw[e:main ](\x*180+90:7mm) to (\x*180+90:18mm);
			\draw[e:main] (\x*180+90:7mm) to (\x*180+112.5:18mm);
			\draw[e:main] (\x*180+90:7mm) to (\x*180+67.5:18mm);
			}

			\foreach \x in {1,2}
			{	
			\draw[e:main ](\x*180:7mm) to (\x*180:18mm);
			\draw[e:main] (\x*180:7mm) to (\x*180+22.5:18mm);
			\draw[e:main] (\x*180:7mm) to (\x*180-22.5:18mm);
			}

			\draw[e:main,color=red] (237:38mm) to (57:38mm);		
			\draw[e:main,color=myGreen]  (327:38mm) to (147:38mm);	
			\foreach \x in {1,2}
			{
				\node[v:main] () at (\x*180+90:7mm){};				
				\node[v:mainempty] () at (\x*180:7mm){};
				\node[v:mainempty] () at (\x*180+90:18mm){};
				\node[v:mainempty] () at (\x*180+112.5:18mm){};
				\node[v:mainempty] () at (\x*180+67.5:18mm){};
				\node[v:main] () at (\x*180:18mm){};
				\node[v:main] () at (\x*180+22.5:18mm){};
				\node[v:main] () at (\x*180-22.5:18mm){};
			}			
			
			\node () at (25:7.5mm){$u_{1}$};
			\node () at (205:7.7mm){$u_{2}$};
			\node () at (115:7.7mm){$v_{1}$};
			\node () at (295:7.7mm){$v_{2}$};
		\end{tikzpicture}
		\caption{The inner part of the $K_{5,5}$ model discussed in the proof. }
\end{subfigure}

\begin{subfigure}{\textwidth}
\centering
\begin{tikzpicture}

\draw[e:main, bend left = 30,color=blue] (-2,0) to (2,0);
\draw[e:main, bend left = 45,color=blue] (-3,0) to (3,0);
\draw[e:main,bend right = 30,color=blue] (-4,0) to (-5.5,1);
\draw[e:main,bend left = 30,color=blue] (4,0) to (5.5,1);

\draw[e:main,color=red] (-1.5,-0.5) to (-1.5,1.5); 
\draw[e:main,color=myGreen,thick] (1.5,-0.5) to (1.5,1.5); 
\draw[e:main,color=red] (4.5,-0.5) to (4.5,1.5); 
\draw[e:main,color=myGreen] (-4.5,-0.5) to (-4.5,1.5);

\draw[e:main, bend left = 30,color=darkorange] (-5,0) to (-1,0);
\draw[e:main, bend left = 30,color=darkorange] (1,0) to (5,0);
\draw[e:main, bend left = 45,color=darkorange] (0,0) to (6,0);

\node[v:mainempty] () at (-5,0){};
\node[v:main] () at (-4,0){};
\node[v:main] () at (-3,0){};
\node[v:main] () at (-2,0){};
\node[v:mainempty] () at (-1,0){};
\node[v:mainempty] () at (0,0){};
\node[v:mainempty] () at (1,0){};
\node[v:main] () at (2,0){};
\node[v:main] () at (3,0){};
\node[v:main] () at (4,0){};
\node[v:mainempty] () at (5,0){};
\node[v:mainempty] () at (6,0){};

\end{tikzpicture}
\caption{We depict the $w_{1}^{i},w_{2}^{i},b_{1}^{i},b_{2}^{i}$ in a linear order together with the paths that join them in order to show that at some point over the red and the green edge they form a flow 4 over an area with a separator of size 3.}
\end{subfigure}
\caption{Showing that $K_{5,5}$ is not a shallow vortex matching minor. The paths $P_{i}$ are depicted as orange and the paths $Q_{i}$ as blue. We avoid naming all paths and vertices explicitly in order to make the figure clearer.}
\label{fig_k55exclusion}
\end{figure}

Given a bipartite graph $B$ we denote by $\mu(B)$ the maximum $t$ such that $K_{t,t}$ is a matching minor of $B.$

\begin{lemma} 
For every shallow vortex matching minor $B,$ $\mu(B)\leq 4.$
\end{lemma}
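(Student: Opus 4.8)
The plan is to argue by contradiction: suppose $K_{5,5}$ is a matching minor of some shallow vortex matching grid $\svmg_t$, and derive a contradiction with a structural property of $\svmg_t$. By \cref{lemma_matmodel} we may fix a matching minor model $\mu\colon K_{5,5}\to\svmg_t$, and by the observation following that lemma we may assume every edge model is a single edge. The key structural feature of $\svmg_t$ is that deleting the crossing edges leaves a planar graph (indeed a subgraph of a cylindrical matching grid), and the crossing edges all attach to the outermost cycle $C_1$. So there is a small ``vortex region'': a disk $\Delta$ whose boundary is (the trace of) a short cycle $C_j$ for some small $j$, such that $\svmg_t\setminus E(\Delta)$ is planar and all the crossings live inside $\Delta$. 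The first step is to make this precise: choose $j$ so that $C_j$ bounds a disk containing all crossing edges, and note $|V(C_j)|\le 8t$ but more importantly the \emph{separator structure} across $C_j$ is controlled — crossing $C_j$ costs essentially $|V(C_j)|$ vertices, but the crossing edges inside $\Delta$ can only be ``few'' in the sense relevant below.

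First I would set up the picture as in \cref{fig_k55exclusion}: the part of $\mu(K_{5,5})$ drawn outside $\Delta$ is planar, so the five branch vertices of one side together with the five of the other side, interlinked by the fifteen edge-models, cannot all be realised outside $\Delta$ — some edge models must pass through the vortex region. The heart of the argument is a counting/routing estimate: $K_{5,5}$ minus a vertex on each side is $K_{4,4}$, and $K_{4,4}$ contains a configuration (four internally disjoint paths joining four prescribed vertices in a ``crossing'' cyclic order, or more simply a flow of value $4$ across any cut that separates part of the picture with a separator of size only $3$) that cannot be embedded across the $\Delta$-boundary if the relevant interface there has size at most $3$. Concretely, I would look at the two crossing chords of $\svmg_t$ that are actually used in the model; each is a single edge, so it contributes only its two endpoints to any separator. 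The planarity of $\svmg_t\setminus\{\text{two crossing edges}\}$ means that after removing those two edges we have a planar graph in which $K_{4,4}$ (obtained from $K_{5,5}$ by deleting one vertex on each side and the at most one crossing edge each deletion destroys) must still be a matching minor — but $K_{4,4}$ is non-planar, contradiction. The care needed is that deleting a vertex of $K_{5,5}$ in the \emph{model} means deleting its (barycentric) tree model, and a matching minor of a planar graph need not be planar in the ordinary sense; here one uses that a matching minor model still yields an ordinary topological minor model after forgetting the matching conditions, so ordinary planarity obstructions apply.

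The cleanest route, which I would follow, is: (i) reduce to a model where each crossing edge of $\svmg_t$ is used in at most one edge-model and there are at most two crossing edges used that are actually ``essential'', using that any two crossing edges of $\svmg_t$ already lie on $C_1$ and deleting $C_1$-incident crossing edges returns a cylindrical (hence planar) grid; (ii) for each of the five branch vertices $u_1,\dots,u_5$ on the black side, consider how the four edge-models to $v_1,\dots,v_4$ (say) leave a neighbourhood of $\mu(u_i)$, noting $\mu(u_i)$ is a tree, so its ``interface'' to the rest is limited; (iii) combine with the fact that the sign-crossing / vortex part only provides a bounded-size ($\le 3$) interface between the planar ``inside'' $\widehat B_{in}$ and the rest, so that a $K_{4,4}$-flow of value $4$ cannot cross it, as illustrated by the linear arrangement of the $w_1^i,w_2^i,b_1^i,b_2^i$ in part (c) of \cref{fig_k55exclusion}. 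The main obstacle I expect is step (iii): carefully quantifying the interface of the vortex region and ruling out that two deleted branch vertices could ``absorb'' enough of the crossing structure to let the remaining $K_{4,4}$ squeeze through. This is exactly where one shows $\widehat B_{in}$ is \emph{not} $K_{3,2}$ (equivalently, the vortex only admits a size-$3$ separator on the relevant side), which forces the contradiction; everything else is the routine translation between matching minor models and ordinary topological minors, plus invoking non-planarity of $K_{4,4}$.
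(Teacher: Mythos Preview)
There is a genuine gap. Your central reduction --- delete one vertex from each side of $K_{5,5}$ so that the remaining $K_{4,4}$ model lives in ``$\svmg_t$ minus two crossing edges'', hence in a planar graph --- rests on a false premise: $\svmg_t$ has $2t$ crossing edges, not two, and removing any two of them does not make the host planar. Nothing in your step (i) justifies the claim that ``at most two crossing edges are actually essential''; a $K_{5,5}$ matching minor model can thread through many crossing edges simultaneously, and deleting two vertex-models need not eliminate more than a couple of them. The sentence ``the planarity of $\svmg_t\setminus\{\text{two crossing edges}\}$'' is simply wrong about the structure of the graph, so the non-planarity-of-$K_{4,4}$ contradiction never fires.

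Your secondary idea --- four disjoint paths forced across a size-$3$ separator in the vortex region --- is indeed the correct mechanism, and it is what the paper uses. But to make it bite you must first pin down \emph{where} the model sits. The paper does this by bicontracting the model so that the part $\widehat{B}_{in}$ lying outside the two outermost cycles becomes a planar bipartite induced subgraph of $K_{t,t}$; a short case analysis (planarity forbids $K_{3,3}$, and every vertex of $K_{t,t}$ needs degree $\ge 5$, which rules out $K_{2,\ell}$ for $\ell\ge 3$ and the remaining small cases) forces $\widehat{B}_{in}$ to be at most $K_{2,2}$. This pushes at least three vertex-models from each colour class entirely into the thin vortex annulus $\widehat{B}_{out}$, and the six resulting disjoint paths $P_i,Q_i$ inside $\widehat{B}_{out}$, together with the interlocking cyclic order of their attachment points, send four of them across a single size-$3$ cut of the annulus --- the contradiction. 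You gesture at this final step but skip the structural reduction that makes it available; without first confining most of the model to $\widehat{B}_{out}$, there is no specific size-$3$ cut to invoke. Your closing remark that ``one shows $\widehat{B}_{in}$ is not $K_{3,2}$'' has the role reversed: the paper shows $\widehat{B}_{in}$ \emph{is} small (at most $K_{2,2}$), and that smallness is what sets up the separator contradiction, not what delivers it.
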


\begin{proof}
Let $B$ be a shallow vortex matching minor and let $j\in \mathbb{N}$ be the smallest integer such that $B$ is a matching minor 
of $\svmg_{j}.$ Then $K_{t,t}$ is a matching minor of $\svmg_{j},$ where $t=\mu(B)$. Towards a contradiction, assume that $t\geq 5.$
Consider a matching minor model of $K_{t,t}$ in $\svmg_{j}.$

We first apply the following two operations. 
We start by working on the models of vertices of $K_{t,t}$ that contain vertices of the two outermost cycles. 
Then we apply bicontractions as much as possible so that their new models are contained in the two outermost 
cycles only, with the extra restriction that we do not remove the crossing edges that attach to them. 
Now, we bicontract all models of vertices that do not contain any vertex on the two outermost cycles and remove any vertices/edges that do not belong to the model, which by definition induces a conformal subgraph of $B.$ By construction the resulting 
graph, $\widehat{B},$ still contains $K_{t,t}$ as a matching minor. 

Observe that bicontractions preserve planarity as well as the bipartiteness of the graph. 
This implies that the graph $\widehat{B}_{in}$ induced  by the vertices not contained in the two outermost cycles is planar, bipartite and an induced subgraph of $K_{t,t}$.

Let us assume that $\widehat{B}_{in}$ contains at least 2 vertices of each part of the bipartition. Observe that only one of
the parts can have more than 3 vertices in $\widehat{B}_{in}$ as otherwise $\widehat{B}_{in}$ would contain $K_{3,3}$,
a contradiction to the face that it is planar.
Then, since it's planar it is either $K_{2,2}$ or $K_{2,\ell}$, $\ell \geq 3$.

We now show that $\widehat{B}_{in}$ is $K_{2,2}.$ Indeed, towards a contradiction, let us assume that $\widehat{B}_{in}$ is $K_{2,\ell}$
and let $u_{1},u_{2},u_{3}$ and $v_{1},v_{2}$ be vertices of its two parts (where $v_{1}$ and $v_{2}$ belong to the part
that has only two vertices in $\widehat{B}_{in}$), respectively. Then due to planarity, it holds that one of the $u_{i},$ say $u_{3}$ is contained in 
the cycle formed by $u_{1},v_{2},u_{2},$ and $v_{2}.$ This implies that $u_{3}$ can only have $v_{1}$ and $v_{2}$ as neighbours, a contradiction to the assumption that $u_{3}$ is a
vertex of $K_{t,t},$ with $t\geq 5.$ (See also (a) in Figure~\ref{fig_k55exclusion}.) Therefore, $\widehat{B}_{in}$ is the graph $K_{2,2}.$ We abuse
notation and denote by $C_{3}$ the 4-cycle induced by $u_{1},v_{1},u_{2},v_{2}$  (as in, the third cycle of the shallow vortex matching grid).

Since $t\geq 5,$ $K_{5,5}$ is also a matching minor of $\widehat{B}$ and, in particular, there exists a model of it such that $u_{1},u_{2}$ and $v_{1},v_{2}$ are vertices of its
two parts and the models of the rest vertices, say $u_{3},u_{4},u_{5}$ and $v_{3},v_{4},v_{5},$ are entirely contained in $\widehat{B}_{out}.$
Moreover, $u_{3},u_{4},$ and $u_{5}$ are joined by edges to $v_{1}$ and $v_{2}.$
This implies that there exist vertices $w_{1}^{i}$ and $w_{2}^{i}$ in the model 
of $u_{i}$ which belong to the same part of $\widehat{B}$ as $u_{i}$ and such that $\{w_{1}^{i},v_{1}\},\{w_{2}^{i},v_{2}\}\in E(\widehat{B}).$
Similarly, the vertices $v_{3},v_{4},v_{5}$ are joined by edges to $u_{1}$ and $u_{2}.$ Therefore, as above, there exist vertices $b_{1}^{i}$ and $b_{2}^{i}$ in the model 
of $v_{i}$ which belong to the same part of $\widehat{B}$ as $v_{i}$ and such that $\{b_{1}^{i},u_{1}\},\{b_{2}^{i},u_{2}\}\in E(\widehat{B}).$ (See (b) in Figure~\ref{fig_k55exclusion}.)
This implies that there exist paths $P_{i}$ between $w_{1}^{i},w_{2}^{i}$ (that are contained in the model of $u_{i}$), $i\in [3],$ and 
$Q_{i}$ between $b_{1}^{i},b_{2}^{i}$ (that are contained in the model of $v_{i}$), $i\in [3],$ respectively. Moreover, the paths $P_{i}$ and $Q_{i}$ are entirely disjoint, are contained
in $\widehat{B}_{out}.$ Observe that four of these paths (two of the $P_{i}$ and two of the $Q_{i}$) go over the red and the green line in (b) and (c) in Figure~\ref{fig_k55exclusion}, a contradiction as there exists a separator of order at most 3.
Therefore, it is not possible that each part of $K_{t,t}$ has at least two vertices in $\widehat{B}_{in}$.

Observe then that $\widehat{B}_{in}$ is either isomorphic to $K_{1,\ell}$ for some $1\leq \ell \leq t$, or $\widehat{B}_{in}$ is an independent set of size $\ell$ for some $\ell \leq t$. With similar arguments about the existence of many disjoint paths over small separators we may exclude the above cases as well. 
Let us assume that $\ell \geq 3$ and let $u_{1},u_{2},u_{3}$ be the vertices of the largest part in $\widehat{B}_{in}$. Let us distinguish the following two cases.

\textbf{Case 1.} The other part has no vertex in $\widehat{B}_{in}$. Notice that each of the $u_{i}$ has four neighbours in $C_{2}$, namely $b_{1}^{j}$, $j\in [5]$, $b_{2}^{j}$, $j\in [5]$, and $b_{3}^{j}$, $j\in [5]$ respectively. 
Moreover, for every $i\in [3]$, there exists a subpath of $C_{2}$ that contains the vertices $b_{i}^{j}$ and does not contain any vertices $b_{i'}^{j'}$ for $i\neq i'$. Similar to the previous case, we may conclude that in this case there exist four disjoint paths going through a separator of size 3, a contradiction.
Therefore, we may exclude this case.

\textbf{Case 2.} The other part has exactly one vertex in $\widehat{B}_{in}$.
Observe then that, as above each of the $u_{i}$ has four neighbours in $C_{2}$, namely $b_{1}^{j}$, $j\in [4]$, $b_{2}^{j}$, $j\in [4]$, and $b_{3}^{j}$, $j\in [4]$ respectively.  Here, we may use similar arguments while taking into account how many vertices of the 
big part are actually contained in $\widehat{B}_{in}$.

Finally, we may conclude that $\ell\leq 2$. But these cases can be similarly excluded.
\end{proof}

It follows that
\begin{corollary}\label{cor_k55notsvg}
$K_{5,5}$ is not a shallow vortex matching minor.
\end{corollary}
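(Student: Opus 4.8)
The statement to prove is \cref{cor_k55notsvg}: that $K_{5,5}$ is not a shallow vortex matching minor. Given the preceding lemma (for every shallow vortex matching minor $B$, $\mu(B)\le 4$), this is almost immediate, and the plan is simply to make the logical deduction explicit and check the trivial fact that $\mu(K_{5,5})=5$. First I would recall that $\mu(B)$ was defined as the largest $t$ for which $K_{t,t}$ is a matching minor of $B$, and that $K_{5,5}$ trivially has itself as a matching minor (the identity embedding, which is a matching minor model since $K_{5,5}-K_{5,5}=\emptyset$ has the empty perfect matching), so $\mu(K_{5,5})\ge 5$. Hence if $K_{5,5}$ were a shallow vortex matching minor — i.e. a matching minor of some $\svmg_t$ — then, since the matching-minor relation is transitive, $K_{5,5}$ would be a matching minor of $\svmg_t$, forcing $\mu(\svmg_t)\ge 5$. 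But $\svmg_t$ is itself a shallow vortex matching minor (the identity model), so the preceding lemma gives $\mu(\svmg_t)\le 4$, a contradiction.

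The one point that deserves a sentence of justification is the transitivity of the matching-minor relation, which is standard but used implicitly: if $H$ is a matching minor of $G$ and $G$ is a matching minor of $K$, then $H$ is a matching minor of $K$. This follows directly from \cref{def_matchingminor} together with \cref{lemma_matmodel} (existence of matching minor models is equivalent to being a matching minor), since composing a model $\mu\colon H\to G$ with a model $\nu\colon G\to K$ yields a model of $H$ in $K$; alternatively one observes that conformal subgraphs of conformal subgraphs are conformal and bicontractions compose. No real obstacle arises here — all the difficulty was already absorbed into the preceding lemma about disjoint paths crossing small separators in $\widehat B_{out}$.

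To keep the exposition self-contained I would phrase the proof in two lines: observe $K_{5,5}$ is a matching minor of itself, so if it were a shallow vortex matching minor then by transitivity and the definition of $\mu$ we would have $\mu(\svmg_t)\ge 5$ for the relevant $t$; but the preceding lemma, applied to the shallow vortex matching minor $\svmg_t$, gives $\mu(\svmg_t)\le 4$, a contradiction. If the paper prefers, this can be folded into the statement of the previous lemma as an immediate corollary with a one-sentence proof, which is exactly how it is presented in the excerpt (``It follows that \ldots'').

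\begin{proof}[Proof of \cref{cor_k55notsvg}]
Suppose, for contradiction, that $K_{5,5}$ is a shallow vortex matching minor. Then there is some $t\in\mathbb{N}$ such that $K_{5,5}$ is a matching minor of $\svmg_{t}$. Since $K_{5,5}$ is trivially a matching minor of itself (take the identity embedding: $K_{5,5}-K_{5,5}$ is the empty graph, which has the empty perfect matching), it follows from the definition of $\mu$ that $\mu(\svmg_{t})\geq 5$. On the other hand, $\svmg_{t}$ is itself a shallow vortex matching minor, so the previous lemma yields $\mu(\svmg_{t})\leq 4$, a contradiction. Hence $K_{5,5}$ is not a shallow vortex matching minor.
\end{proof}
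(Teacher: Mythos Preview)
Your proof is correct and follows essentially the same approach as the paper, which presents the corollary as an immediate consequence of the preceding lemma with no separate proof. The only remark is that your detour through $\svmg_t$ is unnecessary: since you are assuming $K_{5,5}$ itself is a shallow vortex matching minor, you can apply the lemma directly to $B=K_{5,5}$ to get $\mu(K_{5,5})\le 4$, contradicting $\mu(K_{5,5})\ge 5$.
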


\begin{figure}[!ht]
		\centering
				\makefast{ 

		\begin{tikzpicture}[scale=0.8]

			\pgfdeclarelayer{background}
			\pgfdeclarelayer{foreground}
			\pgfsetlayers{background,main,foreground}

			\draw[e:main] (0,0) circle (11mm);
			\draw[e:main] (0,0) circle (16mm);
			\draw[e:main] (0,0) circle (21mm);
			\draw[e:main] (0,0) circle (26mm);
			
			\foreach \x in {1,...,16}
			{
				\draw[e:main] (\x*22.5:11mm) -- (\x*22.5:16mm);
				\draw[e:main] (\x*22.5:16mm) -- (\x*22.5:21mm);
				\draw[e:main] (\x*22.5:21mm) -- (\x*22.5:26mm);
			}
			
			\foreach \x in {2,4,6,...,16}
			{
				\draw[e:main,bend right=35] (\x*22.5:26mm) to (\x*22.5+22.5:33mm); 
				\draw[e:main,bend right=20] (\x*22.5+22.5:33mm) to (\x*22.5+45:33mm); 
				\draw[e:main,bend right=35] (\x*22.5+45:33mm)  to (\x*22.5+67.5:26mm);
			}

			\node[] () at (125:38mm){$v_{1}$};
			\draw[e:marker] (157.5:26mm) to [bend left=35]  (135:33mm) to [bend left=20] (112.5:33mm) to [bend left=35] (90:26mm) to [bend left=13]  (67.5:26mm);
			
			\node[] () at (305:38mm){$v_{2}$};
			\draw[e:marker] (337.5:26mm) to [bend left=35] (315:33mm) to [bend left=20] (292.5:33mm) to [bend left=35] (270:26mm) to [bend left=13] (247.5:26mm);	

			\node[] () at (112.5:29mm){$v_{3}$};
			\draw[e:marker] (112.5:26mm) to (112.5:21mm) to [bend left=30] (45:21mm);

			\node[] () at (-67.5:29mm){$v_{4}$};
			\draw[e:marker] (22.5:16mm) to [bend left=23] (-22.5:16mm) to (-22.5:21mm) to [bend left=23] (-67.5:21mm) to (-67.5:26mm);

			\node[] () at (180:38mm){$u_{1}$};
			\draw[e:marker,color=green] (225:26mm) to [bend left=13] (202.5:26mm) to [bend left=35] (180:33mm) to [bend left=20] (157.5:33mm) to [bend left=35] (135:26mm);
			
			\node[] () at (0:38mm){$u_{2}$}; 						
			\draw[e:marker,color=green] (45:26mm) to [bend left=13] (22.5:26mm) to [bend left=35] (0:33mm) to [bend left=20] (337.5:33mm) to [bend left=35] (315:26mm);

			\node[] at (0:29.5mm){$u_{3}$};			
			\draw[e:marker,color=green,bend left=13] (22.5:21mm) to [bend left=13] (0:21mm) to (0:26mm);
			
			\node[] at (180:29.5mm){$u_{4}$};
			\draw[e:marker,color=green] (180:26mm) to (180:21mm) to [bend left=23] (135:21mm) to (135:16mm) to [bend left=45] (45:16mm);

			\draw[e:marker,color=magenta,bend right=13] (135:26mm) to (157.5:26mm);
			\draw[e:marker,color=magenta,bend right=13] (45:26mm) to (67.5:26mm);
			\draw[e:marker,magenta] (0:26mm) to [bend right=30] (22.5:33mm) to [bend right=20] (45:33mm) to [bend right=30] (67.5:26mm);
			\draw[e:marker,color=magenta,bend right=13] (157.5:26mm) to (180:26mm);

			\draw[e:marker,magenta,bend right=13] (225:26mm) to (247.5:26mm);
			\draw[e:marker,magenta,bend left=13] (-22.5:26mm) to (-45:26mm);
			\draw[e:marker,magenta,bend left=13] (0:26mm) to (-22.5:26mm);
			\draw[e:marker,magenta] (180:26mm) to [bend right=30] (202.5:33mm) to [bend right=20] (225:33mm) to [bend right=30] (247.5:26mm);

			\draw[e:marker,color=magenta,bend right=13] (112.5:26mm) to (135:26mm);			
			\draw[e:marker,magenta] (45:26mm) to [bend right=30] (67.5:33mm) to [bend right=20] (90:33mm) to [bend right=30] (112.5:26mm);
			\draw[e:marker,color=magenta,bend right=13] (22.5:21mm) to (45:21mm);			
			\draw[e:marker,magenta] (45:21mm) to (45:16mm);

			\draw[e:marker,magenta] (225:26mm) to [bend right=30] (247.5:33mm) to [bend right=20] (270:33mm) to [bend right=30] (292.5:26mm);
			\draw[e:marker,color=magenta] (-45:21mm) to (-45:26mm);
			\draw[e:marker,color=magenta] (22.5:16mm) to (22.5:21mm);			
			\draw[e:marker,color=magenta] (22.5:16mm) to (45:16mm);

			\foreach \x in {1,...,8}
			{
				\draw[e:coloredthin,color=BostonUniversityRed,bend right=13] (\x*45+22.5:11mm) to (\x*45+45:11mm);
			}
			\foreach \x in {3,4,5,6}
			{
				\draw[e:coloredthin,color=BostonUniversityRed,bend right=13] (\x*45+22.5:16mm) to (\x*45+45:16mm);
			}
			\foreach \x in {4,5}
			{
				\draw[e:coloredthin,color=BostonUniversityRed,bend right=13] (\x*45+22.5:21mm) to (\x*45+45:21mm);
			}
			
			\foreach \x in {1,...,8}
			{
				\node[v:main] () at (\x*45:11mm){};
				\node[v:mainempty] () at (\x*45:16mm){};
				\node[v:main] () at (\x*45:21mm){};
				\node[v:mainempty] () at (\x*45:26mm){};
				\node[v:mainempty] () at (\x*45+22.5:11mm){};
				\node[v:main] () at (\x*45+22.5:16mm){};
				\node[v:mainempty] () at (\x*45+22.5:21mm){};
				\node[v:main] () at (\x*45+22.5:26mm){};
			}

			\begin{pgfonlayer}{background}
				\foreach \x in {1,...,8}
				{
					\draw[e:coloredborder,bend right=13] (\x*45+22.5:11mm) to (\x*45+45:11mm);
				}
				\foreach \x in {3,4,5,6}
				{
					\draw[e:coloredborder,bend right=13] (\x*45+22.5:16mm) to (\x*45+45:16mm);
				}
				\foreach \x in {4,5}
				{
					\draw[e:coloredborder,bend right=13] (\x*45+22.5:21mm) to (\x*45+45:21mm);

				}
			\end{pgfonlayer}
			
		\end{tikzpicture}}{\scalebox{.55}{\includegraphics{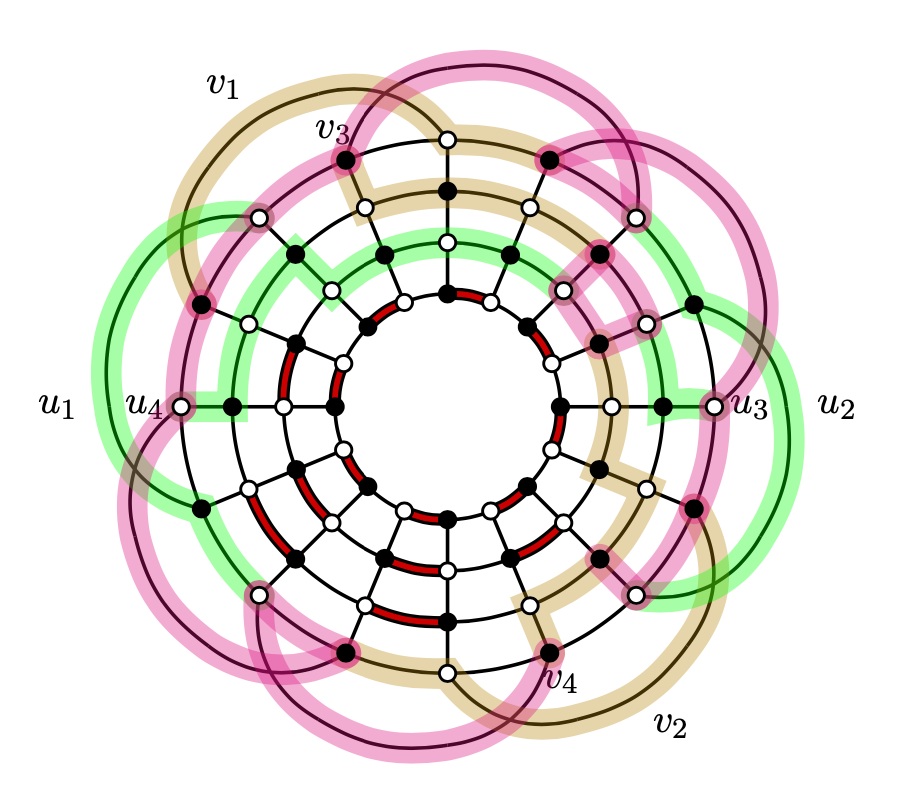}}}
		
	\caption{A model of $K_{4,4}$ in $RV_{4}.$ We depict the vertices of the two parts of the graph with green and yellow markers respectively. Each edge of $K_{4,4}$ is represented by a single edge of the host graph. Thus, we only apply bicontractions to obtain the vertices of $K_{4,4}.$ The edges between vertex models are depicted in pink. The red matching edges certify that the model is a conformal subgraph of the host graph.}
	\label{fig_k44gridmatminor}
\end{figure}

In contrast to \cref{cor_k55notsvg} we show the following. 

\begin{lemma}
$K_{4,4}$ is a shallow vortex matching minor.
\end{lemma}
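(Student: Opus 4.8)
The plan is to exhibit an explicit matching minor model of $K_{4,4}$ inside a refined vortex $RV_k$ of sufficiently small order (order $4$ suffices), and then invoke \cref{lem_rvintosvg} to conclude that $K_{4,4}$ is a matching minor of the shallow vortex matching grid $\svmg_4$, hence a shallow vortex matching minor by definition of the class $\mathcal{V}$. By \cref{lemma_matmodel} it is enough to specify a matching minor model $\mu\colon K_{4,4}\rightarrow RV_4$ meeting the six requirements of \cref{def_matchingminormodel}; and by the observation immediately after that lemma we may take each edge-model $\mu(e)$ to consist of a single edge, so the only nontrivial bookkeeping concerns the vertex-models (barycentric trees) and the existence of a perfect matching in the complement.

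First I would fix the cycles $C_1,\dots,C_8$ of $RV_4$ together with its canonical matching $M$, and designate the eight vertices of $K_{4,4}$ with two families, one per colour class, to be realised by eight pairwise vertex-disjoint barycentric subtrees of $RV_4$. Concretely, following the layout of \cref{fig_k44gridmatminor}: four of the vertex-models, $v_1,v_2$ and $u_1,u_2$, are long arcs running along the outer cycles (using the radial and the ``bend'' edges of the vortex so that consecutive arcs interleave), while the remaining four, $v_3,v_4$ and $u_3,u_4$, are short paths reaching into the inner cycles. The crossing edges of $RV_4$ (the chords $v^1_jv^1_{(j+3)\bmod 2k}$) are exactly what is used to wire the two ``deep'' models $u_4$ and $v_3$ so that all sixteen adjacencies of $K_{4,4}$ are present as single edges between the prescribed old vertices of the corresponding trees. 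I would then check requirements (i)--(v) directly from the picture: each $\mu(x)$ is a barycentric subtree, the trees are pairwise disjoint, the sixteen edge-models are internally disjoint single edges joining old vertices of the right trees, and the two degree-one issues do not arise since every $K_{4,4}$-vertex has degree $4$.

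The remaining, and genuinely load-bearing, step is requirement (vi): $RV_4-\mu(K_{4,4})$ must have a perfect matching, equivalently $\mu(K_{4,4})$ is a conformal subgraph of $RV_4$. For this I would exhibit an explicit perfect matching of $RV_4$ that restricts to a perfect matching on the union of all vertex-models and all edge-models. The natural candidate is a suitable modification of the canonical matching $M$: on each concentric cycle $C_i$ one has two choices of perfect matching ($M_i$ or its complement in $C_i$), and the vertex-arcs were chosen (as in the figure, where the conformity is certified by the red matching edges) so that along each arc the alternation of $M$-edges and non-$M$-edges leaves the arc internally matched while the two endpoints, being in the same colour class, are matched outward by chords/radial edges lying in the model. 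I would verify that (a) each barycentric subtree $\mu(x)$ admits a perfect matching using only edges drawn inside it, (b) the single edges $\mu(e)$ are not in this matching and their endpoints are the old vertices matched inside the trees, and (c) the leftover vertices of $RV_4$ outside $\mu(K_{4,4})$ form an $M$-conformal set, which follows because what remains on each $C_i$ after deleting the arcs is a disjoint union of even paths carrying the restriction of $M$ (or $M_i$-complement). Once (vi) is checked, \cref{lemma_matmodel} gives $K_{4,4}\preccurlyeq_{mm} RV_4$, \cref{lem_rvintosvg} gives $RV_4\preccurlyeq_{mm}\svmg_4$, and matching-minor transitivity (contraction-closure of $\mathcal{V}$) finishes the proof.

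The main obstacle I anticipate is purely the combinatorial verification of conformality: making sure the chosen perfect matching of $RV_4$ simultaneously matches the interiors of all eight trees, avoids all sixteen edge-models, and has its restriction to the complement be a perfect matching — the parity constraint that forces every vertex-model to have both endpoints in one colour class (and hence even ``internal length'') is what must be reconciled with the fixed bipartition of the vortex and with the interleaving pattern of the outer arcs. I expect this to be routine but tedious, which is why, in the spirit of the preceding lemmas, the cleanest write-up is to defer the bulk of it to an annotated figure (\cref{fig_k44gridmatminor}) displaying the model together with the certifying matching, and to remark that the construction is independent of whether one targets $RV_4$, $\svmg_4$, or even directly $CG_{\omega}$ for a small $\omega$, since all three are parametrically equivalent as universal obstructions.
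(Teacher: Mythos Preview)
Your proposal is correct and follows essentially the same route as the paper: exhibit an explicit matching minor model of $K_{4,4}$ in $RV_4$ (with single-edge edge-models and conformality certified by an explicit perfect matching), then invoke \cref{lem_rvintosvg}. The paper is even terser than you are---it simply points to \cref{fig_k44gridmatminor} and notes that only the four outer cycles of $RV_4$ are needed---while you spell out the verification of requirements (i)--(vi) of \cref{def_matchingminormodel}; but the underlying construction and logic are the same.
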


\begin{proof}
Notice that it is enough to show that $K_{4,4}$ is a matching minor of $RV_{k}$ for some positive integer $k.$
For the proof of this lemma, we depict a matching minor model of $K_{4,4}$ in $RV_{4}$ in~\cref{fig_k44gridmatminor}. For a clearer depiction, we only draw the 4 outer circles of $RV_{4}$ as they are enough for the identification of the $K_{4,4}$ model.
\end{proof}

Combining \cref{cor_k55notsvg} and \cref{thm_svmmhardness} we obtain the main result of this section.

\begin{theorem}\label{thm_k55hardness}
The problem $\#\textsc{PerfMatch}$ is $\classP$-hard in the class of graphs excluding $K_{5,5}$ as a matching minor.
\end{theorem}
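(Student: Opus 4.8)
The final statement, \cref{thm_k55hardness}, asserts that $\#\textsc{PerfMatch}$ is $\classP$-hard on the class of bipartite graphs excluding $K_{5,5}$ as a matching minor. The plan is to deduce this directly from the two preceding results, \cref{cor_k55notsvg} and \cref{thm_svmmhardness}, by exhibiting a concrete matching-minor-closed class that lies inside the $K_{5,5}$-matching-minor-free graphs but still contains a shallow vortex matching minor.

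First I would let $\mathcal{G}$ denote the class of all bipartite matching covered graphs that do \emph{not} contain $K_{5,5}$ as a matching minor. Since the matching minor relation is transitive (a matching minor of a matching minor is a matching minor), $\mathcal{G}$ is closed under taking matching minors. Next, by \cref{cor_k55notsvg}, $K_{5,5}$ is not a shallow vortex matching minor; equivalently, every shallow vortex matching grid $\svmg_t$ excludes $K_{5,5}$ as a matching minor, and hence so does every matching minor of $\svmg_t$. Therefore every shallow vortex matching minor belongs to $\mathcal{G}$, so $\mathcal{G}$ is a matching-minor-closed class containing a shallow vortex matching minor (for instance $\svmg_1 \in \mathcal{G}$, and in fact every $\svmg_t$). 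Applying \cref{thm_svmmhardness} to $\mathcal{G}$ yields that $\#\textsc{PerfMatch}$ is $\classP$-complete, in particular $\classP$-hard, on $\mathcal{G}$, which is exactly the claimed statement. One should also note, as the paper remarks in \cref{@unanswerable}, that this class is genuinely nontrivial in the sense that $K_{4,4}$ \emph{is} a shallow vortex matching minor, so $\mathcal{G}$ strictly contains graphs with $K_{4,4}$ matching minors; this is not needed for the proof but underlines that the $K_{5,5}$ threshold is the correct one.

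The only subtlety — and the step I would state most carefully — is the reduction of the class-level statement to the original formulation: \cref{thm_svmmhardness} is phrased for an arbitrary matching-minor-closed class $\mathcal{G}$ that contains some shallow vortex matching minor, and the class of $K_{5,5}$-matching-minor-free graphs is exactly such a class. No further work is required; the content has already been carried by \cref{thm_svmmhardness} (which in turn rests on the gadget reductions of \cref{@interconnected}, the construction of $B_{\mathrm{svmm}}$, and the $\classP$-hardness of counting perfect matchings on $3$-regular bipartite graphs due to Dagum--Luby and Valiant) and by the structural exclusion argument of \cref{cor_k55notsvg} (which rests on the planarity-and-flow analysis inside $\widehat{B}_{in}$). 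Thus the proof is essentially a one-line combination, and the ``hard part'' is genuinely located upstream, namely in establishing that $\mu(B)\le 4$ for every shallow vortex matching minor $B$ — there is nothing left to do here beyond invoking the two cited results.

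\begin{proof}[Proof of \Cref{thm_k55hardness}]
Let $\mathcal{G}$ be the class of all bipartite matching covered graphs with a perfect matching that do not contain $K_{5,5}$ as a matching minor. Since the matching minor relation is transitive, $\mathcal{G}$ is closed under matching minors. By \cref{cor_k55notsvg}, $K_{5,5}$ is not a shallow vortex matching minor, so for every $t\in\N$ the graph $\svmg_t$ excludes $K_{5,5}$ as a matching minor, and hence every shallow vortex matching minor lies in $\mathcal{G}$. In particular $\mathcal{G}$ is a matching-minor-closed class containing a shallow vortex matching minor. Applying \cref{thm_svmmhardness} to $\mathcal{G}$, we conclude that $\#\textsc{PerfMatch}$ is $\classP$-complete, and in particular $\classP$-hard, on $\mathcal{G}$, which is the class of graphs excluding $K_{5,5}$ as a matching minor.
\end{proof}
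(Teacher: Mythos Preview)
Your proof is correct and follows exactly the same approach as the paper, which simply states that the theorem is obtained by ``combining \cref{cor_k55notsvg} and \cref{thm_svmmhardness}'' without giving a separate proof environment. You have merely spelled out the one-line combination in full detail, verifying that the class of $K_{5,5}$-matching-minor-free graphs is matching-minor-closed and contains all shallow vortex matching minors, which is precisely what the paper's terse sentence encodes.
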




 \section{Conclusion}
 \label{@strengthening}
 
In this paper we introduced two types of grid-like bipartite graphs: \textsl{single-crossing matching grids} and  \textsl{shallow vortex matching grids}.
The exclusion of these graphs as matching minors determine the conditions of the two main results of this paper, that are \cref{thm_algomainthm} and \cref{thm_svmmhardness}.
Clearly, every single-crossing matching grid is a minor of a shallow vortex matching grid, but the other direction is not true (for instance, it is easy to see that $K_{4,4}$ is not a matching minor of a single-crossing matching grid, while $K_{4,4}$ is a matching minor of the shallow vortex grid -- see \cref{fig_k44gridmatminor}).
\medskip

As stated in \cref{thm_algomainthm}, there exists a function $h_1:\mathbb{N}\to\mathbb{N}$ such that  the problem of computing the permanent of a complete class $\mathcal{A}$ of square $(0,1)$-matrices, can be solved by an algorithm running in time $|A|^{O(f_1(\mathcal{A}))},$ where $f_1(\mathcal{A})$ is the maximum size of a single-crossing matching grid that is a matching minor of $B(A)$ for any $A\in\mathcal{A},$ and $|A|=|V(B(A))|.$
On the other side, our negative result, \cref{thm_svmmhardness}, asserts that such a result may not be expected for a hereditary class $\mathcal{B}$ of square $(0,1)$-matrices for which the function $f_2,$ measuring the maximum size of a shallow vortex matching grid as a matching minor of $B(A)$ among all $A\in\mathcal{B},$ is unbounded.
In the special case where we exclude $K_{t,t},$ $t\geq 0$ from $B(A)$ as a matching minor, we have that, for $t≤3,$  \cref{thm_algomainthm} yields the polynomial algorithm  of \cite{robertson1999permanents,mccuaig2004polya}, while  \cref{thm_svmmhardness} implies $\classP$-hardness for $t≥5.$
Clearly, there is a gap between these two results that needs to be filled (in \cref{fig_classes}, this gap is visualised by the white space above the black line and below the orange line).
A good representative of the general question is the following:
\begin{quotation}
	\noindent \textsl{What is the complexity of computing the permanent of square $(0,1)$-matrices $A,$ where $B(A)$ excludes $K_{4,4}$ as a matching minor?}
\end{quotation}
Recall that $K_{4,4}$ is a matching minor of a shallow vortex matching grid.
We believe that the above question can be resolved positively, that is, there should exist a polynomial-time algorithm for computing the permanent of biadjacency matrices of $K_{4,4}$ matching-minor-free bipartite graphs.
In general, we conjecture that \cref{thm_algomainthm} also holds if we replace $f_{1}$ by $f_{2}.$
Such a result would completely resolve the permanent problem for hereditary classes of square $(0,1)$-matrices as it would indicate that $f_{2}$ (that is the exclusion of shallow vortex matching grids as matching minors) \textsl{precisely} defines the frontier between polynomial time and $\classP$-hardness.
 
A reason for believing the above conjecture is correct is motivated by a recent result on the complexity of counting perfect matchings in $H$-minor-free graphs.
In \cite{ThilikosW22killi1} the class of \textsl{shallow vortex grids} has been introduced, and it was proven that one can count the perfect matchings of graphs from some proper minor closed class $\mathcal{C}$ in polynomial time if $\mathcal{C}$ excludes some shallow vortex grid.
Moreover, in case $\mathcal{C}$ does not, then counting perfect matchings is $\classP$-hard even when restricted to $\mathcal{C}.$

The \textsl{shallow vortex matching grids} introduced in this paper act as matching theoretic counterparts of these shallow vortex grids.
Indeed, with \cref{thm_svmmhardness}, a matching theoretic counterpart to the lower bound of the dichotomy of \cite{ThilikosW22killi1}, based on the result of Curticapean and Xia \cite{CurticapeanXia22}, has been introduced.
Hence, the missing part for the resolution of the above conjecture is to give a polynomial time algorithm for computing the permanent for hereditary classes $\mathcal{A}$ of square $(0,1)$-matrices where the corresponding bipartite graphs $B(\mathcal{A})$ exclude a shallow vortex grid as a matching minor.
A possible approach for this would be to adopt the approach of \cite{ThilikosW22killi1} and try to transfer the ``vortex killing'' technology to the matching minors setting.
This appears to be a highly non-trivial task as it would require the development of a general structure theorem for bipartite graphs excluding a fixed matching minor.
However, towards the proof of such a matching theoretic counterpart of the GMST, the results of the present paper, as well as the results of Giannopoulou, Kreutzer, and Wiederrecht  \cite{giannopoulou2021two, giannopoulou2021excluding} already provide several necessary building blocks.
 \medskip

%
%
%

\newcommand{\etalchar}[1]{$^{#1}$}


\end{document}